\title[On extended Frobenius structures]{On extended Frobenius structures}
\author[A. Czenky]{Agustina Czenky}
\address{Czenky: Department of Mathematics, University of Southern California,
3620 S. Vermont Ave., KAP 104. Los Angeles, CA 90089, USA}
\email{czenky@usc.edu}
\author[J. Kesten]{Jacob Kesten}
\address{Kesten: Department of Mathematics, The Emery/Weiner School,
9825 Stella Link Road
Houston, TX 77025, USA}
\email{jkesten@emeryweiner.org}
\author[A. Quinonez]{Abiel Quinonez}
\address{Quinonez: Department of Mathematics,  University of Texas,  2515
Speedway C1200,  Austin, TX 78712, USA}
\email{adq243@utexas.edu}
\author[C. Walton]{Chelsea Walton}
\address{Walton: Department of Mathematics, Rice University,
P.O. Box 1892, Houston, TX 77005, USA}
\email{notlaw@rice.edu}
\newcommand{\stkout}[1]{\ifmmode\text{\sout{\ensuremath{#1}}}\else\sout{#1}\fi}
\newcommand \red {\textcolor{red}}
\newcommand \blue {\textcolor{blue}}
\newcommand \green {\textcolor{green}}
\newcommand \gray {\textcolor{gray}}
\definecolor{forest}{rgb}{0.0, 0.55, 0.0}
\definecolor{BetterBlue}{rgb}{0.2, 0.4, 1.0}
\definecolor{BetterPurple}{rgb}{0.6, 0.3, 0.9}
\newcommand{\ac}{\textcolor{BetterPurple}}
\newcommand{\jk}{\textcolor{forest}}
\newcommand{\aq}{\textcolor{BetterBlue}}
\newcommand{\cw}{\textcolor{Cerulean}}
\renewcommand{\baselinestretch}{1.1}
\DeclareMathAlphabet{\cal}{OMS}{zplm}{m}{n}
\DeclareMathAlphabet{\mathsf}{OT1}{cmss}{m}{n} 
\newcommand{\varep}{\varepsilon}
\newcommand{\op}[1]{\mathrm{#1}}
\newcommand{\oop}{\mathrm{op}}
\newcommand{\cop}{\mathrm{cop}}
\newcommand{\Id}{\textnormal{Id}}
\newcommand{\id}{\textnormal{id}}
\newcommand{\one}{\mathbbm{1}}
\newcommand{\Ra}{\Rightarrow}
\newcommand{\Hom}{\textnormal{Hom}}
\newcommand{\tr}{\mathsf{tr}}
\newcommand{\HDelta}{\underline{\Delta}}
\newcommand{\Hvarep}{\underline{\varep}}
\newcommand{\HU}{\underline{U}}
\newcommand{\Hbeta}{\underline{\beta}}
\newcommand{\HHvs}{\underline{H}}
\newcommand{\Hmult}{\underline{m}}
\newcommand{\Hunit}{\underline{u}}
\newcommand{\Hanti}{\underline{S}}
\newcommand{\HLam}{\underline{\Lambda}}
\newcommand{\Hlam}{\underline{\lambda}}
\newcommand{\HPsi}{\underline{\Psi}}
\newcommand{\act}{\triangleright}
\newcommand\osq{{\mathbin{\text{\raisebox{-.25ex}{\scalebox{1.5}{$\square$}}}}}}
\newcommand\natisom{\stackrel{\hbox{$\sim$\hspace{.02in}}}{\smash{\Rightarrow}\rule{0pt}{0.4ex}}}
\newcommand\natisommon{\stackrel{\scalebox{.6}{$\otimes$}}{\stackrel{\hbox{$\sim$\hspace{.02in}}}{\smash{\Rightarrow}\rule{0pt}{0.4ex}}}}
\newcommand\equivto{\stackrel{\hbox{$\sim$\hspace{.02in}}}{\smash{\to}\rule{0pt}{0.3ex}}}
\newcommand{\pullbackcorner}[1][dr]{\save*!/#1-2pc/#1:(-1,1)@^{|-}\restore}
\newcommand{\pushoutcorner}[1][ul]{\save*!/#1-2pc/#1:(-1,1)@^{|-}\restore}
\newcommand{\Alg}{\mathsf{Alg}}
\newcommand{\Bialg}{\mathsf{Bialg}}
\newcommand{\Bimod}{\mathsf{Bimod}}
\newcommand{\Coalg}{\mathsf{Coalg}}
\newcommand{\ComAlg}{\mathsf{ComAlg}}
\newcommand{\CoMod}{\mathsf{Comod}}
\newcommand{\End}{\mathsf{End}}
\newcommand{\FrobAlg}{\mathsf{FrobAlg}}
\newcommand{\ExtFrobAlg}{\mathsf{ExtFrobAlg}}
\newcommand{\Fun}{\mathsf{Fun}_\Bbbk}
\newcommand{\HopfAlg}{\mathsf{HopfAlg}}
\newcommand{\Mod}{\mathsf{Mod}}
\newcommand{\MonCat}{\mathsf{MonCat}}
\newcommand{\Rep}{\mathsf{Rep}}
\newcommand{\Vect}{\mathsf{Vec}}
\newcommand{\Top}{\mathsf{Top}}
\newcommand{\Mat}{\textnormal{Mat}}
\newcommand{\mC}{\mathbb{C}}
\newcommand{\mE}{\mathbb{E}}
\newcommand{\mR}{\mathbb{R}}
\newcommand{\mQ}{\mathbb{Q}}
\newcommand{\mL}{\mathbb{L}}
\newcommand{\mZ}{\mathbb{Z}}
\newcommand{\mN}{\mathbb{N}}
\newcommand{\mH}{\mathbb{H}}
\newcommand{\mA}{\mathbb{A}}
\newcommand{\mF}{\mathbb{F}}
\newcommand{\cA}{\cal{A}}
\newcommand{\cC}{\cal{C}}
\newcommand{\cD}{\cal{D}}
\newcommand{\cB}{\cal{B}}
\newcommand{\cE}{\cal{E}}
\newcommand{\cF}{\cal{F}}
\newcommand{\cH}{\cal{H}}
\newcommand{\cO}{\cal{O}}
\newcommand{\cI}{\cal{I}}
\newcommand{\cL}{\cal{L}}
\newcommand{\cP}{\cal{P}}
\newcommand{\cT}{\cal{T}}
\newcommand{\cU}{\cal{U}}
\newcommand{\cV}{\cal{V}}
\newcommand{\cW}{\cal{W}}
\newcommand{\cM}{\cal{M}}
\newcommand{\cN}{\cal{N}}
\newcommand{\cS}{\cal{S}}
\newcommand{\cX}{\cal{X}}
\newcommand{\cZ}{\cal{Z}}
\newcommand{\rA}{\mathrm{A}}
\newcommand{\rB}{\mathrm{B}}
\newcommand{\rC}{\mathrm{C}}
\newcommand{\rD}{\mathrm{D}}
\newcommand{\rE}{\mathrm{E}}
\newcommand{\rF}{\mathrm{F}}
\newcommand{\rG}{\mathrm{G}}
\newcommand{\rH}{\mathrm{H}}
\newcommand{\rR}{\mathrm{R}}
\newcommand{\rT}{\mathrm{T}}
\newcommand{\rS}{\mathrm{S}}
\newcommand{\rU}{\mathrm{U}}
\newcommand{\bfD}{\mathbf{D}}
\newcommand{\textand}{\phantom{mm}\textnormal{and}\phantom{mm}}
\newcommand{\adj}[4]{#1\negmedspace: #2\rightleftarrows #3:\negmedspace #4}
\numberwithin{equation}{section}
\newtheorem{theorem}{Theorem}[section]
\newtheorem{proposition}[theorem]{Proposition}
\newtheorem{corollary}[theorem]{Corollary}
\newtheorem{lemma}[theorem]{Lemma}
\newtheorem{conjecture}[theorem]{Conjecture}
\newtheorem{theorem*}{Theorem}
\theoremstyle{definition}
\newtheorem{definition}[theorem]{Definition}
\newtheorem{example}[theorem]{Example}
\newtheorem{remark}[theorem]{Remark}
\let\c@equation\c@theorem  
\numberwithin{equation}{section}
\def\changemargin#1#2{\list{}{\rightmargin#2\leftmargin#1}\item[]}
\let\endchangemargin=\endlist
\newcommand{\skp}{\vspace{0.07in}}
\newcommand{\hs}{\hspace{.02in}} 
\subjclass[2020]{16K99, 18M15, 18M30, 57R56}
\keywords{extended Frobenius algebra,  extended Frobenius monoidal functor, extended Hopf algebra}
\begin{document}

\begin{abstract}
A classical result in quantum topology is that oriented 2-dimensional topological quantum field theories (2-TQFTs) are fully classified by commutative Frobenius algebras. In 2006, Turaev and Turner introduced additional structure on Frobenius algebras, forming what are called extended Frobenius algebras, to classify 2-TQFTs in the unoriented case. This work provides a systematic study of extended Frobenius algebras in various settings: over a field, in a monoidal category, and in the framework of monoidal functors. Numerous examples, classification results, and general constructions of extended Frobenius algebras are established.
\end{abstract}

\maketitle







\section{Introduction}\label{sec:intro}

The goal of this work is to study extended Frobenius algebras in various settings. Before providing further context, note that linear structures here are over an algebraically closed field $\Bbbk$ of characteristic zero, unless stated otherwise. Categories $\cC$ are assumed to be locally small, and will have further structure as specified below.  We will also read graphical diagrams from top to bottom.

\medskip

We are motivated by the vast program on producing {\it topological quantum field theories (TQFTs)}, which are categorical constructions that yield topological invariants. Loosely speaking,  a TQFT is a (higher) functor from a (higher) category of topological data to a (higher) target category with extra structure. In the 2-dimensional case, 2-TQFTs are symmetric monoidal functors from a symmetric monoidal category of 1-manifolds and 2-cobordisms  to a choice of a symmetric monoidal category $\cC$. Often, $\cC$ is taken to be the symmetric monoidal category $\mathsf{Vec}$ of $\Bbbk$-vector spaces. A classical result is that a 2-TQFT with values in $\cC$ is classified by where it sends the circle, which in the oriented setting, is a commutative Frobenius algebra in $\cC$; see, e.g., \cite{Kock}. Turaev and Turner expanded this correspondence in the unoriented setting, by tacking on extra structure to Frobenius algebras to form what are called extended Frobenius algebras \cite[Section~2]{TuraevTurner}. 

\medskip

\noindent \textbf{Turaev-Turner's 2-TQFT Result $(\star)$:} 
Isoclasses of unoriented 2-dimensional TQFTs in $\mathsf{Vec}$ are in bijection with isomorphism classes of commutative extended Frobenius
algebras over~$\Bbbk$. 

\medskip

Since then, extended Frobenius algebras have appeared in many works, such as  in  an adaptation of $(\star)$ to compute virtual link homologies \cite{Tubbenhauer},  for an analogue of $(\star)$ for homotopy quantum field theories  \cite{Tagami},  in  a modification of $(\star)$ to examine linearized TQFTs \cite{Czenky},  in  a categorical expansion of  $(\star)$ \cite{Ocal}, and in a study of topological invariants of ribbon graphs \cite{ChengLei}.

\medskip

We expect that extended Frobenius algebras will continue to play a crucial role in the TQFT program. Thus, we focus on the algebraic side of the program and study extended Frobenius algebras in detail-- producing numerous examples, classification results, and general constructions.

\medskip

We begin by taking $\cC = \mathsf{Vec}$, hence working over the field $\Bbbk$. Consider the  terminology below.

\begin{definition} \label{def:extFrob-k}
\begin{enumerate}[\upshape (a)]
\item A {\it Frobenius algebra} over $\Bbbk$ is a tuple $(A,m,u,\Delta, \varep)$, where $(A,m,u)$ is an associative unital $\Bbbk$-algebra, and $(A,\Delta,\varep)$ is a coassociative counital $\Bbbk$-coalgebra, satisfying the Frobenius law:
$(a \otimes 1_A)\Delta(b)= \Delta(ab) = \Delta(a)(1_A \otimes b)$,
for all $a,b \in A$.
A {\it morphism of Frobenius algebras over $\Bbbk$} is a morphism of the underlying $\Bbbk$-algebras and of $\Bbbk$-coalgebras.

\medskip

\item \cite[Definition 2.5]{TuraevTurner} A Frobenius algebra $(A,m,u,\Delta, \varep)$ is an  {\it extended Frobenius algebra} over $\Bbbk$ if it is equipped with a morphism $\phi: A \to A$ and an element $\theta \in A$ such that:
\smallskip
\begin{enumerate}[(i)]
\item $\phi:A \to A$ is an involution of Frobenius algebras,  
\smallskip
\item $\theta \in A$ satisfies $\phi(\theta a) = \theta a$, for all $a \in A$, 
\smallskip
\item $m (\phi \otimes \id_A) \Delta(1_A) = \theta^2$.
\end{enumerate}

\smallskip

\noindent A {\it morphism $f:(A,\phi_A,\theta_A) \to (B,\phi_B,\theta_B)$ of extended Frobenius algebras over $\Bbbk$} is a morphism $f: A \to B$ of  $\Bbbk$-Frobenius algebras such that $f \hs \phi_A = \phi_B \hs f$ and $f(\theta_A) = \theta_B$.

\medskip

\item We refer to $(\phi, \theta)$ in part (b) as the  {\it extended structure} of the underlying Frobenius algebra~$A$, and  say that  $A$ is {\it extendable} when $\phi$ and $\theta$ exist.
We also call an extended structure $(\phi, \theta)$ on $A$   {\it $\phi$-trivial} when $\phi = \id_A$, and  call it {\it $\theta$-trivial} when $\theta = 0$.
\end{enumerate}
\end{definition}

Note that we do not assume that algebras are commutative in our work. Our first main result is the classification of extended structures for  well-known examples of Frobenius algebras over $\Bbbk$. 

\begin{theorem}[Propositions~\ref{prop:k-class}--\ref{prop:xn-class},~\ref{prop:C2-class}--\ref{prop:C4-class},~\ref{prop:V4-class}--\ref{prop:T2-class}] \label{thm:Frobk-class}
Take $n \geq 2$, and $\omega_n \in \Bbbk$ an $n$-th root of unity. The extended structures for the Frobenius algebras below are classified, recapped as~follows.
\begin{enumerate}[\upshape (a)]
\item  $\Bbbk:$ all extensions are $\phi$-trivial.
\smallskip
\item $\mC$ over $\mR$: all extensions are $\phi$-trivial or $\theta$-trivial.
\smallskip
\item $\Bbbk[x]/(x^n)$: all extensions are $\phi$ trivial when $n$ is odd, and  is not extendable when $n$ is even.

\smallskip

\item $\Bbbk C_2$: all extensions are $\phi$-trivial or $\theta$-trivial.

\smallskip

\item $\Bbbk C_3$: all extensions are $\phi$-trivial or $\phi$ maps a generator $g$ of $C_3$ to $\omega_3g^2$.
\smallskip
\item $\Bbbk C_4$: all extensions are $\phi$-trivial, or $\theta$-trivial, or $\phi$ maps a generator $g$ of $C_4$ to $\omega_4g^3$.
\smallskip
\item $\Bbbk (C_2 \times C_2)$: here, $\phi$ maps $g$ to $\omega_2 g'$, where $g,g'$ are generators of $C_2 \times C_2$.
\smallskip
\item $T_2(-1):=\Bbbk\langle g,x \rangle/(g^2-1, x^2, gx+xg):$ all extensions are $\phi$-trivial. \qed
\end{enumerate}
\end{theorem}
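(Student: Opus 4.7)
The plan is to handle the items in Theorem~\ref{thm:Frobk-class} case by case, and in each case carry out three steps. First, fix a Frobenius structure $(\Delta,\varep)$ on $A$; since $\Delta$ is determined by $\varep$ via $\Delta(1_A)=\sum_i e^i\otimes e_i$ with $\{e_i\},\{e^i\}$ bases dual under $\langle a,b\rangle:=\varep(ab)$, this reduces to choosing a nondegenerate symmetric associative form, which for each algebra in the list is either unique up to rescaling or easily parametrized. Second, classify involutive Frobenius algebra automorphisms $\phi$, i.e., algebra automorphisms with $\phi^2=\id_A$ that are also coalgebra morphisms; in particular $\varep\circ\phi=\varep$. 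Third, for each such $\phi$, solve the remaining system: condition (ii) of Definition~\ref{def:extFrob-k}, which says $\theta A\subseteq A^\phi$ (the $\phi$-fixed subspace), together with condition (iii), $\theta^2=m(\phi\otimes\id_A)\Delta(1_A)$.

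For the low-dimensional and semisimple cases (a), (b), the classification is immediate: $\phi=\id_A$ is forced in (a), and in (b) complex conjugation is the unique nontrivial $\mR$-algebra involution, with condition (ii) then forcing $\theta=0$. For $\Bbbk[x]/(x^n)$ in (c), any algebra automorphism has the form $\phi(x)=c_1x+c_2x^2+\cdots$ with $c_1\neq 0$, and involutivity together with preservation of the canonical counit $\varep(x^i)=\delta_{i,n-1}$ forces $c_1=\pm 1$ and constrains the higher $c_j$; for $n$ even the $c_1=-1$ branch fails counit preservation, and direct computation shows $\theta^2$ must equal a nonzero multiple of $x^{n-1}$, which fails to be a square in $\Bbbk[x]/(x^n)$ exactly when $n-1$ is odd. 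The combination of (ii) with this squaring constraint then rules out the $c_1=-1$ branch for $n$ odd as well, giving the stated dichotomy.

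For the group algebras $\Bbbk C_n$ in (d)--(f), take the standard Frobenius form $\varep(g^i)=\delta_{i,0}$, so that $\Delta(1_A)=\sum_i g^{-i}\otimes g^i$. Any Frobenius algebra involution has the form $\phi(g)=\zeta g^k$ with $\zeta^n=1$, $k^2\equiv 1\pmod n$, and $\zeta^{1+k}=1$, yielding a short explicit list of $(k,\zeta)$ pairs for $n=2,3,4$. One computes $m(\phi\otimes\id)\Delta(1_A)=\sum_i\zeta^{-i}g^{(1-k)i}$, which collapses to a scalar multiple either of $1_A$ (when $k=1$) or of a nontrivial idempotent of $\Bbbk C_n$ (when $k\neq 1$), and conditions (ii)--(iii) then reduce to a small finite problem whose solutions match the listed branches. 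The case of $\Bbbk(C_2\times C_2)$ in (g) proceeds similarly, with the larger automorphism group $\op{GL}_2(\mF_2)$ acting on generators; the involution and Frobenius-compatibility conditions together with (iii) pin down $\phi$ to the stated form $g\mapsto \omega_2 g'$. For the noncommutative algebra $T_2(-1)$ in (h), I would first enumerate algebra automorphisms respecting the defining relations $g^2=1$, $x^2=0$, $gx+xg=0$, narrow down using involutivity and Frobenius compatibility, and then solve (ii)--(iii); the relations collapse things to $\phi=\id_A$.

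The main obstacle in all cases is the interaction of conditions (ii) and (iii): condition (ii) constrains $\theta$ to the small subspace $\{\theta\in A : \theta A\subseteq A^\phi\}$, while (iii) demands that $\theta^2$ equal a specific prescribed element of $A$ that need not lie in the square of this subspace. The parity obstruction in (c) and the square-root existence issues underlying (d)--(g) are both manifestations of this tension, and charting when the constraints admit simultaneous solutions -- then organizing the resulting families into the $\phi$-trivial, $\theta$-trivial, and genuinely nontrivial branches recorded in the theorem -- is the bulk of the work.
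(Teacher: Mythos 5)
Your plan is essentially the paper's proof: the theorem is proved there as a case-by-case classification (Propositions~\ref{prop:k-class}--\ref{prop:xn-class}, \ref{prop:C2-class}--\ref{prop:C4-class}, \ref{prop:V4-class}--\ref{prop:T2-class}), each following your three steps of fixing the Frobenius structure via $\Delta(1_A)$, constraining $\phi$ through multiplicativity, (co)unitality and $\phi^2=\id$, and then solving conditions (ii)--(iii) for $\theta$. Your minor variations (using counit preservation rather than comultiplicativity to kill $\phi(x)=-x$ for $n$ even in (c), and using condition (ii) rather than (iii) to force $\theta=0$ for complex conjugation in (b)) are equivalent and unobjectionable.

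There is one concrete gap. In step 2 for the group algebras you assert that \emph{every} Frobenius algebra involution of $\Bbbk C_n$ has the monomial form $\phi(g)=\zeta g^k$, and you build your ``short explicit list of $(k,\zeta)$ pairs'' on that. For $n=4$ this is not established by the involution and Frobenius-morphism conditions alone: writing $\phi(g)=\phi_1g+\phi_2g^2+\phi_3g^3$, those conditions still admit a branch with $\phi_2\neq 0$ and $\phi_1^2+\phi_3^2=0$, and the paper has to invoke condition (iii) together with $\phi^2(g)=g$ to eliminate it (see the proof of Proposition~\ref{prop:C4-class}). As written, your argument would silently skip these non-monomial candidates, so the classification for $\Bbbk C_4$ (and in principle any $n$ where such candidates survive) would be incomplete unless you either prove they cannot be Frobenius involutions or carry them into the step-3 analysis and rule them out there. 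A similar caution applies to (g): the Frobenius automorphisms of $\Bbbk(C_2\times C_2)$ are the \emph{signed} permutations $g_i\mapsto\pm g_j$, which is larger than the $\op{GL}_2(\mF_2)$-action on generators you cite, and the sign data is exactly what distinguishes the branches (b)--(d) of Proposition~\ref{prop:V4-class}.
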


Next, we move to the monoidal setting. See Section~\ref{sec:monbackground} for background material on monoidal categories $\cC:=(\cC, \otimes, \one)$ and on algebraic structures within $\cC$, especially (extended) Frobenius algebras in  $\cC$. This specializes to the setting above by working in $(\mathsf{Vec}, \otimes_\Bbbk, \Bbbk)$. Let $\mathsf{ExtFrobAlg}(\cC)$ denote the category of extended Frobenius algebras in $\cC$ [Definition~\ref{def:extFrobC}]. We first establish monoidal structures on $\mathsf{ExtFrobAlg}(\cC)$. Namely, if $\cC$ is also symmetric, then $\mathsf{ExtFrobAlg}(\cC)$ is monoidal with $\otimes = \otimes^\cC$ and $\one = \one^\cC$ [Proposition~\ref{prop:Ext-mon}]. Moreover, if $\cC$ is additive monoidal, then $\mathsf{ExtFrobAlg}(\cC)$ is monoidal with $\otimes$ being the biproduct of~$\cC$ and $\one$ being the zero object  of $\cC$ [Proposition~\ref{prop:eFMbiprod}].

\medskip

Now we focus on separability in a monoidal category $\cC$. A Frobenius algebra in $\cC$ is {\it separable} if its comultiplication map is a right inverse of its multiplication map [Definition~\ref{def:sep}]. Separability (or {\it specialness}) is a widely used condition in quantum theory (see, e.g., \cite{Muger, RFFS, HeunenVicary}). In particular, it is used to construct {\it state sum 2-TQFTs} \cite{NovakRunkel}. This brings us to the result below.

\begin{proposition}[Proposition~\ref{prop:sepExt}] \label{prop:sep-ext-intro}
A separable Frobenius algebra in a monoidal category is always extendable. \qed
\end{proposition}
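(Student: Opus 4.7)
The plan is to exhibit an explicit extended structure on any separable Frobenius algebra $(A, m, u, \Delta, \varep)$ in a monoidal category $\cC$, by choosing the simplest possible candidates: namely $\phi := \id_A$ and $\theta := u : \one \to A$ (the unit morphism of the algebra, which plays the role of the element $1_A$ in the monoidal setting).

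First, I would verify that $\phi = \id_A$ is an involution of Frobenius algebras: it is trivially an algebra and coalgebra morphism, and $\phi \circ \phi = \id_A$. Condition (ii) in the monoidal analogue of Definition~\ref{def:extFrob-k}(b) takes the form $\phi \circ m \circ (\theta \otimes \id_A) = m \circ (\theta \otimes \id_A)$, which is automatic since $\phi = \id_A$.

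The only substantive step is verifying condition (iii), which in the monoidal setting reads
\[
m \circ (\phi \otimes \id_A) \circ \Delta \circ u \;=\; m \circ (\theta \otimes \theta).
\]
With $\phi = \id_A$, the left-hand side equals $m \circ \Delta \circ u$, which equals $u$ by separability (since $m \circ \Delta = \id_A$). The right-hand side is $m \circ (u \otimes u)$, which equals $u$ (up to the unitor of $\cC$) by the unit axiom of the algebra $(A,m,u)$. Hence both sides agree, and $(A, \id_A, u)$ is an extended Frobenius algebra.

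There is no real obstacle here; the content of the proposition is precisely that separability makes the obstruction $m \circ \Delta \circ u$ equal to $u$, which is always a square in $A$ (trivially, via the unit). The only care required is formulating the conditions (ii) and (iii) in the morphism-level language of a general monoidal category $\cC$, as opposed to the element-based formulation over $\Bbbk$, but this is routine given the background of Section~\ref{sec:monbackground}.
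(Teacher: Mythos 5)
Your proposal is correct and matches the paper's proof of Proposition~\ref{prop:sepExt} exactly: both take $\phi=\id_A$ and $\theta=u$, note that conditions (i) and (ii) are immediate, and verify (iii) via $m\Delta u = u = m(u\otimes u)$ using separability and unitality. No issues.
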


Next, we turn our attention to Hopf algebras, which also play a role in quantum theory and TQFTs (see, e.g., \cite{KerlerLyubashenko, BBG, CCC}). It is well-known that finite-dimensional Hopf algebras over $\Bbbk$ (or more generally, Hopf algebras over $\Bbbk$ with a certain integral)  admit a Frobenius structure. A lesser known result is that in a {\it symmetric} monoidal category $\cC$,  {\it integral Hopf algebras} in $\cC$ [Definition~\ref{def:HopfAlg}] are Frobenius [Proposition~\ref{prop:hopftofrob}]. 
A graphical proof of this result is in Appendix~\ref{sec:HF}, which may be of independent interest to the reader. Building on this, we introduce {\it extended Hopf algebras} in symmetric monoidal categories [Definition~\ref{def:exthopf}], and obtain the result~below.

\begin{proposition} [Proposition~\ref{prop:eHopf}] \label{prop:ext-Hopf-integral} 
If an integral Hopf algebra in a symmetric monoidal category is extendable, then so is its corresponding Frobenius structure (via Proposition~\ref{prop:hopftofrob}). 
\qed
\end{proposition}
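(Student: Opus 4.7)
The plan is to transfer the extended Hopf data $(\phi, \theta)$ directly onto the Frobenius structure produced by Proposition~\ref{prop:hopftofrob}. Since that construction uses only the Hopf operations $(m, u, \Delta, \varep, S)$ together with the integral $\Lambda$, and the axioms of an extended Hopf algebra [Definition~\ref{def:exthopf}] express compatibility of $(\phi, \theta)$ with exactly these pieces of data, the verification should be a mostly formal diagram chase in the symmetric monoidal category $\cC$.

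First, I would verify that $\phi$ is an involution of the induced Frobenius algebra. The Frobenius multiplication and unit are unchanged from the Hopf algebra, so $\phi$ remains an involutive algebra morphism by hypothesis. For the Frobenius comultiplication $\Delta^F$ and counit $\varep^F$, I would appeal to the graphical expressions for them in terms of $\Delta$, $S$, and $\Lambda$ coming from Proposition~\ref{prop:hopftofrob} (see Appendix~\ref{sec:HF}). Since $\phi$ commutes with $\Delta$, $\varep$, $S$ and preserves $\Lambda$ by the extended Hopf hypothesis, inserting $\phi$ through each node of these string diagrams produces the identities $\Delta^F \phi = (\phi \otimes \phi)\Delta^F$ and $\varep^F \phi = \varep^F$, so $\phi$ is a Frobenius morphism.

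Second, axiom (ii) of Definition~\ref{def:extFrob-k}(b), namely $\phi \circ m \circ (\theta \otimes \id) = m \circ (\theta \otimes \id)$, involves only $m$ and the pair $(\phi, \theta)$, each of which is shared between the Hopf and Frobenius structures. It therefore transfers verbatim from the corresponding axiom for extended Hopf algebras.

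The main obstacle is axiom (iii), namely $m(\phi \otimes \id)\Delta^F(u) = m(\theta \otimes \theta)$. I would unfold $\Delta^F(u)$ graphically as a diagram built from $\Delta(\Lambda)$ and the antipode $S$, and then push $\phi$ through the resulting string diagram using its Hopf-level compatibilities together with the symmetric braiding of $\cC$. The task is to reconcile the resulting expression with the right-hand side prescribed by the corresponding axiom for extended Hopf algebras; this match is what makes the passage clean, and any discrepancy should be absorbed by integral identities (such as $S(\Lambda) = \Lambda$ in the unimodular case or the uniqueness of the integral up to scalar). Carrying out this final reduction in the graphical calculus of Appendix~\ref{sec:HF} is where the real content of the proof lies.
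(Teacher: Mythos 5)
Your overall strategy is the paper's: check the three axioms of Definition~\ref{def:extFrobC}(b), observe that (i) and (ii) transfer because the multiplication and unit are shared, and reduce (iii) to the corresponding extended Hopf axiom. For (i), where you propose to push $\phi$ through the string diagrams for $\Delta$ and $\varep$ by hand, the paper simply invokes the functoriality of $\Psi$ from Proposition~\ref{prop:hopftofrob}: a morphism of integral Hopf algebras is already known to be a morphism of the induced Frobenius algebras, so no new diagram chase is needed. Your version works but redoes work already done in Appendix~\ref{sec:HF}.

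The weak point is axiom (iii), which you correctly flag as the real content but then leave undone, and your hedge that ``any discrepancy should be absorbed by integral identities (such as $S(\Lambda)=\Lambda$ in the unimodular case or the uniqueness of the integral up to scalar)'' signals that you have not actually run the computation. There is no discrepancy and no unimodularity or uniqueness is needed: unfolding $\Delta u = (m\otimes S)(\id_H\otimes \HDelta\hs\Lambda)\hs u = (m\otimes S)(u\otimes \HDelta\hs\Lambda)$ and applying unitality $m(u\otimes \id_H)=\id_H$ gives
\[
m(\phi\otimes\id_H)\,\Delta\, u \;=\; m(\phi\otimes S)\,\HDelta\,\Lambda,
\]
and the right-hand side equals $m(\theta\otimes\theta)$ \emph{verbatim} by Definition~\ref{def:exthopf}(iii) --- that axiom was formulated precisely so that this step is immediate. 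If you had needed $S(\Lambda)=\Lambda$ or uniqueness of integrals, the proposition as stated would be in trouble, since neither is assumed. So: right route, but the decisive two-line calculation must actually be written down, and the appeal to extra integral identities should be deleted.
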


Finally, we examine functors that preserve extended Frobenius algebras in monoidal categories. To start, take monoidal categories $\cC$ and  $\cC'$, and note that a {\it Frobenius monoidal functor} $\cC \to \cC'$ [Definition~\ref{def:monfunc}] sends Frobenius algebras in $\cC$ to  those in $\cC'$. It is also known that the separability condition is preserved when such a functor is separable [Proposition~\ref{prop:pres}], and that such functors can be used to form higher categorical structures [Remark~\ref{rem:2cats}]. See also \cite{DayPastro} and  \cite[Chapter~6]{Bohm} for more details. Our last set of results extends the theory of  Frobenius monoidal functors by introducing the notion of an {\it extended Frobenius monoidal functor} [Definition~\ref{def:eFMfunc}]. We  establish that this construction satisfies many desirable conditions as discussed below.

\begin{theorem}[Propositions~\ref{prop:sepisEFM},~\ref{prop:eFMpres}, Theorem~\ref{thm:eFMcomp}, Remark~\ref{rem:2-cat}] \label{thm:eFrobMF-1} The following statements hold.
\begin{enumerate}[\upshape (a)]
\item A separable Frobenius monoidal functor is extended Frobenius monoidal.

\smallskip

\item An extended Frobenius monoidal functor preserves extended Frobenius algebras.

\smallskip

\item The composition of two extended Frobenius monoidal functors is extended Frobenius monoidal.

\smallskip

\item The collections of monoidal categories and extended Frobenius monoidal functors between them forms a (2-)category (with 2-cells being certain natural transformations). \qed
\end{enumerate}
\end{theorem}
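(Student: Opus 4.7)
The plan is to handle the four parts in order, starting from a working notion of an extended Frobenius monoidal functor $(F,\phi_F,\theta_F)\colon \cC \to \cC'$ as in Definition~\ref{def:eFMfunc}: here $F$ is Frobenius monoidal, $\phi_F\colon F \Rightarrow F$ is an involutive monoidal natural transformation compatible with the laxator and colaxator of $F$, and $\theta_F\colon \one^{\cC'} \to F(\one^\cC)$ is a morphism playing the role of a functorial $\theta$-element. These data mirror conditions (i)--(iii) of Definition~\ref{def:extFrob-k}(b) at the level of functors.

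\medskip

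For part (a), given a separable Frobenius monoidal functor $F$, I would take $\phi_F := \id_F$ and build $\theta_F$ from the Frobenius-monoidal data at $\one^\cC$, in the same spirit as Proposition~\ref{prop:sep-ext-intro}. Separability yields $m \circ \Delta = \id$ on the transported Frobenius algebra structure, which trivializes the involution axiom and collapses the squaring condition to an identity readable off the (co)laxator at $\one$.

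\medskip

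For part (b), given $(A,\phi_A,\theta_A) \in \ExtFrobAlg(\cC)$, the Frobenius monoidal structure of $F$ transports $A$ to a Frobenius algebra $F(A)$ in $\cC'$ in the standard way. I would then define
\[
\phi_{F(A)} \;:=\; F(\phi_A) \circ \phi_F(A), \qquad \theta_{F(A)} \;:=\; F(\theta_A) \circ \theta_F,
\]
and verify the three axioms of Definition~\ref{def:extFrob-k}(b): axiom~(i) combines involutivity and monoidal naturality of $\phi_F$ with involutivity of $\phi_A$; axiom~(ii) reduces to the analogous compatibility between $\theta_F$ and $\phi_F$ inside $F$; and axiom~(iii) is the key computation, combining the functorial squaring axiom for $(F,\phi_F,\theta_F)$ with the algebraic squaring axiom for $(A,\phi_A,\theta_A)$.

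\medskip

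For part (c), given a second extended Frobenius monoidal functor $(G,\phi_G,\theta_G)\colon \cC' \to \cC''$, equip $G \circ F$ with components $\phi_{G\circ F}(X) := \phi_G(F(X)) \circ G(\phi_F(X))$ and unit structure $\theta_{G \circ F} := G(\theta_F) \circ \theta_G$, then verify each axiom by first pushing the corresponding axiom for $F$ through $G$ and then invoking the axiom for $G$. Part (d) is then largely bookkeeping: the identity functor carries the trivial extended structure, the composition of (c) is manifestly associative and unital, and morphisms are monoidal natural transformations commuting strictly with both $\phi$ and $\theta$, so the 2-categorical axioms descend from the ambient 2-category of Frobenius monoidal functors. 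The principal obstacle throughout is axiom~(iii) in part~(b), where the $F$-images of the (co)multiplication, $\phi_A$, and $\theta_A$ must all pass cleanly past the laxator, colaxator, $\phi_F$, and $\theta_F$ of $F$; once Definition~\ref{def:eFMfunc} is calibrated so that this passage goes through, the remaining parts follow quickly.
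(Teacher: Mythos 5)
Your proposal follows essentially the same route as the paper: part (a) via $\widehat{F}=\Id_F$ and $\widecheck{F}=F^{(0)}$, part (b) via $\phi_{F(A)}=F(\phi_A)\circ\widehat{F}_A$ and $\theta_{F(A)}=F(\theta_A)\circ\widecheck{F}$, part (c) via the analogous composites (your order $\widehat{G}_{F(X)}\circ G(\widehat{F}_X)$ agrees with the paper's by naturality), and part (d) as bookkeeping over the Frobenius-monoidal 2-category. You also correctly locate the main burden in axiom (iii) of part (b), which is exactly where the paper's longest commutative-diagram argument (and the extra compatibility clauses (b) and (c)(ii)--(iii) of Definition~\ref{def:eFMfunc}) is spent.
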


Parts~(b,c) require intricate arguments (deferred to an appendix only appearing in the ArXiv preprint of this work). Various separable Frobenius monoidal functors appear in the literature; see, e.g.,  \cite{Szlachanyi, McCurdyStreet, Mori, BulacuTorrecillas, HLR, FHL, Yadav}. So, parts~(a,b) above imply that each of these constructions produce extended Frobenius algebras in monoidal categories. There are also extended Frobenius monoidal functors that are not necessarily separable [Examples~\ref{Ex:Ext-mon},~\ref{Ex:eFMbiprod}].

\bigskip

\noindent {\bf Organization of the article.} In Section~\ref{sec:pre-fld}, we study extended Frobenius algebras over a field, proving Theorem~\ref{thm:Frobk-class}. In Section~\ref{sec:pre-mon}, we focus on extended Frobenius algebras in a monoidal category $\cC$, and introduce graphical calculus diagrams for such structures. We also establish monoidal structures on the category of extended Frobenius algebras in $\cC$ in Section~\ref{sec:pre-mon}. In Section~\ref{sec:SepHopf}, we make connections to separable algebras in monoidal categories, and verify Proposition~\ref{prop:sep-ext-intro}. We also strengthen ties to Hopf algebras in monoidal categories in Section~\ref{sec:SepHopf}, obtaining Proposition~\ref{prop:ext-Hopf-integral}. The result that integral Hopf algebras are Frobenius is verified in Appendix~\ref{sec:HF} via graphical calculus arguments. In Section~\ref{sec:eFrobMF}, we introduce extended Frobenius monoidal functors, and establish Theorem~\ref{thm:eFrobMF-1}. Portions of the proof of Theorem~\ref{thm:eFrobMF-1} involve lengthy commutative diagram calculations; these are included in Appendix~B,  appearing only in the ArXiv preprint version of this work.


\section{Extended Frobenius algebras over a field}
\label{sec:pre-fld}

In this section, we study  extended Frobenius algebras over a field $\Bbbk$ as  introduced in Definition~\ref{def:extFrob-k}. We provide many examples of, and preliminary results for, such structures in Section~\ref{sec:Frobk-exam}. Then, in Section~\ref{sec:classfn}, we establish Theorem~\ref{thm:Frobk-class} on the classification of extended structures for several Frobenius algebras over $\Bbbk$.

 \smallskip

\begin{center}
{\it The roman numerals \textnormal{(i)}, \textnormal{(ii)}, \textnormal{(iii)} here will refer to the conditions in Definition~\ref{def:extFrob-k}(b).}
\end{center}

\subsection{Preliminary results and examples} 
\label{sec:Frobk-exam}

We begin with some useful preliminary results on extended Frobenius algebras $A$ over $\Bbbk$. First, the Frobenius law from Definition~\ref{def:extFrob-k}(a) implies that 
\begin{equation} \label{eq:DeltaA}
\Delta(a) = a (1_A)^1 \otimes (1_A)^2, \quad  \text{for $\Delta(1_A) := (1_A)^1 \otimes (1_A)^2$},
\end{equation}
for $a \in A$. So, $\Delta(1_A)$ determines the Frobenius structure of $A$.

\begin{lemma}\label{lemma:domain-extstruc}
If $A$ is a Frobenius algebra that is a domain, then an extended structure of $A$ (if it exists) must be either $\phi$-trivial or $\theta$-trivial.
\end{lemma}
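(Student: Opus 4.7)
The plan is to extract a single identity of the form $\theta \cdot (\phi(a) - a) = 0$ for every $a \in A$, and then invoke the domain hypothesis to force a dichotomy.

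First I would unpack condition~(ii) using the fact that $\phi$, as part of the extended structure from Definition~\ref{def:extFrob-k}(b)(i), is an involution of Frobenius algebras and in particular a $\Bbbk$-algebra map. So $\phi(\theta a) = \phi(\theta)\phi(a)$ for all $a \in A$. Taking $a = 1_A$ in (ii) yields $\phi(\theta) = \theta$, and substituting back turns (ii) into the identity
\[
\theta\, \phi(a) \;=\; \theta\, a \qquad \text{for all } a \in A,
\]
equivalently $\theta\cdot(\phi(a) - a) = 0$ in $A$.

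Now I would use that $A$ is a domain to split into cases. If $\theta = 0$, we are $\theta$-trivial and done. Otherwise $\theta \neq 0$, and for each $a \in A$ the identity $\theta \cdot (\phi(a) - a) = 0$ in the domain $A$ forces $\phi(a) - a = 0$; since this holds for all $a$, we conclude $\phi = \id_A$, i.e.\ the extension is $\phi$-trivial.

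There is essentially no obstacle: the argument never invokes condition~(iii) on $\Delta(1_A)$ and $\theta^2$, nor the coalgebra side of the Frobenius structure, only (i) (to know $\phi$ is multiplicative) and (ii). The only subtlety worth flagging in the write-up is being explicit that ``domain'' means $A$ has no nonzero zero-divisors as a $\Bbbk$-algebra (so in particular $A \neq 0$), so that the implication ``$\theta \neq 0$ and $\theta x = 0 \Rightarrow x = 0$'' is valid for each $x = \phi(a) - a$.
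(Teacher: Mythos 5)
Your argument is correct and is essentially identical to the paper's proof: both derive $\theta\,\phi(a)=\phi(\theta)\phi(a)=\phi(\theta a)=\theta a$ from the multiplicativity of $\phi$ and condition~(ii) (with $a=1_A$ giving $\phi(\theta)=\theta$), then conclude $\theta(\phi(a)-a)=0$ and apply the domain hypothesis. Your write-up is just slightly more explicit about the case split and about which axioms are invoked where.
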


\begin{proof}
Suppose that an extended structure $(A, \phi, \theta)$ exists. Then, $\theta \phi(a)=\phi(\theta) \phi(a)=\phi(\theta a)=\theta a$,
for all $a \in A$ by condition~(i).  Hence, $\theta(\phi(a)-a)=0$ for all $a\in A$, and the result follows from $A$ being a domain. 
\end{proof}

\begin{lemma}\label{lemma: no morph between struct} 
Let $A$ be a Frobenius algebra over $\Bbbk$, and let $(A, \phi, \theta)$ and $(A, \phi', \theta')$ be two extended structures of $A$. If $\theta \in \Bbbk 1_A$  and $\theta\ne \theta'$, then an extended Frobenius algebra morphism from $(A, \phi, \theta)$ to $(A, \phi', \theta')$ does not exist. 
\end{lemma}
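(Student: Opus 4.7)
The plan is to derive a contradiction by tracking what a hypothetical morphism of extended Frobenius algebras must do to the element $\theta$. Suppose for contradiction that $f:(A,\phi,\theta)\to(A,\phi',\theta')$ is such a morphism. By Definition~\ref{def:extFrob-k}(b), $f$ is in particular a Frobenius algebra morphism, hence a unital $\Bbbk$-algebra map, so $f(1_A)=1_A$. The definition of an extended Frobenius algebra morphism also requires $f(\theta_A)=\theta_B$, which in our situation reads $f(\theta)=\theta'$.

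Now I would exploit the hypothesis $\theta\in\Bbbk 1_A$. Write $\theta=c\hs 1_A$ for some $c\in\Bbbk$. Then by $\Bbbk$-linearity of $f$ together with unitality,
\[
\theta' \;=\; f(\theta) \;=\; f(c\hs 1_A) \;=\; c\hs f(1_A) \;=\; c\hs 1_A \;=\; \theta,
\]
which contradicts the assumption $\theta\ne\theta'$. Therefore no such morphism $f$ can exist.

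There is no real obstacle here: the argument uses only that morphisms of extended Frobenius algebras are unital algebra maps that send $\theta$ to $\theta'$, combined with $\Bbbk$-linearity applied to a scalar multiple of the unit. The only subtlety worth flagging in the writeup is to point out explicitly that the Frobenius algebra morphism condition in Definition~\ref{def:extFrob-k}(a) includes being an algebra morphism, hence preserves the unit $1_A$, which is what makes the whole computation go through.
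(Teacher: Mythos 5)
Your proposal is correct and matches the paper's own argument: both exploit that $f$ is a unital $\Bbbk$-linear map preserving $\theta$, so $\theta' = f(c\hs 1_A) = c\hs 1_A = \theta$. The only cosmetic difference is that the paper phrases this as a contrapositive while you argue by contradiction.
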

	
\begin{proof}
Suppose by way of contrapositive that $\theta = \lambda 1_A$ for some $\lambda \in \Bbbk$ and there is a morphism $f:(A, \phi, \theta)\to (A, \phi', \theta')$ of extended Frobenius algebras. Since $f$ is unital and preserves the extended structure,  $\theta  = \lambda 1_A = \lambda f(1_A) =   f(\lambda 1_A)= f(\theta)=\theta'$, as desired.
\end{proof}

We will see in Proposition~\ref{prop:C2-class} that Lemma~\ref{lemma: no morph between struct} fails when $\theta \not \in \Bbbk 1_A$. We now include some examples of extended structures for well-known Frobenius algebras.

\begin{example}\label{ex: KG}
Let $G$ be a finite group. Its group algebra $\Bbbk G$ has a Frobenius algebra structure determined by 
$\Delta(e_G)=\sum_{h\in G} h\otimes h^{-1}$. Then, 
\[
\phi=\id_{\Bbbk G}, \qquad \theta=\pm\sqrt{|G|} \cdot e_G
\] 
yields extended structures of $\Bbbk G$. Now, conditions~(i) and~(ii) are trivially satisfied. Condition~(iii) holds as
$ m(\phi\otimes \id_{\Bbbk G})\Delta (e_{G})=m\left(\sum_{h\in G} h\otimes h^{-1}\right)= |G| \cdot e_G =\theta^2$	.
\end{example}

\begin{example}\label{example: ext on Cn}
Let $C_n$ denote the cyclic group of order $n \geq 2$, and let $g$ denote a generator of $C_n$.  Consider the Frobenius structure on  $\Bbbk C_n$  as defined in Example \ref{ex: KG}.
Then  
\[
\textstyle	\phi(g)=\omega_ng^{-1}, \qquad \textstyle \theta=\pm\frac{1}{\sqrt n}\sum_{j=0}^{n-1}\omega_n^j g^{-2j}
\]
is an extended structure of $\Bbbk C_n$ for any $n$-th root of unity $\omega_n\in \Bbbk.$  It is a quick check that condition~(i) holds. Towards condition~(ii), let $a:=\sum_{i=0}^{n-1} a_ig^i$ be an element in $\Bbbk C_n$. Then, 
\begin{align*}
\textstyle	\phi(a\theta)
\; = \;  \pm\frac{1}{\sqrt{n}}\sum_{i,j=0}^{n-1}a_i\omega_n^{j} \phi(g)^{i-2j}
\; = \; \pm\frac{1}{\sqrt{n}}\sum_{i,j=0}^{n-1}a_i\omega_n^{i-j} g^{-i+2j}
\; = \;  \pm\frac{1}{\sqrt{n}}\sum_{i,k=0}^{n-1}a_i\omega_n^{k} g^{i-2k}
\; = \;  a\theta.
\end{align*}
For  condition~(iii), we compute:
\begin{align*}
\textstyle	m(\phi\otimes \id_{\Bbbk G})\Delta (e_{C_n})
&= \textstyle m(\phi\otimes \id_{\Bbbk C_n})\left(\sum_{j=0}^{n-1} g^{j}\otimes g^{-j} \right)
\; = \; \textstyle \sum_{j=0}^{n-1} \omega_n^j g^{-2j}\\
&  =  \textstyle \frac{1}{n}\sum_{i=0}^{n-1}\sum_{k=0}^{n-1}\omega_n^{k} g^{-2k}
\; = \; \textstyle \frac{1}{n}\sum_{i,j=0}^{n-1}\omega_n^{i+j} g^{-2(i+j)}
\; = \;  \textstyle \frac{1}{n}\left(\sum_{j=0}^{n-1}\omega_n^j g^{-2j}\right)^2
 =  \theta^2.
\end{align*}
 \end{example}

\begin{example}  \label{example:Taft} Let $\omega:=\omega_n$ be a primitive $n$-th root of unity, for $n \geq 2$.
Consider the Taft algebra, 
$$T_n(\omega):=\Bbbk\langle g,x \rangle/(g^n-1, x^n, gx - \omega xg),$$ 
with Frobenius structure determined by
$ \Delta(1_{T_n(\omega)})=\sum_{j=0}^{n-1}\left(-\omega^j g^{j+1}\otimes g^{-(j+1)} x+ g^jx\otimes g^{-j}\right)$.
Then, this Frobenius structure on $T_n(\omega)$ can be extended via 
\[
\phi = \id_{T_n(\omega)},\qquad \textstyle \theta \; \in \bigoplus_{j=0,k=1}^{n-1}\Bbbk g^jx^k.
\]
To show this, we compute:
$m(\phi\otimes \id_{T_n(\omega)})\Delta(1)  
= 0 = \theta^2$, so condition~(iii) holds. Conditions~(i) and~(ii) are trivially satisfied.  
\end{example}

\begin{example}
Let $\Mat_{n}(\Bbbk)$ be the algebra of $n\times n$ matrices over $\Bbbk$, with basis $\{E_{i,j}\}_{i,j=1}^n$ of elementary matrices. Consider the Frobenius structure determined by 
$\textstyle \Delta(E_{i,j})=\sum_{\ell =1}^n E_{i,\ell} \otimes E_{\ell,j}$,
for all $1\leq i,j\leq n$.  Then, 
\[
\phi=\id_{\Mat_n(\Bbbk)}, \qquad
\theta=\pm \sqrt{n} \cdot I_n
\]
 give  extended structures of $\Mat_{n}(\Bbbk)$. Indeed,  
$m(\phi\otimes \id_{\Mat_n(\Bbbk)})\Delta(I_n)=\textstyle \sum_{i,\ell=1}^n E_{i,\ell}E_{\ell,i} =  n \cdot I_n =\theta^2$, so  condition~(iii) holds.
Moreover, conditions~(i) and~(ii) are trivially satisfied.   
\end{example}


\subsection{Classification results} 
\label{sec:classfn} 
Now we proceed to establish Theorem~\ref{thm:Frobk-class}, starting with the results for the Frobenius algebras: $\Bbbk$ over $\Bbbk$, $\mathbb{C}$ over $\mathbb{R}$, and the nilpotent algebra $\Bbbk[x]/(x^n)$ over $\Bbbk$.

\begin{proposition} \label{prop:k-class}
The only extended structures of the Frobenius algebra $\Bbbk$ where $\Delta_{\Bbbk}: \Bbbk \overset{\sim}{\to} \Bbbk \otimes \Bbbk$ are $\phi$-trivial, with $\theta=\pm 1_{\Bbbk}$. Moreover, these extended Frobenius algebra structures are non-isomorphic.
\end{proposition}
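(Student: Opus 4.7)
The plan is to first pin down $\phi$ from the algebra side alone, then read off $\theta$ from condition~(iii), and finally close the non-isomorphism claim via Lemma~\ref{lemma: no morph between struct}.

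First, I would observe that any morphism $\phi:\Bbbk\to\Bbbk$ of Frobenius algebras is in particular a unital $\Bbbk$-linear map, so $\phi(c)=c\hs\phi(1_\Bbbk)=c$ for every $c\in\Bbbk$. Hence $\phi=\id_\Bbbk$ is forced, and every extended structure on the Frobenius algebra $\Bbbk$ is automatically $\phi$-trivial. In particular, conditions~(i) and~(ii) of Definition~\ref{def:extFrob-k}(b) hold trivially, with (ii) imposing no constraint on $\theta$.

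Next, I would invoke condition~(iii). Since $\Delta_\Bbbk(1_\Bbbk)=1_\Bbbk\otimes 1_\Bbbk$ and $\phi=\id_\Bbbk$, the left-hand side $m(\phi\otimes\id_\Bbbk)\Delta(1_\Bbbk)$ collapses to $1_\Bbbk$. Writing $\theta=\lambda\cdot 1_\Bbbk$ with $\lambda\in\Bbbk$, the condition becomes $\lambda^2=1_\Bbbk$, and since $\Bbbk$ is a field this forces $\theta=\pm 1_\Bbbk$. This gives exactly the two extended structures $(\Bbbk,\id_\Bbbk,\pm 1_\Bbbk)$ asserted.

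Finally, for the non-isomorphism statement, both candidate structures have $\theta\in\Bbbk\cdot 1_\Bbbk$ with distinct values $\theta=1_\Bbbk$ and $\theta'=-1_\Bbbk$, so Lemma~\ref{lemma: no morph between struct} applies in either direction and rules out any morphism between them, in particular any isomorphism. I do not foresee a genuine obstacle here; the only care needed is to note that the coalgebra compatibility of $\phi$ and condition~(ii) are both automatic once $\phi=\id_\Bbbk$, so that the entire classification reduces to the single scalar equation $\theta^2=1_\Bbbk$.
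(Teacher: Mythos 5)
Your proposal is correct and follows essentially the same route as the paper: force $\phi=\id_\Bbbk$ from the algebra-morphism condition, note that (i) and (ii) are then automatic, read off $\theta=\pm 1_\Bbbk$ from condition~(iii) using $\Delta_\Bbbk(1_\Bbbk)=1_\Bbbk\otimes 1_\Bbbk$, and conclude non-isomorphism via Lemma~\ref{lemma: no morph between struct}. The extra detail you supply (the explicit collapse of the left-hand side of (iii) to $1_\Bbbk$) is a harmless elaboration of the paper's one-line argument.
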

	
\begin{proof}
Suppose $\phi$ and $\theta$ give an extended structure of $\Bbbk$. Since $\phi:\Bbbk \to \Bbbk$ is a morphism of algebras, the only possible choice is $\phi=\id_{\Bbbk}$, which satisfies conditions~(i) and (ii) trivially.  Condition~(iii) implies that $\theta=\pm 1_{\Bbbk}$. Lastly, the structures are non-isomorphic by Lemma \ref{lemma: no morph between struct}. 
\end{proof}

\begin{proposition}  \label{prop:C-class}
Consider the Frobenius algebra $\mathbb C$ over $\mathbb R$ with 
$\Delta(1)=1\otimes 1-i\otimes i$. 
Then, 
 \begin{enumerate}[\upshape (a)]
     \item $\phi=\id_{\mathbb C}$ and $\theta=\pm \sqrt 2$, and

     \smallskip
     
     \item $\phi(z) = \overline{z}$ for all $z\in\mathbb C$, and $\theta=0$,
 \end{enumerate}
 are all of the extended structures of $\mathbb C$, and these extended Frobenius algebras are non-isomorphic.
\end{proposition}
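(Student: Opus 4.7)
The plan is to run a short case analysis driven by the fact that the only $\mathbb{R}$-algebra endomorphisms of $\mathbb{C}$ are the identity and complex conjugation (since $\mathbb{C}$ is a field of degree $2$ over $\mathbb{R}$ and any such map fixes $1_{\mathbb{C}}$ and squares $i$ to $-1$). These then give the only two candidates for $\phi$, after which conditions~(ii) and~(iii) pin down $\theta$.

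First I would verify that both candidates are involutions of Frobenius algebras, so condition~(i) holds. The identity is trivial. For $\phi(z)=\overline{z}$, it is already an $\mathbb{R}$-algebra involution, so one only needs to check that it is a coalgebra map, i.e.\ $\Delta\phi=(\phi\otimes\phi)\Delta$. Using~\eqref{eq:DeltaA} with $\Delta(1)=1\otimes 1-i\otimes i$, it suffices to check the identity on $1$ and $i$; both computations are immediate from $\overline{i}=-i$.

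Next I would solve conditions~(ii) and~(iii) in each case. When $\phi=\id_{\mathbb{C}}$, condition~(ii) is automatic, and condition~(iii) reads
\[
\theta^{2}\;=\;m(\id\otimes\id)\Delta(1)\;=\;1\cdot 1 - i\cdot i\;=\;2,
\]
giving $\theta=\pm\sqrt{2}$, which recovers~(a). When $\phi(z)=\overline{z}$, write $\theta=x+yi$ with $x,y\in\mathbb{R}$. Applying condition~(ii) at $a=1$ forces $\overline{\theta}=\theta$, so $\theta\in\mathbb{R}$; then applying~(ii) at $a=i$ yields $\theta\,i=-\theta\,i$, hence $\theta=0$. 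Finally, condition~(iii) is consistent, since
\[
m(\phi\otimes\id)\Delta(1)\;=\;1\cdot 1-(-i)\cdot i\;=\;1+i^{2}\;=\;0\;=\;\theta^{2},
\]
recovering~(b). Together these show (a) and (b) are the only extended structures.

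For the non-isomorphism claim, I would note that in all three structures the element $\theta$ lies in $\mathbb{R}\cdot 1_{\mathbb{C}}=\Bbbk\,1_{A}$, and the three values $\sqrt{2},-\sqrt{2},0$ are pairwise distinct. Hence Lemma~\ref{lemma: no morph between struct} immediately rules out extended Frobenius algebra morphisms in either direction between any two of them. The only real obstacle is the computation of condition~(ii) in the conjugation case, where one must extract both the reality of $\theta$ and its vanishing from the relation $\phi(\theta a)=\theta a$; everything else is a direct manipulation of $\Delta(1)$.
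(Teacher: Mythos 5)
Your proof is correct, but it organizes the case analysis differently from the paper. The paper first invokes Lemma~\ref{lemma:domain-extstruc} (using that $\mathbb{C}$ is a domain) to conclude that any extended structure is $\phi$-trivial or $\theta$-trivial, and then in the $\theta$-trivial case it \emph{derives} $\phi(i)=-i$ from condition~(iii). You instead classify $\phi$ at the outset from the purely algebraic fact that the only unital $\mathbb{R}$-algebra endomorphisms of $\mathbb{C}$ are $\id_{\mathbb{C}}$ and conjugation (since $\phi(i)^2=-1$), and then in the conjugation case you \emph{derive} $\theta=0$ from condition~(ii). Both routes are complete: yours avoids the domain lemma and is more self-contained for this example, and it makes explicit the coalgebra-morphism check for conjugation that the paper leaves as ``a quick check''; the paper's route is the one that generalizes to other Frobenius domains where the algebra endomorphisms are not so easily enumerated. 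Your verification of conditions~(ii) and~(iii) in each case and your appeal to Lemma~\ref{lemma: no morph between struct} for the non-isomorphism claim (all three values of $\theta$ lie in $\mathbb{R}\cdot 1_{\mathbb{C}}$ and are pairwise distinct) match the paper's conclusion.
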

	\begin{proof}
By Lemma \ref{lemma:domain-extstruc}, an extended structure of $\mathbb C$ should be $\phi$-trivial or $\theta$-trivial. If $\phi=\id_{\mathbb C},$ then 
$\theta^2
= m(\phi\otimes \id_{\mathbb C})\Delta (1)
= m(1\otimes 1- i\otimes i)
=2$,
and so $\theta=\pm \sqrt{2}$. On the other hand, if $\theta=0$, then 
$0= m(\phi\otimes \id_{\mathbb C})\Delta (1)
=1-\phi(i)i$.
Hence, $\phi(i)=-i$ and it follows that $\phi$ must be complex conjugation. Now condition~(iii) holds, and it is a quick check that conditions~(i) and~(ii) are satisfied for these choices. Lastly, it follows from Lemma~\ref{lemma: no morph between struct} that these structures are all non-isomorphic. 
\end{proof}

\begin{proposition}  \label{prop:xn-class}
Consider the algebra $\Bbbk[x]/(x^n)$, for $n \geq 2$, with Frobenius structure determined by
$\Delta(1)=\sum_{i=0}^{n-1} x^{i}\otimes x^{n-i-1}$. Then, the following statements hold.
\begin{enumerate}[\upshape (a)]
\item For $n$ even, $\Bbbk[x]/(x^n)$ is not extendable.

\smallskip

\item For $n$ odd,  all extended structures of $\Bbbk[x]/(x^n)$ are $\phi$-trivial, with $\theta=\pm \sqrt{n} x^{\frac{n-1}{2}}+\sum_{j=\frac{n+1}{2}}^{n-1} \theta_jx^j$ for some $\theta_{\frac{n+1}{2}},\dots, \theta_{n-1}\in \Bbbk.$
\end{enumerate}
\end{proposition}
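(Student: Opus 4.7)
The plan is to classify extended structures $(\phi,\theta)$ on $A:=\Bbbk[x]/(x^n)$ by pinning down the algebra involution $\phi$ first and then solving for $\theta$ via condition~(iii). Since $\phi(x)^n=\phi(x^n)=0$, nilpotency forces $\phi(x)=\sum_{j=1}^{n-1}c_j x^j$, and comparing linear terms in $\phi^2(x)=x$ gives $c_1^2=1$, so $c_1\in\{\pm 1\}$. Using $\phi(x)^i\equiv c_1^i x^i\pmod{x^{i+1}}$ together with $(x^{i+1})x^{n-1-i}=0$ in $A$, a direct computation gives
\[
m(\phi\otimes\id_A)\Delta(1_A)\;=\;\sum_{i=0}^{n-1}\phi(x)^i\hs x^{n-1-i}\;=\;\Bigl(\textstyle\sum_{i=0}^{n-1}c_1^i\Bigr)\hs x^{n-1},
\]
so condition~(iii) becomes $\theta^2=\alpha\hs x^{n-1}$ with $\alpha:=\sum_{i=0}^{n-1}c_1^i$.

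For part~(a), I would apply counit preservation $\varep\phi=\varep$ at $x^{n-1}$: since $\phi(x)^{n-1}=c_1^{n-1}x^{n-1}$ in $A$, this forces $c_1^{n-1}=1$, excluding $c_1=-1$ when $n$ is even. Hence $\alpha=n\neq 0$ and $\theta\neq 0$. Letting $k:=\min\{j:\theta_j\neq 0\}$, the lowest-degree monomial of $\theta^2$ is $\theta_k^2 x^{2k}$, so $\theta^2=n\hs x^{n-1}$ demands $2k=n-1$, contradicting the parity of $n-1$.

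For part~(b), I would first rule out $c_1=-1$. Either sign yields $\alpha\neq 0$ when $n$ is odd, hence $k=(n-1)/2$, and therefore $\theta$ is associate to $x^{(n-1)/2}$ in $A$ with $\op{Ann}(\theta)=(x^{(n+1)/2})$. Condition~(ii) combined with $\phi$ being an algebra morphism is equivalent to $\phi(\theta)=\theta$ together with $\phi(a)-a\in \op{Ann}(\theta)$ for all $a\in A$; at $a=x$ this yields $\phi(x)\equiv x\pmod{x^{(n+1)/2}}$, which forces $c_1=1$. Consequently $\alpha=n$, $\theta_{(n-1)/2}=\pm\sqrt{n}$, and $\theta_j=0$ for $0\leq j<(n-1)/2$. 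To finish I would upgrade $\phi(x)\equiv x\pmod{x^{(n+1)/2}}$ to $\phi=\id_A$ using the coalgebra-morphism identity $(\phi\otimes\phi)\Delta(1_A)=\Delta(1_A)$. Setting $v:=\phi(x)-x\in(x^{(n+1)/2})$, note $v^2\in(x^{n+1})=0$ in $A$, so $\phi(x)^i=x^i+i\hs x^{i-1}v$. Expanding $(\phi\otimes\phi)\Delta(1_A)-\Delta(1_A)$ in the basis $\{x^a\otimes x^b\}$ of $A\otimes A$, the coefficient of $x^i\otimes x^{n-1}$ for $i\in[(n-1)/2,\hs n-2]$ is $(n-1-i)\hs c_{i+1}$; setting it to zero in characteristic zero gives $c_k=0$ for all $k\geq(n+1)/2$, so $v=0$ and $\phi=\id_A$. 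A short direct verification then confirms that $\phi=\id_A$ together with $\theta=\pm\sqrt{n}\hs x^{(n-1)/2}+\sum_{j=(n+1)/2}^{n-1}\theta_j x^j$ satisfies all three axioms, since the free parameters $\theta_j$ contribute nothing to $\theta^2$ as cross-terms land in $(x^n)=0$.

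The main obstacle is this final bootstrap from $\phi(x)\equiv x\pmod{x^{(n+1)/2}}$ to $\phi=\id_A$: conditions~(ii) and~(iii) alone are too weak to exclude higher-order perturbations $v\in(x^{(n+1)/2})$, so one must invoke the coalgebra-morphism portion of condition~(i) and carefully expand the resulting tensor-product identity to isolate the equations that force each $c_k$ to vanish.
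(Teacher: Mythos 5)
Your proof is correct, but it takes a genuinely different route from the paper's. The paper opens by asserting, as a ``routine calculation'' from multiplicativity and $\phi^2=\id$, that $\phi(x)=\pm x$ on the nose, and then runs a two-case analysis: for $\phi=\id$ it determines $\theta$ by induction on the coefficients of $\theta^2=nx^{n-1}$, while for $\phi(x)=-x$ it uses $(\phi\otimes\phi)\Delta(1)=\Delta(1)$ to kill the even case and condition~(ii) applied to $\theta$ and $x\theta$ to kill the odd case. You instead extract only the leading coefficient $c_1=\pm1$ and observe that $m(\phi\otimes\id_A)\Delta(1_A)=\bigl(\sum_{i}c_1^i\bigr)x^{n-1}$ independently of the higher-order terms of $\phi(x)$; this is a real gain in robustness, since multiplicativity and $\phi^2=\id$ alone do \emph{not} force $\phi(x)=\pm x$ (e.g.\ $\phi(x)=-x+cx^2$ is an algebra involution of $\Bbbk[x]/(x^3)$ for any $c$), so some coalgebra input is genuinely needed where the paper's parenthetical justification elides it. Your even case then follows from counit preservation ($c_1^{n-1}=1$) plus the parity of the $x$-adic valuation of $\theta$ — the same content as the paper's induction — and your odd case from identifying $\op{Ann}(\theta)=(x^{(n+1)/2})$ and reading $c_1=1$ off condition~(ii), with the comultiplication identity supplying the final bootstrap to $\phi=\id_A$; I checked the coefficient extraction $(n-1-i)\hs c_{i+1}$ and it is right. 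One small remark: once $c_1=1$, the algebra-involution condition alone already forces $\phi=\id_A$ (if $c_m$ is the lowest nonvanishing higher coefficient, the $x^m$-coefficient of $\phi^2(x)$ is $2c_m$), so your closing bootstrap could be done without the coalgebra identity, and your concluding caveat that conditions~(ii) and~(iii) are insufficient slightly understates how much of condition~(i) is available; but the computation you give is valid as written.
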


\begin{proof}
Suppose that $\phi$ and $\theta$ give an extended structure of $\Bbbk[x]/(x^n)$. Then, a routine calculation with $\phi$ being multiplicative and $\phi^2 = \id$ (from condition~(i)) implies that $\phi(x) = \pm x$.
So, in the rest of the proof, we look at the cases $\phi=\id$ and $\phi(x)=-x$, and conclude the latter is never possible, while the former is only possible when $n$ is odd. 

Suppose first that $\phi=\id$. Then, conditions~(i) and~(ii) are satisfied trivially.
Let $\theta_0, \dots, \theta_{n-1}\in \Bbbk$ such that $\theta=\sum_{i=0}^{n-1}\theta_ix^i$. Then, condition~(iii) implies that 
\begin{align}\label{eq: torus condiction on k(x)/x^n}
\textstyle		nx^{n-1}=	\sum_{i=0}^{n-1}\theta_i^2x^{2i}+\sum_{i\ne j}\theta_i\theta_i x^{i+j}.
\end{align}
From the coefficient of $1$, it follows that $\theta_0=0.$	
We can argue by induction that $\theta_i=0$ for all $0\leq i \leq \frac{n-1}{2}-1$ if $n$ is odd, and  for all $0\leq i \leq \frac{n}{2}-1$ if $n$ is even. It follows that if $n$ is even, then the coefficient of $x^{n-1}$ in~\eqref{eq: torus condiction on k(x)/x^n} leads to the contradiction:
$\textstyle	n=2\sum_{i=0}^{\frac{n}{2}-1} \theta_i\theta_{n-1-i}=0$.
Thus, $\phi=\id$ is not possible when $n$ is even. On the other hand, if $n$ is odd, then the coefficient of $x^{n-1}$ in~\eqref{eq: torus condiction on k(x)/x^n}  yields
$\textstyle	n=(\theta_{\frac{n-1}{2}})^2+2\sum_{i=0}^{\frac{n-1}{2}-1} \theta_i\theta_{n-1-i}$,
which implies that $\theta_{\frac{n-1}{2}}=\pm \sqrt n \cdot 1_{\Bbbk}.$ 
So, $\phi=\id$ and $\theta = \pm \sqrt{n} x^{\frac{n-1}{2}}+\sum_{j=\frac{n+1}{2}}^{n-1} \theta_jx^j$ precisely satisfy conditions~(i),~(ii), and~(iii) yielding an extended structure on the Frobenius algebra $\Bbbk[x]/(x^n)$  when $n$ is odd.
	
It remains to look at the case $\phi(x)=-x$. It follows from $\phi$ being a morphism of coalgebras that this is not possible when $n$ is even, since we get the following contradiction:
\begin{align*}
\textstyle	\sum_{i=0}^{n-1} x^{i}\otimes x^{n-i-1}=\Delta(\phi(1))=(\phi\otimes \phi) \Delta(1)=\sum_{i=0}^{n-1} (-1)^{n-1} x^{i}\otimes x^{n-i-1}=-\sum_{i=0}^{n-1} x^{i}\otimes x^{n-i-1}.
	\end{align*}
When $n$ is odd, the equalities $\phi(\theta)=\theta$ and $\phi(x\theta)=x\theta$ from condition~(ii) yield the equations
\begin{align*}
\textstyle		\sum_{i=0}^{n-1}\theta_ix^i=\sum_{i=0}^{n-1}(-1)^i\theta_ix^i &&\text{ and } &&\textstyle	 \sum_{i=0}^{n-2}\theta_ix^{i+1}=\sum_{i=0}^{n-2}(-1)^{i+1}\theta_ix^{i+1} ,
\end{align*}
respectively. Hence $\theta_i=0$ for $1\leq i \leq n-2$, and we have that $\theta=\theta_{n-1}x^{n-1}$. But then this would imply
$0=\theta^2=m(\phi\otimes \id)\Delta(1)=x^{n-1}$.
Hence, $\phi(x)=-x$ is also not possible when $n$ is odd. 
\end{proof}

For a group $G$, consider the Frobenius algebra $\Bbbk G$ as in Example \ref{ex: KG}. We now provide classification results for the extended structures of $\Bbbk G$ when $G=C_2, C_3, C_4,$ and $C_2\times C_2.$

\begin{proposition} \label{prop:C2-class}
Let $g$ be a generator of $C_2$. The extended structures of $\Bbbk C_2$ are:
\begin{enumerate}[\upshape (a)]
\item $\phi=\id_{\Bbbk C_2}$ and $\theta \in \{\pm \sqrt{2}e_{C_2}, \; \pm \sqrt{2}g\}$, and

\smallskip

\item $\phi(g)=-g$ and $\theta=0$. 
\end{enumerate}
Moreover, $(\Bbbk C_2, \id_{\Bbbk C_2}, \sqrt{2}g)\cong (\Bbbk C_2, \id_{\Bbbk C_2}, -\sqrt{2}g)$ as extended Frobenius algebras, and all other structures are non-isomorphic. That is, there are four isomorphism classes of extended Frobenius structures on $\Bbbk C_2. $
\end{proposition}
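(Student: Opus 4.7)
The plan is to mimic the strategy used in Propositions~\ref{prop:k-class}--\ref{prop:xn-class}: first enumerate the admissible involutions $\phi$, then for each $\phi$ determine the admissible $\theta$, and finally sort the resulting list of extended structures into isomorphism classes.

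First I would classify $\phi$. Write $\phi(g) = \alpha + \beta g$ with $\alpha,\beta \in \Bbbk$. Since $\phi$ is a unital algebra morphism, $\phi(g)^2 = \phi(1)=1_{\Bbbk C_2}$, which forces $\alpha\beta=0$ and $\alpha^2+\beta^2=1$. Ruling out $\phi(g)=\pm 1$ (either because $\phi$ would fail to be injective/involutive) leaves $\phi(g)\in\{g,-g\}$. Both choices are also coalgebra morphisms, as is checked directly on $\Delta(1)=1\otimes 1+g\otimes g$: in the sign case, $(\phi\otimes\phi)\Delta(1)=1\otimes 1+(-g)\otimes(-g)=\Delta(1)$.

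Next, for each $\phi$ I would solve for $\theta = a + bg$ using conditions~(ii) and~(iii). In the case $\phi=\id_{\Bbbk C_2}$, condition~(ii) is automatic, and condition~(iii) reduces to $\theta^2 = m(\id\otimes\id)\Delta(1) = 1+g^2 = 2$. Expanding $(a+bg)^2 = (a^2+b^2) + 2ab\,g$ and matching coefficients yields $ab=0$ and $a^2+b^2=2$, producing exactly $\theta \in \{\pm\sqrt{2}e_{C_2},\; \pm\sqrt{2}g\}$, which is statement~(a). In the case $\phi(g)=-g$, applying condition~(ii) at $a=1$ forces $b=0$, and then at $a=g$ forces $a=0$, so $\theta=0$; condition~(iii) is then $0 = m(\phi\otimes\id)\Delta(1) = 1-g^2=0$, which is satisfied. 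This gives statement~(b), and these exhaust all extended structures.

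Finally I would carry out the isomorphism classification. Any isomorphism $f$ of extended Frobenius algebras over $\Bbbk C_2$ is in particular a unital algebra automorphism, so (by the same $\phi(g)$ analysis above) $f(g) \in \{g,-g\}$, and both choices are Frobenius automorphisms. The structures with $\phi$-trivial, $\theta \in \Bbbk e_{C_2}$ (the two with $\theta=\pm\sqrt{2}e_{C_2}$) are pairwise non-isomorphic and non-isomorphic to any other listed structure by Lemma~\ref{lemma: no morph between struct}. For the two remaining $\phi$-trivial structures, the automorphism $f(g)=-g$ sends $(\id_{\Bbbk C_2}, \sqrt{2}g)$ to $(\id_{\Bbbk C_2}, -\sqrt{2}g)$, giving the asserted isomorphism. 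To separate the class $\{(\id_{\Bbbk C_2},\pm\sqrt{2}g)\}$ from $(\phi,0)$ with $\phi(g)=-g$, I would use the intertwining condition $f\phi_A=\phi_B f$: testing on $g$ with $f(g)\in\{g,-g\}$ quickly yields a contradiction in either subcase. The separation of this class from the $(\id,\pm\sqrt{2}e_{C_2})$ structures comes from the fact that no algebra automorphism sends $g$ to $1_{\Bbbk C_2}$, so $f(\pm\sqrt{2}g)\ne\pm\sqrt{2}e_{C_2}$. This yields exactly four isomorphism classes as claimed. The main (mild) obstacle is the final step, since Lemma~\ref{lemma: no morph between struct} applies only to $\theta\in\Bbbk 1_A$, and the $\theta=\pm\sqrt{2}g$ cases must be distinguished by hand using the explicit structure of $\operatorname{Aut}_{\Alg}(\Bbbk C_2)$.
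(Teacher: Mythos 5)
Your proposal is correct and follows essentially the same route as the paper: enumerate the admissible involutions $\phi(g)=\pm g$ from the (co)algebra-morphism constraints, solve the quadratic system for $\theta$ in each case, and then use Lemma~\ref{lemma: no morph between struct} together with the explicit automorphism $g\mapsto -g$ to obtain the four isomorphism classes. The only (immaterial) differences are that the paper pins down $\phi$ via counitality rather than involutivity, uses condition~(iii) rather than (ii) to force $\theta=0$ when $\phi(g)=-g$, and notes that Lemma~\ref{lemma: no morph between struct} already separates $(\phi(g)=-g,\,0)$ and $(\id,\pm\sqrt{2}e_{C_2})$ from the $\theta=\pm\sqrt{2}g$ structures (since $0,\pm\sqrt{2}e_{C_2}\in\Bbbk 1_A$), so your extra by-hand intertwining arguments, while valid, are not needed.
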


\begin{proof}
Suppose that $\phi$ and $\theta$ define an extended structure on $\Bbbk C_2$, with  $\phi(g)=\phi_0 e_{C_2}+\phi_1 g$ and $\theta = \theta_0 e_{C_2} + \theta_1 g$ for $\phi_0, \phi_1, \theta_0, \theta_1 \in \Bbbk$. By the counitality of $\phi$, we have that $\phi_0 =\varep(\phi(g)) = \varep(g) = 0$, and
$\phi_1^2 = \varep(\phi_1^2g^2) = \varep(\phi(g^2))= \varep(g^2)= 1$.
So, $\phi_1=\pm 1$. Both choices are involutions and it is a quick check that they satisfy condition~(i).  We look now at the conditions~(ii) and~(iii).

When $\phi=\id$, we have that $\theta_0^2+\theta_1^2=2 e_{C_2}$ and $2\theta_0\theta_1=0$,
and so either  $\theta=\pm \sqrt{2}  e_{C_2}$ or $\theta=\pm \sqrt{2}g$. Both of these satisfy conditions~(ii) and~(iii).  
When $\phi(g)=-g$, condition~(iii) yields
$\theta_0^2+\theta_1^2=0$ and $2\theta_0\theta_1=0$.
Hence,  $\theta=0$, and condition~(ii) is satisfied in this case. 

 Lastly, it follows from Lemma \ref{lemma: no morph between struct} that an isomorphism can only exist between $(\Bbbk C_2, \id_{\Bbbk C_2}, \sqrt{2}g)$ and $(\Bbbk C_2, \id_{\Bbbk C_2}, -\sqrt{2}g)$, which are in fact isomorphic via the morphism of extended Frobenius algebras $f:\Bbbk C_2\to \Bbbk C_2$ defined by $g\mapsto -g.$
\end{proof}

\begin{proposition}  \label{prop:C3-class}
Let $g$ be a generator of $C_3$. The extended structures of $\Bbbk C_3$ are:
\begin{enumerate}[\upshape (a)]
\item $\phi=\id_{\Bbbk C_3}$ and $\theta \in \{\pm \sqrt{3}e_{C_3}, \; \pm \frac{1}{\sqrt{3}}(e_{C_3}-2\omega_3g-2\omega_3^2g^2)\}$,
\item $\phi(g)=\omega_3 g^2$ and $\theta=\pm \frac{1}{\sqrt{3}}(e_{C_3}+\omega_3 g+\omega_3^2 g^2),$ 
\end{enumerate}
where $\omega_3\in \Bbbk$ is some $3$-rd root of unity. Moreover, these structures are all non-isomorphic.  
\end{proposition}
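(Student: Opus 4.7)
The plan is to mirror the strategy of Proposition~\ref{prop:C2-class}: parametrize $\phi$ and $\theta$ in the basis $\{e_{C_3}, g, g^2\}$ of $\Bbbk C_3$, impose conditions~(i)--(iii) of Definition~\ref{def:extFrob-k}(b), and then compare isomorphism classes. Throughout I would use~\eqref{eq:DeltaA}, which implies that for an algebra map $\phi$, being a coalgebra map reduces to the single equation $(\phi\otimes\phi)\Delta(e_{C_3})=\Delta(e_{C_3})$.

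To classify $\phi$, counitality forces $\phi(g) = bg + cg^2$ for some $b,c\in\Bbbk$, and multiplicativity with $\phi(g)^3 = e_{C_3}$ yields $b^3 + c^3 = 1$ together with $b^2 c = bc^2 = 0$, so $bc = 0$. The involution condition $\phi^2 = \id$ then leaves either $\phi = \id_{\Bbbk C_3}$ (in the subcase $c = 0$, which forces $b = 1$) or $\phi(g) = \omega_3 g^2$ for some cube root of unity $\omega_3$ (in the subcase $b = 0$, where $\phi^2 = \id$ is automatic). A direct substitution verifies $(\phi\otimes\phi)\Delta(e_{C_3}) = \Delta(e_{C_3})$ in each case.

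Writing $\theta = \theta_0 e_{C_3} + \theta_1 g + \theta_2 g^2$ and expanding $\theta^2$ in the basis, I would next solve (ii) and (iii) for each $\phi$. For $\phi = \id$, condition (ii) is automatic and (iii) becomes $\theta^2 = 3 e_{C_3}$; splitting on whether $\theta_1\theta_2 = 0$ (in analogy with Proposition~\ref{prop:xn-class}) yields either $\theta = \pm\sqrt{3} e_{C_3}$ or, after the substitution $\theta_2 = \zeta\theta_1$ with $\zeta^3 = 1$, the family $\theta = \pm \frac{1}{\sqrt{3}}(e_{C_3} - 2\omega_3 g - 2\omega_3^2 g^2)$. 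For $\phi(g) = \omega_3 g^2$, condition (iii) reads $\theta^2 = e_{C_3} + \omega_3 g + \omega_3^2 g^2$, while condition (ii) forces $\theta_1 = \omega_3 \theta_0$ and $\theta_2 = \omega_3^2 \theta_0$; substituting gives $\theta = \theta_0(e_{C_3} + \omega_3 g + \omega_3^2 g^2)$, and the identity $(e_{C_3} + \omega_3 g + \omega_3^2 g^2)^2 = 3(e_{C_3} + \omega_3 g + \omega_3^2 g^2)$ reduces (iii) to $\theta_0 = \pm 1/\sqrt{3}$, matching the claim.

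For non-isomorphism with $\omega_3$ fixed, I would first note that any extended Frobenius isomorphism $f$ satisfies $\phi' = f\phi f^{-1}$, so structures with $\phi = \id$ cannot be isomorphic to those with $\phi\neq\id$, separating (a) from (b). Then, repeating the parametrization above without the involution constraint shows that every Frobenius algebra automorphism of $\Bbbk C_3$ has the form $f(g) = bg$ or $f(g) = cg^2$ for a cube root of unity, and any such $f$ preserves the $e_{C_3}$-coefficient of any element. Thus Lemma~\ref{lemma: no morph between struct} handles the scalar pair $\pm\sqrt{3} e_{C_3}$, while the remaining pairs within each case are distinguished by their $e_{C_3}$-coefficients $\pm 1/\sqrt{3}$. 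I expect the main obstacle to be case~(b), where conditions (ii) and (iii) couple to pin down the explicit form of $\theta$; everything else is routine, if careful, bookkeeping.
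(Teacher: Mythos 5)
Your proposal is correct and follows essentially the same route as the paper's proof: parametrize $\phi$ and $\theta$ in the basis $\{e_{C_3},g,g^2\}$, reduce $\phi$ to $\id$ or $g\mapsto\omega_3 g^2$ via counitality, multiplicativity and the involution condition, solve conditions (ii)/(iii) for $\theta$ in each case (in case~(b) you apply (ii) at $a=e_{C_3},g$ first to force $\theta=\theta_0(e_{C_3}+\omega_3 g+\omega_3^2g^2)$ and then use (iii), whereas the paper extracts $\theta^2$ from (iii) first and uses only $\phi(\theta)=\theta$ --- both work), and classify the Frobenius automorphisms $g\mapsto cg$ or $g\mapsto cg^2$ with $c^3=1$ to settle non-isomorphism via Lemma~\ref{lemma: no morph between struct} and preservation of the $e_{C_3}$-coefficient. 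One remark: your decision to fix $\omega_3$ in the non-isomorphism step is the safer reading, since the automorphism $g\mapsto cg$ carries the structure with parameter $\omega_3$ to the one with parameter $c\omega_3$ (in both families (a) and (b)), so the structures attached to \emph{different} roots of unity within a single family are in fact isomorphic and the paper's blanket ``all non-isomorphic'' overstates what its own argument establishes.
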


\begin{proof}
Suppose $\phi$ and $\theta$ define an extended structure of $\Bbbk C_3$, where $\phi(g)= \phi_0 e_{C_3}+ \phi_1 g+ \phi_2 g^2$ and $\theta = \theta_0 e_{C_3}+ \theta_1 g+ \theta_2 g^2$, for $\phi_i, \theta_i \in \Bbbk$. 
By condition~(i), we get that $\phi=\id$ or $\phi(g)=\omega_3 g^2$.  We now examine the conditions: $m(\phi\otimes \id_{\Bbbk C_3})\Delta (e_{C_3})=\theta^2$, and $\phi(\theta a) = \theta a$ for $a \in \Bbbk C_3$. 

When $\phi=\id$, this gives the equation
$\theta^2=3 e_{C_3}$.
Hence, $\theta_0\ne 0,$ and if $\theta_1=0$ or $\theta_2=0$, these imply $\theta=\pm \sqrt{3}e_{C_3}$. Else, if $\theta_1,\theta_2\ne 0,$ it follows that  $\theta=\pm \frac{1}{\sqrt{3}}(e_{C_3}-2\omega_3g-2\omega_3^2g^2)$ for some $3$-rd root of unity $\omega_3$. Condition (ii) is trivially satisfied for these cases. 
When $\phi(g)=\omega_3 g^2$, then condition~(iii) implies that 
$\theta^2 = e_{C_3}+\omega_3 g+\omega_3^2g^2$.
We also require  $\theta=\phi(\theta)=\theta_0 e_{C_3} +\theta_1\omega_3g^2+\theta_2\omega_3^2g$, and thus $\theta_2= \omega_3\theta_1$. Therefore, we get that $\theta=\pm \frac{1}{\sqrt{3}}(e_{C_3}+\omega_3 g+\omega_3^2 g^2)$. One can check that these choices satisfy condition (ii); see Example \ref{example: ext on Cn}. 

 Lastly, any morphism $f$ of extended Frobenius algebras between these possible structures is counital, so  $f(g) = cg$ or $f(g) = cg^2$ for some $c\in \Bbbk$ such that $c^3=1$. From this and  Lemma \ref{lemma: no morph between struct}, we conclude there are no such morphisms between the different extended structures. 
\end{proof}

\begin{proposition}  \label{prop:C4-class}
Let $g$ be a generator of $C_4$. The extended structures of $\Bbbk C_4$ are:
\begin{enumerate}[\upshape (a)]
\item $\phi=\id_{\Bbbk C_4}$ and $\theta\in\{\pm 2e_{C_4}, ~\pm 2g^2,  ~\pm (1-i)(g+ig^3), ~\pm (1+i) (g-ig^3)\}$;
\smallskip
\item $\phi(g)=-g$ and $\theta=0$;

\smallskip

\item $\phi(g)=\omega_4 g^3$ and $\theta\in \left\{\pm \frac{1+\omega_4^2}{2}(e_{C_4}-g^2), \; \pm i \frac{1+\omega_4^2}{2}(g-g^3)\right\}$, 
\end{enumerate}
for any $4$-th root of unity $\omega_4\in \Bbbk$. These form eight isomorphism classes of extended structures. 
\end{proposition}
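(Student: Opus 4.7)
Following the pattern of Propositions~\ref{prop:C2-class} and~\ref{prop:C3-class}, I would parametrize $\phi(g) = \phi_0 e_{C_4} + \phi_1 g + \phi_2 g^2 + \phi_3 g^3$ and $\theta = \theta_0 e_{C_4} + \theta_1 g + \theta_2 g^2 + \theta_3 g^3$ with coefficients in $\Bbbk$, and proceed in three phases. In the first phase, classifying the candidate $\phi$, I would extract from the counit part of~(i) the equation $\phi_0 = \varep(\phi(g)) = \varep(g) = 0$. Combined with the algebra-morphism identity $\phi(g)^4 = \phi(g^4) = 1$, the involution condition $\phi^2(g) = g$, and (where needed to rule out spurious candidates) the coalgebra identity $(\phi \otimes \phi)\Delta(e_{C_4}) = \Delta(e_{C_4})$, this should restrict the possibilities for $\phi$ to $\phi = \id$, $\phi(g) = -g$, and $\phi(g) = \omega_4 g^3$ for a fourth root of unity $\omega_4 \in \Bbbk$.

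For the second phase, for each candidate $\phi$ I would compute $m(\phi \otimes \id_{\Bbbk C_4})\Delta(e_{C_4}) = \sum_{k=0}^{3} \phi(g)^k g^{-k}$ in the basis $\{e_{C_4}, g, g^2, g^3\}$, set this equal to $\theta^2$ by~(iii), and impose~(ii) (which reduces to $\phi(\theta) = \theta$ together with $\theta(\phi(a) - a) = 0$ for all $a \in \Bbbk C_4$) to eliminate spurious $\theta$. For $\phi = \id$ this yields $\theta^2 = 4\, e_{C_4}$ with the eight listed solutions. For $\phi(g) = -g$ the right-hand side vanishes, and~(ii) forces $\theta = 0$. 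For $\phi(g) = \omega_4 g^3$, the computation produces $\theta^2$ as a scalar multiple of a two-term expression in $e_{C_4}$ and $g^2$ (or in $g$ and $g^3$), from which the indicated $\theta$ can be read off; the case $\omega_4^2 = -1$ collapses the solutions to $\theta = 0$.

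For the third phase, to confirm there are exactly eight isomorphism classes, I would first apply Lemma~\ref{lemma: no morph between struct} to separate extensions whose $\theta$ lies in $\Bbbk \cdot 1_{\Bbbk C_4}$ with distinct scalars. For the remaining extensions, I would exhibit explicit isomorphisms of extended Frobenius algebras, built from Frobenius algebra automorphisms of $\Bbbk C_4$ (namely the permutations of the minimal idempotents $\tfrac{1}{4}\sum_{k=0}^{3} i^{-jk}g^k$), to collapse conjugate structures into single classes, and verify non-isomorphism of the final representatives by tracking invariants such as $\phi(g)$ and the annihilator of~$\theta$. The main obstacle is the combined Phase~2/Phase~3 analysis of case~(c): for each of the four values of $\omega_4$ one must carefully verify which $(\omega_4, \theta)$-pairs satisfy both (ii) and (iii), and then correctly sort these --- together with the apparently disjoint case~(b) --- into isomorphism classes using conjugation by Frobenius algebra automorphisms of~$\Bbbk C_4$, being mindful that a sub-case of (c) with $\omega_4^2 = -1$ might merge with case~(b) under such an automorphism.
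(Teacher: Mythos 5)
Your proposal follows essentially the same route as the paper's proof: parametrize $\phi(g)$ and $\theta$ in the group basis, use counitality, multiplicativity, and the involution condition to reduce $\phi$ to the three families $\id$, $g\mapsto -g$, $g\mapsto \omega_4 g^3$ (the paper disposes of the mixed case $\phi_2\neq 0$ via $\phi^2(g)=g$ together with condition~(iii)), then solve $\theta^2=\sum_{k}\phi(g)^k g^{-k}$ subject to condition~(ii), and finally sort the solutions into isomorphism classes using Lemma~\ref{lemma: no morph between struct} and the explicit Frobenius automorphisms $g\mapsto -g$, $g\mapsto ig$ (equivalently, your idempotent permutations). Your added caution about the degenerate subcase $\omega_4^2=-1$ of~(c), where $\theta$ collapses to $0$, is well placed, though one can check directly that no Frobenius automorphism intertwines $\phi(g)=\pm i g^3$ with $\phi(g)=-g$, so that subcase does not merge with case~(b).
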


\begin{proof}
Suppose that $\phi$ and $\theta$ define an extended structure on $\Bbbk C_4$, where for $\phi_i, \theta_i \in \Bbbk$, we have
$\phi(g)= \phi _0 e_{C_3} + \phi_1 g + \phi_2 g^2 + \phi_3g^3$ and $\theta= \theta_0 e_{C_3}+ \theta_1 g+ \theta_2 g^2+ \theta_3 g^3$. 
By condition~(i), we get that $\phi_2=0$ with $\phi(g)=\phi_1 g$ or $\phi(g)=\phi_3g^3$; else,  	 $\phi_2\ne 0$ with $\phi_1^2+\phi_3^2=0$. 
But a routine computation using $\phi^2(g) = g$ and condition~(iii) shows that the $\phi_2 \neq 0$ case is not possible. 
So, either $\phi(g)=\phi_1g$ or $\phi(g)=\phi_3g^3$. Since $\phi^2(g)=g$,  we obtain $\phi(g)=\pm g$ or $\phi(g)=\omega_4 g^3$ for some $\omega_4\in \Bbbk$.

Suppose that $\phi=\id_{\Bbbk C_4}$. Then, condition~(ii) is trivially satisfied. Condition~(iii) implies that $4 e_{C_4}=\theta^2$, and we get the choices for $\theta$ in part (a).  
When $\phi(g)=-g$, condition~(ii) implies that $\theta_1=\theta_3=0$. So, by condition~(iii), we obtain that
$\theta_0^2+2\theta_0\theta_2g^2+\theta_2^2=0$,
and it follows that $\theta=0$. This yields the choice in part~(b).
Lastly, if $\phi(g)=\omega_4g^3$, then from condition~(ii), we know that $\theta_1=\omega_4^3\theta_3$. Also from condition~(iii), we get that 
$\theta^2=(1+\omega_4^2)e_{C_4}+ (\omega_4+\omega_4^3)g^2$.
Solving for $\theta^2$ in $\Bbbk C_4$, we get the two choices for $\theta$ in part (c). The former coincides with the choice of structure given in Example \ref{example: ext on Cn}. For the latter, it is easy to check that condition~(ii) still holds.
		
 We prove now that there are exactly eight isomorphism classes of extended structures. It follows from Lemma \ref{lemma: no morph between struct} that three such classes are given by
\[
\{(\Bbbk C_4, \id_{\Bbbk C_4}, 2e_{C_4})\}, \;\;
\{(\Bbbk C_4, \id_{\Bbbk C_4}, -2e_{C_4})\}, \;\;
\{(\Bbbk C_4, \phi(g)=-g, 0)\}.
\]
Next, there can be no isomorphisms  $f:(\Bbbk C_4, \id_{\Bbbk C_4}, \theta)\to (\Bbbk C_4, \phi(g)=\omega_4g^3, \theta'),$ as this would imply $f(g)=f(\omega_4g^3)$. Also, the algebra isomorphisms  $f,f':\Bbbk C_4\to \Bbbk C_4$  defined by $f(g)=-g$ and $f'(g)=ig$ imply that
\[
\{(\Bbbk C_4,\id_{\Bbbk C_4}, \pm(1-i)(g+ig^3)), \; \; 
\{(\Bbbk C_4,\id_{\Bbbk C_4}, \pm(1+i)(g-ig^3))\},  \; \; 
\{(\Bbbk C_4,\id_{\Bbbk C_4}, \pm 2g^2)\}
\]
are isomorphism classes of extended structures. 
The remaining isomorphism classes  are then 
\[
\{(\Bbbk C_4, \; \phi(g)=\omega_4g^3, \; \textstyle \pm \frac{1+\omega_4^2}{2}(e_{C_4}-g^2))\}, \;\; \{(\Bbbk C_4, \;\phi(g)=\omega_4g^3, \; \textstyle  \pm i\frac{1+\omega_4^2}{2}(g-g^3))\}
\]
by a routine calculation.
\end{proof}

Given the results in Proposition~\ref{prop:C2-class},~\ref{prop:C3-class},~\ref{prop:C4-class}, we propose the following result.

\begin{conjecture}
Let $g$ be a generator of $C_n$. The following are the only  possibilities for the Frobenius automorphism $\phi$ for an extended structure on  $\Bbbk C_n$:
\begin{enumerate}[\upshape (a)]
\item $\phi(g)=\pm g$  or $\phi(g)=\omega_n g^{-1}$ when $n$ is even, 

\smallskip

\item $\phi(g)=g$ or $\phi(g)=\omega_n g^{-1}$ when $n$ is odd, 
\end{enumerate}
where $\omega_n \in \Bbbk$ is any $n$-th root of unity.
\end{conjecture}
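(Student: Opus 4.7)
The plan is to decompose $\Bbbk C_n$ via its primitive idempotents and transport the extended Frobenius conditions into that basis, so that Propositions~\ref{prop:C2-class},~\ref{prop:C3-class},~\ref{prop:C4-class} become instances of a single uniform argument. Since $\Bbbk$ is algebraically closed of characteristic zero, we have $\Bbbk C_n=\bigoplus_{j=0}^{n-1}\Bbbk e_j$ with primitive idempotents $e_j=\tfrac{1}{n}\sum_{k=0}^{n-1} \zeta^{-jk}g^k$, where $\zeta\in\Bbbk$ is a fixed primitive $n$-th root of unity. In this basis the given Frobenius data becomes $\Delta(e_j)=n\,e_j\otimes e_j$ and $\varepsilon(e_j)=1/n$, and $g\cdot e_j=\zeta^j e_j$.

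First I would translate condition~(i). Any algebra endomorphism of $\Bbbk C_n$ permutes the primitive idempotents, so $\phi(e_j)=e_{\sigma(j)}$ for some $\sigma\in S_n$, and $\phi^2=\id_{\Bbbk C_n}$ forces $\sigma^2=\id$. Coalgebra compatibility and counit preservation are then automatic, since every $e_j$ has the same $\varepsilon$-value and $\Delta$ is diagonal on the idempotents. The original generator is recovered as $\phi(g)=\sum_j \zeta^{\sigma^{-1}(j)}e_j$, and the three families in the conjecture correspond to the \emph{affine} involutions of $\mathbb{Z}/n$: $\sigma=\id$ yields $\phi(g)=g$; the translation $\sigma(k)=k+n/2$ (only when $n$ is even) yields $\phi(g)=-g$; and each reflection $\sigma(k)=m-k$ yields $\phi(g)=\zeta^m g^{-1}$. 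I would then try to use conditions~(ii) and~(iii) to rule out every other involution: writing $\theta=\sum_j\theta_j e_j$, condition~(ii) forces $\theta_j=0$ off the fixed set of $\sigma$, and condition~(iii) reduces to $\theta_j^2=n$ for each fixed $j$.

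The hard part, and what I expect to be the real obstacle, is precisely this last step: both of the systems above are always solvable, so (i)--(iii) alone impose no further constraint on~$\sigma$. Already in $\Bbbk C_4$ the transposition $\sigma=(0\,1)$ fixes $\{2,3\}$ and gives a bona fide extended structure $(\phi,\theta)$ with $\phi(g)=\tfrac{1}{2}g+\tfrac{i-1}{2}g^2+\tfrac{i}{2}g^3$ and $\theta=2(e_2+e_3)$, yet $\phi(g)$ is not of the form $\pm g$ or $\omega_n g^{-1}$. So a literal proof of the conjecture must extract a further hypothesis---most naturally, that $\phi(g)$ be a scalar multiple of a power of $g$, or equivalently that $\phi$ intertwine the rotational action of $\mathbb{Z}/n$ on the idempotent basis---after which the argument reduces to the elementary enumeration of affine involutions of $\mathbb{Z}/n$. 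My concrete proposal is therefore to (a) carry out the idempotent reduction above, (b) isolate the implicit hypothesis that cuts $S_n$ down to the affine involutions, and (c) prove the resulting (possibly refined) version of the statement.
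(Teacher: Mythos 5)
This statement is labeled a \emph{conjecture} in the paper and no proof of it is supplied, so there is no argument of the authors' to compare yours against. What you have written is, in effect, a disproof, and as far as I can verify it is correct. Your idempotent reduction is sound: with $e_j=\tfrac1n\sum_k\zeta^{-jk}g^k$ one gets $\Delta(1)=n\sum_j e_j\otimes e_j$, hence $\Delta(e_j)=n\,e_j\otimes e_j$ and $\varepsilon(e_j)=1/n$, so the \emph{Frobenius} coalgebra structure (unlike the Hopf one) is invariant under every permutation of the primitive idempotents. Consequently every involutive permutation $\sigma$ of $\{e_0,\dots,e_{n-1}\}$ is a Frobenius algebra involution, condition~(ii) forces $\theta$ to be supported on the fixed set of $\sigma$, and condition~(iii) becomes $\theta_j^2=n$ there, which is always solvable over $\Bbbk$. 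I checked your $\Bbbk C_4$ example with $\sigma=(0\,1)$ explicitly: $\phi(g)=\tfrac12 g+\tfrac{i-1}{2}g^2+\tfrac{i}{2}g^3$ satisfies $\phi^2(g)=g$, is multiplicative and counital, is comultiplicative for the Frobenius $\Delta$, and $m(\phi\otimes\id)\Delta(1)=4(e_2+e_3)=\theta^2$ for $\theta=2(e_2+e_3)$. Since this $\phi(g)$ is neither $\pm g$ nor $\omega_4 g^{-1}$, the conjecture is false as stated. The one step you should make fully explicit is the crux of the whole argument: a permutation of primitive idempotents is automatically a coalgebra morphism precisely because the Frobenius comultiplication is diagonal with equal weights; this is exactly where the Frobenius and Hopf coalgebra structures diverge (cf.\ the paper's own Remark~\ref{rem:2cats} vicinity discussion of $\Bbbk C_2$ after Proposition~\ref{prop:eHopf}).

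Two further points. First, your example contradicts not only the conjecture but also Proposition~\ref{prop:C4-class}: its proof asserts that the case $\phi_2\neq 0$ with $\phi_1^2+\phi_3^2=0$ ``is not possible'' by a routine computation, yet your $\phi$ has $\phi_2=\tfrac{i-1}{2}\neq0$ and $\phi_1^2+\phi_3^2=\tfrac14-\tfrac14=0$ and admits a compatible $\theta$; so that proposition needs repair as well (the cases $n=2,3$ are unaffected because every involution of a $2$- or $3$-element set is affine). Second, your proposed fix is the right one: restricting to $\phi$ acting affinely on the idempotent indices, i.e.\ $\sigma(j)=aj+b$ with $a^2\equiv1$ and $(a+1)b\equiv0\pmod n$, yields exactly $\phi(g)=\zeta^b g$ (forcing $b\in\{0,n/2\}$, hence $\pm g$, with $-g$ only for $n$ even) or $\phi(g)=\zeta^m g^{-1}$, which recovers the list in the conjecture. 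So the honest conclusion is that the statement, as posed, admits no proof; your analysis isolates both the counterexamples and the hypothesis under which the intended classification does hold.
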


The remainder of Theorem~\ref{thm:Frobk-class} is established in the next two results.

\begin{proposition}   \label{prop:V4-class}
The extended structures of $\Bbbk (C_2\times  C_2)$ are:
\begin{enumerate}[\upshape (a)]
\item $\phi=\id_{\Bbbk (C_2 \times C_2)}$ and $\theta \in \{\pm 2e, ~ \pm 2g_i, ~ \pm(e+g_\ell)\pm(g_i-g_j), ~\pm (e-g_\ell) \pm (g_i+g_j)\}$;

\smallskip

\item $\phi(g_i)=-g_i,$ ~$\phi(g_j)=-g_j$, ~$\phi(g_\ell)=g_\ell$, ~ and $\theta=0$; 

\smallskip

\item $\phi(g_i)=g_j$, ~$\phi(g_j)=g_i$, ~$\phi(g_\ell)=g_\ell$,~ and $\theta \in \{\pm(e+g_\ell), ~\pm(g_i+g_j)\}$;

\smallskip

\item $\phi(g_i)=-g_j$, ~$\phi(g_j)=-g_i$, $\phi(g_\ell)=g_\ell$, ~ and $\theta \in \{\pm(e-g_\ell), ~\pm(g_i-g_j)\}$;
\end{enumerate}
where $C_2\times C_2=\{e,g_1, g_2, g_3\}$ and $\{i,j,\ell\}=\{1,2,3\}$. 
\end{proposition}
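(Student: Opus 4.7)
The plan is to mirror the approach of Propositions~\ref{prop:C2-class}--\ref{prop:C4-class}: first classify all Frobenius involutions $\phi$ of $\Bbbk(C_2\times C_2)$ using condition~(i), and then, for each $\phi$, solve the equations imposed by conditions~(ii) and~(iii) for $\theta$.

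For the first step, I would write $\phi(g_i)=\phi_{i0}e+\phi_{i1}g_1+\phi_{i2}g_2+\phi_{i3}g_3$ for each $i\in\{1,2,3\}$. Counitality forces $\phi_{i0}=\varep(\phi(g_i))=0$, and expanding $\phi(g_i)^2=\phi(e)=e$ via the multiplication table of $C_2\times C_2$ yields $\phi_{i1}^2+\phi_{i2}^2+\phi_{i3}^2=1$ together with $\phi_{ik}\phi_{ik'}=0$ for $k\neq k'$, so exactly one $\phi_{ik}$ is $\pm 1$ and the rest vanish. Hence $\phi(g_i)=\epsilon_i g_{\sigma(i)}$ for some $\sigma\colon\{1,2,3\}\to\{1,2,3\}$ and signs $\epsilon_i\in\{\pm 1\}$. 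Multiplicativity of $\phi$ forces $\sigma$ to be a bijection with $\epsilon_i\epsilon_j=\epsilon_k$ on triples, and $\phi^2=\id$ forces $\sigma^2=\id$ and $\epsilon_i\epsilon_{\sigma(i)}=1$. A short casework on $\sigma\in\{\id,(12),(13),(23)\}$ then produces exactly the four shapes of $\phi$ appearing in~(a)--(d); comultiplication compatibility $(\phi\otimes\phi)\Delta(e)=\Delta(e)$ is automatic for any signed permutation.

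For the second step, I would parameterize $\theta=\sum_{k=0}^{3}\theta_k g_k$. Substituting $a=1_A$ in condition~(ii) gives $\phi(\theta)=\theta$, and plugging this back in yields the useful form $\theta(\phi(a)-a)=0$ for all $a$, confining $\theta$ to an explicit subspace depending on $\phi$. A direct computation then evaluates $m(\phi\otimes\id_A)\Delta(e)$ to be $4e$, $0$, $2(e+g_\ell)$, or $2(e-g_\ell)$ in cases~(a)--(d) respectively. In cases~(c) and~(d), the subspace is two-dimensional, and the multiplication identities $(e\pm g_\ell)^2=(g_i\pm g_j)^2=2(e\pm g_\ell)$ and $(e\pm g_\ell)(g_i\pm g_j)=2(g_i\pm g_j)$ make solving $\theta^2=2(e\pm g_\ell)$ a two-variable quadratic problem whose solutions are precisely the four $\theta$'s listed. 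Case~(b) forces $\theta=0$ immediately. For the identity case~(a), I would pass to the basis of four primitive idempotents of $\Bbbk(C_2\times C_2)\cong\Bbbk^4$, where $\theta^2=4e$ becomes a coordinate-wise $\pm 2$ condition with sixteen solutions; rewriting these in the group basis gives exactly the list in~(a), redundantly parametrized over the choices of $\{i,j,\ell\}=\{1,2,3\}$.

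The main obstacle is bookkeeping rather than genuine difficulty: ensuring the signed-permutation casework in step one is exhaustive but duplication-free, and matching the sixteen $\phi=\id$ solutions in step two against the statement's list. The latter becomes routine once one works in the primitive idempotent basis, and the former is contained in a single table of four permutations with constrained sign choices.
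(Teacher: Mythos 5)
Your proposal is correct and follows essentially the same route as the paper: counitality and multiplicativity force $\phi(g_i)=\pm g_{\sigma(i)}$ for a signed permutation, the involution condition cuts this down to the four shapes in (a)--(d), and then conditions (ii) and (iii) are solved for $\theta$ case by case, with $m(\phi\otimes\id)\Delta(e)$ evaluating to $4e$, $0$, $2(e+g_\ell)$, $2(e-g_\ell)$ exactly as you compute. Your use of the primitive idempotent basis in case (a) is a clean way to organize the sixteen solutions that the paper leaves as "one can check," but it is a presentational refinement rather than a different argument.
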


\begin{proof}
It follows from $\phi$ being counital that $\phi(g_i)=a_{i,1}g_1+a_{i,2}g_2+a_{i,3}g_3$ for $a_{i,p}\in \Bbbk$, for all $1\leq i,p \leq 3$. Since $\phi$ is multiplicative, we then get that
\begin{align*}
e=\phi(g_i^2)=\phi(g_i)^2=(a_{i,1}^2+a_{i,2}^2+a_{i,3}^2)e +2a_{i,1}a_{i,2}g_3+2a_{i,1}a_{i,3}g_2+2a_{i,2}a_{i,3}g_1. 
\end{align*}
Hence, $\phi(g_i)=\pm g_j$ for some $1\leq j \leq 3$. But $\phi^2=\id_{\Bbbk (C_2\times C_2)}$, and thus the remaining possibilities for $\phi$ are the ones listed in the statement.
It remains to find suitable $\theta$ for each possible $\phi$.  Let $\theta_0, \theta_1, \theta_2, \theta_3 \in \Bbbk$ such that $\theta=\theta_0 e + \theta_1g_2+\theta_2g_2+ \theta_3 g_3$. 

We compute 
$\textstyle \theta^2	
= \phi(e)e+\sum_{i=1}^3 \phi(g_i)g_i.$
When $\phi=\id_{\Bbbk (C_2\times C_2)}$, one can check that we get the choices of $\theta$ in part (a) by condition~(iii).
When $\phi(g_i)=-g_i,$ $\phi(g_j)=-g_j$ and $\phi(g_\ell)=g_\ell$ for $\{i,j,\ell\}=\{1,2,3\}$, condition~(iii)  implies $\theta^2=0$, so $\theta=0$; this implies part (b). 
When $\phi(g_i)=g_j,$ $\phi(g_j)=g_i$ and $\phi(g_\ell)=g_\ell$ for $\{i,j,\ell\}=\{1,2,3\}$, 
conditions~(ii) and (iii) yield the choices of $\theta$ in part (c). 
The case $\phi(g_i)=-g_j,$ $\phi(g_j)=-g_i$ and $\phi(g_\ell)=g_\ell$ for $\{i,j,\ell\}=\{1,2,3\}$ is analogous.
\end{proof}   

\begin{proposition}  \label{prop:T2-class}
	Consider the Taft algebra $T_2(-1):=\Bbbk\langle g,x \rangle/(g^2-1, x^2, gx+xg)$ as defined in Example \ref{example:Taft}. All extensions of $T_2(-1)$ are $\phi$-trivial, with $\theta\in \Bbbk x\oplus \Bbbk gx$. 
\end{proposition}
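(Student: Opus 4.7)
The plan is to parameterize $\phi$ and $\theta$ in the $\Bbbk$-basis $\{1,g,x,gx\}$ of $T_2(-1)$, cut down the unknowns via the Frobenius-morphism and involution axioms, and then solve condition~(iii) for $\theta$ in each of the finitely many remaining cases. Specializing Example~\ref{example:Taft} to $n=2$, $\omega=-1$ with $g^{-1}=g$ gives
\[
\Delta(1) \;=\; 1\otimes x + x\otimes 1 - g\otimes gx + gx\otimes g.
\]
The counit axiom $(\id\otimes\varep)\Delta(1)=1$ then forces $\varep(x)=1$ and $\varep(1)=\varep(g)=\varep(gx)=0$. Throughout I will use $x^2=0$, $xg=-gx$, $(gx)^2=0$, $g\cdot gx=x$, and $gx\cdot g=-x$.

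\textbf{Classifying $\phi$.} Write $\phi(g)=a_0+a_1g+a_2x+a_3gx$ and $\phi(x)=b_0+b_1g+b_2x+b_3gx$. The condition $\varep\phi=\varep$ immediately forces $a_2=0$ and $b_2=1$. Expanding $\phi(g)^2=1$ in the basis yields $a_0^2+a_1^2=1$ and $a_0a_1=a_0a_3=0$; bijectivity of $\phi$ excludes $a_0\ne 0$ (else $\phi(g)\in\Bbbk$), so $a_1=\pm 1$ and $\phi(g)=\epsilon g+a_3 gx$ with $\epsilon\in\{\pm 1\}$. Similarly, $\phi(x)^2=0$ forces $b_0=b_1=0$, so $\phi(x)=x+b_3 gx$. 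The anticommutation $\phi(g)\phi(x)+\phi(x)\phi(g)=0$ is then automatic. Finally, imposing $\phi^2=\id$ kills the residual $a_3$ and $b_3$ terms, leaving only $\phi(g)=\pm g$ and $\phi(x)=x$. A direct check confirms $(\phi\otimes\phi)\Delta(1)=\Delta(1)$ for both choices (since $\epsilon^2=1$), so each is a genuine Frobenius involution.

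\textbf{Solving for $\theta$.} A direct computation gives $m(\phi\otimes\id)\Delta(1)=(2-2\epsilon)\,x$. Writing $\theta=\theta_0+\theta_1g+\theta_2x+\theta_3gx$, expansion yields
\[
\theta^2 \;=\; (\theta_0^2+\theta_1^2)+2\theta_0\theta_1\,g+2\theta_0\theta_2\,x+2\theta_0\theta_3\,gx.
\]
When $\epsilon=1$, condition~(iii) reads $\theta^2=0$, forcing $\theta_0=\theta_1=0$ and leaving $\theta\in \Bbbk x\oplus\Bbbk gx$ arbitrary; condition~(ii) is automatic. When $\epsilon=-1$, condition~(ii) specialized to $a=1$ gives $\phi(\theta)=\theta$, which forces $\theta_1=\theta_3=0$, so $\theta=\theta_0+\theta_2 x$ and $\theta^2=\theta_0^2+2\theta_0\theta_2 x$. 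Equating with $4x$ forces $\theta_0^2=0$ hence $\theta_0=0$, which then kills the $x$ coefficient, a contradiction. Hence only $\phi=\id$ is realized, with $\theta\in\Bbbk x\oplus\Bbbk gx$, as claimed.

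The heavy lifting is in the classification of $\phi$: unpacking the three algebra-morphism equations plus $\phi^2=\id$ requires careful bookkeeping across the noncommutative basis, since off-diagonal terms like $a_3 gx$ in $\phi(g)$ can survive squaring but are only eliminated by the involution condition. Counit preservation is what makes this tractable; without pinning down $a_2$ and $b_2$ upfront, one would face a much larger quadratic system in eight parameters.
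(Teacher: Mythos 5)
Your proof is correct and follows essentially the same route as the paper: parameterize $\phi$ in the basis $\{1,g,x,gx\}$, use counitality and multiplicativity plus $\phi^2=\id$ to reduce to $\phi(g)=\pm g$, $\phi(x)=x$, and then solve condition~(iii) for $\theta$. In fact your treatment of the case $\phi(g)=-g$ is more complete than the paper's: since $\phi(g)=-g$, $\phi(x)=x$ genuinely is a Frobenius involution of $T_2(-1)$, it can only be excluded by showing (as you do) that conditions~(ii) and~(iii) force $\theta^2=4x$ to have no solution, whereas the paper's written argument passes from ``$\phi(g)=\pm g$'' to ``$\phi=\id$'' without recording this step.
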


\begin{proof}
First, note that $\Delta(1) = -g \otimes gx + x \otimes 1 + 1 \otimes x + gx \otimes g$. So, by \eqref{eq:DeltaA}, we get that 
$\Delta(g) = -1 \otimes gx + gx \otimes 1 + g \otimes x + x \otimes g$,
$\Delta(x)  = gx \otimes gx  + x \otimes x$, and $\Delta(gx) = x \otimes gx  + gx \otimes x$.
Hence, $\varep(1) = \varep(g) = \varep(gx) =0$ and $\varep(x) = 1$. Now 
suppose that $\phi: T_2(-1) \to T_2(-1)$ and $\theta \in T_2(-1)$ define an extended structure on $T_2(-1)$. Let $a_i, b_i \in \Bbbk$ such that $\phi(g)=a_1+a_2g+a_3x+a_4gx$ and  $\phi(x)=b_1+b_2g+b_3x+b_4gx$. Since $\phi$ is an algebra morphism, we have that 
\begin{align*}	1&=\phi(g)^2=a_1^2+a_2^2+2a_1a_2g+2a_1a_3x+2a_1a_4gx, \\	0&=\phi(x)^2=b_1^2+b_2^2+2b_1b_2g+2b_1b_3x+2b_1b_4gx.
\end{align*}
It follows that $\phi(g) =\pm g +a_3 x+a_4gx$ and $\phi(x)=b_3x+b_4gx$. On the other hand, since $\phi$ is counital, we  get
$0=\varep(\phi(g))=a_3$ and $1=\varep(\phi(x))=b_3$.
So, $\phi(g) =\pm g +a_4gx$ and $\phi(x)=x+b_4gx$. Also, $\phi$ is an involution, hence
$g=\phi(\pm g +a_4gx)= \pm ( g +a_4gx) \pm a_4(gx+b_4x)$.
It follows that $\phi=\id_{T_2(-1)}$. Lastly, 
$\theta^2=	m(\phi\otimes \id_{T_2(-1)})\Delta(1)=0$,
and thus $\theta\in \Bbbk x\oplus \Bbbk gx$.
\end{proof}

\begin{conjecture}  \label{conj:Tn-class}
Consider the Taft algebra, 
$T_n(\omega):=\Bbbk\langle g,x \rangle/(g^n-1, x^n, gx - \omega xg)$ from Example~\ref{example:Taft}. Then, all extensions of $T_n(\omega)$ are $\phi$-trivial, with $\theta\in \Bbbk x \oplus \Bbbk gx \oplus \cdots \oplus \Bbbk g^{n-1} x$. 
\end{conjecture}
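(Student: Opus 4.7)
The plan is to generalize the proof of Proposition~\ref{prop:T2-class} step by step.

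First, I would make the coalgebra data explicit. Using \eqref{eq:DeltaA}, compute $\Delta(g^i x^k) = (g^i x^k \otimes 1_{T_n(\omega)})\Delta(1_{T_n(\omega)})$ on each PBW basis element, then extract $\varepsilon$ from $(\varepsilon \otimes \id)\Delta = \id$. One expects $\varepsilon$ to be concentrated on a single basis monomial (generalizing $\varepsilon(x) = 1$, $\varepsilon(1) = \varepsilon(g) = \varepsilon(gx) = 0$ from the $n=2$ proof), giving a clean counital constraint $\varepsilon \circ \phi = \varepsilon$.

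Second, pin down $\phi$. Expanding $\phi(g) = \sum a_{ik} g^i x^k$ and $\phi(x) = \sum b_{ik} g^i x^k$ in the PBW basis, impose the algebra-morphism relations $\phi(g)^n = 1$, $\phi(x)^n = 0$, and $\phi(g)\phi(x) = \omega \phi(x)\phi(g)$; counitality; the coalgebra identity $(\phi\otimes\phi)\Delta(1_{T_n(\omega)}) = \Delta(1_{T_n(\omega)})$; and involutivity $\phi^2 = \id$. Since $\Bbbk C_n$ is reduced, the $x^0$-component of $\phi(x)$ must vanish, and the $x^0$-component of $\phi(g)$ is an $n$-th root of unity in $\Bbbk C_n$. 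Leveraging the PBW-filtration by $x$-degree together with the $\omega$-commutator should force $\phi(g) = \alpha g$ and $\phi(x) = \lambda x$ for some $\alpha$ with $\alpha^n = 1$ and some $\lambda \in \Bbbk^\times$. The coalgebra identity then forces $\lambda = 1$ (by a direct calculation on the tensors of $\Delta(1_{T_n(\omega)})$ in which $\alpha$ cancels out), and involutivity forces $\alpha^2 = 1$. For $n$ odd this yields $\alpha = 1$ and hence $\phi = \id$; for $n$ even the residual case $\alpha = -1$ must be excluded via conditions~(ii) and~(iii).

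Third, with $\phi = \id$, condition~(ii) is automatic and condition~(iii) reduces to $\theta^2 = m(\id\otimes\id)\Delta(1_{T_n(\omega)}) = 0$ (by Example~\ref{example:Taft}). Decompose $\theta = \theta_0 + \theta_{\ge 1}$ with $\theta_0 \in \Bbbk C_n$ the $x^0$-component and $\theta_{\ge 1}$ the positive-$x$-degree part; projecting $\theta^2 = 0$ onto the $x^0$-component yields $\theta_0^2 = 0$ in the semisimple algebra $\Bbbk C_n$, forcing $\theta_0 = 0$. Hence $\theta$ has positive $x$-degree, consistent with the claimed containment.

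The main obstacle lies in step two: proving $\phi(g) \in \Bbbk g$ directly from the axioms is a nonlinear constraint, because $\phi(g)^n = 1$ entangles the $n^2$ PBW coefficients of $\phi(g)$ through the $\omega$-commutator in combinatorially delicate ways. In particular, when $\omega$ is a primitive $n$-th root of unity, the quantum binomial coefficients $\binom{n}{k}_\omega$ vanish for $1 \le k \le n-1$, producing candidate automorphisms like $g \mapsto g+x$ that satisfy $\phi(g)^n = 1$ and must be excluded using counitality and the coalgebra constraint. For the case $\alpha = -1$ with $n$ even, condition~(ii) forces $\theta = 0$ and condition~(iii) requires $m(\phi \otimes \id)\Delta(1_{T_n(\omega)}) = 2x\sum_{j=0}^{n-1}(-\omega)^j$ to vanish; since this geometric sum does vanish for some $(n,\omega)$ (e.g.\ $n=4$, $\omega = i$), the conjecture's $\phi$-triviality conclusion may need to be amended to $\phi(g) \in \{\pm g\}$ in certain cases. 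Similarly, since elements like $x^k$ with $k \ge \lceil n/2 \rceil$ also satisfy $\theta^2 = 0$, the containment $\theta \in \Bbbk x \oplus \cdots \oplus \Bbbk g^{n-1}x$ may need to be broadened to $\bigoplus_{i,\, k\ge 1} \Bbbk g^i x^k$, matching Example~\ref{example:Taft}.
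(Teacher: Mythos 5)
The statement you are addressing is labeled a \emph{conjecture} in the paper; the authors supply no proof of it (only the $n=2$ case is proved, as Proposition~\ref{prop:T2-class}), so there is no argument of theirs to compare yours against. Your proposal is therefore best judged as a proof plan, and as such it is sensible but not a proof: the decisive step --- showing that the algebra-, coalgebra-, counit-, and involution-constraints force $\phi(g)\in\Bbbk g$ and $\phi(x)\in\Bbbk x$ --- is exactly the part you flag as an obstacle and leave unresolved. The difficulty you identify is real: for $\omega$ a primitive $n$-th root of unity the $\omega$-binomial coefficients $\binom{n}{k}_\omega$ vanish for $1\le k\le n-1$, so $\phi(g)^n=1$ alone does not confine $\phi(g)$ to $\Bbbk g$, and the burden falls on the coalgebra constraint $(\phi\otimes\phi)\Delta(1)=\Delta(1)$, which you do not carry out. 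Until that computation is done, steps two and three are conditional.

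That said, your scrutiny of the statement itself is the most valuable part of the proposal, and it is essentially correct. With $\phi=\id$ condition~(iii) reduces to $\theta^2=0$ (since $m\Delta(1)=\sum_j(-\omega^j x+g^jxg^{-j})=\sum_j(-\omega^j+\omega^j)x=0$), and for $n\ge 3$ the element $\theta=x^2$ satisfies $\theta^4=0$ trivially and $\theta^2=x^4=0$ whenever $n\le 4$ --- e.g.\ $(T_3(\omega),\id,x^2)$ is an extended structure with $\theta\notin\Bbbk x\oplus\Bbbk gx\oplus\Bbbk g^2x$. This is already implicit in Example~\ref{example:Taft}, which exhibits extended structures with $\theta\in\bigoplus_{j=0,k=1}^{n-1}\Bbbk g^jx^k$. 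So the containment in the conjecture, read literally, is too narrow and should be $\bigoplus_{j,\,k\ge 1}\Bbbk g^jx^k$ (equivalently, the positive-$x$-degree part), as you say. Your caveat about $\phi(g)=-g$ for even $n$ is also a legitimate case that any proof must either exclude or absorb into an amended statement. In short: no gap relative to the paper (there is nothing to be gapped against), but your plan does not yet constitute a proof, and your corrections to the statement should be incorporated before attempting one.
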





\medskip


\section{Extended Frobenius algebras in a monoidal category}
\label{sec:pre-mon}

In this section, we first discuss monoidal categories and algebraic structures in monoidal categories in Section~\ref{sec:monbackground}. There, we generalize Definition~$\ref{def:extFrob-k}$ to the monoidal setting, following \cite[Section 2.2]{TuraevTurner}; see Definition~\ref{def:extFrobC}. Finally, we put monoidal structures on the category of extended Frobenius algebras in Section~\ref{sec:oper}. 

\subsection{Background material}\label{sec:monbackground}
For details on algebras in monoidal categories, see, for example, \cite[Chapter 3]{Kock}, \cite[Parts I and II]{TuraevVirelizier} or \cite[Chapters 3 and~4]{Walton}. The first reference also includes an introduction to Frobenius algebras in monoidal categories. Extended Frobenius algebras in monoidal categories can be found in \cite[Section 2.2]{TuraevTurner}, \cite{Czenky}, and \cite{Ocal}.

\subsubsection{Monoidal categories} A {\it monoidal category} consists of a category $\cC$ equipped with a bifunctor $\otimes:  \cC \times \cC \to \cC$,  a natural isomorphism $a:=\{a_{X,Y,Z}:  (X \otimes Y) \otimes Z \hs \equivto X \otimes (Y \otimes Z)\}_{X,Y,Z \in \cC}$, an object $\one \in \cC$, and  natural isomorphisms $\ell:=\{\ell_X:  \one \otimes X \hs \equivto X\}_{X \in \cC}$ and $r:=\{r_X : X  \otimes \one \hs \equivto X\}_{X \in \cC}$, 
such that the pentagon and triangle axioms hold.

\smallskip

Unless stated otherwise, by MacLane's strictness theorem, we will assume that all monoidal categories are {\it strict} in the sense that 
\[
X \otimes Y \otimes Z := (X \otimes Y) \otimes Z = X \otimes (Y \otimes Z), \quad \quad
X := \one \otimes X = X \otimes \one,
\]
 for all $X, Y, Z \in \cC$; that is, $a_{X,Y,Z},\; \ell_X,\; r_X$ are identity maps.

\smallskip

A monoidal category $\cC$ is {\it symmetric} if it is equipped with $c:=\{c_{X,Y}:  X \otimes Y \overset{\sim}{\to} Y \otimes X\}_{X,Y \in \cC}$,  a natural isomorphism  with $c_{Y,X} \circ c_{X,Y} = \id_{X \otimes Y}$ for $X,Y \in \cC$,  such that the  hexagon axioms hold. The component $c_{X,Y}$ of $c$, the $c^2 = \id$ property, the naturality of $c$ at a morphism $f \in \cC$, and unit coherence of $c$ are all depicted in Figure~\ref{fig:symmon}.

\medskip


\begin{figure}[h!]
\scalebox{0.6}{
\tikzset{every picture/.style={line width=0.75pt}} 
\begin{tikzpicture}[x=0.75pt,y=0.75pt,yscale=-1,xscale=1]

\draw [line width=2.25]    (31.25,30.38) .. controls (31.14,70.16) and (78.56,60.54) .. (78.34,100.68) ;
\draw [line width=2.25]    (78.11,30.35) .. controls (78.56,70.16) and (31.37,59.87) .. (31.82,100.35) ;
\draw [line width=2.25]    (156.59,31.7) .. controls (156.47,51.49) and (203.9,46.7) .. (203.67,66.68) ;
\draw [line width=2.25]    (203.45,31.68) .. controls (203.9,51.49) and (156.7,46.37) .. (157.15,66.51) ;
\draw [line width=2.25]    (157.25,66.37) .. controls (157.14,86.16) and (204.56,81.37) .. (204.34,101.35) ;
\draw [line width=2.25]    (203.67,66.68) .. controls (204.12,86.49) and (156.93,81.37) .. (157.38,101.51) ;
\draw [line width=2.25]    (249.33,30.92) -- (249.67,100.68) ;
\draw [line width=2.25]    (269.33,30.92) -- (269.67,100.68) ;
\draw [line width=2.25]    (346.25,30.05) .. controls (346.14,69.83) and (393.56,60.2) .. (393.34,100.35) ;
\draw [line width=2.25]    (393.11,30.02) .. controls (393.56,69.83) and (346.37,59.54) .. (346.82,100.01) ;
\draw [line width=2.25]    (426.25,30.05) .. controls (426.14,69.83) and (473.56,60.2) .. (473.34,100.35) ;
\draw [line width=2.25]    (473.11,30.02) .. controls (473.56,69.83) and (426.37,59.54) .. (426.82,100.01) ;
\draw [line width=2.25]    (544.25,29.72) .. controls (544.14,69.49) and (591.56,59.87) .. (591.34,100.01) ;
\draw [line width=2.25]    (591.11,29.68) .. controls (591.56,69.49) and (544.37,59.21) .. (544.82,99.68) ;
\draw [line width=2.25]    (624.25,29.72) .. controls (624.14,69.49) and (671.56,59.87) .. (671.34,100.01) ;
\draw [line width=2.25]    (671.11,29.68) .. controls (671.56,69.49) and (624.37,59.21) .. (624.82,99.68) ;
\draw   (9.67,10.5) -- (1050.6,10.5) -- (1050.6,151.01) -- (9.67,151.01) -- cycle ;
\draw [line width=2.25]  [dash pattern={on 6.75pt off 4.5pt}]  (742.05,29.98) .. controls (741.94,69.76) and (789.36,60.14) .. (789.14,100.28) ;
\draw [line width=2.25]    (788.91,29.95) .. controls (789.36,69.76) and (742.17,59.47) .. (742.62,99.95) ;
\draw [line width=2.25]    (826.33,29.92) -- (826.67,99.68) ;
\draw [line width=2.25]  [dash pattern={on 6.75pt off 4.5pt}]  (846.33,29.92) -- (846.67,99.68) ;
\draw [line width=2.25]    (921.25,29.98) .. controls (921.14,69.76) and (968.56,60.14) .. (968.34,100.28) ;
\draw [line width=2.25]  [dash pattern={on 6.75pt off 4.5pt}]  (968.11,29.95) .. controls (968.56,69.76) and (921.37,59.47) .. (921.82,99.95) ;
\draw [line width=2.25]  [dash pattern={on 6.75pt off 4.5pt}]  (1005.53,29.92) -- (1005.87,99.68) ;
\draw [line width=2.25]    (1025.53,29.92) -- (1025.87,99.68) ;

\draw (219.5,60) node [anchor=north west][inner sep=0.75pt]  [font=\large] [align=left] {=};
\draw (403.5,60) node [anchor=north west][inner sep=0.75pt]  [font=\large] [align=left] {=};
\draw (600.5,60) node [anchor=north west][inner sep=0.75pt]  [font=\large] [align=left] {=};
\draw (39,113.89) node [anchor=north west][inner sep=0.75pt] [font=\large]  [align=left] {$\displaystyle c_{X,Y}$ \ \ \ \ \ \ \ \ \ \ \ \ \ \ \ \ \ \ \ \ \ \ \ \ \ \   (S1) \ \ \ \ \ \ \ \ \ \ \ \ \ \ \ \ \ \ \ \ \  \ \ \ \ \ \ \ \ \ (S2) \ \ \ \ \ \ \ \ \ \ \ \ \ \ \ \ \ \ \ \ \ \ \ \ \ \ \ \ \ \ \  (S3) \  \ \ \ \ \ \ \ \ \ \ \ \ \ \ \ \ \ \ \ \ \ \ \ \  \ \ \ \ \ \ (S4) \ \ \ \ \ \ \ \ \ \ \ \ \ \   \ \ \ \ \ \ \ \ \ \ \ \ \ \ (S5)};
\draw  [fill={rgb, 255:red, 255; green, 255; blue, 255 }  ,fill opacity=1 ][line width=1.5]   (344.96,38.4) -- (360.96,38.4) -- (360.96,56.4) -- (344.96,56.4) -- cycle  ;
\draw (358,40.5) node [anchor=north east] [inner sep=0.75pt]  [font=\small] [align=left] {$\displaystyle f$};
\draw  [fill={rgb, 255:red, 255; green, 255; blue, 255 }  ,fill opacity=1 ][line width=1.5]   (458.56,74.4) -- (474.56,74.4) -- (474.56,92.4) -- (458.56,92.4) -- cycle  ;
\draw (472,76.5) node [anchor=north east] [inner sep=0.75pt]  [font=\small] [align=left] {$\displaystyle f$};
\draw  [fill={rgb, 255:red, 255; green, 255; blue, 255 }  ,fill opacity=1 ][line width=1.5]   (576.96,36.8) -- (592.96,36.8) -- (592.96,54.8) -- (576.96,54.8) -- cycle  ;
\draw (590,38.5) node [anchor=north east] [inner sep=0.75pt]  [font=\small] [align=left] {$\displaystyle f$};
\draw  [fill={rgb, 255:red, 255; green, 255; blue, 255 }  ,fill opacity=1 ][line width=1.5]   (622.96,74.4) -- (638.96,74.4) -- (638.96,92.4) -- (622.96,92.4) -- cycle  ;
\draw (636,76.5) node [anchor=north east] [inner sep=0.75pt]  [font=\small] [align=left] {$\displaystyle f$};
\draw (798,60) node [anchor=north west][inner sep=0.75pt]  [font=\large] [align=left] {=};
\draw (979,60) node [anchor=north west][inner sep=0.75pt]  [font=\large] [align=left] {=};
\end{tikzpicture}
}
\vspace{-.3in}
\caption{Some axioms for a symmetric monoidal category.}
\label{fig:symmon}
\end{figure}
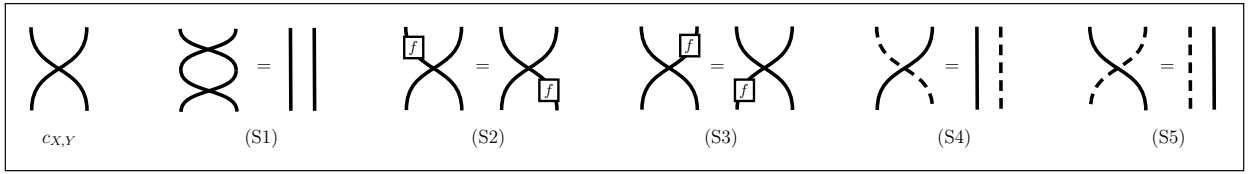




\subsubsection{Algebraic structures in monoidal categories} \label{sec:algstr} Take a monoidal category $\cC:=(\cC, \otimes, \one)$. 

\smallskip

An {\it algebra in $\cC$} is an object $A \in \cC$, equipped with morphisms $m: A \otimes A \to A$ and $u: \one \to A$ in~$\cC$, subject to associativity and unitality axioms:
\[
m(m \otimes \id_A) = m(\id_A \otimes m), \qquad m(u \otimes \id_A) = \id_A = m(\id_A \otimes u).
\]
These structures form a category, $\mathsf{Alg}(\cC)$, where a morphism $(A,m_A,u_A) \to (B,m_B,u_B)$ is a morphism $f:A \to B$ in $\cC$ such that $f \hs m_A = m_B(f \otimes f)$ and $f \hs u_A = u_B$. 

\smallskip

A {\it coalgebra in $\cC$} is an object $A \in \cC$, equipped with morphisms $\Delta: A \to A \otimes A$ and $\varep: A \to \one$ in~$\cC$, subject to coassociativity and counitality axioms:
\[
(\Delta \otimes \id_A)\Delta = (\id_A \otimes \Delta)\Delta, \qquad (\varep \otimes \id_A)\Delta = \id_A = \varep(\id_A \otimes u)\Delta.
\]
These structures form a category, $\mathsf{Coalg}(\cC)$, where a morphism $(A,\Delta_A,\varep_A) \to (B,\Delta_B,\varep_B)$ is a morphism $f:A \to B$ in $\cC$ such that $\Delta_B \hs f = (f \otimes f) \Delta_A$ and $\varep_B \hs f = \varep_A$.

\medskip

Our main algebraic structures of interest in this article are given as follows.

\begin{definition}\label{def:extFrobC} Consider the following constructions in a monoidal category $\cC:=(\cC, \otimes, \one)$.
\begin{enumerate}[\upshape (a)]
\item A {\it Frobenius algebra in $\cC$} is a tuple $(A,m,u,\Delta, \varep)$, where $(A,m,u)$ is an algebra in $\cC$, and $(A,\Delta,\varep)$ is a coalgebra in $\cC$, subject to the Frobenius law:
\[
(m \otimes \id_A)(\id_A \otimes \Delta) = \Delta m = (\id_A \otimes m)(\Delta \otimes \id_A).
\]
A {\it morphism of Frobenius algebras in $\cC$} is a morphism of the underlying algebras and coalgebras in $\cC$. 
The above objects and morphisms form a category, $\mathsf{FrobAlg}(\cC)$.
\smallskip
\item An {\it extended Frobenius algebra in $\cC$} is a tuple $(A,m,u,\Delta, \varep, \phi, \theta)$, where $(A,m,u,\Delta,\varep)$ is a Frobenius algebra in $\cC$, and $\phi:A \to A$ and $\theta: \one \to A$ are morphisms in $\cC$ such that
\smallskip
\begin{itemize}
    \item[(i)] $\phi$ is a morphism of Frobenius algebras in $\cC$, with $\phi^2 = \id_A$;
    \smallskip
    \item[(ii)] $\phi \hs m ( \theta \otimes \id_A) = m ( \theta \otimes \id_A )$;
    \smallskip
    \item[(iii)] $m (\phi \otimes \id_A) \Delta u = m (\theta \otimes \theta)$. 
\end{itemize}
\smallskip
A {\it morphism $f:(A,\phi_A,\theta_A) \to (B,\phi_B,\theta_B)$ of extended Frobenius algebras in $\cC$} is a morphism $f: A \to B$ of Frobenius algebras  in $\cC$, such that $f \hs \phi_A = \phi_B \hs f$ and $f \hs \theta_A = \theta_B$.
The above objects and morphisms form a category, $\mathsf{ExtFrobAlg}(\cC)$.
\smallskip
\item The morphisms $\phi$ and $\theta$ in part (b) are the  {\it extended structure} of the underlying Frobenius algebra. When $\phi$ and $\theta$ exist, we say that the underlying Frobenius algebra is {\it extendable}.
\smallskip
\item An extended structure $(\phi, \theta)$ on a Frobenius algebra $A$ is said to be  {\it $\phi$-trivial} if $\phi$ is the identity morphism, and is {\it $\theta$-trivial} if $\theta$ is the zero morphism (when these exist in $\cC$).
\end{enumerate}
\end{definition}

The structure morphisms for an extended Frobenius algebra in $\cC$ are depicted in Figure~\ref{fig:eFrob}, and the axioms that they satisfy are depicted in Figure~\ref{fig:eFrobax}. Here, we read diagrams from top down.

\smallskip


\begin{figure}[h!]
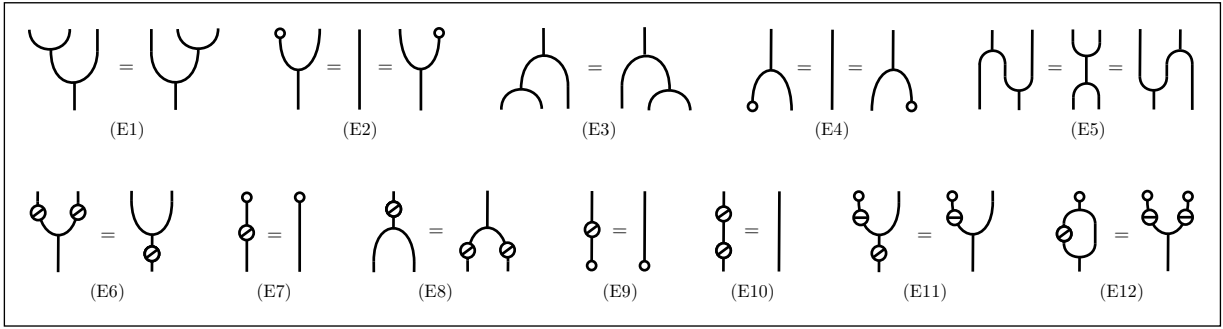


\scalebox{0.65}{

\tikzset{every picture/.style={line width=0.75pt}} 


}
\vspace{-.3in}
\caption{Axioms for an extended Frobenius algebra in $\cC$.}
\label{fig:eFrobax}
\vspace{0.05in}
\end{figure}

One useful lemma is the following,  adapted from \cite[Lemma~2.8]{TuraevTurner} for the monoidal setting.
\begin{lemma} \label{lem:EFrobid}
 If $(A, m, u, \Delta, \varep, \phi, \theta)$ is an extended Frobenius algebra in $\cC$, then \[
  m (\phi \otimes \id_A) \Delta \;  = \; m\bigl(m(\theta \otimes \theta) \otimes \id_A\bigr).\] 
\end{lemma}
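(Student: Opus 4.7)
The claim asserts that the composite $m(\phi \otimes \id_A)\Delta$, viewed as an endomorphism of $A$, coincides with left multiplication by the ``element'' $\theta^2 := m(\theta \otimes \theta): \one \to A$. The plan is to reduce the left side to something of the form ``$m(\phi \otimes \id_A)\Delta u$, tensored then multiplied against $\id_A$,'' and then invoke condition~(iii) from Definition~\ref{def:extFrobC}(b), which identifies $m(\phi \otimes \id_A)\Delta u$ with $m(\theta \otimes \theta)$. None of the axioms beyond the Frobenius law, the unit law, associativity of $m$, and condition~(iii) should be needed; in particular the involutivity of $\phi$ and condition~(ii) play no role.

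The key rewrite is the following identity for $\Delta$, which I would derive first. Unitality gives $\Delta = \Delta m (u \otimes \id_A)$, and then one of the two Frobenius laws, namely $\Delta m = (\id_A \otimes m)(\Delta \otimes \id_A)$, yields
\[
\Delta \;=\; (\id_A \otimes m)(\Delta u \otimes \id_A) \colon A \longrightarrow A \otimes A.
\]
This is the monoidal-category analogue of the field-level formula $\Delta(a) = a(1_A)^1 \otimes (1_A)^2$ recorded in~\eqref{eq:DeltaA}.

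Next, I would post-compose with $m(\phi \otimes \id_A)$ and simplify the resulting composite $m(\phi \otimes \id_A)(\id_A \otimes m): A \otimes A \otimes A \to A$. Since $\phi$ does not meet the rightmost factor, this composite equals $m(\phi \otimes m)$, and by associativity of $m$ this in turn equals $m\bigl(m(\phi \otimes \id_A) \otimes \id_A\bigr)$. Pulling this through the tensor with $\id_A$ then gives
\[
m(\phi \otimes \id_A)\Delta \;=\; m\bigl(m(\phi \otimes \id_A)\Delta u \otimes \id_A\bigr).
\]
Finally, substituting condition~(iii), $m(\phi \otimes \id_A)\Delta u = m(\theta \otimes \theta)$, yields the desired equality $m\bigl(m(\theta \otimes \theta) \otimes \id_A\bigr)$.

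There is no real obstacle here; the only care needed is bookkeeping of tensor factors when invoking associativity and the Frobenius law. Graphically this is the one-line picture that slides the $\Delta$ across the unit via a Frobenius relation, places a $\phi$ on the newly created left strand, and then recognizes the resulting left cap as $\theta^2$ by condition~(iii). One could equivalently present the argument as a chain of commutative squares, but the string-diagrammatic/element-style manipulation above already makes the four ingredients (Frobenius law, unitality, associativity, condition~(iii)) transparent.
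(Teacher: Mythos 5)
Your proof is correct and follows essentially the same route as the paper's: Figure~\ref{fig:eFrobid} is precisely the graphical version of your chain, inserting a unit via (E2), sliding it across $\Delta$ with the Frobenius law (E5), regrouping by associativity (E1), and finally recognizing $m(\phi\otimes\id_A)\Delta u$ as $m(\theta\otimes\theta)$ by axiom (E12), i.e.\ condition~(iii). The algebraic identity $\Delta=(\id_A\otimes m)(\Delta u\otimes\id_A)$ and the interchange/associativity bookkeeping you carry out are exactly right, so nothing is missing.
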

\begin{proof}
This is proved in Figure~\ref{fig:eFrobid} with references to Figures~\ref{fig:eFrob}~and~\ref{fig:eFrobax}.
\end{proof}

\pagebreak


\begin{figure}[h!]

\scalebox{0.85}{
\tikzset{every picture/.style={line width=0.75pt}} 
\begin{tikzpicture}[x=0.75pt,y=0.75pt,yscale=-1,xscale=1]

\draw   (11.5,10.5) -- (480,10.5) -- (480,120.52) -- (11.5,120.52) -- cycle ;
\draw  [line width=1.5]  (442.25,34.29) .. controls (442.24,36.14) and (440.73,37.63) .. (438.88,37.62) .. controls (437.03,37.61) and (435.54,36.11) .. (435.55,34.26) .. controls (435.56,32.41) and (437.06,30.92) .. (438.91,30.92) .. controls (440.76,30.93) and (442.25,32.44) .. (442.25,34.29) -- cycle ;
\draw [line width=1.5]    (414.77,37.01) .. controls (414.17,68.76) and (439.17,68.26) .. (438.88,37.62) ;
\draw [line width=1.5]    (459.67,29.76) -- (459.67,65.26) ;
\draw  [fill={rgb, 255:red, 255; green, 255; blue, 255 }  ,fill opacity=1 ][line width=1.5]  (409.72,47.18) .. controls (409.72,44.19) and (412.09,41.76) .. (415.03,41.76) .. controls (417.96,41.76) and (420.33,44.19) .. (420.33,47.18) .. controls (420.33,50.18) and (417.96,52.61) .. (415.03,52.61) .. controls (412.09,52.61) and (409.72,50.18) .. (409.72,47.18) -- cycle ;
\draw [line width=1.5]    (409.72,47.18) -- (420.33,47.18) ;
\draw  [line width=1.5]  (418.13,33.68) .. controls (418.13,35.53) and (416.62,37.02) .. (414.77,37.01) .. controls (412.92,37) and (411.43,35.5) .. (411.44,33.65) .. controls (411.45,31.8) and (412.95,30.31) .. (414.8,30.31) .. controls (416.65,30.32) and (418.14,31.83) .. (418.13,33.68) -- cycle ;
\draw [line width=1.5]    (36.52,50.18) .. controls (36.59,37.18) and (59.72,36.58) .. (59.72,50.58) ;
\draw [line width=1.5]    (36.52,79.48) .. controls (36.49,93.38) and (60.12,93.08) .. (59.72,79.48) ;
\draw [line width=1.5]    (36.52,79.48) -- (36.52,50.18) ;
\draw [line width=1.5]    (59.72,79.48) -- (59.72,50.58) ;
\draw [line width=1.5]    (48.05,100.54) -- (48.12,89.94) ;
\draw  [fill={rgb, 255:red, 255; green, 255; blue, 255 }  ,fill opacity=1 ][line width=1.5]  (31.42,64.52) .. controls (31.42,61.52) and (33.79,59.09) .. (36.73,59.09) .. controls (39.66,59.09) and (42.03,61.52) .. (42.03,64.52) .. controls (42.03,67.51) and (39.66,69.94) .. (36.73,69.94) .. controls (33.79,69.94) and (31.42,67.51) .. (31.42,64.52) -- cycle ;
\draw [line width=1.5]    (32.45,67.64) -- (41.21,61.14) ;
\draw  [fill={rgb, 255:red, 255; green, 255; blue, 255 }  ,fill opacity=1 ][line width=1.5]  (31.42,64.52) .. controls (31.42,61.52) and (33.79,59.09) .. (36.73,59.09) .. controls (39.66,59.09) and (42.03,61.52) .. (42.03,64.52) .. controls (42.03,67.51) and (39.66,69.94) .. (36.73,69.94) .. controls (33.79,69.94) and (31.42,67.51) .. (31.42,64.52) -- cycle ;
\draw [line width=1.5]    (32.45,67.64) -- (41.21,61.14) ;
\draw  [fill={rgb, 255:red, 255; green, 255; blue, 255 }  ,fill opacity=1 ][line width=1.5]  (433.22,47.68) .. controls (433.22,44.69) and (435.59,42.26) .. (438.53,42.26) .. controls (441.46,42.26) and (443.83,44.69) .. (443.83,47.68) .. controls (443.83,50.68) and (441.46,53.11) .. (438.53,53.11) .. controls (435.59,53.11) and (433.22,50.68) .. (433.22,47.68) -- cycle ;
\draw [line width=1.5]    (433.22,47.68) -- (443.83,47.68) ;
\draw [line width=1.5]    (48.05,40.71) -- (48.12,30.11) ;
\draw [line width=1.5]    (426.67,64.76) .. controls (426.67,83.76) and (459.67,84.76) .. (459.67,65.26) ;
\draw [line width=1.5]    (443.67,79.76) -- (443.67,100.26) ;
\draw [line width=1.5]    (426.67,64.76) -- (426.73,60.79) ;
\draw [line width=1.5]    (126.45,64.79) .. controls (126.51,51.79) and (149.65,51.19) .. (149.65,65.19) ;
\draw [line width=1.5]    (126.19,79.81) .. controls (126.15,93.71) and (149.79,93.41) .. (149.39,79.81) ;
\draw [line width=1.5]    (126.19,79.81) -- (126.45,64.79) ;
\draw [line width=1.5]    (149.39,79.81) -- (149.65,65.19) ;
\draw [line width=1.5]    (137.72,100.88) -- (137.79,90.28) ;
\draw  [fill={rgb, 255:red, 255; green, 255; blue, 255 }  ,fill opacity=1 ][line width=1.5]  (120.75,71.85) .. controls (120.75,68.85) and (123.13,66.42) .. (126.06,66.42) .. controls (128.99,66.42) and (131.37,68.85) .. (131.37,71.85) .. controls (131.37,74.84) and (128.99,77.27) .. (126.06,77.27) .. controls (123.13,77.27) and (120.75,74.84) .. (120.75,71.85) -- cycle ;
\draw [line width=1.5]    (121.79,74.98) -- (130.54,68.47) ;
\draw  [fill={rgb, 255:red, 255; green, 255; blue, 255 }  ,fill opacity=1 ][line width=1.5]  (120.75,71.85) .. controls (120.75,68.85) and (123.13,66.42) .. (126.06,66.42) .. controls (128.99,66.42) and (131.37,68.85) .. (131.37,71.85) .. controls (131.37,74.84) and (128.99,77.27) .. (126.06,77.27) .. controls (123.13,77.27) and (120.75,74.84) .. (120.75,71.85) -- cycle ;
\draw [line width=1.5]    (121.79,74.98) -- (130.54,68.47) ;
\draw [line width=1.5]    (138.05,55.38) -- (138.11,48.45) ;
\draw [line width=1.5]    (126.19,37.48) .. controls (126.15,51.38) and (149.79,51.08) .. (149.39,37.48) ;
\draw [line width=1.5]    (149.39,37.48) -- (149.45,30.55) ;
\draw  [line width=1.5]  (129.55,34.14) .. controls (129.54,35.99) and (128.04,37.49) .. (126.19,37.48) .. controls (124.34,37.47) and (122.85,35.96) .. (122.85,34.11) .. controls (122.86,32.26) and (124.37,30.77) .. (126.22,30.78) .. controls (128.07,30.79) and (129.56,32.3) .. (129.55,34.14) -- cycle ;
\draw [line width=1.5]    (217.17,54.79) .. controls (217.24,41.79) and (232.9,41.2) .. (232.9,55.2) ;
\draw [line width=1.5]    (217.19,79.48) .. controls (217.15,93.38) and (240.84,93.12) .. (240.39,79.48) ;
\draw [line width=1.5]    (217.19,79.48) -- (217.17,54.79) ;
\draw [line width=1.5]    (240.39,79.48) -- (240.5,71.45) ;
\draw [line width=1.5]    (228.72,100.54) -- (228.79,89.94) ;
\draw  [fill={rgb, 255:red, 255; green, 255; blue, 255 }  ,fill opacity=1 ][line width=1.5]  (211.75,66.85) .. controls (211.75,63.85) and (214.13,61.42) .. (217.06,61.42) .. controls (219.99,61.42) and (222.37,63.85) .. (222.37,66.85) .. controls (222.37,69.84) and (219.99,72.27) .. (217.06,72.27) .. controls (214.13,72.27) and (211.75,69.84) .. (211.75,66.85) -- cycle ;
\draw [line width=1.5]    (212.79,69.98) -- (221.54,63.47) ;
\draw  [fill={rgb, 255:red, 255; green, 255; blue, 255 }  ,fill opacity=1 ][line width=1.5]  (211.75,66.85) .. controls (211.75,63.85) and (214.13,61.42) .. (217.06,61.42) .. controls (219.99,61.42) and (222.37,63.85) .. (222.37,66.85) .. controls (222.37,69.84) and (219.99,72.27) .. (217.06,72.27) .. controls (214.13,72.27) and (211.75,69.84) .. (211.75,66.85) -- cycle ;
\draw [line width=1.5]    (212.79,69.98) -- (221.54,63.47) ;
\draw [line width=1.5]    (232.84,62.12) -- (232.9,55.2) ;
\draw [line width=1.5]    (232.84,62.12) .. controls (232.8,76.02) and (248.57,75.39) .. (248.17,61.79) ;
\draw [line width=1.5]    (225.39,44.48) -- (225.5,38.45) ;
\draw  [line width=1.5]  (228.87,35.12) .. controls (228.86,36.97) and (227.35,38.46) .. (225.5,38.45) .. controls (223.65,38.45) and (222.16,36.94) .. (222.17,35.09) .. controls (222.18,33.24) and (223.68,31.75) .. (225.53,31.76) .. controls (227.38,31.77) and (228.88,33.27) .. (228.87,35.12) -- cycle ;
\draw [line width=1.5]    (248.17,61.79) -- (248.17,30.79) ;
\draw [line width=1.5]    (317.17,53.45) .. controls (317.24,40.45) and (332.9,39.87) .. (332.9,53.87) ;
\draw [line width=1.5]    (326,83.12) .. controls (326.34,94.79) and (348.34,94.12) .. (348,82.45) ;
\draw [line width=1.5]    (317,69.45) -- (317.17,53.45) ;
\draw [line width=1.5]    (326,83.12) -- (325.84,78.45) ;
\draw [line width=1.5]    (336.67,99.79) -- (336.67,91.12) ;
\draw  [fill={rgb, 255:red, 255; green, 255; blue, 255 }  ,fill opacity=1 ][line width=1.5]  (311.75,59.98) .. controls (311.75,56.99) and (314.13,54.56) .. (317.06,54.56) .. controls (319.99,54.56) and (322.37,56.99) .. (322.37,59.98) .. controls (322.37,62.98) and (319.99,65.41) .. (317.06,65.41) .. controls (314.13,65.41) and (311.75,62.98) .. (311.75,59.98) -- cycle ;
\draw [line width=1.5]    (312.79,63.11) -- (321.54,56.61) ;
\draw  [fill={rgb, 255:red, 255; green, 255; blue, 255 }  ,fill opacity=1 ][line width=1.5]  (311.75,59.98) .. controls (311.75,56.99) and (314.13,54.56) .. (317.06,54.56) .. controls (319.99,54.56) and (322.37,56.99) .. (322.37,59.98) .. controls (322.37,62.98) and (319.99,65.41) .. (317.06,65.41) .. controls (314.13,65.41) and (311.75,62.98) .. (311.75,59.98) -- cycle ;
\draw [line width=1.5]    (312.79,63.11) -- (321.54,56.61) ;
\draw [line width=1.5]    (333,69.45) -- (332.9,53.87) ;
\draw [line width=1.5]    (317,69.45) .. controls (317,81.12) and (333,82.45) .. (333,69.45) ;
\draw [line width=1.5]    (325.39,43.14) -- (325.5,37.12) ;
\draw  [line width=1.5]  (328.87,33.79) .. controls (328.86,35.64) and (327.35,37.13) .. (325.5,37.12) .. controls (323.65,37.11) and (322.16,35.61) .. (322.17,33.76) .. controls (322.18,31.91) and (323.68,30.42) .. (325.53,30.42) .. controls (327.38,30.43) and (328.88,31.94) .. (328.87,33.79) -- cycle ;
\draw [line width=1.5]    (348,82.45) -- (348.17,29.45) ;

\draw (75.93,46.97) node [anchor=north west][inner sep=0.75pt]   [align=left] {{\scriptsize (E2) \ \ \ \ \ \ \ \ \ \  \hs \ \ \ \ \ \ \ (E5) \ \ \ \ \ \ \ \ \ \  \ \ \ \ \ \ \ \ \ \ \ (E1) \ \ \ \ \ \ \ \  \ \ \ \ \hs \ \ \ \ \ (E12)}};
\draw (375.05,62) node [anchor=north west][inner sep=0.75pt]   [align=left] {=};
\draw (80.17,62) node [anchor=north west][inner sep=0.75pt]   [align=left] {=};
\draw (173,62) node [anchor=north west][inner sep=0.75pt]   [align=left] {=};
\draw (279,62) node [anchor=north west][inner sep=0.75pt]   [align=left] {=};
\end{tikzpicture}
}
\vspace{-.1in}
\caption{Proof of Lemma~\ref{lem:EFrobid}.}
\label{fig:eFrobid}
\end{figure}

\begin{proposition} \label{prop:FrobHom}
  A morphism of extended Frobenius algebras in  $\cC$ must be an isomorphism.
\end{proposition}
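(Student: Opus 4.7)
The plan is to prove the slightly stronger statement that every morphism of the underlying Frobenius algebras is already an isomorphism; the extended structure data $(\phi,\theta)$ plays no role. The strategy is to exploit the fact that a Frobenius algebra $A$ in $\cC$ is self-dual, with evaluation $\ev_A := \varep_A\, m_A : A \otimes A \to \one$ and coevaluation $\coev_A := \Delta_A\, u_A : \one \to A \otimes A$. These satisfy the snake identities
\[
(\id_A \otimes \ev_A)(\coev_A \otimes \id_A) \;=\; \id_A \;=\; (\ev_A \otimes \id_A)(\id_A \otimes \coev_A),
\]
which follow quickly from the Frobenius law combined with (co)unitality, and which are very transparent in the graphical calculus already set up in Figure~\ref{fig:eFrobax}.

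Given a morphism $f:(A,\phi_A,\theta_A) \to (B,\phi_B,\theta_B)$ in $\mathsf{ExtFrobAlg}(\cC)$, I would define the candidate inverse $g:B \to A$ as the mate of $f$ under these self-dualities:
\[
g \;:=\; (\id_A \otimes \varep_B\, m_B)\,(\id_A \otimes f \otimes \id_B)\,(\Delta_A\, u_A \otimes \id_B).
\]
To verify $g \circ f = \id_A$, I would slide the postcomposed $f$ on the rightmost strand into the middle, use $m_B(f\otimes f) = f\, m_A$ and $\varep_B\, f = \varep_A$ to replace the central block by $\varep_A m_A = \ev_A$, and then collapse the resulting cap–cup pairing via the snake identity for $A$. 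To verify $f \circ g = \id_B$, I would dually push $f$ upward through the coevaluation strand using $(f \otimes f)\Delta_A = \Delta_B f$ and $f\, u_A = u_B$ to rewrite $(f\otimes \id_A)(\Delta_A u_A \otimes \id_A)$ as $(\id_B \otimes f^{-1}\text{-free stuff})(\Delta_B u_B \otimes \id_A)$ and then apply the snake identity for $B$.

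The main point is conceptual rather than computational: $f$ is a morphism of self-dual objects for the self-dualities canonically attached to the two Frobenius structures, and the mate construction produces an honest two-sided inverse whenever the morphism respects both the algebra and coalgebra structure. The only real bookkeeping is keeping track of the compatibilities of $f$ with $m,u,\Delta,\varep$ at each step, which is the expected obstacle; given the graphical calculus already established in this section, the verification amounts to two short diagrammatic slides, each ending in a snake identity.
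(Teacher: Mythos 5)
Your proposal is correct and takes essentially the same approach as the paper: the paper likewise reduces to the underlying Frobenius structure, defines the candidate inverse $g$ as the mate of $f$ under the self-dualities given by $\varepsilon\, m$ and $\Delta\, u$ (Figure~\ref{fig:defg}), and verifies $gf=\id_A$ and $fg=\id_B$ by sliding $f$ through via its (co)algebra-morphism properties and then collapsing with the Frobenius law and (co)unitality --- which is exactly your snake-identity step. The only difference is the cosmetic choice of which leg of the coevaluation carries $f$.
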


\begin{proof}
 This follows from the well-known fact that a morphism of Frobenius algebras  in $\cC$ must be an isomorphism. We repeat the proof here for the reader's convenience. Take a morphism of (extended) Frobenius algebras $f: A \to B$ in $\cC$, that is, $f$ is a morphism of the underlying algebras and coalgebras in $\cC$. In graphical calculus, we will denote the (extended) Frobenius structure morphisms on $A$ by those given in Figure \ref{fig:eFrob}, and the (extended) Frobenius structure morphisms on $B$ will be denoted according to Figure \ref{fig:StructMorphAB}.  We then define a morphism $g:B\to A$ in Figure \ref{fig:defg}, and show that $gf = \id_A$ and $fg = \id_B$ using graphical calculus in Figure \ref{fig:FrobHomProof}. 
\begin{figure}[h!]
\begin{minipage}{.61\textwidth}
\centering
\scalebox{.87}{
\tikzset{every picture/.style={line width=0.75pt}} 

\begin{tikzpicture}[x=0.75pt,y=0.75pt,yscale=-1,xscale=1]

\draw [line width=1.25]    (30,53) .. controls (30,82) and (61.5,81.5) .. (61,52.5) ;
\draw [line width=1.25]    (46,74) -- (46,92.5) ;
\draw [line width=1.25]    (99.44,43.5) -- (99.44,83) ;
\draw [line width=1.25]    (170,96) .. controls (170.02,67) and (139.02,66.98) .. (139.5,95.98) ;
\draw [line width=1.25]    (154.01,73.5) -- (154,54.5) ;
\draw [line width=1.25]    (210.57,105.5) -- (210.55,66) ;
\draw [line width=1.25]    (261,52.38) -- (261,93.63) ;
\draw [line width=1.25]    (310.5,43.63) -- (310.5,83.13) ;
\draw   (10,20) -- (340.5,20) -- (340.5,150) -- (10,150) -- cycle ;
\draw  [line width=1.25]  (203.13,74.75) .. controls (203.13,70.64) and (206.46,67.31) .. (210.56,67.31) .. controls (214.67,67.31) and (218,70.64) .. (218,74.75) .. controls (218,78.85) and (214.67,82.18) .. (210.56,82.18) .. controls (206.46,82.18) and (203.13,78.85) .. (203.13,74.75) -- cycle ;
\draw  [line width=1.25]  (146.57,74.5) .. controls (146.57,70.39) and (149.9,67.07) .. (154.01,67.07) .. controls (158.11,67.07) and (161.44,70.39) .. (161.44,74.5) .. controls (161.44,78.61) and (158.11,81.94) .. (154.01,81.94) .. controls (149.9,81.94) and (146.57,78.61) .. (146.57,74.5) -- cycle ;
\draw  [line width=1.25]  (92,74.56) .. controls (92,70.46) and (95.33,67.13) .. (99.44,67.13) .. controls (103.54,67.13) and (106.87,70.46) .. (106.87,74.56) .. controls (106.87,78.67) and (103.54,82) .. (99.44,82) .. controls (95.33,82) and (92,78.67) .. (92,74.56) -- cycle ;
\draw  [line width=1.25]  (38.56,75.5) .. controls (38.56,71.4) and (41.89,68.07) .. (46,68.07) .. controls (50.11,68.07) and (53.44,71.4) .. (53.44,75.5) .. controls (53.44,79.61) and (50.11,82.94) .. (46,82.94) .. controls (41.89,82.94) and (38.56,79.61) .. (38.56,75.5) -- cycle ;
\draw  [fill={rgb, 255:red, 255; green, 255; blue, 255 }  ,fill opacity=1 ][line width=1.25]  (95.93,43.5) .. controls (95.93,41.57) and (97.5,40) .. (99.44,40) .. controls (101.37,40) and (102.94,41.57) .. (102.94,43.5) .. controls (102.94,45.44) and (101.37,47.01) .. (99.44,47.01) .. controls (97.5,47.01) and (95.93,45.44) .. (95.93,43.5) -- cycle ;
\draw  [fill={rgb, 255:red, 255; green, 255; blue, 255 }  ,fill opacity=1 ][line width=1.25]  (307,43.63) .. controls (307,41.69) and (308.57,40.13) .. (310.5,40.13) .. controls (312.43,40.13) and (314,41.69) .. (314,43.63) .. controls (314,45.56) and (312.43,47.13) .. (310.5,47.13) .. controls (308.57,47.13) and (307,45.56) .. (307,43.63) -- cycle ;
\draw  [fill={rgb, 255:red, 255; green, 255; blue, 255 }  ,fill opacity=1 ][line width=1.25]  (207.07,106.5) .. controls (207.07,104.56) and (208.64,102.99) .. (210.57,102.99) .. controls (212.51,102.99) and (214.08,104.56) .. (214.08,106.5) .. controls (214.08,108.43) and (212.51,110) .. (210.57,110) .. controls (208.64,110) and (207.07,108.43) .. (207.07,106.5) -- cycle ;
\draw  [fill={rgb, 255:red, 255; green, 255; blue, 255 }  ,fill opacity=1 ][line width=1.25]  (254.13,74.56) .. controls (254.13,70.46) and (257.46,67.13) .. (261.56,67.13) .. controls (265.67,67.13) and (269,70.46) .. (269,74.56) .. controls (269,78.67) and (265.67,82) .. (261.56,82) .. controls (257.46,82) and (254.13,78.67) .. (254.13,74.56) -- cycle ;
\draw [fill={rgb, 255:red, 255; green, 255; blue, 255 }  ,fill opacity=1 ][line width=1.25]    (256.62,78.68) -- (267,70) ;

\draw  [line width=1.25]  (258.06,74.56) .. controls (258.06,72.63) and (259.63,71.06) .. (261.56,71.06) .. controls (263.5,71.06) and (265.07,72.63) .. (265.07,74.56) .. controls (265.07,76.5) and (263.5,78.07) .. (261.56,78.07) .. controls (259.63,78.07) and (258.06,76.5) .. (258.06,74.56) -- cycle ;

\draw [line width=1.25]    (30,53) -- (30,40) ;
\draw [line width=1.25]    (61,52.5) -- (61,40) ;
\draw [line width=1.25]    (46,110) -- (46,92.5) ;
\draw [line width=1.25]    (100,110) -- (99.44,73) ;
\draw [line width=1.25]    (139.5,109.99) -- (139.5,94.98) ;
\draw [line width=1.25]    (170,110.51) -- (170,92.51) ;
\draw [line width=1.25]    (154,54.5) -- (154,39.5) ;
\draw [line width=1.25]    (210.55,77) -- (210,40) ;
\draw [line width=1.25]    (261,110) -- (261,93.63) ;
\draw [line width=1.25]    (261,52.38) -- (260.99,40) ;
\draw [line width=1.25]    (310,110.5) -- (310.5,73.13) ;
\draw  [fill={rgb, 255:red, 255; green, 255; blue, 255 }  ,fill opacity=1 ][line width=1.25]  (304.66,70.76) .. controls (307.24,67.56) and (311.93,67.07) .. (315.12,69.66) .. controls (318.31,72.24) and (318.8,76.93) .. (316.22,80.12) .. controls (313.63,83.31) and (308.95,83.8) .. (305.76,81.22) .. controls (302.56,78.63) and (302.07,73.95) .. (304.66,70.76) -- cycle ;
\draw [fill={rgb, 255:red, 255; green, 255; blue, 255 }  ,fill opacity=1 ][line width=1.25]    (304,75.53) -- (317.53,75.31) ;

\draw  [line width=1.25]  (306.93,75.44) .. controls (306.93,73.5) and (308.5,71.93) .. (310.44,71.93) .. controls (312.37,71.93) and (313.94,73.5) .. (313.94,75.44) .. controls (313.94,77.37) and (312.37,78.94) .. (310.44,78.94) .. controls (308.5,78.94) and (306.93,77.37) .. (306.93,75.44) -- cycle ;

\draw (38,122) node [anchor=north west][inner sep=0.75pt]  [font=\small]  {$m_{B}$};
\draw (93,122) node [anchor=north west][inner sep=0.75pt]  [font=\small]  {$u_{B}$};
\draw (145,118) node [anchor=north west][inner sep=0.75pt]  [font=\small]  {$\Delta _{B}$};
\draw (252,118) node [anchor=north west][inner sep=0.75pt]  [font=\small]  {$\phi_{B}$};
\draw (204,122) node [anchor=north west][inner sep=0.75pt]  [font=\small]  {$\varepsilon_{B}$};
\draw (303,118) node [anchor=north west][inner sep=0.75pt]  [font=\small]  {$\theta_{B}$};

\end{tikzpicture}
}
\vspace{-.1in}
\caption{Extended Frobenius structure on $B$.}
\label{fig:StructMorphAB}
\end{minipage}         
\begin{minipage}{.38\textwidth}
\centering

\scalebox{.78}{\tikzset{every picture/.style={line width=0.75pt}} 

\begin{tikzpicture}[x=0.75pt,y=0.75pt,yscale=-1,xscale=1]

\draw [line width=1.5]    (174.5,43.5) -- (174.5,51.5) ;
\draw [line width=1.5]    (190.5,80.51) .. controls (190.51,51.51) and (159.51,51.49) .. (160,80.49) ;
\draw [line width=1.5]    (174.51,58.5) -- (174.5,43.5) ;
\draw [line width=1.5]    (39.5,40) -- (40,150) ;
\draw [line width=1.5]    (129.5,40) -- (129,94.5) ;
\draw [line width=1.5]    (160,76.49) -- (160,93) ;
\draw [line width=1.5]    (190.5,77.51) -- (190,150.5) ;
\draw [line width=1.5]    (129,90.5) .. controls (129,119.5) and (160.5,119) .. (160,90) ;
\draw [line width=1.5]    (145,122) -- (145,139) ;
\draw [line width=1.5]    (145.01,151) -- (144.99,111.5) ;
\draw   (10.5,20.5) -- (210.5,20.5) -- (210.5,170) -- (10.5,170) -- cycle ;
\draw  [line width=1.5]  (137.55,111.5) .. controls (137.55,107.39) and (140.88,104.06) .. (144.99,104.06) .. controls (149.1,104.06) and (152.43,107.39) .. (152.43,111.5) .. controls (152.43,115.61) and (149.1,118.94) .. (144.99,118.94) .. controls (140.88,118.94) and (137.55,115.61) .. (137.55,111.5) -- cycle ;
\draw  [line width=1.5]  (137.56,131.56) .. controls (137.56,127.46) and (140.89,124.13) .. (145,124.13) .. controls (149.11,124.13) and (152.44,127.46) .. (152.44,131.56) .. controls (152.44,135.67) and (149.11,139) .. (145,139) .. controls (140.89,139) and (137.56,135.67) .. (137.56,131.56) -- cycle ;
\draw  [fill={rgb, 255:red, 255; green, 255; blue, 255 }  ,fill opacity=1 ][line width=1.5]  (141.51,147.5) .. controls (141.51,145.56) and (143.08,143.99) .. (145.01,143.99) .. controls (146.94,143.99) and (148.51,145.56) .. (148.51,147.5) .. controls (148.51,149.43) and (146.94,151) .. (145.01,151) .. controls (143.08,151) and (141.51,149.43) .. (141.51,147.5) -- cycle ;
\draw  [fill={rgb, 255:red, 255; green, 255; blue, 255 }  ,fill opacity=1 ][line width=1.5]  (171,44) .. controls (171,42.06) and (172.57,40.49) .. (174.5,40.49) .. controls (176.43,40.49) and (178,42.06) .. (178,44) .. controls (178,45.93) and (176.43,47.5) .. (174.5,47.5) .. controls (172.57,47.5) and (171,45.93) .. (171,44) -- cycle ;

\draw  [fill={rgb, 255:red, 255; green, 255; blue, 255 }  ,fill opacity=1 ][line width=1.5]   (32.26,85.5) -- (48.26,85.5) -- (48.26,103.5) -- (32.26,103.5) -- cycle  ;
\draw (45,90) node [anchor=north east] [inner sep=0.75pt]  [font=\footnotesize] [align=left] {$\displaystyle g$};
\draw (79,87.5) node [anchor=north west][inner sep=0.75pt]   [align=left] {:=};
\draw  [fill={rgb, 255:red, 255; green, 255; blue, 255 }  ,fill opacity=1 ][line width=1.5]   (152.26,76.5) -- (168.26,76.5) -- (168.26,94.5) -- (152.26,94.5) -- cycle  ;
\draw (165.26,79.25) node [anchor=north east] [inner sep=0.75pt]  [font=\footnotesize] [align=left] {$\displaystyle f$};
\draw (24,38) node [anchor=north west][inner sep=0.75pt]   [align=left] {$\displaystyle B$};
\draw (24,136.5) node [anchor=north west][inner sep=0.75pt]   [align=left] {$\displaystyle A$};
\end{tikzpicture}
}
\vspace{-0.1in}
\caption{Defining $g$.}
\label{fig:defg}
\end{minipage}
\end{figure}

                    \begin{figure}[h!]
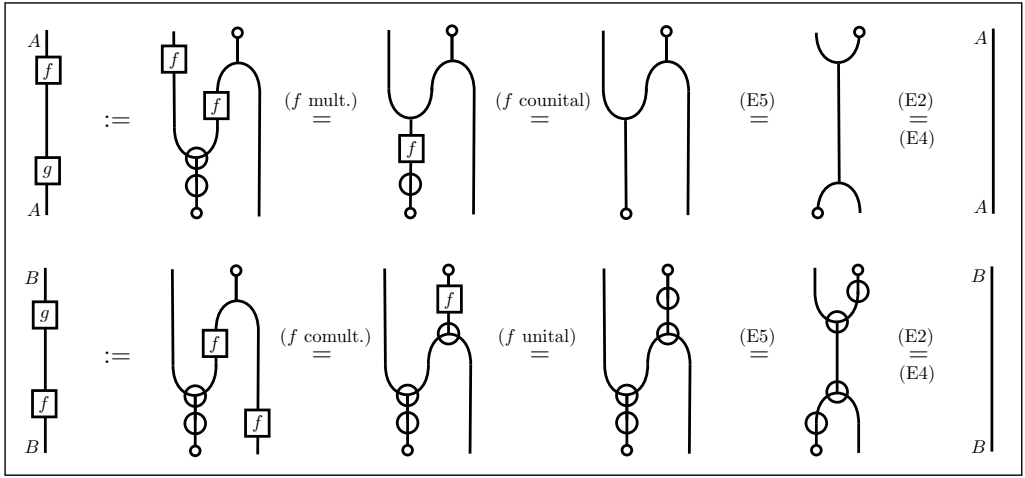


\tikzset{every picture/.style={line width=0.75pt}} 
\scalebox{.7}{     

}

\vspace{-0.1 in}
\caption{Proof that $gf= \id_A$ and $fg = \id_B$.}
\label{fig:FrobHomProof}
\end{figure}

\end{proof}


\subsection{Structure results} 
\label{sec:oper}
Recall the category $\mathsf{ExtFrobAlg}(\cC)$ defined in Definition~\ref{def:extFrobC}. We put monoidal structures on this category, using two distinct monoidal products, in the following results.

\begin{proposition} \label{prop:Ext-mon}
 Let $(\cC,\otimes,\one,c)$ be a symmetric monoidal category. Then,   $\ExtFrobAlg(\cC)$ is monoidal with $\otimes:=\otimes^{\cC}$ and $\one:=\one^{\cC}$. 
\end{proposition}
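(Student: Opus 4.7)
The plan is to define an explicit tensor product and unit object on $\mathsf{ExtFrobAlg}(\cC)$, then verify that all axioms of Definition~\ref{def:extFrobC}(b) are preserved, and finally observe that the associator and unitors inherited from $\cC$ automatically satisfy the pentagon and triangle axioms.

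\medskip

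\emph{Setup.} For $(A, \phi_A, \theta_A), (B, \phi_B, \theta_B) \in \mathsf{ExtFrobAlg}(\cC)$, I would equip $A \otimes B$ with the standard Frobenius structure in a symmetric monoidal category, namely
\[
m_{A\otimes B} = (m_A \otimes m_B)(\id_A \otimes c_{B,A} \otimes \id_B), \qquad u_{A\otimes B} = u_A \otimes u_B,
\]
\[
\Delta_{A\otimes B} = (\id_A \otimes c_{A,B} \otimes \id_B)(\Delta_A \otimes \Delta_B), \qquad \varep_{A\otimes B} = \varep_A \otimes \varep_B,
\]
together with the extended structure
\[
\phi_{A \otimes B} = \phi_A \otimes \phi_B, \qquad \theta_{A \otimes B} = \theta_A \otimes \theta_B.
\]
The fact that this is a Frobenius algebra in $\cC$ is standard (see, e.g., \cite[Chapter~3]{Kock}), so I would cite it and focus on conditions~(i)--(iii). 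For the unit, I would equip $\one$ with the trivial Frobenius structure ($m = u = \Delta = \varep = \id_\one$) together with $\phi = \id_\one$ and $\theta = \id_\one$; axioms (i)--(iii) are then all trivially valid.

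\medskip

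\emph{Verifying (i)--(iii) on $A \otimes B$.} For (i), since $\phi_A$ and $\phi_B$ are each morphisms of Frobenius algebras, and tensor products of algebra/coalgebra morphisms in a symmetric monoidal category are again algebra/coalgebra morphisms (using naturality of $c$), $\phi_A \otimes \phi_B$ is a Frobenius morphism. Its square is $\phi_A^2 \otimes \phi_B^2 = \id_{A \otimes B}$. For (ii), apply the $\cC$-interchange law and the naturality of $c$ to reduce
\[
\phi_{A \otimes B} \, m_{A\otimes B}(\theta_{A\otimes B} \otimes \id_{A \otimes B}) \;=\; \bigl(\phi_A m_A(\theta_A \otimes \id_A)\bigr) \otimes \bigl(\phi_B m_B(\theta_B \otimes \id_B)\bigr),
\]
which equals $m_{A \otimes B}(\theta_{A \otimes B} \otimes \id_{A \otimes B})$ by axiom (ii) for $A$ and $B$ separately. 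For (iii), a similar calculation using symmetry and naturality gives
\[
m_{A\otimes B}(\phi_{A\otimes B} \otimes \id_{A\otimes B})\Delta_{A\otimes B}\, u_{A\otimes B} \;=\; \bigl(m_A(\phi_A \otimes \id_A)\Delta_A u_A\bigr) \otimes \bigl(m_B(\phi_B \otimes \id_B)\Delta_B u_B\bigr),
\]
which by axiom (iii) on each factor equals $m_A(\theta_A \otimes \theta_A) \otimes m_B(\theta_B \otimes \theta_B) = m_{A \otimes B}(\theta_{A \otimes B} \otimes \theta_{A \otimes B})$. Graphical calculus (as in Figure~\ref{fig:eFrobax}) would make each of these verifications transparent.

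\medskip

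\emph{Bifunctoriality and coherence.} For morphisms $f : (A,\phi_A,\theta_A) \to (A', \phi_{A'}, \theta_{A'})$ and $g : (B, \phi_B, \theta_B) \to (B', \phi_{B'}, \theta_{B'})$ in $\mathsf{ExtFrobAlg}(\cC)$, I would check that $f \otimes g$ is a Frobenius algebra morphism (standard) and further that $(\phi_{A'} \otimes \phi_{B'})(f \otimes g) = (f \otimes g)(\phi_A \otimes \phi_B)$ and $(f \otimes g)(\theta_A \otimes \theta_B) = \theta_{A'} \otimes \theta_{B'}$, each of which follows directly from the corresponding property of $f$ and $g$. Finally, the associator $a_{A,B,C}$ and the unitors $\ell_A, r_A$ of $\cC$ are Frobenius algebra morphisms between the relevant tensor products, and one checks they commute with $\phi$ and preserve $\theta$ (essentially because $\theta_\one = \id_\one$ and $\phi_\one = \id_\one$); these then satisfy the pentagon and triangle axioms by virtue of doing so in $\cC$.

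\medskip

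\emph{Main obstacle.} None of the steps are conceptually difficult; the main nuisance is the bookkeeping with the symmetry $c$ in verifying (ii) and (iii), where one must interleave swaps with multiplications/comultiplications. This is most efficiently handled by string-diagram calculations using the symmetry axioms in Figure~\ref{fig:symmon}, rather than by writing out each composite as a long formula.
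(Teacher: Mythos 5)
Your proposal is correct and follows essentially the same route as the paper: the same tensor-product structure $\tilde\phi = \phi_A\otimes\phi_B$, $\tilde\theta = \theta_A\otimes\theta_B$ on $A\otimes B$, citing the standard Frobenius structure and verifying (i)--(iii) via naturality of the symmetry (the paper does this by string diagrams in Figures~\ref{fig:E11AxB} and~\ref{fig:E12AxB}), with coherence inherited from $\cC$. Your treatment of bifunctoriality on morphisms is slightly more explicit than the paper's, but nothing differs in substance.
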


\begin{proof}
    We first note that $\one^\cC = (\one^\cC, \ell_\one, \id_\one, \ell^{-1}_\one, \id_\one, \id_\one,\id_\one)$ is an extended Frobenius algebra in $\cC$. 

    Next, we show that the monoidal product of two extended Frobenius algebras is extended Frobenius. Namely, we verify that given extended Frobenius algebras $(A,m_A,u_A,\Delta_A,\varep_A,\phi_A,\theta_A)$ and $(B,m_B,u_B,\Delta_B,\varep_B,\phi_B,\theta_B)$, then $(A\otimes B,\tilde m,\tilde u,\tilde\Delta,\tilde\varep,\tilde\phi,\tilde\theta)$ is an extended Frobenius algebra, where 
    \[\tilde m := (m_A\otimes m_B)(\id_A\otimes c_{B,A}\otimes \id_B),\qquad \tilde\Delta := (\id_A\otimes c_{A,B}\otimes\id_B)(\Delta_A\otimes\Delta_B)\]
    \[\tilde u:= u_A\otimes u_B,\qquad \tilde\varep := \varep_A\otimes\varep_B,\qquad \tilde \phi := \phi_A\otimes\phi_B,\qquad \tilde\theta := \theta_A\otimes\theta_B.\]
Figure~\ref{fig:ExtAxB} shows what these morphisms look like in graphical calculus, using the symbols from Figure~\ref{fig:eFrob} for $A$ and the symbols from Figure~\ref{fig:StructMorphAB} for $B$, as in Proposition \ref{prop:FrobHom}. Recall also the axioms for a symmetric monoidal category from Figure~\ref{fig:symmon}.

    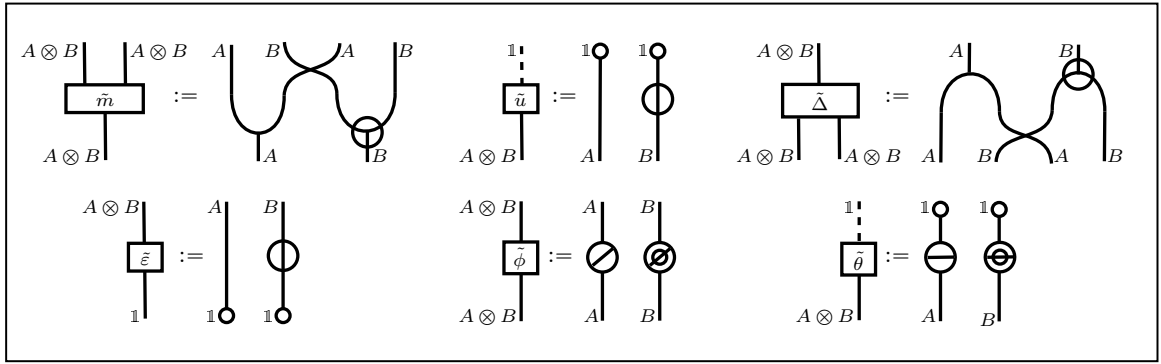
\begin{figure}[h!]

        \scalebox{1}{

\tikzset{every picture/.style={line width=0.75pt}} 

\begin{tikzpicture}[x=0.75pt,y=0.75pt,yscale=-1,xscale=1]

\draw [line width=1.25]    (177.13,55.61) .. controls (176.76,39.27) and (152,48.5) .. (150.66,29.42) ;
\draw [line width=1.25]    (150.53,56.37) .. controls (151.51,39.59) and (178,46.5) .. (178.61,30) ;
\draw [line width=1.25]    (124,56.01) .. controls (124,81.62) and (150.98,81.98) .. (150.53,56.37) ;
\draw [line width=1.25]    (137.52,75.54) -- (137.52,89.56) ;
\draw [line width=1.25]    (177.13,55.61) .. controls (177.13,81.23) and (206.95,80.31) .. (206.49,54.69) ;
\draw [line width=1.25]    (192.52,73.98) -- (192.52,90) ;
\draw [line width=1.25]    (124,30.82) -- (124,56.01) ;
\draw [line width=1.25]    (206.49,29.51) -- (206.49,54.69) ;
\draw [line width=1.25]    (310,37) -- (309.75,89.5) ;
\draw [line width=1.25]    (339,37) -- (339,89.5) ;
\draw [line width=1.25]    (511.3,64.34) .. controls (511.53,80.69) and (536.37,71.66) .. (537.55,90.75) ;
\draw [line width=1.25]    (537.91,63.81) .. controls (536.78,80.58) and (510.36,73.44) .. (509.61,89.94) ;
\draw [line width=1.25]    (564.43,64.39) .. controls (564.65,38.78) and (537.67,38.19) .. (537.91,63.81) ;
\draw [line width=1.25]    (551.07,45.74) -- (551.19,30.73) ;
\draw [line width=1.25]    (511.3,64.34) .. controls (511.52,38.73) and (481.69,39.39) .. (481.93,65.01) ;
\draw [line width=1.25]    (496.07,44.84) -- (496.2,29.82) ;
\draw [line width=1.25]    (564.22,89.58) -- (564.43,64.39) ;
\draw [line width=1.25]    (481.72,90.2) -- (481.93,65.01) ;
\draw [line width=1.25]    (121.5,167.5) -- (121.5,110) ;
\draw [line width=1.25]    (150,167.5) -- (150,110) ;
\draw [line width=1.25] [dashed]    (440.56,109.81) -- (440.56,140) ;
\draw [line width=1.25]     (440.56,140) -- (440.56,169.81) ;
\draw [line width=1.25]    (481.56,117.81) -- (481.56,170.31) ;
\draw [line width=1.25]    (511.16,117.91) -- (511.16,170.41) ;
\draw [line width=1.25]    (270.06,109.81) -- (270.06,170.31) ;
\draw [line width=1.25]    (311.06,110.06) -- (311.06,170.06) ;
\draw [line width=1.25]    (339.96,110.04) -- (339.96,170.04) ;
\draw   (10.33,10) -- (591,10) -- (591,190.5) -- (10.33,190.5) -- cycle ;
\draw  [line width=1.25]  (142.56,137.25) .. controls (142.56,133.14) and (145.89,129.81) .. (150,129.81) .. controls (154.11,129.81) and (157.44,133.14) .. (157.44,137.25) .. controls (157.44,141.36) and (154.11,144.69) .. (150,144.69) .. controls (145.89,144.69) and (142.56,141.36) .. (142.56,137.25) -- cycle ;
\draw  [line width=1.25]  (185.08,74.98) .. controls (185.08,70.88) and (188.41,67.55) .. (192.52,67.55) .. controls (196.63,67.55) and (199.96,70.88) .. (199.96,74.98) .. controls (199.96,79.09) and (196.63,82.42) .. (192.52,82.42) .. controls (188.41,82.42) and (185.08,79.09) .. (185.08,74.98) -- cycle ;
\draw  [line width=1.25]  (331.56,58.25) .. controls (331.56,54.14) and (334.89,50.81) .. (339,50.81) .. controls (343.11,50.81) and (346.44,54.14) .. (346.44,58.25) .. controls (346.44,62.36) and (343.11,65.69) .. (339,65.69) .. controls (334.89,65.69) and (331.56,62.36) .. (331.56,58.25) -- cycle ;
\draw  [line width=1.25]  (543.63,45.74) .. controls (543.63,41.64) and (546.96,38.31) .. (551.07,38.31) .. controls (555.17,38.31) and (558.5,41.64) .. (558.5,45.74) .. controls (558.5,49.85) and (555.17,53.18) .. (551.07,53.18) .. controls (546.96,53.18) and (543.63,49.85) .. (543.63,45.74) -- cycle ;
\draw  [fill={rgb, 255:red, 255; green, 255; blue, 255 }  ,fill opacity=1 ][line width=1.25]  (303.63,138.06) .. controls (303.63,133.96) and (306.96,130.63) .. (311.06,130.63) .. controls (315.17,130.63) and (318.5,133.96) .. (318.5,138.06) .. controls (318.5,142.17) and (315.17,145.5) .. (311.06,145.5) .. controls (306.96,145.5) and (303.63,142.17) .. (303.63,138.06) -- cycle ;
\draw [fill={rgb, 255:red, 255; green, 255; blue, 255 }  ,fill opacity=1 ][line width=1.25]    (306.12,142.18) -- (316.5,133.5) ;

\draw  [fill={rgb, 255:red, 255; green, 255; blue, 255 }  ,fill opacity=1 ][line width=1.25]  (332.53,138.04) .. controls (332.53,133.93) and (335.86,130.6) .. (339.96,130.6) .. controls (344.07,130.6) and (347.4,133.93) .. (347.4,138.04) .. controls (347.4,142.14) and (344.07,145.47) .. (339.96,145.47) .. controls (335.86,145.47) and (332.53,142.14) .. (332.53,138.04) -- cycle ;
\draw [fill={rgb, 255:red, 255; green, 255; blue, 255 }  ,fill opacity=1 ][line width=1.25]    (335.02,142.16) -- (345.4,133.47) ;

\draw  [line width=1.25]  (336.46,138.04) .. controls (336.46,136.1) and (338.03,134.53) .. (339.96,134.53) .. controls (341.9,134.53) and (343.47,136.1) .. (343.47,138.04) .. controls (343.47,139.97) and (341.9,141.54) .. (339.96,141.54) .. controls (338.03,141.54) and (336.46,139.97) .. (336.46,138.04) -- cycle ;

\draw  [fill={rgb, 255:red, 255; green, 255; blue, 255 }  ,fill opacity=1 ][line width=1.25]  (475.78,133.38) .. controls (478.37,130.19) and (483.05,129.7) .. (486.24,132.28) .. controls (489.44,134.87) and (489.93,139.55) .. (487.34,142.74) .. controls (484.76,145.94) and (480.07,146.43) .. (476.88,143.84) .. controls (473.69,141.26) and (473.2,136.57) .. (475.78,133.38) -- cycle ;
\draw [fill={rgb, 255:red, 255; green, 255; blue, 255 }  ,fill opacity=1 ][line width=1.25]    (475.13,138.15) -- (488.66,137.94) ;

\draw  [fill={rgb, 255:red, 255; green, 255; blue, 255 }  ,fill opacity=1 ][line width=1.25]  (505.78,133.28) .. controls (508.37,130.09) and (513.05,129.6) .. (516.24,132.18) .. controls (519.44,134.77) and (519.93,139.45) .. (517.34,142.64) .. controls (514.76,145.84) and (510.07,146.33) .. (506.88,143.74) .. controls (503.69,141.16) and (503.2,136.47) .. (505.78,133.28) -- cycle ;
\draw [fill={rgb, 255:red, 255; green, 255; blue, 255 }  ,fill opacity=1 ][line width=1.25]    (505.13,138.05) -- (518.66,137.84) ;

\draw  [line width=1.25]  (508.06,137.96) .. controls (508.06,136.03) and (509.63,134.46) .. (511.56,134.46) .. controls (513.5,134.46) and (515.07,136.03) .. (515.07,137.96) .. controls (515.07,139.9) and (513.5,141.47) .. (511.56,141.47) .. controls (509.63,141.47) and (508.06,139.9) .. (508.06,137.96) -- cycle ;

\draw  [fill={rgb, 255:red, 255; green, 255; blue, 255 }  ,fill opacity=1 ][line width=1.25]  (306.5,34) .. controls (306.5,32.07) and (308.07,30.5) .. (310,30.5) .. controls (311.93,30.5) and (313.5,32.07) .. (313.5,34) .. controls (313.5,35.93) and (311.93,37.5) .. (310,37.5) .. controls (308.07,37.5) and (306.5,35.93) .. (306.5,34) -- cycle ;
\draw  [fill={rgb, 255:red, 255; green, 255; blue, 255 }  ,fill opacity=1 ][line width=1.25]  (335.5,34) .. controls (335.5,32.07) and (337.07,30.5) .. (339,30.5) .. controls (340.93,30.5) and (342.5,32.07) .. (342.5,34) .. controls (342.5,35.93) and (340.93,37.5) .. (339,37.5) .. controls (337.07,37.5) and (335.5,35.93) .. (335.5,34) -- cycle ;
\draw  [fill={rgb, 255:red, 255; green, 255; blue, 255 }  ,fill opacity=1 ][line width=1.25]  (118,167.5) .. controls (118,165.57) and (119.57,164) .. (121.5,164) .. controls (123.44,164) and (125.01,165.57) .. (125.01,167.5) .. controls (125.01,169.43) and (123.44,171) .. (121.5,171) .. controls (119.57,171) and (118,169.43) .. (118,167.5) -- cycle ;
\draw  [fill={rgb, 255:red, 255; green, 255; blue, 255 }  ,fill opacity=1 ][line width=1.25]  (146.5,167.5) .. controls (146.5,165.57) and (148.07,164) .. (150,164) .. controls (151.94,164) and (153.51,165.57) .. (153.51,167.5) .. controls (153.51,169.43) and (151.94,171) .. (150,171) .. controls (148.07,171) and (146.5,169.43) .. (146.5,167.5) -- cycle ;
\draw  [fill={rgb, 255:red, 255; green, 255; blue, 255 }  ,fill opacity=1 ][line width=1.25]  (478.06,113.81) .. controls (478.06,111.88) and (479.63,110.31) .. (481.56,110.31) .. controls (483.5,110.31) and (485.07,111.88) .. (485.07,113.81) .. controls (485.07,115.75) and (483.5,117.32) .. (481.56,117.32) .. controls (479.63,117.32) and (478.06,115.75) .. (478.06,113.81) -- cycle ;
\draw  [fill={rgb, 255:red, 255; green, 255; blue, 255 }  ,fill opacity=1 ][line width=1.25]  (507.66,113.91) .. controls (507.66,111.98) and (509.23,110.41) .. (511.16,110.41) .. controls (513.1,110.41) and (514.67,111.98) .. (514.67,113.91) .. controls (514.67,115.85) and (513.1,117.42) .. (511.16,117.42) .. controls (509.23,117.42) and (507.66,115.85) .. (507.66,113.91) -- cycle ;
\draw [line width=1.25]    (50,50.5) -- (49.75,29.5) ;
\draw [line width=1.25]    (70.5,50.5) -- (70.25,29.5) ;
\draw [line width=1.25]    (60.5,89) -- (60.25,65) ;
\draw [line width=1.25]  [dashed]  (270.5,51.5) -- (270.25,30.5) ;
\draw [line width=1.25]    (270.5,89) -- (270.25,65) ;
\draw [line width=1.25]    (430.47,67.53) -- (430.65,88.53) ;
\draw [line width=1.25]    (409.97,67.96) -- (410.15,88.96) ;
\draw [line width=1.25]    (420.1,30) -- (420.27,51) ;
\draw [line width=1.25]    (80,131) -- (79.75,110) ;
\draw [line width=1.25]    (80.5,169) -- (80.25,145) ;

\draw (16,29) node [anchor=north west][inner sep=0.75pt]  [font=\tiny]  {$A\otimes B$};
\draw (71,29) node [anchor=north west][inner sep=0.75pt]  [font=\tiny]  {$A\otimes B$};
\draw (27,82) node [anchor=north west][inner sep=0.75pt]  [font=\tiny]  {$A\otimes B$};
\draw  [line width=1.25]   (40.5,51) -- (79.5,51) -- (79.5,65) -- (40.5,65) -- cycle  ;
\draw (54,54) node [anchor=north west][inner sep=0.75pt]  [font=\tiny]  {$\tilde{m}$};
\draw (112,29) node [anchor=north west][inner sep=0.75pt]  [font=\tiny]  {$A$};
\draw (177,29) node [anchor=north west][inner sep=0.75pt]  [font=\tiny]  {$A$};
\draw (206,29) node [anchor=north west][inner sep=0.75pt]  [font=\tiny]  {$B$};
\draw (139,29) node [anchor=north west][inner sep=0.75pt]  [font=\tiny]  {$B$};
\draw (138,82) node [anchor=north west][inner sep=0.75pt]  [font=\tiny]  {$A$};
\draw (193,82) node [anchor=north west][inner sep=0.75pt]  [font=\tiny]  {$B$};
\draw  [line width=1.25]   (261.5,51) -- (278.5,51) -- (278.5,65) -- (261.5,65) -- cycle  ;
\draw (265,54) node [anchor=north west][inner sep=0.75pt]  [font=\tiny]  {$\tilde{u}$};
\draw (261,29) node [anchor=north west][inner sep=0.75pt]  [font=\tiny]  {$\one$};
\draw (237,82) node [anchor=north west][inner sep=0.75pt]  [font=\tiny]  {$A\otimes B$};
\draw (298,29) node [anchor=north west][inner sep=0.75pt]  [font=\tiny]  {$\one$};
\draw (327,29) node [anchor=north west][inner sep=0.75pt]  [font=\tiny]  {$\one$};
\draw (298,82) node [anchor=north west][inner sep=0.75pt]  [font=\tiny]  {$A$};
\draw (327,82) node [anchor=north west][inner sep=0.75pt]  [font=\tiny]  {$B$};
\draw (377,82) node [anchor=north west][inner sep=0.75pt]  [font=\tiny]  {$A\otimes B$};
\draw (387,29) node [anchor=north west][inner sep=0.75pt]  [font=\tiny]  {$A\otimes B$};
\draw (431,82) node [anchor=north west][inner sep=0.75pt]  [font=\tiny]  {$A\otimes B$};
\draw (470,82) node [anchor=north west][inner sep=0.75pt]  [font=\tiny]  {$A$};
\draw (537,82) node [anchor=north west][inner sep=0.75pt]  [font=\tiny]  {$A$};
\draw (564,82) node [anchor=north west][inner sep=0.75pt]  [font=\tiny]  {$B$};
\draw (498,82) node [anchor=north west][inner sep=0.75pt]  [font=\tiny]  {$B$};
\draw (484,29) node [anchor=north west][inner sep=0.75pt]  [font=\tiny]  {$A$};
\draw (539,29) node [anchor=north west][inner sep=0.75pt]  [font=\tiny]  {$B$};
\draw (47,108) node [anchor=north west][inner sep=0.75pt]  [font=\tiny]  {$A\otimes B$};
\draw (71,163) node [anchor=north west][inner sep=0.75pt]  [font=\tiny]  {$\one$};
\draw  [line width=1.25]   (72,131) -- (89,131) -- (89,145) -- (72,145) -- cycle  ;
\draw (76.5,134) node [anchor=north west][inner sep=0.75pt]  [font=\tiny]  {$\tilde{\varepsilon}$};
\draw (109,163) node [anchor=north west][inner sep=0.75pt]  [font=\tiny]  {$\one$};
\draw (138,163) node [anchor=north west][inner sep=0.75pt]  [font=\tiny]  {$\one$};
\draw (110,108) node [anchor=north west][inner sep=0.75pt]  [font=\tiny]  {$A$};
\draw (138,108) node [anchor=north west][inner sep=0.75pt]  [font=\tiny]  {$B$};
\draw  [fill={rgb, 255:red, 255; green, 255; blue, 255 }  ,fill opacity=1 ][line width=1.25]   (431.8,131) -- (448.8,131) -- (448.8,147) -- (431.8,147) -- cycle  ;
\draw (436,134) node [anchor=north west][inner sep=0.75pt]  [font=\tiny]  {$\tilde{\theta}$};
\draw (431,108) node [anchor=north west][inner sep=0.75pt]  [font=\tiny]  {$\one$};
\draw (407,163) node [anchor=north west][inner sep=0.75pt]  [font=\tiny]  {$A\otimes B$};
\draw (469,108) node [anchor=north west][inner sep=0.75pt]  [font=\tiny]  {$\one$};
\draw (499,108) node [anchor=north west][inner sep=0.75pt]  [font=\tiny]  {$\one$};
\draw (470,163) node [anchor=north west][inner sep=0.75pt]  [font=\tiny]  {$A$};
\draw (493,163) node [anchor=north west][inner sep=0.75pt]  [font=\tiny]  {$ \begin{array}{l}
B\\
\end{array}$};
\draw (237,163) node [anchor=north west][inner sep=0.75pt]  [font=\tiny]  {$A\otimes B$};
\draw (300,163) node [anchor=north west][inner sep=0.75pt]  [font=\tiny]  {$A$};
\draw (328,163) node [anchor=north west][inner sep=0.75pt]  [font=\tiny]  {$B$};
\draw (237,108) node [anchor=north west][inner sep=0.75pt]  [font=\tiny]  {$A\otimes B$};
\draw (299,108) node [anchor=north west][inner sep=0.75pt]  [font=\tiny]  {$A$};
\draw (328,108) node [anchor=north west][inner sep=0.75pt]  [font=\tiny]  {$B$};
\draw  [fill={rgb, 255:red, 255; green, 255; blue, 255 }  ,fill opacity=1 ][line width=1.25]   (261.4,130.33) -- (278.4,130.33) -- (278.4,146.33) -- (261.4,146.33) -- cycle  ;
\draw (264.5,132) node [anchor=north west][inner sep=0.75pt]  [font=\tiny]  {$\tilde{\phi}$};
\draw (93,53) node [anchor=north west][inner sep=0.75pt]  [font=\footnotesize]  {$:=$};
\draw (284,53) node [anchor=north west][inner sep=0.75pt]  [font=\footnotesize]  {$:=$};
\draw (452,53) node [anchor=north west][inner sep=0.75pt]  [font=\footnotesize]  {$:=$};
\draw (452,134) node [anchor=north west][inner sep=0.75pt]  [font=\footnotesize]  {$:=$};
\draw (282,134) node [anchor=north west][inner sep=0.75pt]  [font=\footnotesize]  {$:=$};
\draw (96,134) node [anchor=north west][inner sep=0.75pt]  [font=\footnotesize]  {$:=$};
\draw  [line width=1.25]   (401.93,50.8) -- (440.07,50.8) -- (440.07,67.2) -- (401.93,67.2) -- cycle  ;
\draw (415,53.5) node [anchor=north west][inner sep=0.75pt]  [font=\tiny,rotate=-0.68]  {$\tilde{\Delta}$};

\end{tikzpicture}
}

    \vspace{-.1in}
    \caption{Extended Frobenius structure morphisms for $A\otimes B$.}
    \label{fig:ExtAxB}
    \end{figure}

\smallskip

We then have that $(A\otimes B,\tilde m,\tilde u,\tilde\Delta,\tilde\varep)\in\mathsf{FrobAlg}(\cC)$ by \cite[Section 2.4]{Kock}. To see that this Frobenius algebra is extended via $\tilde\phi$ and $\tilde\theta$, we verify the three required conditions in Definition~\ref{def:extFrobC}(b).
    
\begin{enumerate}\renewcommand{\labelenumi}{(\roman{enumi})}
\item It is easy to see that $\tilde\phi$ is an involution, since both $\phi_A$ and $\phi_B$ are involutions. Moreover, since both $\phi_A$, $\phi_B$ are Frobenius morphisms, so is their monoidal product in $\cC$.

\smallskip

\item Figure~\ref{fig:E11AxB} gives that $\tilde\phi\,\tilde m(\tilde\theta\otimes \id_{A\otimes B}) = \tilde m(\tilde\theta\otimes\id_{A\otimes B})$.

\smallskip

\item Finally, Figure~\ref{fig:E12AxB} gives that $\tilde m(\tilde\phi\otimes\id_{A\otimes B})\tilde\Delta\tilde u = \tilde m(\tilde\theta\otimes\tilde\theta)$.

\end{enumerate}
Thus, we have that $(A\otimes B, \tilde\phi,\tilde\theta)\in\mathsf{ExtFrobAlg}(\cC)$, as desired.

Lastly, we note that by taking $\one^\cC$ as the unit and $\otimes^\cC$ as the monoidal product in $\mathsf{ExtFrobAlg}(\cC)$, with extended structures behaving as described above, we obtain that the required pentagon and triangle axioms in $(\mathsf{ExtFrobAlg}(\cC),\otimes^\cC,\one^\cC)$ are all inherited from the same axioms in $(\cC,\otimes^\cC,\one^\cC)$. From this, we can conclude that $(\mathsf{ExtFrobAlg}(\cC),\otimes^\cC,\one^\cC)$ is a monoidal category. 
\end{proof}

\vspace{0.1in} 

    \begin{figure}[h!]

        \scalebox{.75}{


}

        \vspace{-.1in}
        \caption{Proof that $A\otimes B$ satisfies Definition~\ref{def:extFrobC}(b)(ii).}
        \label{fig:E11AxB}
    \end{figure}

    \begin{figure}[h!]

        \scalebox{.75}{

\tikzset{every picture/.style={line width=0.75pt}} 


}

        \vspace{-.1in}
        \caption{Proof that $A\otimes B$ satisfies Definition~\ref{def:extFrobC}(b)(iii).}
        \label{fig:E12AxB}
    \end{figure}

Now we turn our attention to extended Frobenius algebras in additive monoidal categories. See \cite[Section~3.1.3]{Walton} for background material on such categories.

\begin{proposition} \label{prop:eFMbiprod} Let $(\cC,\otimes,\one)$ be an additive  monoidal category. Then, the category  $\ExtFrobAlg(\cC)$ is monoidal with $\otimes$ being the biproduct $\osq$, and $\one$ being the zero object~$\mathsf{0}$. 
\end{proposition}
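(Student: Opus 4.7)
The plan is to equip $A \osq B$ with an extended Frobenius structure built componentwise from those of $A$ and $B$, and then verify that coherence is inherited from $\cC$. First, the zero object $\mathsf{0}$ carries a trivial extended Frobenius structure: each of $m, u, \Delta, \varep, \phi, \theta$ is the (unique) zero morphism with the appropriate source and target, making use of $\mathsf{0} \otimes \mathsf{0} \cong \mathsf{0}$ coming from biadditivity of $\otimes$. Every axiom in Definition~\ref{def:extFrobC} then holds because all morphisms involved are zero.

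Next, for two extended Frobenius algebras $A$ and $B$, with biproduct injections $\iota_A, \iota_B$ and projections $\pi_A, \pi_B$, I would define
\[
m_{A\osq B} = \iota_A m_A (\pi_A \otimes \pi_A) + \iota_B m_B (\pi_B \otimes \pi_B), \quad u_{A\osq B} = \iota_A u_A + \iota_B u_B,
\]
\[
\Delta_{A\osq B} = (\iota_A \otimes \iota_A) \Delta_A \pi_A + (\iota_B \otimes \iota_B) \Delta_B \pi_B, \quad \varep_{A\osq B} = \varep_A \pi_A + \varep_B \pi_B,
\]
\[
\phi_{A\osq B} = \iota_A \phi_A \pi_A + \iota_B \phi_B \pi_B, \quad \theta_{A\osq B} = \iota_A \theta_A + \iota_B \theta_B.
\]
The crucial input for checking the axioms is biadditivity of $\otimes$, which yields the distributive decomposition
\[
(A \osq B)^{\otimes n} \;\cong\; \bigosq_{w \in \{A,B\}^n} w_1 \otimes \cdots \otimes w_n,
\]
with associated injections/projections being tensor products of $\iota$'s and $\pi$'s. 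Under this decomposition, every composite appearing in the algebra, coalgebra, Frobenius, and extended Frobenius axioms splits as a sum of $A$-components and $B$-components (the ``mixed'' components vanish because $\pi_X \iota_Y = 0$ for $X \neq Y$), so each axiom reduces to the corresponding axiom for $A$ and $B$ separately, and thus holds.

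The step requiring the most care is verifying conditions (i)--(iii) of Definition~\ref{def:extFrobC}(b) for $A \osq B$, since these mix the multiplication with $\theta$ and $\phi$. For condition~(i), one checks directly that $\phi_{A \osq B}^2 = \id_{A \osq B}$ using $\phi_A^2 = \id_A$, $\phi_B^2 = \id_B$, and $\iota_A \pi_A + \iota_B \pi_B = \id_{A \osq B}$; compatibility with $m, u, \Delta, \varep$ then reduces componentwise. Conditions (ii) and (iii) follow by expanding $\tilde m(\tilde \theta \otimes \id)$ and $\tilde m (\tilde\phi \otimes \id) \tilde \Delta \tilde u$ via the above decomposition and applying (ii) and (iii) for $A$ and $B$. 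Finally, the biproduct $\osq$ and zero object $\mathsf{0}$ in $\cC$ endow $\cC$ with a symmetric monoidal structure whose associator and unitors are canonical isomorphisms built from $\iota$'s and $\pi$'s; a short check shows these canonical isomorphisms are morphisms of extended Frobenius algebras for the structures defined above, and the pentagon and triangle axioms in $\ExtFrobAlg(\cC)$ are inherited from those in $(\cC, \osq, \mathsf{0})$. This completes the verification that $(\ExtFrobAlg(\cC), \osq, \mathsf{0})$ is monoidal.
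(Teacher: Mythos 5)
Your proposal is correct and follows essentially the same route as the paper: the extended Frobenius structure on $A\osq B$ is built componentwise from those of $A$ and $B$ (your explicit sums $\iota_A(-)\pi_A+\iota_B(-)\pi_B$ define exactly the same morphisms the paper characterizes via universal-property diagrams, using the same biadditivity of $\otimes$ to kill the mixed components), and the axioms, including (i)--(iii) of Definition~\ref{def:extFrobC}(b), reduce to those for $A$ and $B$ separately. The only difference is presentational: you verify the identities by direct matrix-style computation in the distributive decomposition of $(A\osq B)^{\otimes n}$, whereas the paper appeals to uniqueness of the completing maps in the biproduct's universal property.
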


\begin{proof}
We first note that $\mathsf{0}$ is an extended Frobenius algebra in $\cC$, with structure morphisms $m,u,\Delta,\varep,$ and $\theta$ all being zero morphisms, and $\phi = \id_\mathsf{0}$. We next note that similar to the previous proposition, the pentagon and triangle axioms in $(\mathsf{\ExtFrobAlg}(\cC),\osq,\mathsf 0)$ will be inherited from these same axioms on the strict monoidal category $(\cC,\osq,\mathsf 0)$. Hence, to finish the proof, it suffices to show that the biproduct of two extended Frobenius algebras is again extended Frobenius. To do so, let $(A,m_A,u_A,\Delta_A,\varep_A,\phi_A,\theta_A)$ and $(B,m_B,u_B,\Delta_B,\varep_B,\phi_B,\theta_B)$ be two extended Frobenius algebras in $\cC$. We will show that $(A\osq B,\tilde m,\tilde u,\tilde\Delta,\tilde\varep,\tilde\phi,\tilde\theta)$ is an extended Frobenius algebra, where $\tilde m$, $\tilde u$, $\tilde\Delta$, $\tilde \varep$, $\tilde\phi$, and $\tilde\theta$ are defined by the following universal property diagrams.

\vspace{0.05in}

\begin{scriptsize}\begin{center}
    \begin{tikzcd}
  & (A\osq B)\otimes (A\osq B) \arrow[ldd, "m_A\,\circ\, \pi_{A\otimes A}"', bend right] \arrow[rdd, "{m_B\,\circ\, \pi_{B\otimes B}}", bend left] \arrow[d, "\exists!\, \tilde m", dashed] &    \\
  & A\osq B \arrow[ld, "\pi_A"] \arrow[rd, "\pi_{B}"']                                                                                                                       &    \\
A &                                                                                                                                                                               & B
\end{tikzcd}\qquad\begin{tikzcd}
                                                                                  & (A\osq B)\otimes (A\osq B)                        &                                                                                           \\
                                                                                  & A\osq B \arrow[u, "\exists!\,\tilde\Delta"', dashed] &                                                                                           \\
A \arrow[ruu, "\iota_{A\otimes A}\,\circ \,\Delta_A", bend left] \arrow[ru, "\iota_A"'] &                                                     & B \arrow[luu, "\iota_{B\otimes B}\,\circ\, \Delta_B"', bend right] \arrow[lu, "\iota_{B}"]
\end{tikzcd}

\begin{tikzcd}
  & \one \arrow[ldd, "u_A"', bend right] \arrow[rdd, "u_B", bend left] \arrow[d, "\exists!\, \tilde u", dashed] &    \\
  & A\osq B \arrow[ld, "\pi_A"] \arrow[rd, "\pi_{B}"']                                                &    \\
A &                                                                                                        & B
\end{tikzcd}\qquad\begin{tikzcd}
                                                          & \one                                                &                                                                 \\
                                                          & A\osq B \arrow[u, "\exists!\,\tilde\varep"', dashed] &                                                                 \\
A \arrow[ruu, "\varep_A", bend left] \arrow[ru, "\iota_A"'] &                                                     & B \arrow[luu, "\varep_B"', bend right] \arrow[lu, "\iota_{B}"]
\end{tikzcd}\qquad\begin{tikzcd}
  & A\osq B \arrow[ldd, "\phi_A\,\circ\, \pi_A"', bend right] \arrow[rdd, "\phi_B\,\circ\,\pi_{B}", bend left] \arrow[d, "\exists!\,\tilde\phi", dashed] &    \\
  & A\osq B \arrow[ld, "\pi_A"] \arrow[rd, "\pi_{B}"']                                                                                      &    \\
A &                                                                                                                                           & B
\end{tikzcd}\qquad\begin{tikzcd}
  & \one \arrow[ldd, "\theta_A"', bend right] \arrow[rdd, "\theta_B", bend left] \arrow[d, "\exists!\,\tilde\theta", dashed] &    \\
  & A\osq B \arrow[ld, "\pi_A"] \arrow[rd, "\pi_{B}"']                                                                 &    \\
A &                                                                                                                     & B
\end{tikzcd}
\end{center}
\end{scriptsize}

It is well known that with the above constructions, $(A\osq B,\tilde m,\tilde u,\tilde \Delta,\tilde\varep)$ is a Frobenius algebra. See \cite[Exercises 2.2.7 and 2.2.8]{Kock} for the case where $\cC = \mathsf{Vec}$. Thus, we only need to verify that $\tilde\phi$ and $\tilde\theta$ extend this Frobenius algebra. The three required properties from Definition \ref{def:extFrobC}(b) can be verified by respectively considering each of the universal property diagrams below. 

\vspace{0.03in}

\begin{center}\begin{small}
    \begin{tikzcd}
                               & A\osq B \arrow[ld, "\pi_A"'] \arrow[rd, "\pi_{B}"] \arrow[d, "\exists!", dashed] &                                      \\
A \arrow[d, "\phi_A^2 \,=\,\id_A"'] & A\osq B \arrow[ld, "\pi_A"] \arrow[rd, "\pi_{B}"']                               & B \arrow[d, "\phi_B^2 \,=\, \id_{B}"] \\
A                              &                                                                                    & B                                 
\end{tikzcd}\qquad \begin{tikzcd}
                                                                    & A\osq B \arrow[ld, "\pi_A"'] \arrow[rd, "\pi_{B}"] \arrow[d, "\exists!", dashed] &                                                                                \\
A \arrow[d, "\substack{m(\theta_A\,\otimes\,\id_A) \\ \rotatebox[origin=c]{90}{$=$} \\ \phi_A(m_A(\theta_A\,\otimes\,\id_A))}"'{xshift = -2pt}] & A\osq B \arrow[ld, "\pi_A"] \arrow[rd, "\pi_{B}"']                               & B \arrow[d, "\substack{m_B(\theta_B\,\otimes\,\id_{B}) \\ \rotatebox[origin=c]{90}{$=$} \\ \phi_B(m_B(\theta_B\,\otimes\,\id_{B}))}"{xshift = 2pt}] \\
A                                                                   &                                                                                    & B                                                                            
\end{tikzcd}

\medskip

\begin{tikzcd}
  & \one \arrow[ldd, near end, "\substack{m(\phi_A\,\otimes\,\id_A)(\Delta_A(u_A)) \\ \rotatebox[origin=c]{90}{$=$} \\ m_A(\theta_A\,\otimes\,\theta_A)}"'{xshift = 10pt}, bend right] \arrow[rdd, near end, "\substack{m_B(\phi_B\,\otimes\,\id_{B})(\Delta_B(u_B)) \\ \rotatebox[origin=c]{90}{$=$} \\ m_B(\theta_B\,\otimes\,\theta_B)}"{xshift = -10pt}, bend left] \arrow[d, "\exists!", dashed] &    \\
  & A\osq B \arrow[ld, "\pi_A"] \arrow[rd, "\pi_{B}"']                                                                                                                                                               &    \\
A &                                                                                                                                                                                                                    & B
\end{tikzcd}
\end{small}
\end{center}

Using uniqueness of the completing map in each of the diagrams, it follows that (i) $(\tilde\phi)^2 = \id_{A\osq B}$, (ii) $\tilde m(\tilde\theta\otimes\id_{A\osq B}) = \tilde \phi(\tilde m(\tilde\theta\otimes\id_{A\osq B}))$, and (iii) $\tilde m(\tilde \phi\otimes\id_{A\osq B})(\tilde\Delta(\tilde u)) = \tilde m(\tilde\theta\otimes\tilde\theta)$.

This completes the proof that $(A\osq B,\tilde\phi,\tilde\theta)$ is an extended Frobenius algebras in $\cC$, which in turn gives that $(\mathsf{ExtFrobAlg}(\cC),\osq,0)$ is a monoidal category.
\end{proof}


\section{Ties to separable algebras and Hopf algebras}
\label{sec:SepHopf}

In this section, we study extended Frobenius algebras in (symmetric) monoidal categories $\cC$, in the context of separable algebras and Hopf algebras in $\cC$; see Sections~\ref{sec:Sep} and~\ref{sec:Hopf}, respectively. We also introduce the notion of an extended Hopf algebra in $\cC$, and make connections to extended Frobenius algebras in $\cC$, in Section~\ref{sec:eHopf}.


\subsection{Tie to separable algebras} \label{sec:Sep} 
Take $\cC:=(\cC, \otimes, \one)$ to be a monoidal category, and consider the terminology below. See  \cite[Chapter~6]{Bohm} and references within for the case when $\cC = \mathsf{Vec}$.

\begin{definition} \label{def:sep} 
\begin{enumerate}[(a)]
    \item  We say that an algebra $A:=(A,m,u)$ in $\cC$ is {\it separable} if there exists a morphism $t:A \to A \otimes A$ such that $mt = \id_A$, and 
\[
(m \otimes \id_A)(\id_A \otimes t) \; = \;  tm  \; = \; 
(\id_A \otimes m)(t \otimes \id_A).
\]
\item A Frobenius algebra  $A:= (A,m,u,\Delta,\varep)$ is {\it separable Frobenius} if  $m \Delta =  \id_A$.
\end{enumerate}
\end{definition}

These structures form full subcategories as indicated below:
\[
\mathsf{SepAlg}(\cC) \subset \mathsf{Alg}(\cC), \qquad
\mathsf{SepFrobAlg}(\cC) \subset \mathsf{FrobAlg}(\cC).
\]

\begin{proposition} \label{prop:sepExt} If $A$ is a separable Frobenius algebra in $\cC$, then $A$ is extendable.
\end{proposition}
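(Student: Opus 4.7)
The plan is to exhibit an explicit extended structure on the given separable Frobenius algebra $A$. The natural candidate is the most trivial one: take $\phi = \id_A$ (so the extension will be $\phi$-trivial) and take $\theta = u : \one \to A$, the unit of the algebra.

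With these choices, conditions (i) and (ii) of Definition~\ref{def:extFrobC}(b) are immediate: $\id_A$ is tautologically an involutive morphism of Frobenius algebras, and condition (ii) reads $\id_A \circ m(u \otimes \id_A) = m(u \otimes \id_A)$, which is trivial. The only thing to verify is condition (iii), which becomes
\[
m\,\Delta\, u \;=\; m(u \otimes u).
\]
The left-hand side equals $u$ by the separability hypothesis $m\Delta = \id_A$, while the right-hand side equals $u$ by the unitality axiom $m(u \otimes \id_A) = \id_A$ precomposed with $u$. So both sides coincide with $u$, and (iii) holds.

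There is essentially no obstacle here; the statement is a direct verification once the candidate $(\phi,\theta)=(\id_A, u)$ is written down. If a graphical calculus proof is desired (to match the presentation of the preceding lemmas in the paper), the verification of (iii) is a two-step diagram: on the left, the composite of a unit cap with a Frobenius cup is collapsed to a single strand via separability; on the right, the two units feeding into the multiplication vertex are absorbed by unitality. Both reductions leave a single $u$-cap, completing the proof.
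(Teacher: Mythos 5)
Your proposal is correct and is essentially identical to the paper's own proof: both take $\phi=\id_A$ and $\theta=u$, note that conditions (i) and (ii) are immediate, and verify condition (iii) via $m\Delta u = u = m(u\otimes u)$ using separability and unitality. No issues.
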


\begin{proof}
 Suppose that $A:= (A,m,u,\Delta,\varep)$ is a separable Frobenius algebra, and take $\phi:=\id_A$ and $\theta:= u$. Then, conditions (i) and~(ii) of Definition~\ref{def:extFrobC}(b) clearly hold. Condition (iii) of Definition~\ref{def:extFrobC}(b) holds by the computation below:
\[
m (\phi \otimes \id_A) \Delta u 
\; = \; m \Delta u 
\; = \;  u
\; = \; m ( u \otimes  u)
\; = \; m (\theta \otimes \theta),
\]
where the third equality follows from a unitality axiom of $A$.
\end{proof}

\begin{example}
The monoidal unit $\one \in \cC$ is a separable Frobenius algebra, with $m$ and $\Delta$ identified as $\id_\one$, and with $u = \varep = \id_\one$. The Frobenius structure is then extended with $\phi = \theta = \id_\one$.
\end{example}


\subsection{Tie to Hopf algebras}  \label{sec:Hopf}
Take $\cC:=(\cC, \otimes, \one, c)$ to be a symmetric monoidal category. See  \cite[Chapter~10]{Radford} and references within for the case when $\cC = \mathsf{Vec}$ for the material below.

\begin{definition} \label{def:HopfAlg}
Consider the following constructions in $\cC:= (\cC,\otimes,\one,c)$.
\begin{enumerate}\renewcommand{\labelenumi}{(\alph{enumi})}
\item A \textit{Hopf algebra in $\cC$} is a tuple $(H, m, u, \HDelta, \Hvarep, S)$, where $(H,m,u)$ in an algebra in $\cC$ and $(H,\HDelta,\Hvarep)$ is a coalgebra in $\cC$, subject to the bialgebra laws,
\[
\HDelta, \; \Hvarep \; \in \mathsf{Alg}(\cC) 
\; \; \;
(\Leftrightarrow
\; \; 
m,  u \; \in \mathsf{Coalg}(\cC)),
\]
and where $S: H \to H$ ({\it antipode}) is a morphism in $\cC$ satisfying the antipode axiom,
\[
m(S\otimes\id_H)\HDelta \; =  \; u \hspace{0.02in} \Hvarep \; = \; m(\id_H\otimes S)\HDelta.
\]
 If the antipode $S$ is invertible with inverse $S^{-1}:H\to H$ in $\cC$, then we call the tuple $(H,m,u,\HDelta,\Hvarep,S,S^{-1})$ a \textit{Hopf algebra with invertible antipode}.

  \medskip
  
\item  A \textit{left integral} for a Hopf algebra $(H,m,u,\HDelta,\Hvarep,S)$ is a morphism $\Lambda: \one\to H$ which satisfies $m(\id_H\otimes\Lambda) = \Lambda \hspace{0.02in} \Hvarep$. A \textit{right cointegral} for the Hopf algebra $(H,m,u,\HDelta,\Hvarep,S)$ is a morphism $\lambda:H\to\one$ satisfying $(\lambda\otimes\id_H)\HDelta = u \hspace{0.02in} \lambda$.  If $\Lambda$ and $\lambda$ further satisfy $\lambda \hspace{0.02in} \Lambda = \id_{\one}$, then $\Lambda$ and $\lambda$ are said to be \textit{normalized}. A Hopf algebra equipped with a normalized (co)integral pair is called an \textit{integral Hopf algebra}.
  
  \medskip
  
\item A \textit{morphism of integral Hopf algebras} $f: H \to K$ is a morphism, which is both an algebra and coalgebra morphism, and which satisfies $f\Lambda_H = \Lambda_K$ and $\lambda_K f = \lambda_H$. 

  \medskip

\item We organize the above into a category, $\mathsf{IntHopfAlg}(\cC)$, whose objects are integral Hopf algebras and whose morphisms are morphisms of integral Hopf algebras as defined above.
\end{enumerate} 
\end{definition}

See Figures~\ref{fig:Hopfmap}-\ref{fig:Hopfint} in Appendix \ref{sec:HF} for a graphical representation of this definition. 

\begin{remark}
If a Hopf algebra is equipped with a normalized integral and cointegral, then the antipode is invertible; see, e.g.,  \cite[Lemma~3.5]{CollinsDuncan}. 
\end{remark}

Now we show that an integral Hopf algebra in $\cC$ admits the structure of a Frobenius algebra in~$\cC$. A similar argument can also be found in \cite[Appendix~A.2]{FuchsSchweigert}.

\begin{proposition}\label{prop:hopftofrob}
We have that 
\begin{align*}
\Psi:\mathsf{IntHopfAlg}(\cC) &\to \mathsf{FrobAlg}(\cC)\\
(H,\hspace{0.01in} m, \hspace{0.01in} u,  \hspace{0.01in} \HDelta, \hspace{0.01in} \Hvarep, \hspace{0.01in} S, \hspace{0.01in} S^{-1}, \hspace{0.01in} \Lambda, \hspace{0.01in}\lambda) 
&\mapsto (H, \hspace{0.01in} m, \hspace{0.01in} u, \; \Delta:= (m  \otimes S)(\id_H \otimes \HDelta\hs \Lambda), \;  \varep:=\lambda)
\end{align*}
is a well-defined functor, which acts as the identity on morphisms.
\end{proposition}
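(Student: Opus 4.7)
The proof establishes three things via graphical calculus in $(\cC,\otimes,\one,c)$: that $(H,m,u,\Delta,\varep)$ is a coalgebra, that the Frobenius law holds, and that $\Psi$ respects morphisms. Throughout, the plan is to use the graphical presentations of the Hopf structure and the integral/cointegral axioms (as will be set up in Appendix~\ref{sec:HF}), together with the bialgebra laws, the antipode axiom, and the standard consequence $\HDelta\hs S = c_{H,H}(S\otimes S)\HDelta$ that holds in any Hopf algebra in a symmetric monoidal category.

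For the coalgebra structure, coassociativity of $\Delta$ follows by pushing a copy of $\HDelta\hs\Lambda$ through itself using coassociativity of $\HDelta$ and the anti-coalgebra property of $S$ just mentioned. Counitality $(\varep\otimes\id_H)\Delta = \id_H = (\id_H\otimes\varep)\Delta$ is reduced to a combination of the right cointegral axiom $(\lambda\otimes\id_H)\HDelta = u\hs\lambda$, the antipode axiom, and ultimately the normalization $\lambda\hs\Lambda = \id_\one$.

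For the Frobenius law, the equation $(m\otimes\id_H)(\id_H\otimes\Delta) = \Delta\hs m$ is immediate from associativity of $m$, since $\Delta$ was defined by multiplying its input into the first tensorand of $\HDelta\hs\Lambda$. The remaining equation $\Delta\hs m = (\id_H\otimes m)(\Delta\otimes\id_H)$ is the main obstacle: after canceling $m$ in the first slot, it reduces to the core identity asserting, in suggestive Sweedler-type notation, that $k\Lambda_{(1)}\otimes S(\Lambda_{(2)}) = \Lambda_{(1)}\otimes S(\Lambda_{(2)})\hs k$ for every $k$. The strategy is to exhibit a diagrammatic proof by post-composing both sides with the Galois-type endomorphism $F := (\id_H\otimes m)(\HDelta\otimes\id_H)$: on the right-hand side, a coassociativity move followed by the antipode axiom $m(\id_H\otimes S)\HDelta = u\hs\Hvarep$ collapses the middle to the unit; on the left-hand side, the fact that $\HDelta$ is an algebra morphism combined with the same antipode identity and the left integral axiom $m(\id_H\otimes\Lambda) = \Lambda\hs\Hvarep$ absorbs $k$ into $\Lambda$. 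Both sides thereby reduce to $\Lambda\otimes k$, and invertibility of $F$ (with inverse $a\otimes b\mapsto a_{(1)}\otimes S(a_{(2)})b$, verified by a direct antipode computation) yields the desired equality.

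For functoriality, a morphism $f:H\to K$ of integral Hopf algebras by definition respects $m$, $u$, $\HDelta$, $\Hvarep$, and satisfies $f\hs\Lambda_H = \Lambda_K$, $\lambda_K\hs f = \lambda_H$. The identity $\varep_K\hs f = \lambda_K\hs f = \lambda_H = \varep_H$ is then immediate, and $\Delta_K\hs f = (f\otimes f)\Delta_H$ follows from $f$ being an algebra morphism together with the standard fact that bialgebra morphisms between Hopf algebras automatically commute with antipodes (by uniqueness of convolution inverses). All steps other than the integral balance identity in the Frobenius calculation are routine graphical verifications; that identity is where the substantive content of the proposition lies.
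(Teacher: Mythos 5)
Your plan is correct in outline and matches the paper's strategy: the paper likewise proves the result by graphical verification of counitality, the Frobenius law, and coassociativity, and handles morphisms exactly as you do (multiplicativity and unitality are free, the counit is the cointegral condition, and comultiplicativity uses that a bialgebra morphism between Hopf algebras automatically intertwines antipodes). Your treatment of the key balance identity $k\Lambda_{(1)}\otimes S(\Lambda_{(2)})=\Lambda_{(1)}\otimes S(\Lambda_{(2)})\hs k$ by post-composing with the invertible Galois map $F=(\id_H\otimes m)(\HDelta\otimes\id_H)$ is an equivalent repackaging of the paper's Lemma~\ref{lem:appA}(a), which asserts $\Delta=(\id_H\otimes m)(\id_H\otimes S\otimes\id_H)(\HDelta\hs m\otimes\id_H)(\id_H\otimes\Lambda\otimes\id_H)\HDelta$; the same moves (antipode axiom, multiplicativity of $\HDelta$, the left integral law) drive both arguments, so I would not count this as a genuinely different route.

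There are, however, two soft spots. First, your stated route to coassociativity --- ``coassociativity of $\HDelta$ plus the anti-coalgebra property of $S$'' --- does not go through as written: expanding $(\Delta\otimes\id_H)\Delta$ and $(\id_H\otimes\Delta)\Delta$ leaves two expressions whose comparison requires precisely the balance identity again, and the clean derivation, which is the one the paper uses (Figure~\ref{fig:coassoc} cites Figure~\ref{fig:Froblaw}), is to deduce coassociativity from the already-established Frobenius law together with counitality. Since you prove the Frobenius law anyway, this is repairable, but the step as described would stall. Second, the counit identity $(\id_H\otimes\varep)\Delta=\id_H$ amounts to $\bigl(\id_H\otimes(\lambda\hs S)\bigr)\HDelta\hs\Lambda=u$, and this does not follow from the normalization $\lambda\hs\Lambda=\id_\one$ alone: the paper has to isolate the derived identity $\lambda\hs S\hs\Lambda=\id_\one$ as Lemma~\ref{lem:appA}(b), whose proof (Figure~\ref{fig:lem(b)}) is by far the longest single computation in the appendix and uses essentially the full strength of the Hopf and (co)integral axioms. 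Your phrase ``ultimately the normalization'' hides the hardest part of the counitality check, so you should expect to prove that auxiliary identity separately before the coalgebra axioms close up.
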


\begin{proof}
This is established in Appendix~\ref{sec:HF} via graphical calculus arguments.   
\end{proof}

\begin{example} \label{ex: Psi(kG)}
Let $G$ be any finite group. The group algebra $\Bbbk G$ is a finite-dimensional Hopf algebra with  $\HDelta(g) = g\otimes g$, $\Hvarep(g) = 1$, and $S(g) = g^{-1}$, for all $g\in G$. This Hopf algebra admits a normalized (co)integral pair given by $\Lambda := \sum_{h \in G} h$ and $\lambda(g):= \delta_{e,g}1_{\Bbbk}$. Applying $\Psi$ to this integral Hopf algebra, we obtain the Frobenius structure on $\Bbbk G$ described in Example \ref{ex: KG} and~\eqref{eq:DeltaA}, where $\Delta(g):= \sum_{h \in G}g h \otimes h^{-1}$ and $\varep(g):= \lambda(g) = \delta_{e,g}1_\Bbbk$, for all $g\in G$.
\end{example}

\begin{proposition} \label{prop:intHopfext}
If $H \in \mathsf{IntHopfAlg}(\cC)$ is equipped with $\theta: \one \to H \in \cC$ such that $m(\theta\otimes \theta)= u \hs\Hvarep\hs \Lambda,$
then the Frobenius algebra $\Psi(H)$ from Proposition~\ref{prop:hopftofrob} is extendable.
In particular, when $\cC= \mathsf{Vec}$, the Frobenius algebra $\Psi(H)$ over $\Bbbk$ is extendable with $\phi=\id_{\Psi(H)}$ and $\theta=\pm \sqrt{\Hvarep(\Lambda(1_{\Bbbk}))} \hs u.$
\end{proposition}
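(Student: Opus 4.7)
The strategy is to take $\phi := \id_H$ and to verify that $(\Psi(H), \id_H, \theta)$ satisfies the three conditions of Definition~\ref{def:extFrobC}(b). Conditions~(i) and~(ii) are immediate: $\id_H$ is trivially an involutive Frobenius algebra morphism, and $\phi\hs m(\theta \otimes \id_H) = m(\theta \otimes \id_H)$ is tautological once $\phi = \id_H$. The only nontrivial condition is~(iii), which demands $m\hs\Delta\hs u = m(\theta \otimes \theta)$, where $\Delta := (m \otimes S)(\id_H \otimes \HDelta\hs\Lambda)$ is the Frobenius comultiplication supplied by $\Psi$.

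The key computation I would carry out is
\[
m\hs\Delta\hs u \;=\; m(m \otimes S)(u \otimes \HDelta\hs\Lambda) \;=\; m\bigl(m(u \otimes \id_H) \otimes S\bigr)\HDelta\hs\Lambda \;=\; m(\id_H \otimes S)\HDelta\hs\Lambda \;=\; u\hs\Hvarep\hs\Lambda,
\]
where the penultimate equality uses the unit axiom $m(u \otimes \id_H) = \id_H$ for $H$, and the final equality uses the antipode axiom $m(\id_H \otimes S)\HDelta = u\hs\Hvarep$. The standing hypothesis $m(\theta \otimes \theta) = u\hs\Hvarep\hs\Lambda$ then yields~(iii) directly. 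These manipulations can equally well be checked via the graphical calculus for symmetric monoidal categories, so the general case is essentially a one-line application of the antipode axiom.

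For the specialization to $\cC = \mathsf{Vec}$, I would make the ansatz $\theta = c\hs u$ for a scalar $c \in \Bbbk$. Then $m(\theta \otimes \theta) = c^2\hs u$, whereas $u\hs\Hvarep\hs\Lambda$ corresponds to the scalar multiple $\Hvarep(\Lambda(1_\Bbbk))\hs u$. The hypothesis thus reduces to the equation $c^2 = \Hvarep(\Lambda(1_\Bbbk))$ in $\Bbbk$, yielding the two solutions $\theta = \pm\sqrt{\Hvarep(\Lambda(1_\Bbbk))}\hs u$; since $\phi=\id_{\Psi(H)}$, both give bona fide extended structures.

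No substantial obstacle is anticipated: the hypothesis $m(\theta \otimes \theta) = u\hs\Hvarep\hs\Lambda$ is calibrated precisely so that one application of the antipode axiom turns the left-hand side of condition~(iii) into the right-hand side. The only care required is tracking the unit isomorphism $\one \otimes \one \cong \one$ when composing $u \otimes \HDelta\hs\Lambda$, which is automatic under the strict-monoidal convention adopted in Section~\ref{sec:monbackground}.
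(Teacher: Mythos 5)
Your proof is correct and follows essentially the same route as the paper: the paper verifies condition~(iii) graphically in Figure~\ref{fig:prop4.7} using precisely the chain you write algebraically --- the definition of $\Delta$, unitality of $m$, the antipode axiom (H9), and the hypothesis $m(\theta\otimes\theta)=u\hs\Hvarep\hs\Lambda$. The treatment of the $\mathsf{Vec}$ case via the ansatz $\theta = c\hs u$ with $c^2 = \Hvarep(\Lambda(1_\Bbbk))$ also matches what the paper leaves as "clear."
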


\begin{proof}
Suppose that the morphism $\theta:\one \to H$ as in the statement exists. Then, taking $\phi = \id_{\Psi(H)}$, and using this $\theta$, we extend the Frobenius structure. To verify the axioms of Definition~\ref{def:extFrobC}(b), notice that
conditions~(i) and~(ii) hold trivially. Condition (iii) is verified in Figure \ref{fig:prop4.7}; using notation and axioms from Appendix~\ref{sec:HF}.
The last statement on the case when $\cC=\mathsf{Vec}$ is clear.
 \end{proof}
\begin{figure}[h!]
\scalebox{.75}{

\tikzset{every picture/.style={line width=0.75pt}} 

\begin{tikzpicture}[x=0.75pt,y=0.75pt,yscale=-1,xscale=1]

\draw [line width=1.5]    (266.61,48.74) -- (266.61,32.62) ;
\draw [shift={(266.61,28.62)}, rotate = 90] [fill={rgb, 255:red, 0; green, 0; blue, 0 }  ][line width=0.08]  [draw opacity=0] (11.61,-5.58) -- (0,0) -- (11.61,5.58) -- cycle    ;
\draw [line width=1.5]    (252.11,62.74) .. controls (252.61,44.24) and (281.61,43.74) .. (281.61,63.24) ;
\draw [line width=1.5]    (221.61,62.74) .. controls (222.61,83.74) and (251.61,85.74) .. (252.11,62.74) ;
\draw [line width=1.5]    (221.61,62.74) -- (221.55,29.95) ;
\draw [line width=1.5]    (237.11,98.74) -- (237.11,79.74) ;
\draw [line width=1.5]    (281.61,98.74) -- (281.61,63.24) ;
\draw  [fill={rgb, 255:red, 255; green, 255; blue, 255 }  ,fill opacity=1 ][line width=1.5]  (272.93,76.3) .. controls (272.93,71.6) and (276.74,67.8) .. (281.43,67.8) .. controls (286.13,67.8) and (289.94,71.6) .. (289.94,76.3) .. controls (289.94,81) and (286.13,84.8) .. (281.43,84.8) .. controls (276.74,84.8) and (272.93,81) .. (272.93,76.3) -- cycle ;
\draw [line width=1.5]    (277.83,75.9) -- (284.97,76.02) ;
\draw [line width=1.5]    (281.45,72.33) -- (281.45,79.93) ;
\draw  [fill={rgb, 255:red, 0; green, 0; blue, 0 }  ,fill opacity=1 ][line width=1.5]  (263.31,48.74) .. controls (263.31,46.92) and (264.78,45.44) .. (266.61,45.44) .. controls (268.43,45.44) and (269.9,46.92) .. (269.9,48.74) .. controls (269.9,50.56) and (268.43,52.03) .. (266.61,52.03) .. controls (264.78,52.03) and (263.31,50.56) .. (263.31,48.74) -- cycle ;
\draw   (10.05,10.14) -- (570.96,10.14) -- (570.96,160.6) -- (10.05,160.6) -- cycle ;
\draw [line width=1.5]    (40.51,80.14) .. controls (41.28,50.35) and (81.28,50.35) .. (80.62,79.68) ;
\draw [line width=1.5]    (61.41,57.58) -- (61.41,38.08) ;
\draw [line width=1.5]    (80.61,93.18) -- (80.62,79.68) ;
\draw  [fill={rgb, 255:red, 255; green, 255; blue, 255 }  ,fill opacity=1 ][line width=1.5]  (33.32,87.32) .. controls (33.32,83.35) and (36.54,80.14) .. (40.51,80.14) .. controls (44.47,80.14) and (47.69,83.35) .. (47.69,87.32) .. controls (47.69,91.29) and (44.47,94.5) .. (40.51,94.5) .. controls (36.54,94.5) and (33.32,91.29) .. (33.32,87.32) -- cycle ;
\draw [line width=1.5]    (35.01,92.02) -- (45.41,82.78) ;
\draw [line width=1.5]    (40.51,94.5) .. controls (40.34,123.04) and (80.61,123.18) .. (80.61,93.18) ;
\draw [line width=1.5]    (61.41,139.98) -- (61.54,116.2) ;
\draw  [fill={rgb, 255:red, 255; green, 255; blue, 255 }  ,fill opacity=1 ][line width=1.5]  (57.68,34.35) .. controls (57.68,32.29) and (59.35,30.62) .. (61.41,30.62) .. controls (63.47,30.62) and (65.14,32.29) .. (65.14,34.35) .. controls (65.14,36.41) and (63.47,38.08) .. (61.41,38.08) .. controls (59.35,38.08) and (57.68,36.41) .. (57.68,34.35) -- cycle ;
\draw [line width=1.5]    (130.51,80.14) .. controls (131.28,50.35) and (171.28,50.35) .. (170.62,79.68) ;
\draw [line width=1.5]    (151.41,57.58) -- (151.41,38.08) ;
\draw [line width=1.5]    (170.61,93.18) -- (170.62,79.68) ;
\draw [line width=1.5]    (130.51,94.5) .. controls (130.34,123.04) and (170.61,123.18) .. (170.61,93.18) ;
\draw [line width=1.5]    (151.41,139.98) -- (151.54,116.2) ;
\draw  [fill={rgb, 255:red, 255; green, 255; blue, 255 }  ,fill opacity=1 ][line width=1.5]  (147.68,34.35) .. controls (147.68,32.29) and (149.35,30.62) .. (151.41,30.62) .. controls (153.47,30.62) and (155.14,32.29) .. (155.14,34.35) .. controls (155.14,36.41) and (153.47,38.08) .. (151.41,38.08) .. controls (149.35,38.08) and (147.68,36.41) .. (147.68,34.35) -- cycle ;
\draw [line width=1.5]    (130.51,94.5) -- (130.51,80.14) ;
\draw [line width=1.5]    (237.11,98.74) .. controls (236.94,127.27) and (281.61,128.74) .. (281.61,98.74) ;
\draw [line width=1.5]    (260.14,139.98) -- (259.94,121.4) ;
\draw [line width=1.5]    (340.11,80.54) .. controls (340.88,50.75) and (380.88,50.75) .. (380.22,80.08) ;
\draw [line width=1.5]    (361.01,57.98) -- (361.01,38.48) ;
\draw [line width=1.5]    (340.11,94.9) .. controls (339.94,123.44) and (380.21,123.58) .. (380.21,93.58) ;
\draw [line width=1.5]    (361.01,140.38) -- (361.14,116.6) ;
\draw [line width=1.5]    (340.11,94.9) -- (340.11,80.54) ;
\draw  [fill={rgb, 255:red, 255; green, 255; blue, 255 }  ,fill opacity=1 ][line width=1.5]  (371.33,89.1) .. controls (371.33,84.4) and (375.14,80.6) .. (379.83,80.6) .. controls (384.53,80.6) and (388.34,84.4) .. (388.34,89.1) .. controls (388.34,93.8) and (384.53,97.6) .. (379.83,97.6) .. controls (375.14,97.6) and (371.33,93.8) .. (371.33,89.1) -- cycle ;
\draw [line width=1.5]    (376.23,88.7) -- (383.37,88.82) ;
\draw [line width=1.5]    (379.85,85.13) -- (379.85,92.73) ;
\draw  [fill={rgb, 255:red, 0; green, 0; blue, 0 }  ,fill opacity=1 ][line width=1.5]  (357.71,57.98) .. controls (357.71,56.16) and (359.19,54.68) .. (361.01,54.68) .. controls (362.83,54.68) and (364.3,56.16) .. (364.3,57.98) .. controls (364.3,59.8) and (362.83,61.27) .. (361.01,61.27) .. controls (359.19,61.27) and (357.71,59.8) .. (357.71,57.98) -- cycle ;
\draw [line width=1.5]    (361.01,48.54) -- (361.01,32.42) ;
\draw [shift={(361.01,28.42)}, rotate = 90] [fill={rgb, 255:red, 0; green, 0; blue, 0 }  ][line width=0.08]  [draw opacity=0] (11.61,-5.58) -- (0,0) -- (11.61,5.58) -- cycle    ;
\draw [line width=1.5]    (444.57,74.44) -- (444.6,34.42) ;
\draw [shift={(444.61,30.42)}, rotate = 90.05] [fill={rgb, 255:red, 0; green, 0; blue, 0 }  ][line width=0.08]  [draw opacity=0] (11.61,-5.58) -- (0,0) -- (11.61,5.58) -- cycle    ;
\draw  [fill={rgb, 255:red, 0; green, 0; blue, 0 }  ,fill opacity=1 ][line width=1.5]  (441.31,62.78) .. controls (441.31,60.96) and (442.79,59.48) .. (444.61,59.48) .. controls (446.43,59.48) and (447.9,60.96) .. (447.9,62.78) .. controls (447.9,64.6) and (446.43,66.07) .. (444.61,66.07) .. controls (442.79,66.07) and (441.31,64.6) .. (441.31,62.78) -- cycle ;
\draw  [fill={rgb, 255:red, 255; green, 255; blue, 255 }  ,fill opacity=1 ][line width=1.5]  (440.84,78.18) .. controls (440.84,76.11) and (442.51,74.44) .. (444.57,74.44) .. controls (446.63,74.44) and (448.3,76.11) .. (448.3,78.18) .. controls (448.3,80.24) and (446.63,81.91) .. (444.57,81.91) .. controls (442.51,81.91) and (440.84,80.24) .. (440.84,78.18) -- cycle ;
\draw  [fill={rgb, 255:red, 255; green, 255; blue, 255 }  ,fill opacity=1 ][line width=1.5]  (440.84,97.78) .. controls (440.84,95.71) and (442.51,94.04) .. (444.57,94.04) .. controls (446.63,94.04) and (448.3,95.71) .. (448.3,97.78) .. controls (448.3,99.84) and (446.63,101.51) .. (444.57,101.51) .. controls (442.51,101.51) and (440.84,99.84) .. (440.84,97.78) -- cycle ;
\draw [line width=1.5]    (444.57,139.64) -- (444.57,101.51) ;
\draw [line width=1.5]    (506.63,74.39) -- (506.71,41.89) ;
\draw [line width=1.5]    (507.51,95) .. controls (507.34,123.54) and (546.51,124.39) .. (546.51,94.39) ;
\draw [line width=1.5]    (528.41,140.48) -- (528.54,116.7) ;
\draw  [fill={rgb, 255:red, 255; green, 255; blue, 255 }  ,fill opacity=1 ][line width=1.5]  (502.98,38.16) .. controls (502.98,36.09) and (504.65,34.42) .. (506.71,34.42) .. controls (508.78,34.42) and (510.45,36.09) .. (510.45,38.16) .. controls (510.45,40.22) and (508.78,41.89) .. (506.71,41.89) .. controls (504.65,41.89) and (502.98,40.22) .. (502.98,38.16) -- cycle ;
\draw  [fill={rgb, 255:red, 255; green, 255; blue, 255 }  ,fill opacity=1 ][line width=1.5]  (499.82,81.32) .. controls (499.82,77.35) and (503.04,74.14) .. (507.01,74.14) .. controls (510.97,74.14) and (514.19,77.35) .. (514.19,81.32) .. controls (514.19,85.29) and (510.97,88.5) .. (507.01,88.5) .. controls (503.04,88.5) and (499.82,85.29) .. (499.82,81.32) -- cycle ;
\draw [line width=1.5]    (499.82,81.32) -- (514.19,81.32) ;
\draw  [fill={rgb, 255:red, 255; green, 255; blue, 255 }  ,fill opacity=1 ][line width=1.5]  (539.32,81.82) .. controls (539.32,77.85) and (542.54,74.64) .. (546.51,74.64) .. controls (550.47,74.64) and (553.69,77.85) .. (553.69,81.82) .. controls (553.69,85.79) and (550.47,89) .. (546.51,89) .. controls (542.54,89) and (539.32,85.79) .. (539.32,81.82) -- cycle ;
\draw [line width=1.5]    (539.32,81.82) -- (553.69,81.82) ;
\draw [line width=1.5]    (546.63,74.39) -- (546.71,42.39) ;
\draw  [fill={rgb, 255:red, 255; green, 255; blue, 255 }  ,fill opacity=1 ][line width=1.5]  (542.98,38.66) .. controls (542.98,36.59) and (544.65,34.92) .. (546.71,34.92) .. controls (548.78,34.92) and (550.45,36.59) .. (550.45,38.66) .. controls (550.45,40.72) and (548.78,42.39) .. (546.71,42.39) .. controls (544.65,42.39) and (542.98,40.72) .. (542.98,38.66) -- cycle ;
\draw [line width=1.5]    (507.51,95) -- (507.01,88.5) ;
\draw [line width=1.5]    (546.51,94.39) -- (546.51,89) ;

\draw (98,82.5) node [anchor=north west][inner sep=0.75pt]   [align=left] {:= \ \ \ \ \ \ \ \ \ \ \ \ \ \ \ = \  \ \ \ \ \ \ \ \ \ \ \ \ \ \ \ \ \ \ \ = \ \  \ \ \ \ \ \ \ \ \ \ \ \ \ \ \ = \ \ \ \ \ \ \ \ \ \ =};
\draw (90.5,65.6) node [anchor=north west][inner sep=0.75pt] [font=\small]  [align=left] {(def) \ \ \ \ \ \ \ \ \ \ \  Fig.\hs\ref{fig:DelEp} \ \ \ \ \ \ \ \ \ \ \ \ \ \ \ \ \ (H2) \ \ \ \ \ \ \ \ \ \ \ \ \ \ \ (H9) \ \ \ \ \ \ \ (hyp)};

\end{tikzpicture}

} \vspace{-.1in}
        \caption{Proof of Definition~\ref{def:extFrobC}(b)(iii) for Proposition~\ref{prop:intHopfext}.}
        \label{fig:prop4.7}
    \end{figure}
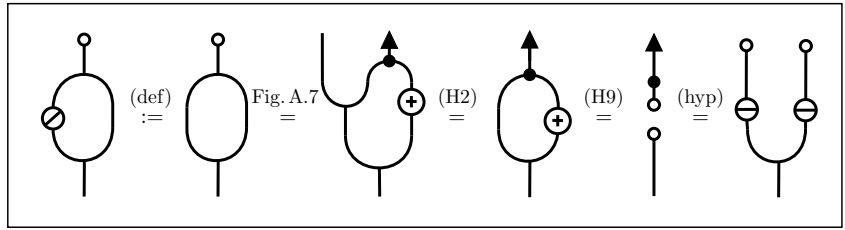

\begin{example}
Let $G$ be a finite group, and recall that the group algebra $\Bbbk G$ has a Hopf algebra structure, which induces a Frobenius algebra structure, as described in Example \ref{ex: Psi(kG)}. In this case, we have that $u\Hvarep(\Lambda) = u(\Hvarep(\sum_{h\in G} h) = u(\sum_{h\in G}1_{\Bbbk}) = |G|\cdot u(1_{\Bbbk}) = |G|\cdot e_G$. The above proposition then tells us that the choice $\phi =\id_{\Bbbk G}$ and $\theta = \pm\sqrt{|G|}\cdot e_G$ extends the induced Frobenius algebra structure on $\Bbbk G$. Note that this is the same extended Frobenius structure as introduced in Example~\ref{ex: KG}.
\end{example}


\subsection{Extended Hopf algebras}  \label{sec:eHopf}
Continue to let $\cC$ be a symmetric monoidal category. Here, we introduce extended Hopf algebras in $\cC$.

\begin{definition}\label{def:exthopf}
An integral Hopf algebra $(H,m,u,\HDelta,\Hvarep,S, S^{-1}, \Lambda,\lambda)$ is called {\it extended} if it is equipped with two morphisms $\phi:H\to H$ and $\theta:\one\to H$ in $\cC$ satisfying the following axioms:
\begin{enumerate}\renewcommand{\labelenumi}{(\roman{enumi})}
\item $\phi$ is a morphism of integral Hopf algebras such that $\phi^2 = \id_H$;
\smallskip
\item $\phi\hs m(\theta\otimes\id_H) = m(\theta\otimes\id_H)$;
\smallskip
\item $m(\phi\otimes S)\HDelta\Lambda = m(\theta\otimes\theta)$.
\end{enumerate}
A \textit{morphism of extended Hopf algebras} $f:(H,\phi,\theta)\to (H',\phi',\theta')$ is a morphism of integral Hopf algebras in $\cC$ which also satisfies $f\phi = \phi' f$ and $f\theta = \theta'$.
\end{definition}

We use the above to define a category $\mathsf{ExtHopfAlg}(\cC)$. Also, consider the forgetful functor,
\begin{align*}
U: \mathsf{ExtHopfAlg}(\cC) &\to \mathsf{IntHopfAlg}(\cC)\\
(H,\hspace{0.01in} m, \hspace{0.01in} u,  \hspace{0.01in} \HDelta, \hspace{0.01in} \Hvarep, \hspace{0.01in} S, \hspace{0.01in} S^{-1}, \hspace{0.01in} \Lambda, \hspace{0.01in}\lambda, \hspace{0.01in} \phi, \hspace{0.01in} \theta) 
&\mapsto (H,\hspace{0.01in} m, \hspace{0.01in} u,  \hspace{0.01in} \HDelta, \hspace{0.01in} \Hvarep, \hspace{0.01in} S, \hspace{0.01in} S^{-1}, \hspace{0.01in} \Lambda, \hspace{0.01in}\lambda).
\end{align*}
We have the following result.

\begin{proposition} \label{prop:eHopf}
Take $H \in \mathsf{ExtHopfAlg}(\cC)$. Then, the Frobenius algebra $\Psi U(H)$ in $\cC$ from Proposition \ref{prop:hopftofrob}  is extendable via the morphisms $\phi$ and~$\theta$.
\end{proposition}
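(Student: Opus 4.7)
The plan is to verify conditions~(i)--(iii) of Definition~\ref{def:extFrobC}(b) for the triple $(\Psi U(H), \phi, \theta)$, drawing directly on the three axioms in Definition~\ref{def:exthopf} for the extended Hopf structure on $H$. The strategy is to import the algebra structure and $\phi$ from the Hopf side unchanged, and then do a short calculation to reduce the Frobenius-level axioms to the Hopf-level axioms.

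For condition~(i), I would observe that $\phi$ is an involutive algebra endomorphism of $H$ by hypothesis, so only the coalgebra part needs work. The Frobenius coalgebra has $\Delta = (m \otimes S)(\id_H \otimes \HDelta\hs\Lambda)$ and $\varep = \lambda$. Counit preservation $\lambda \hs\phi = \lambda$ is part of being a morphism of integral Hopf algebras. For comultiplication preservation, I would use that $\phi$ preserves $m$, $\HDelta$, and $\Lambda$, together with the standard fact that any bialgebra map between Hopf algebras automatically commutes with the antipode (so $\phi \hs S = S\hs \phi$); a short diagram chase then yields $(\phi \otimes \phi)\Delta = \Delta \hs \phi$.

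Condition~(ii) coincides verbatim with Definition~\ref{def:exthopf}(ii), so there is nothing to show.

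For condition~(iii), the key step is to simplify $\Delta\hs u$. Using unitality $m(u \otimes \id_H) = \id_H$, I would compute
\[
\Delta\hs u \;=\; (m \otimes S)(\id_H \otimes \HDelta\hs\Lambda)\hs u \;=\; (m \otimes S)(u \otimes \HDelta\hs\Lambda) \;=\; (\id_H \otimes S)\HDelta\hs\Lambda.
\]
Precomposing with $m(\phi \otimes \id_H)$ then yields $m(\phi \otimes S)\HDelta\hs\Lambda$, which equals $m(\theta \otimes \theta)$ by Definition~\ref{def:exthopf}(iii). No serious obstacle is expected: the two points that require a moment's care are the identity $\phi\hs S = S\hs\phi$ and the bookkeeping for $\Delta\hs u$. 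Both arguments can alternatively be rendered in graphical calculus, in the style of Appendix~\ref{sec:HF}, if a pictorial proof is preferred.
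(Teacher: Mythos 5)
Your proposal is correct and follows essentially the same route as the paper: condition~(ii) is immediate, and condition~(iii) reduces to Definition~\ref{def:exthopf}(iii) via the same unitality simplification of $\Delta\hs u$. The only difference is cosmetic: for condition~(i) the paper simply cites the functoriality of $\Psi$ and $U$ from Proposition~\ref{prop:hopftofrob} to conclude that $\phi$ is a Frobenius algebra morphism, whereas you re-derive this by hand (using $\phi\hs S = S\hs\phi$ and preservation of $m$, $\HDelta$, $\Lambda$), which amounts to unfolding the morphism part of that functoriality argument.
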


\begin{proof}
We will verify that $\phi$ and $\theta$ extend the Frobenius algebra $\Psi U(H)$ by checking the axioms of Definition~\ref{def:extFrobC}(b).
Since $\phi:(H,m,u,\HDelta,\Hvarep, S, S^{-1}, \Lambda, \lambda)\to (H,m,u,\HDelta,\Hvarep, S, S^{-1}, \Lambda, \lambda)$ is a morphism of integral Hopf algebras, the functoriality of $\Psi$ and $U$ gives that $\phi: (H,m,u,\Delta,\varep)\to (H,m,u,\Delta,\varep)$ is a Frobenius algebra morphism. Moreover, we have that $\phi^2 = \id_H$ by Definition \ref{def:exthopf}(i). So, condition~(i) of Definition~\ref{def:extFrobC}(b) holds. Condition~(ii) of Definition~\ref{def:extFrobC}(b) also holds by Definition~\ref{def:exthopf}(ii) since the multiplication morphism is the same for both the Hopf and Frobenius structures on $H$. Towards condition~(iii) of Definition~\ref{def:extFrobC}(b), we compute:
\[
m(\phi\otimes\id_H)\Delta u \; = \; m(\phi\otimes S)(m\otimes\id_H)(u\otimes\HDelta)\Lambda \; = \; m(\theta\otimes\theta),
\]

\noindent where the first equality is the definition of $\Delta$ and a level exchange, and the second equality is by the unitality of $m$ and $u$ and Definition \ref{def:exthopf}(iii).
\end{proof}

The consequence below is straight-forward.

\begin{corollary}
There is a functor $\HPsi:\mathsf{ExtHopfAlg}(\cC)\to \mathsf{ExtFrobAlg}(\cC)$ which sends an extended Hopf algebra $(H,m,u,\HDelta,\Hvarep,S,S^{-1},\Lambda,\lambda,\phi,\theta)$ to the extended Frobenius algebra $(H,m,u,\Delta,\varep,\phi,\theta)$, with $\Delta$ and $\varep$ defined in Proposition~\ref{prop:hopftofrob}, and which acts as the identity on morphisms. \qed 
\end{corollary}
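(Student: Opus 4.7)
The plan is to verify directly that the assignment described is a well-defined functor, leveraging Proposition~\ref{prop:hopftofrob} and Proposition~\ref{prop:eHopf}, which have already done most of the work.

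For the object map, start with an extended Hopf algebra $(H,m,u,\HDelta,\Hvarep,S,S^{-1},\Lambda,\lambda,\phi,\theta)$. Applying the forgetful functor $U$ yields an integral Hopf algebra, and then $\Psi$ produces a Frobenius algebra $(H,m,u,\Delta,\varep)$ with $\Delta$ and $\varep$ as in Proposition~\ref{prop:hopftofrob}. Proposition~\ref{prop:eHopf} gives exactly what is needed: the same morphisms $\phi$ and $\theta$ extend this Frobenius algebra to an object of $\mathsf{ExtFrobAlg}(\cC)$. So defining $\HPsi(H,\ldots,\phi,\theta) := (H,m,u,\Delta,\varep,\phi,\theta)$ is well-posed on objects.

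For the morphism map, suppose $f:(H,\phi,\theta)\to (H',\phi',\theta')$ is a morphism of extended Hopf algebras. I need to check that $f$ is a morphism of extended Frobenius algebras from $\HPsi(H,\phi,\theta)$ to $\HPsi(H',\phi',\theta')$. By Definition~\ref{def:exthopf}, $f$ is in particular a morphism of integral Hopf algebras, so since both $U$ and $\Psi$ act as the identity on morphisms (the former by construction, the latter by Proposition~\ref{prop:hopftofrob}), $f$ is automatically a morphism of the underlying Frobenius algebras. The two remaining conditions $f\phi = \phi' f$ and $f\theta = \theta'$ needed for a morphism in $\mathsf{ExtFrobAlg}(\cC)$ are already imposed in the definition of a morphism in $\mathsf{ExtHopfAlg}(\cC)$, so they transfer for free.

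Finally, functoriality of $\HPsi$ is immediate: since $\HPsi$ acts as the identity on morphisms and composition of underlying morphisms in $\cC$ agrees on both sides, $\HPsi$ preserves identities and composition. No single step here is an obstacle; the statement is essentially a packaging of Propositions~\ref{prop:hopftofrob} and~\ref{prop:eHopf}, together with the observation that the compatibility conditions in $\mathsf{ExtHopfAlg}(\cC)$ on morphisms were deliberately set up to match those in $\mathsf{ExtFrobAlg}(\cC)$.
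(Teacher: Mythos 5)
Your proposal is correct and follows exactly the route the paper intends: the paper states this corollary as a straightforward consequence of Propositions~\ref{prop:hopftofrob} and~\ref{prop:eHopf}, with the object map handled by Proposition~\ref{prop:eHopf} and the morphism map reducing to the observation that the extra conditions $f\phi=\phi'f$ and $f\theta=\theta'$ in $\mathsf{ExtHopfAlg}(\cC)$ are precisely those required in $\mathsf{ExtFrobAlg}(\cC)$. Nothing is missing.
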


\begin{remark}
While the above result tells us that every extended Hopf algebra gives rise to an extended Frobenius algebra via the same $\phi$ and $\theta$, the converse is not true. In particular, given $H \in \mathsf{IntHopfAlg}(\cC)$, we get that $\Psi(H) \in \mathsf{FrobAlg}(\cC)$. If $\Psi(H)$ is extendable via $\phi_{\Psi(H)}$ and $\theta_{\Psi(H)}$, it is not necessarily true that $(H, \phi_{\Psi(H)}, \theta_{\Psi(H)})$ is an extended Hopf algebra in $\cC$. 

For instance,  consider the Frobenius algebra structure on $\Bbbk C_2$, induced by the Hopf structure, as described in Example \ref{ex: Psi(kG)}. This Frobenius structure can be extended by taking $\phi(g) = -g$ (where $g$ is a generator of $C_2$) and $\theta = 0$, as in Proposition \ref{prop:C2-class}(b). However, this choice of $\phi$ and $\theta$ does not extend the integral Hopf structure on $\Bbbk C_2$, since $\phi$ is not comultiplicative with respect to $\HDelta$. 
\end{remark}


\section{Extended Frobenius monoidal functors}
\label{sec:eFrobMF}
In this section, we introduce the construction of an extended Frobenius monoidal functor, which preserves extended Frobenius algebras [Proposition~\ref{prop:eFMpres}]. Background material is covered in Section~\ref{sec:back-MF}, and the main construction is covered in Section~\ref{sec:result-EMF}. Examples are presented in Section~\ref{sec:examples-EMF}.


\subsection{Background on monoidal functors} \label{sec:back-MF} We can move between monoidal categories in several ways. Consider the terminology below, along with the references, \cite[Chapter~6]{Bohm}, \cite{DayPastro}, \cite[Sections~1.4 and~7.5]{TuraevVirelizier}, \cite[Section~3.2]{Walton}, for  details about the material in this part. 

\begin{definition} \label{def:monfunc} Take a functor $F:\cC\to\cC'$ between monoidal categories $(\cC, \otimes, \one)$ and $(\cC', \otimes', \one').$
\begin{enumerate}[\upshape (a)]
\item We say that $F$ is a {\it monoidal functor} if it is equipped with a natural transformation \linebreak $F^{(2)}:=\{F^{(2)}_{X,Y}:  F(X) \otimes' F(Y) \to F(X \otimes Y)\}_{X,Y \in \cC}$, and  a morphism $F^{(0)}: \one' \to F(\one)$ in $\cC'$, 
that satisfy associativity and unitality constraints.
\smallskip
\item A monoidal functor $(F,F^{(2)}, F^{(0)})$ is said to be {\it strong} if $F^{(2)}$ is a natural isomorphism and $F^{(0)}$ is an isomorphism. In this case, denote $F^{(-2)}_{X,Y}:= (F^{(2)}_{X,Y})^{-1}$ and $F^{(-0)}:=(F^{(0)})^{-1}$.
\smallskip
\item We say that $F$ is a {\it comonoidal functor} if it is equipped with a natural transformation \linebreak $F_{(2)}:=\{F_{(2)}^{X,Y}:  F(X \otimes Y) \to F(X) \otimes' F(Y)\}_{X,Y \in \cC}$, and  a morphism $F_{(0)}: F(\one) \to \one'$ in $\cC'$, 
that satisfy coassociativity and counitality constraints.
\smallskip
\item We say that $F$ is a {\it Frobenius monoidal functor} if it is part of a tuple $(F, F^{(2)}, F^{(0)}, F_{(2)}, F_{(0)})$, where $(F, F^{(2)}, F^{(0)})$ is a monoidal functor, and $(F, F_{(2)}, F_{(0)})$ is a comonoidal functor, subject to the Frobenius conditions, for all $X,Y,Z \in \cC$:
\[
\begin{array}{rl}
\bigl(F^{(2)}_{X,Y} \otimes' \id_{F(Z)}\bigr)\bigl(\id_{F(X)} \otimes' F_{(2)}^{Y,Z}\bigr) 
&= \;  F_{(2)}^{X \otimes Y, Z} \circ F^{(2)}_{X, Y \otimes Z} \;,\medskip\\
\bigl(\id_{F(X)} \otimes' F^{(2)}_{Y,Z}\bigr)\bigl(F_{(2)}^{X,Y} \otimes' \id_{F(Z)}\bigr) 
&= \; F_{(2)}^{X, Y \otimes Z} \circ F^{(2)}_{X \otimes Y, Z}.
\end{array}
\]

\item A Frobenius monoidal functor $(F, F^{(2)}, F^{(0)}, F_{(2)}, F_{(0)})$ is {\it separable} if for each $X,Y \in \cC$:
\[
F^{(2)}_{X,Y} \circ F_{(2)}^{X,Y} = \id_{F(X \otimes Y)}.
\] 
\end{enumerate}
\end{definition}

Moreover,  consider the transformations of (co)monoidal functors below.

\begin{definition} \label{def:monNT} Take monoidal categories $\cC:=(\cC,\otimes, \one)$ and $\cC':=(\cC',\otimes', \one')$.
\begin{enumerate}[\upshape (a)]
\item A {\it monoidal natural transformation} from a monoidal functor $(F,F^{(2)},F^{(0)}): \cC \to \cC'$ to a monoidal functor $(G,G^{(2)},G^{(0)}): \cC \to \cC'$ is a natural transformation $\phi: F \Rightarrow G$ such that 
\[
\phi_{X \otimes Y} \circ F^{(2)}_{X,Y} = G^{(2)}_{X,Y} \circ (\phi_X \otimes' \phi_Y)\; \text{ for all $X,Y \in \cC$}, \quad \qquad \phi_{\one} \circ  F^{(0)}= G^{(0)}.
\]
\item A {\it comonoidal natural transformation} from a comonoidal functor $(F,F_{(2)},F_{(0)}): \cC \to \cC'$ to a comonoidal functor $(G,G_{(2)},G_{(0)}): \cC \to \cC'$ is a natural transformation $\phi: F \Rightarrow G$ such~that 
\[
(\phi_X \otimes' \phi_Y) \circ F_{(2)}^{X,Y} = G_{(2)}^{X,Y} \circ \phi_{X \otimes Y}  \; \text{ for all $X,Y \in \cC$}, \quad \qquad   F_{(0)} = G_{(0)} \circ \phi_{\one}.
\]
\item A {\it Frobenius monoidal natural transformation} is a natural transformation $\phi: F \Rightarrow G$ between Frobenius monoidal functors $(F,F^{(2)},F^{(0)},F_{(2)},F_{(0)})$ and $(G,G^{(2)},G^{(0)},G_{(2)},G_{(0)})$ from $\cC$ to $\cC'$ that is monoidal for the underlying monoidal functor structure and comonoidal for the underlying comonoidal functor structure. 
\end{enumerate}
\end{definition}

Next, we see in the result below that the various types of functors in Definition~\ref{def:monfunc} preserve the corresponding algebraic structures introduced in Section~\ref{sec:algstr} and Definition~\ref{def:sep}.

\begin{proposition} \label{prop:pres} \cite[Proposition~4.3]{Walton} \cite[Corollary~5]{DayPastro} \cite[Lemma~6.10]{Bohm} Take monoidal categories $\cC$ and $\cC'$.
\begin{enumerate}[\upshape (a)]
\item A monoidal functor $(F, F^{(2)}, F^{(0)}): \cC \to \cC'$ yields  $\mathsf{Alg}(F): \mathsf{Alg}(\cC) \to \mathsf{Alg}(\cC')$, a functor where $\mathsf{Alg}(F)(A,m_A,u_A)$ is defined as  
\[\bigl(F(A), \;  \; m_{F(A)}:= F(m_A) \hs F^{(2)}_{A,A}, \; \; u_{F(A)}:= F(u_A) \hs F^{(0)}\bigr).
\]

\smallskip

\item  A comonoidal functor $(F, F_{(2)}, F_{(0)}): \cC \to \cC'$ yields $\mathsf{Coalg}(F): \mathsf{Coalg}(\cC)\to\mathsf{Coalg}(\cC')$,  a functor where $\mathsf{Coalg}(F)(A, \Delta_A, \varep_A)$ is defined as 
\[
\bigl(F(A), \;  \; \Delta_{F(A)}:= F_{(2)}^{A,A} \hs F(\Delta_A), \; \; \varep_{F(A)}:= F_{(0)} \hs F(\varep_A)\bigr).
\]
\item Moreover, a Frobenius monoidal functor $(F,F^{(2)},F^{(0)}, F_{(2)}, F_{(0)}): \cC \to \cC'$ yields a functor $\mathsf{FrobAlg}(F): \mathsf{FrobAlg}(\cC) \to \mathsf{FrobAlg}(\cC')$  by using the formulas from parts (a) and (b).
\medskip
\item Likewise, a separable Frobenius monoidal functor $(F, F^{(2)},F^{(0)},F_{(2)}, F_{(0)}): \cC \to \cC'$  yields a functor $\mathsf{SepFrobAlg}(\cC) \to \mathsf{SepFrobAlg}(\cC')$ by using the formulas from parts (a) and (b). \qed
\end{enumerate}
\end{proposition}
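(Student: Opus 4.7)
The plan is to handle the four parts in sequence, with each building on the previous. Since the structures are defined by morphisms satisfying equational axioms, the core strategy throughout is: (i) write down the defining axiom for the target structure on $F(A)$ using the transported morphisms, (ii) expand using the definitions of $m_{F(A)}, u_{F(A)}, \Delta_{F(A)}, \varep_{F(A)}$, (iii) apply the coherence axioms of the (co/Frobenius)-monoidal functor to rearrange, and (iv) bring an application of $F$ to the corresponding axiom on $A$ through the functor. Preservation of morphisms is separated from preservation of objects and proceeds by naturality of $F^{(2)}$ and $F_{(2)}$.

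For part (a), I would fix an algebra $(A, m_A, u_A)$ in $\cC$ and verify associativity of $m_{F(A)} = F(m_A) \circ F^{(2)}_{A,A}$ by computing both sides of the associativity axiom; after unfolding, the naturality of $F^{(2)}$ lets one slide $F^{(2)}_{A,A}$ past $F(m_A) \otimes' \id_{F(A)}$ (respectively $\id_{F(A)} \otimes' F(m_A)$), the monoidal functor associativity axiom equates the two resulting three-fold composites, and functoriality reduces the statement to $F$ applied to the associativity axiom for $m_A$. Unitality is similar, using the unit axioms for $F^{(2)}, F^{(0)}$. For morphisms, naturality of $F^{(2)}$ at the arrow $f\otimes f$ gives what is needed.

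Part (b) is formally dual: reverse all arrows, replace $m$ with $\Delta$, $u$ with $\varep$, and $F^{(2)}, F^{(0)}$ with $F_{(2)}, F_{(0)}$. For part (c), once (a) and (b) are in place, the only remaining check is the Frobenius law. The left-hand side, $(m_{F(A)}\otimes' \id_{F(A)})(\id_{F(A)}\otimes' \Delta_{F(A)})$, expands to
\[
\bigl(F(m_A)\otimes' \id_{F(A)}\bigr)\bigl(F^{(2)}_{A,A}\otimes' \id_{F(A)}\bigr)\bigl(\id_{F(A)}\otimes' F_{(2)}^{A,A}\bigr)\bigl(\id_{F(A)}\otimes' F(\Delta_A)\bigr).
\]
The middle two factors equal $F_{(2)}^{A\otimes A, A}\circ F^{(2)}_{A, A\otimes A}$ by the first Frobenius condition on $F$; then naturality of $F^{(2)}$ and $F_{(2)}$ lets one pull $F(m_A)$ and $F(\Delta_A)$ inside $F$, reducing the expression to $F_{(2)}^{A,A}\circ F\bigl((m_A\otimes \id_A)(\id_A\otimes \Delta_A)\bigr)\circ F^{(2)}_{A,A}$. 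Applying the Frobenius law for $A$ inside $F$ then gives $\Delta_{F(A)}\circ m_{F(A)}$. The other equality uses the second Frobenius condition on $F$ in precisely the same way. The functoriality claim in (c) follows from (a) and (b) jointly.

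Part (d) is the cleanest: under the hypothesis $F^{(2)}_{X,Y}\circ F_{(2)}^{X,Y} = \id_{F(X\otimes Y)}$, one computes
\[
m_{F(A)}\circ\Delta_{F(A)} \; = \; F(m_A)\circ F^{(2)}_{A,A}\circ F_{(2)}^{A,A}\circ F(\Delta_A) \; = \; F(m_A\circ\Delta_A) \; = \; F(\id_A) \; = \; \id_{F(A)},
\]
so the induced Frobenius algebra on $F(A)$ lies in $\mathsf{SepFrobAlg}(\cC')$; combined with (c), this gives the desired functor. The main obstacle is notational rather than conceptual: in part (c), one must be careful to track the precise order of tensor factors when sliding morphisms across $F^{(2)}$ and $F_{(2)}$, since both Frobenius conditions and naturality must be applied in the correct places. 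A graphical-calculus rendering (with boxes for $F^{(2)}, F_{(2)}$ and strings carrying objects of $\cC$ versus $\cC'$) makes the argument essentially visual, which is the presentation I would adopt if writing it out in full.
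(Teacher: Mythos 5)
Your proof is correct, and it is the standard argument for this result; the paper itself does not prove Proposition~\ref{prop:pres} but cites it to the literature (\cite[Proposition~4.3]{Walton}, \cite[Corollary~5]{DayPastro}, \cite[Lemma~6.10]{Bohm}), where essentially your computation appears. In particular, your key step in part (c) — using the first (resp.\ second) Frobenius condition on $F$ to collapse $(F^{(2)}_{A,A}\otimes'\id)(\id\otimes' F_{(2)}^{A,A})$ to $F_{(2)}^{A\otimes A,A}\circ F^{(2)}_{A,A\otimes A}$, then naturality to push $F(m_A)$ and $F(\Delta_A)$ inside $F$ and invoke the Frobenius law for $A$ — is exactly how the cited sources proceed, and your one-line verification of separability in part (d) is likewise the intended argument.
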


One nice feature of the functors here is that they are closed under composition.

\begin{proposition} \label{prop:comp-bkg} \cite[Exercise~3.4]{Walton} \cite[Proposition~4]{DayPastro} \cite[Exercises~3.10 and~6.4]{Bohm} 
Take monoidal categories $\cC$, $\cC'$, and $\cC''$.
\begin{enumerate}[\upshape (a)]
\item Let $(F,F^{(2)},F^{(0)}):\cC \to \cC'$ and $(G,G^{(2)},G^{(0)}):\cC' \to \cC''$ be monoidal functors. Then, the composition $GF: \cC \to \cC''$ is monoidal, with $(GF)^{(2)}$ and $(GF)^{(0)}$ defined by:
\[
(GF)^{(2)}_{X,Y}:=G(F^{(2)}_{X,Y})\circ  G^{(2)}_{F(X),F(Y)} \;\;  \forall X,Y \in \cC, \qquad (GF)^{(0)}:=G(F^{(0)})\circ  G^{(0)}.
\]
\item Let $(F,F_{(2)},F_{(0)}):\cC \to \cC'$ and $(G,G_{(2)},G_{(0)}):\cC' \to \cC''$ be comonoidal functors. Then, the composition $GF: \cC \to \cC''$ is comonoidal, with $(GF)_{(2)}$ and $(GF)_{(0)}$ defined by:
\[
(GF)_{(2)}^{X,Y}:= G_{(2)}^{F(X),F(Y)}\circ G(F_{(2)}^{X,Y}) \;\;  \forall X,Y \in \cC, \qquad (GF)_{(0)}:=G_{(0)}\circ  G(F_{(0)}).
\]
\item Let $(F, F^{(2)},F^{(0)},F_{(2)},F_{(0)}):\cC \to \cC'$ and $(G,G^{(2)},G^{(0)}, G_{(2)},G_{(0)}):\cC' \to \cC''$ be Frobenius monoidal functors. Then, the composition $GF: \cC \to \cC''$ is Frobenius monoidal by using the formulas from parts (a) and (b).
\medskip
\item The composition of two separable Frobenius monoidal functors is also separable Frobenius monoidal by using the formulas from parts (a) and (b). \qed
\end{enumerate}
\end{proposition}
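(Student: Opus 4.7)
The plan is to verify each of (a)--(d) by direct diagram chasing, exploiting that the coherence morphisms for $GF$ are given in their expected nested form, so in every case an axiom for the composite decomposes into a corresponding axiom for $G$ at the objects $F(X),F(Y),F(Z)$ glued, via naturality of a coherence transformation of $G$, to $G$ applied to the same axiom for $F$. Parts (a) and (b) are formally dual and routine. For (a), I would unfold the associativity pentagon for $(GF)^{(2)}$ at $(X,Y,Z)$ into an inner hexagon that is the associativity constraint for $G^{(2)}$ at $(F(X),F(Y),F(Z))$, connected through naturality of $G^{(2)}$ at $F^{(2)}_{X,Y}$ and at $F^{(2)}_{Y,Z}$ to an outer square obtained by applying $G$ to the associativity constraint for $F^{(2)}$. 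The two unit coherence conditions reduce identically, using naturality of $G^{(2)}$ at $F^{(0)}$ together with the unit axioms for $F$ and $G$. Part (b) is the formal dual: reverse every coherence arrow and run the same argument on $(GF)_{(2)}$ and $(GF)_{(0)}$ using naturality of $G_{(2)}$.

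Part (c) is the main content. After substituting the definitions of $(GF)^{(2)}$ and $(GF)_{(2)}$, each of the two Frobenius identities for $GF$ at $(X,Y,Z)$ expands into a composite of six morphisms, with an outer pair of the form $G(F^{(2)}_{?,?})$ and $G(F_{(2)}^{?,?})$, and an inner pair $G^{(2)}_{?,?}$ and $G_{(2)}^{?,?}$ coming from the coherence of $G$. The key move is to slide a $G_{(2)}$-instance past an adjacent $G^{(2)}$-instance by invoking naturality of $G_{(2)}$ at $F^{(2)}_{X,Y}$ (or, on the mirror side, naturality of $G^{(2)}$ at $F_{(2)}^{Y,Z}$). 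This separates the composite into an inner factor that is a Frobenius identity for $G$ at $(F(X),F(Y),F(Z))$ and an outer factor that is $G$ applied to a Frobenius identity for $F$ at $(X,Y,Z)$; invoking each of these in turn yields the required equation.

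The main obstacle I anticipate is purely bookkeeping, not conceptual: the objects $F(X \otimes Y)$ and $F(X) \otimes' F(Y)$ must be identified through the correct instance of $F^{(2)}$ or $F_{(2)}$ at the right point in the chain, so each naturality square has to be applied with carefully matched source and target. The cleanest presentation is to draw the composite as a string diagram with vertices labeled by $F^{(2)}, F_{(2)}, G^{(2)}, G_{(2)}$ and physically slide the $G$-vertices past each other; once that picture is set up, the obstruction-free path to the opposite side is visibly unique.

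Finally, part (d) is the shortest. The separability equation $(GF)^{(2)}_{X,Y} \circ (GF)_{(2)}^{X,Y} = \id_{GF(X \otimes Y)}$ expands to $G(F^{(2)}_{X,Y}) \circ G^{(2)}_{F(X),F(Y)} \circ G_{(2)}^{F(X),F(Y)} \circ G(F_{(2)}^{X,Y})$; collapsing the middle pair by separability of $G$, then using functoriality of $G$ and separability of $F$, yields the identity in three steps with no diagrammatic surprises.
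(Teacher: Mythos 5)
Your proposal is correct, and the argument you outline — composing the coherence data in nested form, then using naturality of $G^{(2)}$ and $G_{(2)}$ at the components of $F^{(2)}$ and $F_{(2)}$ to split each axiom for $GF$ into the corresponding axiom for $G$ at $F(X),F(Y),F(Z)$ composed with $G$ applied to the axiom for $F$ — is exactly the standard proof. The paper itself gives no proof of this proposition (it is stated with citations to the literature and a \qed), so there is nothing further to compare against.
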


\begin{remark} \label{rem:2cats} 
It is now straightforward to build the 2-category, 
\textsc{Mon} (resp., \textsc{Comon},  \textsc{FrobMon},  \textsc{SepFrobMon}),
via the data below.
\begin{enumerate}[\upshape (a)]
\item 0-cells are monoidal categories.

\smallskip

\item 1-cells are (resp., co-, Frobenius, separable Frobenius) monoidal  functors.

\smallskip

\item 2-cells are (resp., co-, Frobenius, Frobenius) monoidal natural transformations.

\smallskip

\item The identity 1-cell/2-cell is the identity (resp., co-, Frobenius, Frobenius) monoidal functor/natural transformation. 

\smallskip

\item Horizontal composition of 1-cells is given in Proposition~\ref{prop:comp-bkg}.

\smallskip

\item Vertical/horizontal composition of 2-cells is given by the standard vertical/horizontal composition of monoidal and comonoidal natural transformations. 
\end{enumerate}
See \cite[Section~4.10.3]{Walton} and references within, and also see \cite[Exercise~2.7.11]{JohnsonYau}. 
\end{remark}


\subsection{Main construction and results} \label{sec:result-EMF}

Here, we extend the results in Propositions \ref{prop:pres} and \ref{prop:comp-bkg} to the category $\mathsf{ExtFrobAlg}(\cC)$. In particular, we will define a type of functor that preserves extended Frobenius algebras, and then show that this type of functor is closed under composition.

\begin{definition} \label{def:eFMfunc}
A Frobenius monoidal functor $(F,F^{(2)},F^{(0)},F_{(2)},F_{(0)}):(\cC,\otimes,\one)\to(\cC',\otimes',\one')$ is called  an {\it extended Frobenius monoidal functor} (or is {\it extendable}) if there exist a natural transformation $\widehat{F}:F\Rightarrow F$ and a morphism $\widecheck{F}:\one'\to F(\one) \in \cC'$ such that the conditions below hold.
\begin{enumerate}[\upshape (a)]
\item $\widehat{F}$ is a Frobenius monoidal natural transformation.

\smallskip

\item $F^{(2)}_{\one,\one}\circ(\widehat{F}_{\one}\otimes'\id_{F(\one)})\circ F_{(2)}^{\one,\one}\circ F^{(0)}=F^{(2)}_{\one,\one}\circ(\widecheck{F}\otimes'\widecheck{F})$.

\smallskip

\item The following are true for each $X,Y\in\cC$:
\begin{enumerate}[\upshape (i)]

\smallskip

\item $\widehat{F}_X\circ\widehat{F}_X=\id_{F(X)}$;

\smallskip

\item $\widehat{F}_{\one\otimes X}\circ F^{(2)}_{\one,X}\circ(\widecheck{F}\otimes' \id_{F(X)})=F^{(2)}_{\one,X}\circ(\widecheck{F}\otimes' \id_{F(X)})$;

\smallskip

\item $F^{(2)}_{X,Y}\circ(\widehat{F}_{X}\otimes'\id_{F(Y)})\circ F_{(2)}^{X,Y}=F^{(2)}_{X\otimes Y,\one}\circ(\widehat{F}_{X\otimes Y}\otimes'\id_{F(\one)})\circ F_{(2)}^{X\otimes Y,\one}$.
\end{enumerate}
\end{enumerate}

\smallskip

Part (b) is represented by the following commutative diagram.
{\small
\[
\begin{tikzcd}[row sep=0.5cm]
	{\one'} && {F(\one)} && {F(\one)\otimes' F(\one)} \\
	&&&& {F(\one)\otimes' F(\one)} \\
	{F(\one)\otimes'F(\one)} &&&& {F(\one)}
	\arrow["{F^{(0)}}", from=1-1, to=1-3]
	\arrow["{\widecheck{F}\otimes'\widecheck{F}}"', from=1-1, to=3-1]
	\arrow["{F^{\one,\one}_{(2)}}", from=1-3, to=1-5]
	\arrow["{\widehat{F}_{\one}\otimes'\id_{F(\one)}}", from=1-5, to=2-5]
	\arrow["{F_{\one,\one}^{(2)}}", from=2-5, to=3-5]
	\arrow["{F_{\one,\one}^{(2)}}", from=3-1, to=3-5]
\end{tikzcd}
\]
}

Parts (c)(ii,iii) are represented by the left and right diagrams below, respectively.
{\small
\[
\begin{tikzcd}[row sep=0.5cm]
	{\one'\otimes'F(X)} && {F(\one)\otimes'F(X)} \\
	{F(\one)\otimes'F(X)} \\
	{F(\one\otimes X)} && {F(\one\otimes X)}
	\arrow["{\widecheck{F}\otimes'\id_{F(X)}}", from=1-1, to=1-3]
	\arrow["{\widecheck{F}\otimes'\id_{F(X)}}"', from=1-1, to=2-1]
	\arrow["{F^{(2)}_{\one,X}}", from=1-3, to=3-3]
	\arrow["{F^{(2)}_{\one,X}}"', from=2-1, to=3-1]
	\arrow["{\widehat{F}_{\one\otimes X}}", from=3-1, to=3-3]
\end{tikzcd}
\qquad
\begin{tikzcd}[row sep=0.5cm]
	{F(X\otimes Y)} && {F(X\otimes Y)\otimes'F(\one)} \\
	{F(X)\otimes' F(Y)} && {F(X\otimes Y)\otimes'F(\one)} \\
	{F(X)\otimes' F(Y)} && {F(X\otimes Y)}
	\arrow["{F_{(2)}^{X\otimes Y,\one}}", from=1-1, to=1-3]
	\arrow["{F_{(2)}^{X,Y}}"', from=1-1, to=2-1]
	\arrow["{\widehat{F}_{X\otimes Y}\otimes'\id_{F(\one)}}", from=1-3, to=2-3]
	\arrow["{\widehat{F}_{X}\otimes'\id_{F(Y)}}"', from=2-1, to=3-1]
	\arrow["{F^{(2)}_{X\otimes Y,\one}}", from=2-3, to=3-3]
	\arrow["{F^{(2)}_{X,Y}}", from=3-1, to=3-3]
\end{tikzcd}
\]
}
\end{definition}

\pagebreak

 Extended Frobenius monoidal functors are plentiful. Specifically, we have the following result; compare to Proposition~\ref{prop:sepExt}.

\begin{proposition} \label{prop:sepisEFM}
Separable Frobenius monoidal functors admit the structure of extended Frobenius monoidal functors.
\end{proposition}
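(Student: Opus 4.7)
The plan is to mimic the strategy used in the proof of Proposition~\ref{prop:sepExt}, where a separable Frobenius algebra $(A,m,u,\Delta,\varep)$ is extended via $\phi := \id_A$ and $\theta := u$. For a separable Frobenius monoidal functor $(F,F^{(2)},F^{(0)},F_{(2)},F_{(0)}) \colon \cC \to \cC'$, the natural candidates are
\[
\widehat{F} \; := \; \id_F \colon F \Rightarrow F, \qquad \widecheck{F} \; := \; F^{(0)} \colon \one' \to F(\one).
\]
With these choices, I would then go through the axioms of Definition~\ref{def:eFMfunc} in order and verify each one.

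Condition~(a), that $\widehat{F}$ is a Frobenius monoidal natural transformation, is immediate from the definitions since the identity natural transformation trivially satisfies both the monoidal and comonoidal compatibilities in Definition~\ref{def:monNT}. Condition~(c)(i) is trivial as $\id_{F(X)} \circ \id_{F(X)} = \id_{F(X)}$. Condition~(c)(ii) reduces to the tautology
\[
F^{(2)}_{\one,X} \circ \bigl(F^{(0)} \otimes' \id_{F(X)}\bigr) \; = \; F^{(2)}_{\one,X} \circ \bigl(F^{(0)} \otimes' \id_{F(X)}\bigr),
\]
since $\widehat{F}_{\one \otimes X}$ is the identity. Condition~(c)(iii) becomes
\[
F^{(2)}_{X,Y} \circ F_{(2)}^{X,Y} \; = \; F^{(2)}_{X \otimes Y, \one} \circ F_{(2)}^{X \otimes Y, \one},
\]
and both sides equal $\id_{F(X \otimes Y)}$ by the separability hypothesis on $F$.

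The only condition requiring a small argument is~(b). Using $\widehat{F}_\one = \id_{F(\one)}$ and separability, the left-hand side collapses to
\[
F^{(2)}_{\one,\one} \circ F_{(2)}^{\one,\one} \circ F^{(0)} \; = \; \id_{F(\one)} \circ F^{(0)} \; = \; F^{(0)}.
\]
For the right-hand side, I would invoke the unitality axiom of the monoidal functor $(F,F^{(2)},F^{(0)})$, which in the strict setting reads $F^{(2)}_{\one,\one} \circ (F^{(0)} \otimes' \id_{F(\one)}) = \id_{F(\one)}$. Composing this identity on the right with $\id_{\one'} \otimes' F^{(0)}$ (and using the interchange law together with $\one' \otimes' F(\one) = F(\one)$) yields $F^{(2)}_{\one,\one} \circ (F^{(0)} \otimes' F^{(0)}) = F^{(0)}$, matching the left-hand side.

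No step here is a real obstacle; the main subtlety is keeping the unitors straight in~(b), which is why I would write that one out explicitly while dispatching the rest in a sentence each. The overall shape of the argument is exactly the functor-level analogue of the algebra-level result $\phi=\id_A$, $\theta=u$ from Proposition~\ref{prop:sepExt}.
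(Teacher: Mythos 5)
Your proposal is correct and matches the paper's proof exactly: the paper also takes $\widehat{F}=\Id_F$ and $\widecheck{F}=F^{(0)}$ and leaves the verification of the axioms as "straightforward," which you have carried out in detail (your handling of condition (b) via separability and the unitality constraint is the right computation).
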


\begin{proof}
Let $(F,F^{(2)},F^{(0)},F_{(2)},F_{(0)})$ be a separable Frobenius monoidal functor. Then, take $\widehat{F}=\Id_F$ and $\widecheck{F}=F^{(0)}$. It is then straightforward to verify that these choices of $\widehat{F}$ and $\widecheck{F}$ extend the Frobenius monoidal structure on $F$.
\end{proof}

\begin{example}
Strong monoidal functors are separable with $F_{(2)} := F^{(-2)}$ and $F_{(0)}:=F^{(-0)}$, so they are also extended Frobenius monoidal functors.
\end{example}

The next result is the desired extension of Proposition \ref{prop:pres}. See Appendix~B.1 for the proof (in the ArXiv preprint version of this article); namely, it involves lengthy commutative diagram arguments to verify that the formulas in the statement below yield an extended Frobenius algebra.

\begin{proposition} \label{prop:eFMpres}
An extended Frobenius monoidal functor $(F,F^{(2)},F^{(0)}, F_{(2)},F_{(0)},\widehat{F},\widecheck{F}):\cC\to\cC'$ induces a functor $\mathsf{ExtFrobAlg}(\cC)\to\mathsf{ExtFrobAlg}(\cC')$. For $A \in\mathsf{ExtFrobAlg}(\cC)$, we get $m_{F(A)}$,  $u_{F(A)}$, $\Delta_{F(A)}$, $\varep_{F(A)}$  as in Proposition~\ref{prop:pres}(a,b), with $\phi_{F(A)}= F(\phi_A)\hs \widehat{F}_A$ and $\theta_{F(A)}=F(\theta_A) \hs\widecheck{F}$. \hfill\qed
\end{proposition}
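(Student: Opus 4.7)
Since Proposition~\ref{prop:pres}(c) already supplies the underlying Frobenius algebra structure on $F(A)$ (and its functoriality on Frobenius morphisms), it suffices to verify the three axioms of Definition~\ref{def:extFrobC}(b) for the pair $(\phi_{F(A)}, \theta_{F(A)})$, and then to check the two compatibilities required of a morphism of extended Frobenius algebras in~$\cC'$.

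For axiom~(i), the first step is to observe that each $\widehat{F}_X$ is itself a Frobenius algebra endomorphism of $F(X)$ whenever $X$ is a Frobenius algebra in $\cC$: this follows by combining naturality of $\widehat{F}$ at $m_X, u_X, \Delta_X, \varep_X$ with the monoidality and comonoidality of $\widehat{F}$, together with the unit compatibilities $\widehat{F}_\one \circ F^{(0)} = F^{(0)}$ and $F_{(0)} \circ \widehat{F}_\one = F_{(0)}$. Hence $\phi_{F(A)} = F(\phi_A) \circ \widehat{F}_A$ is a composite of two Frobenius morphisms. Involutivity $\phi_{F(A)}^2 = \id_{F(A)}$ reduces, by sliding one factor of $\widehat{F}_A$ past $F(\phi_A)$ via naturality of $\widehat{F}$ at $\phi_A$, to the combination of $F(\phi_A^2) = \id_{F(A)}$ and Definition~\ref{def:eFMfunc}(c)(i).

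For axiom~(ii), I would unfold $\phi_{F(A)} \circ m_{F(A)} \circ (\theta_{F(A)} \otimes' \id_{F(A)})$, apply naturality of $F^{(2)}$ at $\theta_A$ to turn $F^{(2)}_{A,A} \circ (F(\theta_A) \otimes' \id_{F(A)})$ into $F(\theta_A \otimes \id_A) \circ F^{(2)}_{\one,A}$, and then use naturality of $\widehat{F}$ at $m_A$ and at $\theta_A \otimes \id_A$ to collect $\widehat{F}_A$ just to the left of $F^{(2)}_{\one,A} \circ (\widecheck{F} \otimes' \id_{F(A)})$. Axiom~(c)(ii) of Definition~\ref{def:eFMfunc} then removes this remaining $\widehat{F}$, and axiom~(ii) of Definition~\ref{def:extFrobC}(b) for $A$ removes the internal $\phi_A$, producing the required equality with $m_{F(A)} \circ (\theta_{F(A)} \otimes' \id_{F(A)})$.

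Axiom~(iii) is the technical heart, and I would handle it by introducing the auxiliary family $\widetilde{\phi}_X := F^{(2)}_{X,\one} \circ (\widehat{F}_X \otimes' \id_{F(\one)}) \circ F_{(2)}^{X,\one}$ and checking that $\widetilde{\phi}: F \Rightarrow F$ is a natural transformation; the naturality square at a morphism $f: X \to Y$ is a short diagram chase using naturality of $F^{(2)}$, $F_{(2)}$, and $\widehat{F}$. Axiom~(c)(iii) of Definition~\ref{def:eFMfunc} then reads $F^{(2)}_{X,Y} \circ (\widehat{F}_X \otimes' \id_{F(Y)}) \circ F_{(2)}^{X,Y} = \widetilde{\phi}_{X \otimes Y}$. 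Expanding the left-hand side of axiom~(iii), pulling $F(\phi_A)$ out through $F^{(2)}_{A,A}$ by naturality exposes exactly the block appearing in (c)(iii) with $X = Y = A$, which rewrites it as $\widetilde{\phi}_{A \otimes A}$; naturality of $\widetilde{\phi}$ at $\Delta_A \circ u_A: \one \to A \otimes A$ then shifts it past $F(\Delta_A \circ u_A)$, producing $F(m_A \circ (\phi_A \otimes \id_A) \circ \Delta_A \circ u_A) \circ \widetilde{\phi}_\one \circ F^{(0)}$. Expanding the right-hand side, naturality of $F^{(2)}$ at $\theta_A$ followed by axiom~(b) of Definition~\ref{def:eFMfunc} yields $F(m_A \circ (\theta_A \otimes \theta_A)) \circ \widetilde{\phi}_\one \circ F^{(0)}$, and axiom~(iii) of Definition~\ref{def:extFrobC}(b) for $A$ matches the two. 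Finally, morphism preservation is immediate: given $f: A \to B$ in $\mathsf{ExtFrobAlg}(\cC)$, the identity $F(f) \circ \phi_{F(A)} = \phi_{F(B)} \circ F(f)$ follows from naturality of $\widehat{F}$ at $f$ combined with $f \circ \phi_A = \phi_B \circ f$, and $F(f) \circ \theta_{F(A)} = \theta_{F(B)}$ follows directly from $f \circ \theta_A = \theta_B$. The main obstacle is the bookkeeping for axiom~(iii); the auxiliary transformation $\widetilde{\phi}$ is the conceptual step that tames the calculation.
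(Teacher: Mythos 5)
Your proposal is correct and follows essentially the same route as the paper's Appendix~B.1: the same inputs (naturality of $\widehat{F}$, $F^{(2)}$, $F_{(2)}$; Definition~\ref{def:eFMfunc}(a),(b),(c)(i)--(iii); and the extended Frobenius axioms for $A$) are deployed in the same order for each of the three axioms and for morphism preservation. Your auxiliary transformation $\widetilde{\phi}_X = F^{(2)}_{X,\one}\circ(\widehat{F}_X\otimes'\id_{F(\one)})\circ F_{(2)}^{X,\one}$ is a clean repackaging of the paper's large commutative diagram for axiom~(iii) --- its naturality absorbs the many naturality regions of that diagram, and the appeals to Definition~\ref{def:eFMfunc}(b), (c)(iii), and Definition~\ref{def:extFrobC}(b)(iii) for $A$ land in exactly the same places --- but it does not constitute a different argument.
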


Since separable Frobenius monoidal functors are extended by Propositions \ref{prop:sepisEFM}, we obtain the following corollary of Proposition \ref{prop:eFMpres}.

\begin{corollary}
If $(F, F^{(2)}, F^{(0)}, F_{(2)}, F_{(0)}): \cC \to \cC'$ is a separable Frobenius monoidal functor, then it induces a functor $\ExtFrobAlg(\cC) \to \ExtFrobAlg(\cC')$. \qed
\end{corollary}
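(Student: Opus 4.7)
The plan is to simply chain together the two previously established results. By Proposition~\ref{prop:sepisEFM}, the separable Frobenius monoidal functor $(F, F^{(2)}, F^{(0)}, F_{(2)}, F_{(0)})$ can be promoted to an extended Frobenius monoidal functor by equipping it with the natural transformation $\widehat{F} := \Id_F$ and the morphism $\widecheck{F} := F^{(0)}: \one' \to F(\one)$. Once this extended structure is in hand, Proposition~\ref{prop:eFMpres} immediately supplies the desired induced functor $\mathsf{ExtFrobAlg}(\cC) \to \mathsf{ExtFrobAlg}(\cC')$.

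For concreteness, I would also spell out explicitly what this induced functor does on an object $(A, m_A, u_A, \Delta_A, \varep_A, \phi_A, \theta_A) \in \mathsf{ExtFrobAlg}(\cC)$. Using the formulas from Proposition~\ref{prop:eFMpres} with the choices $\widehat{F} = \Id_F$ and $\widecheck{F} = F^{(0)}$, the resulting extended Frobenius algebra $F(A) \in \mathsf{ExtFrobAlg}(\cC')$ has the underlying Frobenius structure from Proposition~\ref{prop:pres}(c), together with the extended structure
\[
\phi_{F(A)} \;=\; F(\phi_A) \circ (\Id_F)_A \;=\; F(\phi_A), \qquad \theta_{F(A)} \;=\; F(\theta_A) \circ F^{(0)}.
\]
Since $F(\theta_A) \circ F^{(0)}$ coincides with $u_{F(\theta_A)}$-type composition, this also matches the natural unit morphism one would expect from functoriality. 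On morphisms, the functor acts as $F$ does, with the compatibility conditions for extended Frobenius morphisms inherited from Proposition~\ref{prop:eFMpres}.

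There is no main obstacle here, since all the technical work is already contained in Propositions~\ref{prop:sepisEFM} and~\ref{prop:eFMpres}. The only thing worth checking carefully in passing is that the specific extended structure $(\widehat{F}, \widecheck{F}) = (\Id_F, F^{(0)})$ used in the proof of Proposition~\ref{prop:sepisEFM} really does produce the induced functor claimed, but this is automatic from the statement of Proposition~\ref{prop:eFMpres}. Thus, the corollary follows in one line: compose the two prior results.
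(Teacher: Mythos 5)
Your proposal is correct and is exactly the paper's argument: the paper derives this corollary by noting that separable Frobenius monoidal functors are extended Frobenius monoidal (Proposition~\ref{prop:sepisEFM}) and then invoking Proposition~\ref{prop:eFMpres}. Your explicit formulas $\phi_{F(A)} = F(\phi_A)$ and $\theta_{F(A)} = F(\theta_A)\circ F^{(0)}$ are a harmless (and correct) elaboration, though the parenthetical remark about ``$u_{F(\theta_A)}$-type composition'' is muddled and could simply be dropped.
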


Now that we have defined extended Frobenius monoidal functors, the natural next thing to do is to arrange them into a 2-category. To do this, we need the following result, which extends Proposition \ref{prop:comp-bkg} to extended Frobenius monoidal functors. The proof of this theorem can be found in Appendix~B.2 (in the ArXiv preprint version of this article).

\begin{theorem} \label{thm:eFMcomp}
The composition of two extended Frobenius monoidal functors is again an extended Frobenius monoidal functor.\hfill\qed
\end{theorem}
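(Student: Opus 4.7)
The plan is to fix two extended Frobenius monoidal functors $(F,F^{(2)},F^{(0)},F_{(2)},F_{(0)},\widehat{F},\widecheck{F}):\cC\to\cC'$ and $(G,G^{(2)},G^{(0)},G_{(2)},G_{(0)},\widehat{G},\widecheck{G}):\cC'\to\cC''$, to recall from Proposition~\ref{prop:comp-bkg}(c) that $GF$ is already Frobenius monoidal with the stated composition formulas, and then to exhibit explicit extension data for $GF$. The candidates I would take are
\[
\widehat{GF}_X \;:=\; G(\widehat{F}_X)\circ \widehat{G}_{F(X)}, \qquad \widecheck{GF}\;:=\; G(\widecheck{F})\circ\widecheck{G},
\]
noting that naturality of $\widehat{G}$ applied to the morphism $\widehat{F}_X$ lets me replace $\widehat{GF}_X$ by $\widehat{G}_{F(X)}\circ G(\widehat{F}_X)$ whenever convenient. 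With this data in hand, the proof reduces to verifying each of the axioms of Definition~\ref{def:eFMfunc} for $(GF,\widehat{GF},\widecheck{GF})$, which I group below into a routine batch and a heavier batch.

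The easy axioms come first. For condition~(c)(i), I would commute the inner $\widehat{G}_{F(X)}$ past $G(\widehat{F}_X)$ by naturality and then invoke the involutive properties of $\widehat{F}$ inside $G$ and of $\widehat{G}$ at $F(X)$. For condition~(a), I would check the four equations of Definition~\ref{def:monNT}(a,b) one by one. The unit equations unpack after a single application of naturality of $\widehat{G}$ at $F^{(0)}$ (respectively $F_{(0)}$) into the identities $\widehat{F}_\one\circ F^{(0)}=F^{(0)}$ and $\widehat{G}_{\one'}\circ G^{(0)}=G^{(0)}$ (dually for the counit side). Each binary equation follows the same three-step pattern: slide $\widehat{G}$ past the relevant $G$-image of $F^{(2)}_{X,Y}$ (respectively $F_{(2)}^{X,Y}$) by naturality, apply the $\widehat{F}$ (co)monoidal axiom under $G$, and then apply the $\widehat{G}$ (co)monoidal axiom at $(F(X),F(Y))$. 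These checks are short and formal.

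The substance of the proof lies in conditions~(b), (c)(ii), and~(c)(iii), each of which I would attack by the same two-stage recipe. First, substitute the composition formulas for $(GF)^{(2)}$, $(GF)^{(0)}$, $(GF)_{(2)}$, $(GF)_{(0)}$ together with the definitions of $\widehat{GF}$ and $\widecheck{GF}$, so that both sides of the equation decompose into an outer $G$-data layer and an inner layer of the form $G(\text{$F$-morphism})$. Next, invoke the relevant axiom from Definition~\ref{def:eFMfunc} for $F$ inside $G$, which collapses the inner layer to a $G$-image of the $F$-identity. Finally, use naturality of $\widehat{G}$ to reposition the remaining pieces and apply the matching axiom for $G$ at objects of the form $F(X)$, $F(Y)$, $F(\one)$, with $\widecheck{G}$ playing the role that $\widecheck{F}$ played in the inner step. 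The main obstacle will be the sheer size of the diagrams for (b) and (c)(iii), where two nested layers of $\otimes$-structure interact with the extension data simultaneously; no single step is conceptually subtle, but careful bookkeeping of the dozen or so rectangles produced by the substitution above is where any error is likely to hide, and this combinatorial load is precisely why the authors defer the detailed verification to Appendix~B.2 of the ArXiv version.
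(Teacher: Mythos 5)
Your proposal matches the paper's Appendix~B.2 proof essentially verbatim: the same extension data $\widehat{GF}_X = G(\widehat{F}_X)\circ\widehat{G}_{F(X)}$ and $\widecheck{GF} = G(\widecheck{F})\circ\widecheck{G}$, with the axioms of Definition~\ref{def:eFMfunc} verified by the same interplay of naturality of $\widehat{G}$, the $F$-axioms applied under $G$, and the $G$-axioms at objects in the image of $F$. Your outline is correct and faithfully anticipates the structure of the paper's large commutative-diagram verifications.
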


To prove this, we let $(GF)^{(2)}$,  $(GF)^{(0)}$, $(GF)_{(2)}$, $(GF)_{(0)}$ be as in Proposition \ref{prop:comp-bkg}(a,b). Proposition \ref{prop:comp-bkg}(c) then implies that $GF$ is a Frobenius monoidal functor. We also define $\widehat{GF}:GF\Rightarrow GF$ by $\widehat{GF}_X:= G(\widehat{F}_X)\circ \widehat{G}_{F(X)}$ for all $X\in\cC$, and define $\widecheck{GF}:=G(\widecheck{F})\circ \widecheck{G}:\one''\to GF(\one)$.

\begin{remark} \label{rem:2-cat}
The collection of monoidal categories, extended Frobenius monoidal functors, and Frobenius natural transformations compatible with the extended Frobenius monoidal structures forms a 2-category, \textsc{ExtFrobMon}. Compare to Remark \ref{rem:2cats}. 
\end{remark}

\begin{remark} \label{rem:sp-case}
One can also obtain Proposition~\ref{prop:eFMpres} as a consequence of Theorem~\ref{thm:eFMcomp}. Take the  monoidal category $\overline{\one}$ consisting of a single object $\one$ and morphism $\id_{\one}$. Then, a Frobenius monoidal functor $(E,E^{(2)},E^{(0)},E_{(2)},E_{(0)}):\overline{\one}\to\cC$ is extendable if and only if $E(\one) \in \mathsf{ExtFrobAlg}(\cC)$. So, when $A\in\ExtFrobAlg(\cC)$, the functor $A^\#:\overline{\one}\to\cC$ with $A^\#(\one):=A$ is extended Frobenius monoidal.  Now if $(F,F^{(2)},F^{(0)},F_{(2)},F_{(0)},\widehat{F},\widecheck{F}):\cC\to\cC'$ is extended Frobenius monoidal, Theorem~\ref{thm:eFMcomp} implies that the functor $FA^\#:\overline{\one}\to\cC'$ is also extended Frobenius monoidal. Hence, $F(A)$ is an extended Frobenius algebra in $\cC'$ as in the proof of Proposition~\ref{prop:eFMpres}. Compare to \cite[Corollary~5]{DayPastro}.
\end{remark}


\subsection{Examples}  \label{sec:examples-EMF}

Following up with Propositions~\ref{prop:Ext-mon} and~\ref{prop:eFMbiprod}, consider the examples of extended Frobenius monoidal functors below.

\begin{example}\label{Ex:Ext-mon}
Let $(\cC,\otimes,\one,c)$ be a symmetric monoidal category, with an extended Frobenius algebra $B \in\mathsf{ExtFrobAlg}(\cC)$. Then, the functor $-\otimes B:\cC\to\cC$ is extended Frobenius with
{\small
\[(-\otimes B)^{(2)}_{X,Y}:= (\id_{X\otimes Y}\otimes m_B)(\id_X\otimes c_{B,Y}\otimes\id_B),
\qquad (-\otimes B)_{(2)}^{X,Y}:= (\id_X\otimes c_{Y,B}\otimes\id_B)(\id_{X\otimes Y}\otimes \Delta_B),
\]
\[
(-\otimes B)^{(0)}:= u_B, \qquad(-\otimes B)_{(0)}:=\varep_B, \qquad 
\widehat{(-\otimes B)}_X:= \id_X\otimes\phi_B, \qquad\widecheck{(-\otimes B)}:=\theta_B,\]
}
\noindent    for any $X,Y\in\cC$. We note further that when $B$ is not a separable Frobenius algebra, the Frobenius functor defined above is not separable.
\end{example}

\begin{example}\label{Ex:eFMbiprod}  Let $(\cC,\otimes,\one)$ be an additive monoidal category, with an extended Frobenius algebra $B\in\mathsf{ExtFrobAlg}(\cC)$. Then, the functor $-\osq B:\cC\to\cC$ is extended Frobenius with 
{\small
\[(-\osq B)^{(2)}_{X,Y}:= \pi_{X\otimes Y}\osq (m_B \circ \pi_{B\otimes B}),\qquad 
(-\osq B)_{(2)}^{X,Y}:= \iota_{X\otimes Y}\osq (\iota_{B\otimes B} \circ \Delta_B),
\]
\[(-\osq B)^{(0)}:= \id_\one\osq u_B, \qquad
(-\osq B)_{(0)}:= \id_\one\osq\varep_B,\qquad
\widehat{(-\osq B)}_X:= \pi_X\osq (\phi_B\circ\pi_B),  \qquad \widecheck{(-\osq B)}:=\id_\one\osq \theta_B,\]
}

\smallskip

\noindent for any $X,Y\in\cC$. Again, when $B$ is not a separable Frobenius algebra, the Frobenius functor defined above is not separable.
\end{example}


\appendix

\section{Graphical proof that integral Hopf implies Frobenius}
\label{sec:HF}
\setcounter{figure}{0}
\renewcommand{\thefigure}{A.\arabic{figure}}

 In this section, we give a graphical proof of Proposition~\ref{prop:hopftofrob}, showing that an integral Hopf algebra in a symmetric monoidal category $\cC$ is a Frobenius algebra in $\cC$. Recall axioms (S1) - (S5) from Figure~\ref{fig:symmon} in Section~\ref{sec:pre-mon}.1 above.

\subsection{Diagrams for integral Hopf algebras} 
Recall from Definition~\ref{def:HopfAlg} that a  Hopf algebra with invertible antipode in~$\cC$ is an object $H \in \cC$ equipped with morphisms $m:H \otimes H \to H$, $u: \one \to H$, $\HDelta: H \to H \otimes H$, $\Hvarep: H \to \one$, $S: H \to H$ with inverse $S^{-1}:H \to H$; this is  depicted in Figure~\ref{fig:Hopfmap}. These morphisms must satisfy the axioms in Figure~\ref{fig:Hopfax}. 
We also have that Hopf algebras with invertible antipode in~$\cC$ satisfy the identities in Figure~\ref{fig:Hopfiden}. Moreover, an integral and a cointegral of a Hopf algebra $H$ with invertible antipode in $\cC$ are given by morphisms $\Lambda: \one \to H$ and $\lambda: H \to \one$, respectively, satisfying the axioms depicted in Figure~\ref{fig:Hopfint}. 

\medskip


\begin{figure}[h!]
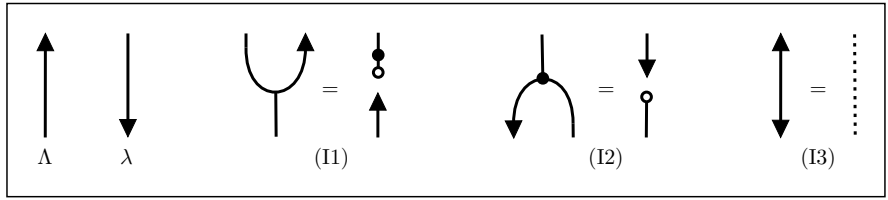

\scalebox{0.7}{
\tikzset{every picture/.style={line width=0.75pt}} 


}
\vspace{-.1in}
\caption{Normalized (co)integral for a Hopf algebra in $\cC$.}
\label{fig:Hopfint}
\end{figure}



\begin{lemma} \label{lem:appA}
We have the following identities.
\begin{enumerate}[\upshape (a)]
\item $(m  \otimes S)(\id_H \otimes \HDelta\hs \Lambda) = (\id_H \otimes m)(\id_H \otimes S \otimes \id_H) (\HDelta \hs m \otimes \id_H) (\id_H \otimes \Lambda \otimes \id_H) \HDelta$.
\smallskip
\item $\lambda \hs S \hs \Lambda = \id_\one$.
\end{enumerate}
\end{lemma}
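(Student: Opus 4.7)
My plan is to prove both parts of the lemma by graphical manipulations in the symmetric monoidal category $\cC$, using the axioms collected in Figures~\ref{fig:Hopfmap}--\ref{fig:Hopfint}. The two parts turn out to be closely linked: part (a) is essentially the Frobenius-type \emph{commutation identity} between the integral and the input, and part (b) can then be extracted from (a) by a short additional argument.

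For (a), I would start from the RHS and simplify it toward the LHS. The opening move is to apply the left integral axiom (I1) in the innermost block: the composite $m \circ (\id \otimes \Lambda)$ applied to the first tensor factor coming from the outer $\HDelta$ collapses to $\Lambda \circ \Hvarep$, freeing $\Lambda$ from the input. After performing $\HDelta$ on the isolated $\Lambda$, using coassociativity (H4), and then collapsing the leftover $\Hvarep$ via counitality (H5)/(H6), the RHS reduces to the expression $\Lambda^{(1)} \otimes S(\Lambda^{(2)})\,a$, whereas the LHS is $a\,\Lambda^{(1)} \otimes S(\Lambda^{(2)})$. The statement (a) is therefore equivalent to the commutation identity
\[
a\,\Lambda^{(1)} \otimes S(\Lambda^{(2)}) \;=\; \Lambda^{(1)} \otimes S(\Lambda^{(2)})\,a.
\]
I would establish this by applying $\HDelta$ to the left integral equation $a\Lambda = \Hvarep(a)\Lambda$ (using that $\HDelta$ is an algebra morphism, (H1)), then applying $\id \otimes S$ together with antipode antimultiplicativity (H9) to the second factor, tensoring on a third copy of $a$ via coassociativity (H4), multiplying two adjacent factors, and collapsing the resulting $S(a^{(2)})\,a^{(3)}$ using the antipode axiom (H7) together with counitality. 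The technical heart of the argument, and its main obstacle, is this Sweedler-level rewrite.

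For (b), I would piggyback on the commutation identity of (a). Applying $\id \otimes \lambda$ to both sides and specializing $a = \Lambda$ gives
\[
\Lambda \cdot \Lambda^{(1)} \cdot \lambda S(\Lambda^{(2)}) \;=\; \Lambda^{(1)} \cdot \lambda\bigl(S(\Lambda^{(2)}) \cdot \Lambda\bigr).
\]
Simplifying the RHS using the left integral property on $S(\Lambda^{(2)}) \cdot \Lambda = \Hvarep S(\Lambda^{(2)})\,\Lambda = \Hvarep(\Lambda^{(2)})\Lambda$ (via $\Hvarep \circ S = \Hvarep$, axiom A4), the normalization (I3), and counitality collapses it to $\Lambda$. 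Thus $\Lambda \cdot E = \Lambda$, where $E := \Lambda^{(1)} \cdot \lambda S(\Lambda^{(2)})$. I would then show $E = (\lambda S \Lambda) \cdot u$ by computing $\HDelta E$: coassociativity and the fact that $\lambda \circ S$ is a \emph{left} cointegral (i.e., $(\id \otimes \lambda S)\HDelta = u \cdot \lambda S$, itself derivable by applying the right cointegral property (I2) to $S(a)$ and using $\HDelta \circ S = (S \otimes S)\,c\,\HDelta$) yield $\HDelta E = E \otimes u$, and counitality then gives $E = \Hvarep(E)\,u = (\lambda S \Lambda)\,u$.

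Substituting this form of $E$ into the equation $\Lambda \cdot E = \Lambda$ produces $(\lambda S \Lambda) \cdot \Lambda = \Lambda$, and applying $\lambda$ to both sides (using the normalization $\lambda \Lambda = \id_\one$ to cancel the trailing $\lambda(\Lambda)$) yields $\lambda S \Lambda = \id_\one$, as desired. The subtle part of this argument is the passage from $\Lambda \cdot E = \Lambda$ to the conclusion, since one cannot simply cancel $\Lambda$ in a general monoidal category; the trick is to first pin down the explicit form $E = (\lambda S \Lambda)\,u$ by the coassociativity calculation, after which the scalar $\lambda S \Lambda$ factors out cleanly and the normalization finishes the proof.
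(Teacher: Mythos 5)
Your argument is correct, but it is organized quite differently from the paper's. The paper proves both identities by direct graphical computation: part (a) is a short string-diagram chain (Figure~\ref{fig:lem(a)}) and part (b) is a long, self-contained diagram chase (Figure~\ref{fig:lem(b)}) that does not route through part (a) at all. You instead work in Sweedler-style notation (legitimate here, since every step is a composite of morphisms in $\cC$) and reduce (a) to the commutation identity $a\Lambda^{(1)}\otimes S(\Lambda^{(2)})=\Lambda^{(1)}\otimes S(\Lambda^{(2)})a$, proved by the same antipode-axiom, anti-multiplicativity, and integral manipulations that the paper's figure encodes --- so for (a) the content is essentially the same in different clothing. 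For (b), however, your decomposition is genuinely different: you specialize the commutation identity to $a=\Lambda$, isolate $E=\Lambda^{(1)}\,\lambda(S(\Lambda^{(2)}))$, show $E=\lambda(S(\Lambda))\cdot 1_H$ by proving that $\lambda S$ is a left cointegral, and then extract the scalar from $\Lambda\cdot E=\Lambda$ using the normalization $\lambda\Lambda=\id_{\one}$. This is more conceptual than the paper's direct chase and makes (b) a consequence of (a); what the paper's computation buys is independence of the two parts and no auxiliary cointegral lemma. Two small points you should make explicit: (i) in deriving that $\lambda S$ is a left cointegral, applying (I2) to $S(a)$ together with $\HDelta S=(S\otimes S)\,c\,\HDelta$ yields $S(a^{(1)})\,\lambda(S(a^{(2)}))=\lambda(S(a))\,1_H$, and one must still apply $S^{-1}$ to strip the outer antipode --- harmless, since integral Hopf algebras have invertible antipode and the paper's own Figure~\ref{fig:lem(b)} likewise invokes $S^{-1}$, but it is a missing step as written; (ii) your final step rightly avoids any division by pinning the scalar via $\lambda(\Lambda)=\id_{\one}$ rather than via $\lambda(1_H)$, which need not be invertible in a general $\cC$.
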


\begin{proof}
Part (a) is proved in Figure~\ref{fig:lem(a)}, and part (b) is proved in Figure~\ref{fig:lem(b)}. References to Figures~\ref{fig:symmon},~\ref{fig:Hopfax},~\ref{fig:Hopfiden}, and~\ref{fig:Hopfint} are made throughout.
\end{proof}

\subsection{Proof of Proposition~\ref{prop:hopftofrob}}
We aim to show that
\begin{align*}
\Psi:\mathsf{IntHopfAlg}(\cC) &\to \mathsf{FrobAlg}(\cC)\\
(H,\hspace{0.01in} m, \hspace{0.01in} u, \hspace{0.01in} \HDelta, \hspace{0.01in} \Hvarep, \hspace{0.01in} S,  \hspace{0.01in} S^{-1}, \hspace{0.01in}  \Lambda, \hspace{0.01in} \lambda) 
&\mapsto (H, \hspace{0.01in} m, \hspace{0.01in} u, \hspace{0.01in} \Delta:= (m  \otimes S)(\id_H \otimes \HDelta\hs \Lambda), \;   \varep:=\lambda)
\end{align*}
is a well-defined functor, which acts as the identity on morphisms.

For the assignment of objects  under the functor $\Psi$, the coproduct $\Delta$ and counit $\varep$ are depicted in Figure~\ref{fig:DelEp}. The counitality axioms are then established in Figure~\ref{fig:counit}; the Frobenius laws are established in Figure~\ref{fig:Froblaw}; and the coassociativity axiom is established in Figure~\ref{fig:coassoc}. References to Figures~\ref{fig:Hopfax}--\ref{fig:lem(b)}  are made throughout. 

\pagebreak

Next, for the assignment of morphisms under $\Psi$, take a morphism of integral Hopf algebras 
\[
f:(H,m_H,u_H,\HDelta_H,\Hvarep_H,S_H^{\pm 1},\Lambda_H,\lambda_H)\to (K,m_K,u_K,\HDelta_K,\Hvarep_K,S_K^{\pm 1},\Lambda_K,\lambda_K).
\]
We will verify that $\Psi(f):=f$ is a morphism of Frobenius algebras from $(H,m_H,u_H,\Delta_H,\varep_H)$ to $(K,m_K,u_K,\Delta_K,\varep_K)$. We have multiplicativity and unitality for free, since the Hopf multiplications and units on $H$ and $K$ are the same as the Frobenius multiplications and units on $H$ and $K$. Next, we get Frobenius counitality immediately from the fact that $f$ is compatible with the cointegrals of $H$ and $K$; namely, the Frobenius counits of $H$ and $K$ are given by $\varep_H = \lambda_H$ and $\varep_K = \lambda_K$. Finally, we have that Frobenius comultiplicativity holds via the commutative diagram below.

\vspace{.2in}

\begin{center}
\begin{tikzcd}
H \arrow[rrr,"f"]\arrow[ddd,"\Delta_H"']\arrow[rd,"\id_H\otimes\Lambda_H"] &&& K\arrow[ddd,"\Delta_K"]\arrow[ld,"\id_K\otimes\Lambda_K"']\\
&H\otimes H\arrow[d,"\id_H\otimes \HDelta_H"]\arrow[r,"f\otimes f"]&K\otimes K\arrow[d,"\id_K\otimes\HDelta_K"]&\\
&H\otimes H\otimes H\arrow[r,"f\otimes f\otimes f"']\arrow[ld,"m_H\otimes S_H"] &K\otimes K\otimes K\arrow[rd,"m_K\otimes S_K"']&\\
H\otimes H\arrow[rrr,"f\otimes f"] &&& K\otimes K
\end{tikzcd}
\end{center}

\vspace{.2in}

\noindent Here, the left and right regions commute by definition of $\Delta_H$ and $\Delta_K$. The top region commutes because $f$ is compatible with the integrals of $H$ and $K$. The bottom region commutes because $f$ is an algebra map and  is compatible with the antipodes of $H$ and $K$. Finally, the middle region commutes because $f$ is a coalgebra map between the Hopf algebras $H$ and $K$. 
\qed



\begin{figure}[h!]
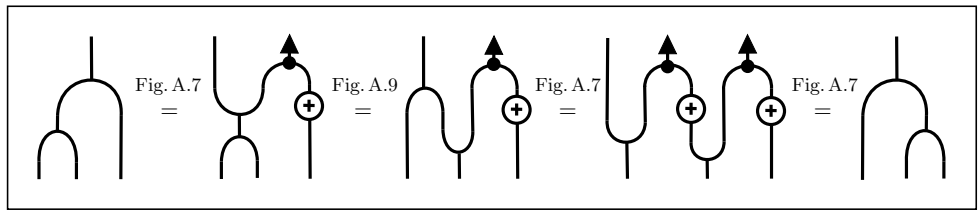

\vspace{.5in}
\scalebox{0.7}{

\tikzset{every picture/.style={line width=0.75pt}} 


}
\vspace{-.1in}
\caption{Proof of coassociativity for the Frobenius-from-Hopf structure in $\cC$.}
\label{fig:coassoc}
\end{figure}

\clearpage




\section{Proofs of selected results in Section~\ref{sec:eFrobMF} (preprint version only)}
\label{app:b}

We prove Proposition \ref{prop:eFMpres} in Section~\ref{app:b1}, and prove Proposition~\ref{thm:eFMcomp} in Section~\ref{app:b2}.


\subsection{Proof of Proposition \ref{prop:eFMpres}} \label{app:b1}

Given $(A,m_A,u_A,\Delta_A,\varep_A,\phi_A,\theta_A)\in\mathsf{ExtFrobAlg}(\cC)$, we first define an extended Frobenius algebra structure on $F(A)$. 
Let $m_{F(A)}$, $u_{F(A)}$, $\Delta_{F(A)}$, and $\varepsilon_{F(A)}$ be as in Proposition \ref{prop:pres}(a,b). By Proposition~\ref{prop:pres}(c), this makes $F(A)$ a Frobenius algebra in $\cC'$. Define
\[
\phi_{F(A)}:=F(\phi_A)\hs\widehat{F}_A, \qquad \theta_{F(A)}:=F(\theta_A)\hs\widecheck{F},
\]
 and note that by naturality, $\phi_{F(A)}:= F(\phi_A)\hs\widehat{F}_A=\widehat F_A\hs F(\phi_A)$. We will now show that $\phi_{F(A)}$ and $\theta_{F(A)}$ satisfy the conditions in Definition \ref{def:extFrobC}(b).

To verify Definition~\ref{def:extFrobC}(b)(i) for $F(A)$, we first show that $\phi_{F(A)}$ is a Frobenius algebra morphism. Commutativity of Diagram~\ref{diag:phi is frob} verifies $m_{F(A)}(\phi_{F(A)}\otimes'\phi_{F(A)})=\phi_{F(A)}\hs m_{F(A)}$. Regions (1), (2), (5), and~(8) commute by definition, (3) by monoidality of $\widehat F$, (4) and (6) by naturality, and (7) by multiplicativity of $\phi_A$. 
Likewise, comonoidality of $\widehat{F}$ gives $(\phi_{F(A)}\otimes'\phi_{F(A)})\Delta_{F(A)}=\Delta_{F(A)}\hs\phi_{F(A)}.$ 

Commutativity of Diagram~\ref{diag:phi is frob 2} shows that $u_{F(A)}=\phi_{F(A)}\hs u_{F(A)}$. Regions (1), (4), and (6) commute by definition, (2) by monoidality of $\widehat F$,  (3) by $\phi_{A}$ being an algebra morphism, and (5) by naturality. Using that $\widehat{F}$ is comonoidal, an analogous argument shows that $\varepsilon_{F(A)}=\varepsilon_{F(A)}\hs\phi_{F(A)}$, concluding the proof that $\phi_{F(A)}$ is a morphism of Frobenius algebras in $\cC'$.


\setcounter{figure}{0}
\renewcommand{\figurename}{Diagram}
\renewcommand{\thefigure}{B.\arabic{figure}}

\begin{figure}[h!]
\hspace{-1.5cm} \adjustbox{scale=0.95}{ 
  \begin{tikzcd}[every label/.append style = {font = \tiny}, nodes={font=\footnotesize}, column sep=7ex, row sep=2ex]
	{F(A)\otimes'F(A)} && {F(A\otimes A)} && {F(A)} \\
	\\
	{F(A)\otimes'F(A)} && {F(A\otimes A)} && {F(A)} \\
	\\
	{F(A)\otimes'F(A)} && {F(A\otimes A)} && {F(A)}
	\arrow[""{name=0, anchor=center, inner sep=0}, "{{{F^{(2)}_{A,A}}}}"'{description}, from=1-1, to=1-3]
	\arrow[""{name=1, anchor=center, inner sep=0}, "{{{m_{F(A)}}}}", curve={height=-30pt}, from=1-1, to=1-5]
	\arrow[""{name=2, anchor=center, inner sep=0}, "{{{\widehat{F}_A \otimes' \widehat{F}_A}}}", from=1-1, to=3-1]
	\arrow[""{name=3, anchor=center, inner sep=0}, "{{{\phi_{F(A)}\otimes'\phi_{F(A)}}}}"',shift right=3, curve={height=30pt}, from=1-1, to=5-1]
	\arrow[""{name=4, anchor=center, inner sep=0}, "{{{F(m_A)}}}"{description}, from=1-3, to=1-5]
	\arrow["{{{\widehat{F}_{A\otimes A}}}}", from=1-3, to=3-3]
	\arrow[""{name=5, anchor=center, inner sep=0}, "{{{\widehat{F}_A}}}"', from=1-5, to=3-5]
	\arrow[""{name=6, anchor=center, inner sep=0}, "{{{\phi_{F(A)}}}}", shift right=0,curve={height=-30pt}, from=1-5, to=5-5]
	\arrow[""{name=7, anchor=center, inner sep=0}, "{{{F^{(2)}_{A,A}}}}"{description}, from=3-1, to=3-3]
	\arrow["{{{F(\phi_A) \otimes' F(\phi_A)}}}", from=3-1, to=5-1]
	\arrow[""{name=8, anchor=center, inner sep=0}, "{{{F(m_A)}}}"{description}, from=3-3, to=3-5]
	\arrow["{{{F(\phi_A \otimes \phi_A)}}}", from=3-3, to=5-3]
	\arrow["{{{F(\phi_A)}}}"', from=3-5, to=5-5]
	\arrow[""{name=9, anchor=center, inner sep=0}, "{{{F^{(2)}_{A,A}}}}"{description}, from=5-1, to=5-3]
	\arrow[""{name=10, anchor=center, inner sep=0}, "{{{m_{F(A)}}}}"', curve={height=30pt}, from=5-1, to=5-5]
	\arrow[""{name=11, anchor=center, inner sep=0}, "{{{F(m_A)}}}"{description}, from=5-3, to=5-5]
	\arrow["{{{\textcolor{blue}{(1)}}}}"{description,pos=.3}, shift left=0,draw=none, from=1-3, to=1]
	\arrow["{{{\textcolor{blue}{(2)}}}}"{description, pos=0.7}, shift left, draw=none, from=3, to=2]
	\arrow["{{{\textcolor{blue}{(3)}}}}"{description}, shift left=-4, curve={height=-12pt}, draw=none, from=0, to=7]
	\arrow["{{{\textcolor{blue}{(4)}}}}"{description}, draw=none, from=4, to=8]
	\arrow["{{{\textcolor{blue}{(5)}}}}"{description,pos=.4}, draw=none, from=3-5, to=6]
	\arrow["{{{\textcolor{blue}{(6)}}}}", shift left=5, draw=none, from=7, to=9]
	\arrow["{{{\textcolor{blue}{(7)}}}}"{description}, shift left=2, draw=none, from=8, to=11]
	\arrow["{{{\textcolor{blue}{(8)}}}}"{description,pos=.3}, shift right=0, draw=none, from=5-3, to=10]
\end{tikzcd}}
\vspace{-.1in}
\caption{$\phi_{F(A)}$ is multiplicative.}
\label{diag:phi is frob}
\end{figure}
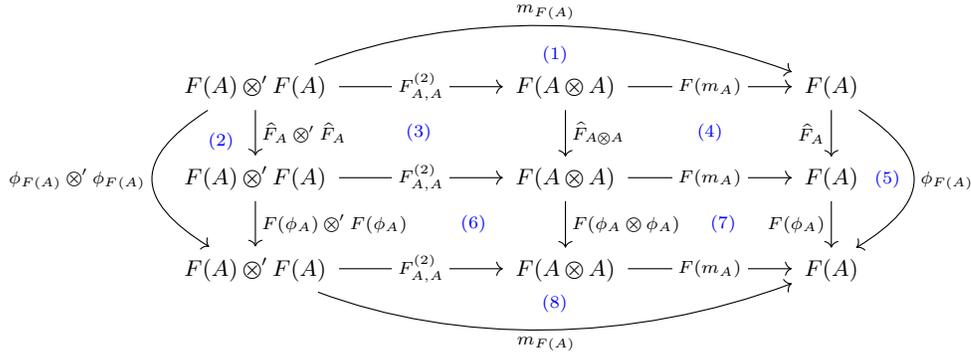

\vspace{-.1in}

\begin{figure}[h!]
\adjustbox{scale=0.95}{ 
\begin{tikzcd}[every label/.append style = {font = \tiny}, nodes={font=\footnotesize},column sep=6ex]
	{\one'} && {F(\one)} && {F(A)} \\
	&& {F(\one)} && {F(A)} \\
	&&&& {F(A)}
	\arrow["{{{F^{(0)}}}}"{description}, from=1-1, to=1-3]
	\arrow[""{name=0, anchor=center, inner sep=0}, "{{{u_{F(A)}}}}", shift left, curve={height=-18pt}, from=1-1, to=1-5]
	\arrow[""{name=1, anchor=center, inner sep=0}, "{{{F^{(0)}}}}"{description}, from=1-1, to=2-3]
	\arrow[""{name=2, anchor=center, inner sep=0}, "{{{u_{F(A)}}}}"', shift right, curve={height=24pt}, from=1-1, to=3-5]
	\arrow["{{{F(u_A)}}}"{description}, from=1-3, to=1-5]
	\arrow["{{{\widehat{F}_{\one}}}}", from=1-3, to=2-3]
	\arrow[""{name=3, anchor=center, inner sep=0}, "{{{F(u_A)}}}"{description, pos=.65},shift right=.5, from=1-3, to=2-5]
	\arrow[""{name=4, anchor=center, inner sep=0}, "{{{F(\phi_A)}}}"'{pos=.3}, from=1-5, to=2-5]
	\arrow[""{name=5, anchor=center, inner sep=0}, "{{{\phi_{F(A)}}}}", shift left, curve={height=-30pt}, from=1-5, to=3-5]
	\arrow["{{{\textcolor{blue}{(5)}}}}"{description}, shift right=.5, draw=none, from=2-3, to=2-5]
	\arrow["{{{F(u_A)}}}"{description}, from=2-3, to=3-5]
	\arrow["{{{\widehat{F}_A}}}"'{pos=.3}, from=2-5, to=3-5]
	\arrow["{{{\textcolor{blue}{(1)}}}}"{description, pos=0.7}, draw=none, from=0, to=1-3]
	\arrow["{{\textcolor{blue}{(2)}}}"{description,pos=.7}, shift left = -1, draw=none, from=1, to=1-3]
	\arrow["{{{\textcolor{blue}{(3)}}}}"{description,pos=0.2},shift right=-2.5, draw=none, from=3, to=1-5]
	\arrow["{{{\textcolor{blue}{(6)}}}}"{description,pos=.4}, draw=none, from=2-5, to=5]
	\arrow["{{{\textcolor{blue}{(4)}}}}"{description, pos=0.3}, draw=none, from=2-3, to=2]
\end{tikzcd}}
\vspace{-.1in}
\caption{$\phi_{F(A)}$ is unital.}
\label{diag:phi is frob 2}
\end{figure}

\vspace{.1in}

Lastly, to see that $\phi_{F(A)}$ is an involution, note that 
\begin{equation*}
\phi_{F(A)}\circ\phi_{F(A)}=F(\phi_A)\circ\widehat{F}_A\circ \widehat{F}_A\circ F(\phi_A)
=F(\phi_A\circ\phi_A)
=\id_{F(A)},
\end{equation*}
where we use $\phi_{F(A)}:= F(\phi_A)\hs\widehat{F}_A=\widehat F_A\hs F(\phi_A)$, Definition~\ref{def:eFMfunc}(c)(i), and  $\phi_A$ being an involution. 

\pagebreak

Next, Definition~\ref{def:extFrobC}(b)(ii) for $F(A)$ follows from commutativity of Diagram \ref{diag:FA 3.1(b)(ii)} below.

\begin{figure}[h!]
\adjustbox{scale=.88}{\begin{tikzcd}[every label/.append style = {font = \tiny}, nodes={font=\footnotesize}, scale cd = 1.05, column sep=4ex, row sep=4ex]
	{\one'\otimes'F(A)} &&& {F(\one)\otimes'F(A)} && {F(A)\otimes'F(A)} \\
& {F(\one)\otimes'F(A)} && {F(\one\otimes A)} \\
&& {F(\one\otimes A)} && {F(A\otimes A)} \\
& {F(A)\otimes'F(A)} & {F(A\otimes A)} & {F(A\otimes A)} & {F(A)} & {F(A)} \\
&&&& {F(A)}
\arrow["{{{\widecheck{F}\otimes' \id_{F(A)}}}}"'{pos=.6}, from=1-1, to=1-4]
\arrow[""{name=0, anchor=center, inner sep=0}, "{{{\theta_{F(A)}\,\otimes'\,\id_{F(A)}}}}", curve={height=-30pt}, from=1-1, to=1-6]
\arrow[from=1-1, to=2-2]
\arrow["{{{\widecheck{F}\,\otimes'\,\id_{F(A)}}}}"{pos=.5}, shift right=1.5, draw=none, from=1-1, to=2-2]
\arrow[""{name=1, anchor=center, inner sep=0}, "{{{\theta_{F(A)}\otimes'\id_{F(A)}}}}"', curve={height=30pt}, from=1-1, to=4-2]
\arrow["{{{F(\theta_A) \otimes' \id_{F(A)}}}}"', from=1-4, to=1-6]
\arrow[""{name=2, anchor=center, inner sep=0}, "{{{F^{(2)}_{\one,A}}}}", from=1-4, to=2-4]
\arrow[""{name=3, anchor=center, inner sep=0}, "{{{F^{(2)}_{A,A}}}}"{description}, from=1-6, to=3-5]
\arrow[""{name=4, anchor=center, inner sep=0}, "{{{m_{F(A)}}}}", from=1-6, to=4-6]
\arrow["{{\textcolor{blue}{(3)}}}"{pos=.45}, draw=none,shift right=.75, from=2-2, to=2-4]
\arrow["{{{F^{(2)}_{\one,A}}}}"{pos=.4},draw=none,shift right=1.5, from=2-2, to=3-3]
\arrow[from=2-2, to=3-3]
\arrow[""{name=5, anchor=center, inner sep=0}, "{{{F(\theta_A)\,\otimes'\,\id_{F(A)}}}}"'{description}, from=2-2, to=4-2]
\arrow["{{{\widehat{F}_{\one\otimes A}}}}"'{pos=0.4}, draw=none, shift right=-1.25, from=2-4, to=3-3]
\arrow[from=2-4, to=3-3]
\arrow["{{{F(\theta_A \otimes \id_{A})}}}" {pos=0.2},draw=none, shift right=1.5, from=2-4, to=3-5]
\arrow[from=2-4, to=3-5]
\arrow[""{name=6, anchor=center, inner sep=0}, "{{{F(\theta_A \otimes \id_{A})}}}"{description}, from=2-4, to=4-4]
\arrow["{{{F(\theta_A \otimes \id_{A})}}}"{pos=0.4}, from=3-3, to=4-3]
\arrow["{{{F(m_A)}}}"{pos=.4}, draw=none, shift right =1.5, from=3-5, to=4-6]
\arrow[from=3-5, to=4-6]
\arrow[""{name=7, anchor=center, inner sep=0}, "{{{F^{(2)}_{A,A}}}}", from=4-2, to=4-3]
\arrow["{{{m_{F(A)}}}}"', curve={height=24pt}, from=4-2, to=5-5]
\arrow[""{name=8, anchor=center, inner sep=0}, "{{{F(m_A)}}}"'{description,pos=0.5}, from=4-3, to=5-5]
\arrow["{{\textcolor{blue}{(8)}}}"{description,pos=.6}, shift right =-2, draw=none, from=4-4, to=3-5]
\arrow["{{{\widehat{F}_{A\otimes A}}}}"', from=4-4, to=4-3]
\arrow[""{name=9, anchor=center, inner sep=0}, "{{{F(m_A)}}}", from=4-4, to=4-5]
\arrow["{{{\widehat{F}_A}}}", from=4-5, to=5-5]
\arrow[""{name=10, anchor=center, inner sep=0}, "{{{F(\phi_A)}}}"'{pos=0.6}, from=4-6, to=4-5]
\arrow[""{name=11, anchor=center, inner sep=0}, "{{{\phi_{F(A)}}}}", curve={height=-8pt}, from=4-6, to=5-5]
\arrow["{{\textcolor{blue}{(1)}}}"{description,pos=.4}, shift right=2, draw=none, from=0, to=1-4]
\arrow["{{\textcolor{blue}{(4)}}}"{description,pos=.6}, draw=none, from=2, to=3]
\arrow["{{\textcolor{blue}{(5)}}}"{description}, draw=none, from=3-5, to=4]
\arrow["{{\textcolor{blue}{(6)}}}"{pos=.7}, shift right=1, draw=none, from=4-2, to=3-3]
\arrow["{{\textcolor{blue}{(2)}}}"{description, pos=0.5}, shift right=5, draw=none, from=2-2, to=1]
\arrow["{{\textcolor{blue}{(7)}}}"{description,pos=.55}, draw=none, shift right=1, from=4-3, to=2-4]
\arrow["{{\textcolor{blue}{(10)}}}"'{pos=0.7}, shift left=-1, draw=none, from=8, to=9]
\arrow["{{\textcolor{blue}{(11)}}}"{description, pos=0.45}, shift right =-2, draw=none, from=11, to=10]
\arrow["{{\textcolor{blue}{(9)}}}"{description,pos=.4}, shift right=2, draw=none, from=4-2, to=5-5]
\end{tikzcd}}
\vspace{-.1in}
\caption{$F(A)$ satisfies Definition~\ref{def:extFrobC}(b)(ii).}
\label{diag:FA 3.1(b)(ii)}
\end{figure}

\noindent Regions (1), (2), (5), (9), and (11) commute by definition, (4), (6), (7), and (10) by naturality,~(3) by Definition \ref{def:eFMfunc}(c)(ii), and (8) by Definition \ref{def:extFrobC}(b)(ii) for $A$.

Lastly, Definition~\ref{def:extFrobC}(b)(iii) for $F(A)$ holds by commutativity of Diagram \ref{F(A) is ext fr},
where regions (1), (2), (3), (8), (20), and (21) commute by definition, (5), (6), and (9)-(18) by naturality, (4) by Definition \ref{def:eFMfunc}(b), (7) by Definition \ref{def:eFMfunc}(c)(iii), and (19) by Definition \ref{def:extFrobC}(b)(iii) for $A$. This completes the proof that $F(A)\in\mathsf{ExtFrobAlg}(\cC')$.

\begin{figure}[h!]
	\adjustbox{scale=.86,center}{
\begin{tikzcd}[every label/.append style = {font = \tiny}, nodes={font=\footnotesize}, column sep=0.8ex, row sep=5ex]
& {\one'} & {F(\one)} & {F(A)} && {F(A\otimes A)} & {F(A)\otimes' F(A)} \\
{\one'\otimes'\one'} & {F(\one\otimes\one)} && {F(A\otimes\one)} && {F(A\otimes A\otimes\one)} \\
& {F(\one)\otimes'F(\one)} && {F(A)\otimes'F(\one)} && {F(A\otimes A)\otimes'F(\one)} & {F(A)\otimes'F(A)} \\
& {F(\one)\otimes'F(\one)} && {F(A)\otimes'F(\one)} && {F(A\otimes A)\otimes'F(\one)} \\
{F(\one)\otimes'F(\one)} & {F(\one\otimes\one)} && {F(A\otimes\one)} && {F(A\otimes A\otimes\one)} & {F(A)\otimes'F(A)} \\
&&& {F(\one)} & {F(A)} & {F(A\otimes A)} & {F(A\otimes A)} \\
{F(A)\otimes'F(A)} && {F(A\otimes A)} &&&& {F(A)}
\arrow["{{{{{{F^{(0)}}}}}}}"'{pos=0.5}, from=1-2, to=1-3]
\arrow[""{name=0, anchor=center, inner sep=0}, "{{{{{{u_{F(A)}}}}}}}", curve={height=-25pt}, from=1-2, to=1-4]
\arrow["{{{{{{r_{\one}^{-1}}}}}}}"'{pos=.4},shift right =-2, draw=none,from=1-2, to=2-1]
\arrow[from=1-2, to=2-1]
\arrow["{{{{{{F(u_A)}}}}}}"', from=1-3, to=1-4]
\arrow["{{{{{{F(r_{\one}^{-1})}}}}}}"'{pos=.6},shift right=-1.5, draw=none,from=1-3, to=2-2]
\arrow[from=1-3, to=2-2]
\arrow["{{{{{\textcolor{blue}{(5)}}}}}}"{description, pos=0.2},shift right =3, draw=none, from=1-3, to=2-4]
\arrow["{{{{{{F(\Delta_A)}}}}}}"'{pos=0.5}, from=1-4, to=1-6]
\arrow[""{name=1, anchor=center, inner sep=0}, "{{{{{{\Delta_{F(A)}}}}}}}", curve={height=-25pt}, from=1-4, to=1-7]
\arrow["{{{{{{F(r_A^{-1})}}}}}}"{pos=.4}, from=1-4, to=2-4]
\arrow["{{{{{\textcolor{blue}{(6)}}}}}}"{description}, draw=none, from=1-4, to=2-6]
\arrow["{{{{{{F_{(2)}^{A,A}}}}}}}"'{pos=0.5},shift right =1,draw=none, from=1-6, to=1-7]
\arrow[ from=1-6, to=1-7]
\arrow["{{{{{{F(r_{A\otimes A}^{-1})}}}}}}"'{pos=0.5}, from=1-6, to=2-6]
\arrow[""{name=2, anchor=center, inner sep=0}, "{{{{{{\widehat{F}_{A}\otimes'\id_{F(\one)}}}}}}}"{description}, from=1-7, to=3-7]
\arrow[""{name=3, anchor=center, inner sep=0}, "{{{{{{\phi_{F(A)}\otimes'\id_{F(A)}}}}}}}"{description,pos=.4},shift right=-6, curve={height=-60pt}, from=1-7, to=5-7]
\arrow["{{{{{(\textcolor{blue}{4)}}}}}}"{description,pos=.4},shift right =2, draw=none, from=2-1, to=3-2]
\arrow[""{name=4, anchor=center, inner sep=0}, "{{{{{{\widecheck{F}\otimes'\widecheck{F}}}}}}}"', from=2-1, to=5-1]
\arrow[""{name=5, anchor=center, inner sep=0}, "{{{{{{\theta_{F(A)}\otimes'\theta_{F(A)}}}}}}}"{description, pos=0.4},shift right=3, curve={height=50pt}, from=2-1, to=7-1]
\arrow["{{{{{{F(u_A\otimes\id_\one)}}}}}}", from=2-2, to=2-4]
\arrow["{{{{{{F^{\one,\one}_{(2)}}}}}}}"', from=2-2, to=3-2]
\arrow["{{{{{\textcolor{blue}{(9)}}}}}}"{description},shift right=6, draw=none, from=2-2, to=2-4]
\arrow["{{{{{{F(\Delta_A\otimes\id_{\one})}}}}}}", from=2-4, to=2-6]
\arrow["{{{{{{F^{A,\one}_{(2)}}}}}}}"', from=2-4, to=3-4]
\arrow["{{{{{\textcolor{blue}{(10)}}}}}}"{description},shift right =6, draw=none, from=2-4, to=2-6]
\arrow[""{name=6, anchor=center, inner sep=0}, "{{{{{{F^{A\otimes A,\one}_{(2)}}}}}}}"'{pos=.4}, from=2-6, to=3-6]
\arrow["{{{{{{F(u_A)\otimes'F(\id_\one)}}}}}}",shift right =-1, draw=none, from=3-2, to=3-4]
\arrow[ from=3-2, to=3-4]
\arrow["{{{{{{\widehat{F}_{\one}\otimes'\id_{F(\one)}}}}}}}"', from=3-2, to=4-2]
\arrow["{{{{{\textcolor{blue}{(11)}}}}}}"{description,pos=.2},shift right =6, draw=none, from=3-2, to=3-4]
\arrow["{{{{{{F(\Delta_A)\otimes' F(\id_\one)}}}}}}",shift right=-1,draw=none, from=3-4, to=3-6]
\arrow[ from=3-4, to=3-6]
\arrow["{{{{{{\widehat{F}_A\otimes'\id_{F(\one)}}}}}}}"'{pos=.4}, from=3-4, to=4-4]
\arrow["{{{{{\textcolor{blue}{(12)}}}}}}"{description,pos=.02},shift right =6, draw=none, from=3-4, to=3-6]
\arrow["{{{{{{\widehat{F}_{A\otimes A}\otimes'\id_{F(\one)}}}}}}}"'{pos=.4}, from=3-6, to=4-6]
\arrow["{{{{{{F(\phi_A)\otimes'F(\id_A)}}}}}}"{pos=.2},draw=none,shift right=.75, from=3-7, to=5-7]
\arrow[""{name=7, anchor=center, inner sep=0},from=3-7, to=5-7]
\arrow["{{{{{{F^{(2)}_{A,A}}}}}}}"{description, pos=0.3}, curve={height=-15pt}, from=3-7, to=6-6]
\arrow["{{{{{{F(u_A)\otimes'F(\id_\one)}}}}}}",shift right =-1, draw=none, from=4-2, to=4-4]
\arrow[from=4-2, to=4-4]
\arrow["{{{{{{F_{\one,\one}^{(2)}}}}}}}"'{pos=.4}, from=4-2, to=5-2]
\arrow["{{{{{\textcolor{blue}{(13)}}}}}}"{description},shift right =6, draw=none, from=4-2, to=4-4]
\arrow["{{{{{{F(\Delta_A)\otimes' F(\id_\one)}}}}}}",shift right=-1,draw=none, from=4-4, to=4-6]
\arrow[from=4-4, to=4-6]
\arrow["{{{{{{F_{A,\one}^{(2)}}}}}}}"'{pos=.4}, from=4-4, to=5-4]
\arrow["{{{{{\textcolor{blue}{(14)}}}}}}"{description},shift right =6, draw=none, from=4-4, to=4-6]
\arrow["{{{{{{F_{A\otimes A,\one}^{(2)}}}}}}}"'{pos=.4}, from=4-6, to=5-6]
\arrow["{{{{{{F_{\one,\one}^{(2)}}}}}}}"', shift right =1,draw=none, from=5-1, to=5-2]
\arrow[from=5-1, to=5-2]
\arrow["{{{{{{F(\theta_A)\otimes' F(\theta_A)}}}}}}"{description,pos=.35}, from=5-1, to=7-1]
\arrow["{{{{{\textcolor{blue}{(15)}}}}}}"{description,pos=.25},shift right =-12, draw=none, from=7-1, to=7-3]
\arrow["{{{{{{F(u_A\otimes\id_\one)}}}}}}", from=5-2, to=5-4]
\arrow[ "{{{{{{F(r_\one)}}}}}}"{pos=.4}, shift right =1,draw=none, from=5-2, to=6-4]
\arrow[""{name=8, anchor=center, inner sep=0}, from=5-2, to=6-4]
\arrow["{{{{{{F(\theta_A\otimes\theta_A)}}}}}}"{description}, from=5-2, to=7-3]
\arrow["{{{{{{F(\Delta_A\otimes\id_{\one})}}}}}}", from=5-4, to=5-6]
\arrow["{{{{{{F(r_A)}}}}}}"'{pos=.4},shift right =-1, draw=none, from=5-4, to=6-5]
\arrow[from=5-4, to=6-5]
\arrow["{{{{{\textcolor{blue}{(17)}}}}}}"{description,pos=0.4},shift right=-18, draw=none, from=5-4, to=6-4]
\arrow["{{{{{{F(r_{A\otimes A})}}}}}}"', from=5-6, to=6-6]
\arrow[""{name=9, anchor=center, inner sep=0}, "{{{{{{F^{(2)}_{A,A}}}}}}}"', from=5-7, to=6-7]
\arrow[""{name=10, anchor=center, inner sep=0}, "{{{{{{m_{F(A)}}}}}}}"{pos=0.5}, curve={height=-30pt},shift right =-2, from=5-7, to=7-7]
\arrow["{{{{{{F(u_A)}}}}}}"',shift right =1,draw=none, from=6-4, to=6-5]
\arrow[from=6-4, to=6-5]
\arrow["{{{{{{F(\Delta_A)}}}}}}"',shift right =1, draw=none, from=6-5, to=6-6]
\arrow[from=6-5, to=6-6]
\arrow["{{{{{\textcolor{blue}{(18)}}}}}}"{pos=0.6}, shift right=5, draw=none, from=6-6, to=5-7]
\arrow["{{{{{{F(\phi_A\otimes\id_A)}}}}}}"',shift right =1, draw=none, from=6-6, to=6-7]
\arrow[""{name=11, anchor=center, inner sep=0}, from=6-6, to=6-7]
\arrow["{{{{{{F(m_A)}}}}}}"'{pos=.6}, from=6-7, to=7-7]
\arrow["{{{{{{F_{A,A}^{(2)}}}}}}}", from=7-1, to=7-3]
\arrow[""{name=12, anchor=center, inner sep=0}, "{{{{{{m_{F(A)}}}}}}}"'{pos=.7}, curve={height=35pt}, from=7-1, to=7-7]
\arrow["{{{{{{F(m_A)}}}}}}", from=7-3, to=7-7]
\arrow["{{{{{\textcolor{blue}{(1)}}}}}}"{description,pos=.5}, shift right=-4, draw=none, from=1-2, to=1-4]
\arrow["{{{{{\textcolor{blue}{(2)}}}}}}"{description,pos=.5},shift right =-4, draw=none, from=1-4, to=1-7]
\arrow["{{{{{\textcolor{blue}{(3)}}}}}}"'{pos=0.5},shift right=6, draw=none, from=2-1, to=7-1]
\arrow["{{{{{\textcolor{blue}{(7)}}}}}}"'{pos=.45},shift right =-16, draw=none, from=1-6, to=4-6]
\arrow["{{{{{\textcolor{blue}{(8)}}}}}}"{pos=0.7},shift right =-9, draw=none, from=1-7, to=3-7]
\arrow["{{{{{\textcolor{blue}{(16)}}}}}}"{description,pos=0.4},shift right =6, draw=none, from=5-4, to=6-4]
\arrow["{{{{{\textcolor{blue}{(20)}}}}}}"{description, pos=0.4}, shift left, draw=none, from=9, to=10]
\arrow["{{{{{\textcolor{blue}{(19)}}}}}}"{description}, draw=none, from=7-3, to=6-4]
\arrow["{{{{{\textcolor{blue}{(21)}}}}}}"{description},shift right=6, draw=none, from=7-1, to=7-7]
\end{tikzcd}}
\hspace{-.1in}
\caption{$F(A)$ satisfies Definition~\ref{def:extFrobC}(b)(iii).}
\label{F(A) is ext fr}
\end{figure}

\pagebreak

It remains to show that if $f:(A,m_A,u_A,\Delta_A,\varep_A,\phi_A,\theta_A)\to (B,m_B,u_B,\Delta_B,\varep_B,\phi_B,\theta_B)$ is a morphism of extended Frobenius algebras in $\cC$, then $F(f):F(A)\to F(B)$ is a morphism of extended Frobenius algebras in $\cC'$. 
 By Proposition~\ref{prop:pres}(c), $F(f)$ is a morphism of Frobenius algebras in $\cC'$, so it is enough to verify that $F(f)\hs \phi_{F(A)}=\phi_{F(B)}\hs F(f)$  and $F(f)\hs \theta_{F(A)}=\theta_{F(B)}$ in $\cC'$.
The first equation follows from Diagram \ref{F(f) is ext 1}, where regions (1) and (4) commute by the definitions of $\phi_{F(A)}$ and $\phi_{F(B)}$, respectively, (2) by naturality of $\widehat{F}$, and (3) because $f$ is a morphism of extended Frobenius algebras in $\cC$. For the second equation, observe that regions (1) and (3) in Diagram \ref{F(f) is ext 2} commute by the definitions of $\theta_{F(A)}$ and $\theta_{F(B)}$, respectively, and region (2) commutes because $f$ is a morphism of extended Frobenius algebras in $\cC$.

\begin{figure}[h!]
\begin{minipage}[b][2in][b]{.49\textwidth}
\adjustbox{scale=.9,center}{
\begin{tikzcd}[every label/.append style = {font = \tiny}, nodes={font=\footnotesize}, scale cd = 1.1,  column sep=7ex, row sep=3ex]
	{F(A)} && {F(A)} && {F(A)} \\
	\\
	{F(B)} && {F(B)} && {F(B)}
	\arrow["{\widehat{F}_A}", from=1-1, to=1-3]
	\arrow[""{name=0, anchor=center, inner sep=0}, "{\phi_{F(A)}}", curve={height=-40pt}, from=1-1, to=1-5]
	\arrow["{F(f)}", from=1-1, to=3-1]
	\arrow["{\textcolor{blue}{(2)}}"{description}, draw=none, from=1-1, to=3-3]
	\arrow["{F(\phi_A)}", from=1-3, to=1-5]
	\arrow["{F(f)}", from=1-3, to=3-3]
	\arrow["{F(f)}", from=1-5, to=3-5]
	\arrow["{\widehat{F}_B}"', from=3-1, to=3-3]
	\arrow[""{name=1, anchor=center, inner sep=0}, "{\phi_{F(B)}}"', curve={height=40pt}, from=3-1, to=3-5]
	\arrow["{\textcolor{blue}{(3)}}"{description}, draw=none, from=3-3, to=1-5]
	\arrow["{F(\phi_B)}"', from=3-3, to=3-5]
	\arrow["{\textcolor{blue}{(1)}}"{description, pos=0.6}, draw=none, from=0, to=1-3]
	\arrow["{\textcolor{blue}{(4)}}"{description, pos=0.4}, draw=none, from=3-3, to=1]
\end{tikzcd}}
\vspace{-.1in}
\caption{$F(f)$ respects $\phi$.}
\label{F(f) is ext 1}
\end{minipage}
\begin{minipage}[b][2in][b]{.49\textwidth}
\adjustbox{scale=.9,center}{
\begin{tikzcd}[every label/.append style = {font = \tiny}, nodes={font=\footnotesize}, scale cd = 1.1, column sep=7ex, row sep=3ex]
	&&&& {F(A)} \\
	{\one'} && {F(\one)} \\
	&&&& {F(B)}
	\arrow[""{name=0, anchor=center, inner sep=0}, "{F(f)}", from=1-5, to=3-5]
	\arrow[""{name=1, anchor=center, inner sep=0}, "{\theta_{F(A)}}", curve={height=-15pt}, from=2-1, to=1-5]
	\arrow["{\widecheck{F}}"{pos=.6}, from=2-1, to=2-3]
	\arrow[""{name=2, anchor=center, inner sep=0}, "{\theta_{F(B)}}"', curve={height=15pt}, from=2-1, to=3-5]
	\arrow["{F(\theta_A)}"{description}, from=2-3, to=1-5]
	\arrow["{F(\theta_B)}"{description}, from=2-3, to=3-5]
	\arrow["{\textcolor{blue}{(1)}}"{description}, draw=none, from=2-1, to=1-5]
	\arrow["{\textcolor{blue}{(3)}}"{description}, draw=none, from=2-1, to=3-5]
	\arrow["{\textcolor{blue}{(2)}}"{description, pos=0.7}, draw=none, from=2-3, to=0]
\end{tikzcd}}
\vspace{.57cm}
\caption{$F(f)$ respects $\theta$.}
\label{F(f) is ext 2}
\end{minipage}
\end{figure}

\vspace{.1in}

This completes the proof of Proposition~\ref{prop:eFMpres}.
\qed

\bigskip


\subsection{Proof of Proposition~\ref{thm:eFMcomp}} \label{app:b2}
 Let \[(F,F^{(2)},F^{(0)},F_{(2)},F_{(0)},\widehat{F},\widecheck{F}):(\cC,\otimes,\one)\to(\cC',\otimes',\one');\] 
 \[(G,G^{(2)},G^{(0)},G_{(2)},G_{(0)},\widehat{G},\widecheck{G}):(\cC',\otimes',\one')\to(\cC'',\otimes'',\one'')\]
be two extended Frobenius monoidal functors. To show that the composition 
\[
GF:(\cC,\otimes,\one)\to(\cC'',\otimes'',\one'')
\]
admits the structure of an extended Frobenius monoidal functor, let $(GF)^{(2)}$,  $(GF)^{(0)}$, $(GF)_{(2)}$, and $(GF)_{(0)}$ be as in Proposition \ref{prop:comp-bkg}(a,b). Proposition \ref{prop:comp-bkg}(c)  gives that this makes $GF$ into a Frobenius monoidal functor. Now, define $\widehat{GF}:GF\Rightarrow GF$ by $\widehat{GF}_X:= G(\widehat{F}_X)\circ \widehat{G}_{F(X)}$ for all $X\in\cC$, and define $\widecheck{GF}:=G(\widecheck{F})\circ \widecheck{G}:\one''\to GF(\one)$. We need to show that $\widehat{GF}$ and $\widecheck{GF}$ extend the above Frobenius monoidal structure on $GF$.

Note first that the composition of (co)monoidal natural transformations is again (co)monoidal, so $\widehat{GF}$ is a Frobenius monoidal natural transformation. So, Definition~\ref{def:eFMfunc}(a) holds for $GF$.

That Definition~\ref{def:eFMfunc}(b) is satisfied by $GF$ follows from commutativity of Diagram \ref{GF satisfies b)}: regions (1), (2), (8), (18), (25), and (26) commute by definition, (4)-(6), (9)-(17), and (19)-(23) by naturality,  (3) and (24) by Definition \ref{def:eFMfunc}(b) for $G$ and $F$ respectively, and  (7) by Definition~\ref{def:eFMfunc}(c)(iii) for $G$.

To see that Definition~\ref{def:eFMfunc}(c)(i) holds for $GF$, see Diagram \ref{c)i)}. Regions (1) and (3) commute by definition of $\widehat{GF}$, and regions (2) and (4) commute by Definition \ref{def:eFMfunc}(c)(i) for $F$ and $G$ respectively. 

Next, $GF$ satisfies Definition~\ref{def:eFMfunc}(c)(ii) by Diagram \ref{c)ii)}: regions (1), (4), (7), (8), and (11) commute by definition; (3), (5), (6), and (9) by naturality; and (2) and (10) by Definition \ref{def:eFMfunc}(c)(ii) for $G$ and $F$ respectively. 

Finally, to see that Definition~\ref{def:eFMfunc}(c)(iii) is satisfied by $GF$, consider Diagram~\ref{GF satisfies c)iii)}: regions (1), (2), (5), (6), (25), and (26) commute by definition; (4), (7)-(11), and (14)-(24) by naturality; and (3), (12), and (13) by Definition \ref{def:eFMfunc}(c)(iii) for $F$ and $G$ respectively.

\smallskip

This concludes the proof of Proposition~\ref{thm:eFMcomp}.
\qed
\vspace{.3in}

\begin{figure}[h!]\adjustbox{scale=.8,center}{
    \begin{tikzcd}[every label/.append style = {font = \tiny}, nodes={font=\footnotesize}, scale cd = 1.15, column sep=5ex, row sep=6ex]
	&&& {GF(X)} \\
	{GF(X)} && {GF(X)} && {GF(X)} && {GF(X)}
	\arrow["{{G(\widehat{F}_X)}}"{pos=0.5},draw=none, shift right=1.5, from=1-4, to=2-5]
        \arrow[from=1-4, to=2-5]
	\arrow[""{name=0, anchor=center, inner sep=0}, "{{\widehat{GF}_X}}", curve={height=-12pt}, from=1-4, to=2-7]
	\arrow[""{name=1, anchor=center, inner sep=0}, "{{\widehat{GF}_X}}", curve={height=-12pt}, from=2-1, to=1-4]
	\arrow["{{\widehat{G}_{F(X)}}}", from=2-1, to=2-3]
	\arrow[""{name=2, anchor=center, inner sep=0}, "{{\id_{GF(X)}}}"', curve={height=30pt}, from=2-1, to=2-7]
	\arrow["{{G(\widehat{F}_X)}}"{pos=0.5}, shift right = 1.5, draw=none, from=2-3, to=1-4]
        \arrow[from=2-3, to=1-4]
	\arrow[""{name=3, anchor=center, inner sep=0}, "{{\id_{GF(X)}}}", from=2-3, to=2-5]
	\arrow["{{\widehat{G}_{F(X)}}}", from=2-5, to=2-7]
	\arrow["{\textcolor{blue}{(3)}}"{description}, shift left=3, draw=none, from=0, to=2-5]
	\arrow["{\textcolor{blue}{(2)}}"{description, pos=0.4}, draw=none, from=1-4, to=3]
	\arrow["{\textcolor{blue}{(1)}}"{description}, draw=none, from=2-3, to=1]
	\arrow["{\textcolor{blue}{(4)}}"{description}, draw=none, from=3, to=2]
\end{tikzcd}}
\vspace{-.1in}
\caption{$GF$ satisfies Definition~\ref{def:eFMfunc}(c)(i).}
\label{c)i)}
\end{figure}

\vspace{0.3in}

\begin{figure}[h!]
    \adjustbox{scale=.8,center}{\begin{tikzcd}[every label/.append style = {font = \tiny}, nodes={font=\footnotesize}, scale cd = 1.15, column sep=3ex, row sep=6ex]
	{\one''\otimes''GF(X)} && {G(\one')\otimes''GF(X)} && {GF(\one)\otimes''GF(X)} && {GF(\one\otimes X)} \\
	&& {G(\one'\otimes'F(X))} && {G(F(\one)\otimes'F(X))} \\
	& {G(\one')\otimes''GF(X)} & {G(\one'\otimes'F(X))} && {G(F(\one)\otimes'F(X))} && {GF(\one\otimes X)} \\
	\\
	{GF(\one)\otimes''GF(X)} && {G(F(\one)\otimes'F(X))} &&&& {GF(\one\otimes X)}
	\arrow["{{\widecheck{G}\otimes''GF(\id_{X})}}", from=1-1, to=1-3]
	\arrow[""{name=0, anchor=center, inner sep=0}, "{{\widecheck{GF}\otimes''GF(\id_X)}}", curve={height=-60pt}, from=1-1, to=1-5]
	\arrow[""{name=1, anchor=center, inner sep=0}, "{{\widecheck{G}\otimes''GF(\id_{X})}}", draw=none, shift right = 2, from=1-1, to=3-2]
        \arrow[from=1-1, to=3-2]
	\arrow[""{name=2, anchor=center, inner sep=0}, "{{\widecheck{GF}\otimes''GF(\id_X)}}"', from=1-1, to=5-1]
	\arrow["{{G(\widecheck{F})\otimes''GF(\id_{X})}}"',draw=none, shift right =1, from=1-3, to=1-5]
        \arrow[from=1-3, to=1-5]
	\arrow["{{G^{(2)}_{\one',F(X)}}}"', from=1-3, to=2-3]
	\arrow["{{(GF)^{(2)}_{\one,X}}}", from=1-5, to=1-7]
	\arrow["{{G^{(2)}_{F(\one),F(X)}}}", from=1-5, to=2-5]
	\arrow[""{name=3, anchor=center, inner sep=0}, "{{\widehat{G}_{F(\one\otimes X)}}}"'{pos=.8}, from=1-7, to=3-7]
	\arrow[""{name=4, anchor=center, inner sep=0}, "{{\widehat{GF}_{\one\otimes X}}}", curve={height=-60pt}, from=1-7, to=5-7]
	\arrow["{{G(\widecheck{F}\otimes'F(\id_{X}))}}"',draw=none,shift right =1, from=2-3, to=2-5]
        \arrow[from=2-3, to=2-5]
	\arrow["{{\widehat{G}_{\one'\otimes' F(X)}}}"', from=2-3, to=3-3]
	\arrow[""{name=5, anchor=center, inner sep=0}, "{{G(F^{(2)}_{\one,X})}}"'{pos=.3}, shift right = 0, draw=none, bend right=15, from=2-5, to=1-7]
        \arrow[bend right=20, from=2-5, to=1-7]
        \arrow[""{name=6, anchor=center, inner sep=0}, "{{\widehat{G}_{F(\one)\otimes' F(X)}}}", from=2-5, to=3-5]
	\arrow["{{G^{(2)}_{\one',F(X)}}}"', draw=none, shift right =1,from=3-2, to=3-3]
        \arrow[from=3-2, to=3-3]
	\arrow[""{name=3251, anchor=center, inner sep=0},"{{G(\widecheck{F})\otimes''GF(\id_{X})}}"{pos=0.5},draw =none, shift right=2, from=3-2, to=5-1]
        \arrow[from=3-2, to=5-1]
	\arrow["{{G(\widecheck{F}\otimes'F(\id_{X}))}}"',draw=none, shift right=1, from=3-3, to=3-5]
        \arrow[from=3-3, to=3-5]
	\arrow[""{name=3353, anchor=center, inner sep=0},"{{G(\widecheck{F}\otimes'F(\id_{X}))}}", from=3-3, to=5-3]
	\arrow["{{G(F^{(2)}_{\one,X})}}"',draw=none, shift right =1, from=3-5, to=3-7]
        \arrow[from=3-5, to=3-7]
	\arrow["{{G(\widehat{F}_{\one\otimes X})}}"', from=3-7, to=5-7]
	\arrow["{{G^{(2)}_{F(\one),F(X)}}}"'{description}, from=5-1, to=5-3]
	\arrow[""{name=7, anchor=center, inner sep=0}, "{{(GF)^{(2)}_{\one,X}}}"', curve={height=60pt}, from=5-1, to=5-7]
	\arrow[""{name=8, anchor=center, inner sep=0}, "{{G(F^{(2)}_{\one,X})}}"'{description}, from=5-3, to=5-7]
	\arrow["{\textcolor{blue}{(1)}}"{description}, draw=none, from=0, to=1-3]
	\arrow["{\textcolor{blue}{(2)}}"{pos=.8},shift right =1, draw=none, from=1, to=2-3]
	\arrow["{\textcolor{blue}{(8)}}"{description}, draw=none, from=2, to=3-2]
	\arrow["{\textcolor{blue}{(6)}}"{description},shift right =-3, draw=none, from=2-5, to=3-7]
	\arrow["{\textcolor{blue}{(7)}}"{description}, draw=none, from=3, to=4]
	\arrow["{\textcolor{blue}{(4)}}"{description,pos=.65}, shift right =-1,draw=none, from=2-5, to=1-7]
	\arrow["{\textcolor{blue}{(10)}}"{description}, draw=none, from=3-5, to=8]
	\arrow["{\textcolor{blue}{(11)}}"{description}, draw=none, from=5-3, to=7]
        \arrow["{\textcolor{blue}{(9)}}"'{pos=.6},shift right =-1, draw=none, from=3251, to=3353]
        \arrow["{\textcolor{blue}{(3)}}"{description},shift right =2, draw=none, from=1-3, to=2-5]
        \arrow["{\textcolor{blue}{(5)}}"{description},shift right =2, draw=none, from=2-3, to=3-5]
\end{tikzcd}}
\vspace{-.1in}
\caption{$GF$ satisfies Definition~\ref{def:eFMfunc}(c)(ii).}
\label{c)ii)}
\end{figure}

\vspace{.1in}

\begin{landscape}
 \begin{figure}[h!]
\adjustbox{scale=.9,center}{
\begin{tikzcd}[every label/.append style = {font = \tiny}, nodes={font=\scriptsize}, column sep=5.8ex, row sep=8ex]
	{\one''} & {G(\one')} & {GF(\one)} & {GF(\one\otimes \one)} & {G(F(\one)\otimes' F(\one))} & {GF(\one)\otimes'' GF(\one)} \\
{\one''\otimes'' \one''} & {G(\one'\otimes'\one')} & {G(F(\one)\otimes' \one')} & {G(F(\one\otimes \one)\otimes'\one')} & {G(F(\one)\otimes' F(\one)\otimes'\one')} \\
& {G(\one')\otimes''G(\one')} & {GF(\one)\otimes''G( \one')} & {GF(\one\otimes \one)\otimes''G(\one')} & {G(F(\one)\otimes' F(\one))\otimes''G(\one')} & {GF(\one)\otimes'' GF(\one)} \\
& {G(\one')\otimes''G(\one')} & {GF(\one)\otimes''G( \one')} & {GF(\one\otimes \one)\otimes''G(\one')} & {G(F(\one)\otimes' F(\one))\otimes''G(\one')} \\
{G(\one')\otimes'' G(\one')} & {G(\one'\otimes'\one')} & {G(F(\one)\otimes' \one')} & {G(F(\one\otimes \one)\otimes'\one')} & {G(F(\one)\otimes' F(\one)\otimes'\one')} \\
&& {GF(\one)} & {GF(\one\otimes \one)} & {G(F(\one)\otimes'F( \one))} & {GF(\one)\otimes'' GF(\one)} \\
{GF(\one)\otimes''GF(\one)} & {G(F(\one)\otimes'F(\one))} & {G(\one')} & {G(F(\one)\otimes'F( \one))} \\
&&& {GF(\one\otimes\one)} & {GF(\one)}
\arrow["{{G^{(0)}}}"{pos=0.5}, from=1-1, to=1-2]
\arrow[""{name=0, anchor=center, inner sep=0}, "{(GF)^{(0)}}"{pos=0.5}, curve={height=-30pt}, from=1-1, to=1-3]
\arrow["{{(r''_{\one''})^{-1}}}"', from=1-1, to=2-1]
\arrow["{G(F^{(0)})}"{pos=0.5}, from=1-2, to=1-3]
\arrow[""{name=1, anchor=center, inner sep=0}, "{G{((r'_{\one'})^{-1})}}"', from=1-2, to=2-2]
\arrow["{GF(r_{\one}^{-1})}", from=1-3, to=1-4]
\arrow[""{name=2, anchor=center, inner sep=0}, "{G((r'_{F(\one)})^{-1})}"'{pos=0.4}, from=1-3, to=2-3]
\arrow["{G(F_{(2)}^{\one,\one})}"{pos=0.6}, from=1-4, to=1-5]
\arrow[""{name=3, anchor=center, inner sep=0}, "{(GF)_{(2)}^{\one,\one}}"{pos=0.5}, curve={height=-30pt}, from=1-4, to=1-6]
\arrow[""{name=4, anchor=center, inner sep=0}, "{G((r'_{F(\one\otimes \one)})^{-1})}"'{pos=0.4}, from=1-4, to=2-4]
\arrow["{G_{(2)}^{F(\one),F(\one)}}"'{pos=0.6}, from=1-5, to=1-6]
\arrow[""{name=5, anchor=center, inner sep=0}, "{G((r'_{F(\one)\otimes' F(\one)})^{-1})}"'{pos=0.4}, from=1-5, to=2-5]
\arrow[""{name=6, anchor=center, inner sep=0}, "{\widehat G_{F(\one)}\otimes'' \id_{GF(\one)}}"{description,pos=0.6}, from=1-6, to=3-6]
\arrow[""{name=7, anchor=center, inner sep=0}, "{\widehat{GF_{\one}}\otimes'' \id_{GF(\one)}}"{description, pos=0.6}, shift left=4, curve={height=-50pt}, from=1-6, to=6-6]
\arrow[""{name=8, anchor=center, inner sep=0}, "{{\widecheck G\otimes'' \widecheck G}}", from=2-1, to=5-1]
\arrow[""{name=9, anchor=center, inner sep=0}, "{{\widecheck{GF}\otimes''\widecheck{GF}}}"{description, pos=0.4}, shift right=4, curve={height=40pt}, from=2-1, to=7-1]
\arrow["{G(F^{(0)}\otimes ' \id_{\one '})}"{pos=0.5},shift right =-.5,draw=none,from=2-2, to=2-3]
\arrow[from=2-2, to=2-3]
\arrow[""{name=10, anchor=center, inner sep=0}, "{G_{(2)}^{\one',\one'}}"', from=2-2, to=3-2]
\arrow["{G(F(r_{\one}^{-1})\otimes'\id_{\one'})}",draw=none, shift left, from=2-3, to=2-4]
\arrow[from=2-3, to=2-4]
\arrow[""{name=11, anchor=center, inner sep=0}, "{G_{(2)}^{F(\one),\one'}}"'{pos=0.4}, from=2-3, to=3-3]
\arrow["{G(F_{(2)}^{\one,\one}\otimes' \id_{\one'})}",shift left, draw=none, from=2-4, to=2-5]
\arrow[from=2-4, to=2-5]
\arrow[""{name=12, anchor=center, inner sep=0}, "{G_{(2)}^{F(\one\otimes \one),\one'}}"'{pos=0.4}, from=2-4, to=3-4]
\arrow[""{name=13, anchor=center, inner sep=0}, "{G_{(2)}^{F(\one)\otimes' F(\one),\one'}}"'{pos=0.4}, from=2-5, to=3-5]
\arrow["{G(F^{(0)})\otimes '' G(\id_{\one '})}",draw=none, shift left, from=3-2, to=3-3]
\arrow[from=3-2, to=3-3]
\arrow[""{name=14, anchor=center, inner sep=0}, "{\widehat{G}_{\one '}\otimes'' G(\id_{\one'})}"', from=3-2, to=4-2]
\arrow["{GF(r_{\one}^{-1})\otimes''G(\id_{\one'})}",draw=none, shift left, from=3-3, to=3-4]
\arrow[from=3-3, to=3-4]
\arrow[""{name=15, anchor=center, inner sep=0}, "{\widehat{G}_{F(\one)}\otimes'' G(\id_{\one'})}"'{pos=0.4}, from=3-3, to=4-3]
\arrow["{G(F_{(2)}^{\one,\one})\otimes'' G(\id_{\one'})}",draw=none, shift left, from=3-4, to=3-5]
\arrow[from=3-4, to=3-5]
\arrow[""{name=16, anchor=center, inner sep=0}, "{\widehat{G}_{F(\one\otimes \one)}\otimes'' G(\id_{\one'})}"'{pos=0.4}, from=3-4, to=4-4]
\arrow[""{name=17, anchor=center, inner sep=0}, "{\widehat{G}_{F(\one)\otimes' F( \one)}\otimes'' G(\id_{\one'})}"'{pos=0.4}, from=3-5, to=4-5]
\arrow["{G^{(2)}_{F(\one),F(\one)}}"{pos=0.7},draw=none, shift right=2, curve={height=-20pt}, from=3-6, to=6-5]
\arrow[curve={height=-20pt}, from=3-6, to=6-5]
\arrow[""{name=18, anchor=center, inner sep=0}, "{G(\widehat F(\one))\otimes'' \id_{GF(\one)}}"'{description,pos=0.6}, from=3-6, to=6-6]
\arrow["{G(F^{(0)})\otimes '' G(\id_{\one '})}",draw=none, shift left, from=4-2, to=4-3]
\arrow[from=4-2, to=4-3]
\arrow[""{name=19, anchor=center, inner sep=0}, "{G^{(2)}_{\one',\one'}}"', from=4-2, to=5-2]
\arrow["{GF(r_{\one}^{-1})\otimes''G(\id_{\one'})}",draw=none, shift left, from=4-3, to=4-4]
\arrow[from=4-3, to=4-4]
\arrow[""{name=20, anchor=center, inner sep=0}, "{G^{(2)}_{F(\one),\one'}}"'{pos=0.4}, from=4-3, to=5-3]
\arrow["{G(F_{(2)}^{\one,\one})\otimes'' G(\id_{\one'})}",draw=none, shift left, from=4-4, to=4-5]
\arrow[from=4-4, to=4-5]
\arrow[""{name=21, anchor=center, inner sep=0}, "{G^{(2)}_{F(\one\otimes \one),\one'}}"'{pos=0.4}, from=4-4, to=5-4]
\arrow[""{name=22, anchor=center, inner sep=0}, "{G^{(2)}_{F(\one)\otimes' F(\one),\one'}}"'{pos=0.4}, from=4-5, to=5-5]
\arrow["{{G_{\one', \one'}^{(2)}}}", from=5-1, to=5-2]
\arrow[""{name=23, anchor=center, inner sep=0}, "{{G(\widecheck F)\otimes''G(\widecheck F)}}"{pos=0.3}, shift right, from=5-1, to=7-1]
\arrow["{G(F^{(0)}\otimes ' \id_{\one '})}",draw=none, shift left, from=5-2, to=5-3]
\arrow[from=5-2, to=5-3]
\arrow[""{name=24, anchor=center, inner sep=0}, "{{G(\widecheck F\otimes'\widecheck F)}}"', from=5-2, to=7-2]
\arrow["{G{(r'_{\one'})}}"'{pos=.5},draw=none, shift left =1, from=5-2, to=7-3]
\arrow[""{name=25, anchor=center, inner sep=0}, from=5-2, to=7-3]
\arrow["{G(F(r_{\one}^{-1})\otimes'\id_{\one'})}",draw=none, shift left, from=5-3, to=5-4]
\arrow[from=5-3, to=5-4]
\arrow[""{name=26, anchor=center, inner sep=0}, "{G(r'_{F(\one)})}"'{pos=.4}, from=5-3, to=6-3]
\arrow["{G(F_{(2)}^{\one,\one}\otimes' \id_{\one'})}",draw=none, shift left, from=5-4, to=5-5]
\arrow[from=5-4, to=5-5]
\arrow[""{name=27, anchor=center, inner sep=0}, "{G(r'_{F(\one\otimes \one)})}"'{pos=0.4}, from=5-4, to=6-4]
\arrow[""{name=28, anchor=center, inner sep=0}, "{G(r'_{F(\one)\otimes' F(\one)})}"'{pos=0.4}, from=5-5, to=6-5]
\arrow["{GF(r_{\one}^{-1})}", from=6-3, to=6-4]
\arrow["{G(F_{(2)}^{\one,\one})}", from=6-4, to=6-5]
\arrow["{G(\widehat F_{\one}\otimes' \id_{F(\one)})}"'{pos=.5},draw=none,shift right =-1, from=6-5, to=7-4]
\arrow[""{name=29, anchor=center, inner sep=0},from=6-5, to=7-4]
\arrow["{G^{(2)}_{F(\one),F(\one)}}"'{pos=.5},shift right =-11,draw=none, from=6-6, to=7-4]
\arrow[shift right =1,curve={height=-30pt}, from=6-6, to=7-4]
\arrow[ "{(GF)^{(2)}_{\one,\one}}",draw=none, from=6-6, to=8-5]
\arrow[""{name=30, anchor=center, inner sep=0},shift left =1, from=6-6, to=8-5]
\arrow["{{G_{F(\one), F(\one)}^{(2)}}}",draw=none, shift left, from=7-1, to=7-2]
\arrow[from=7-1, to=7-2]
\arrow[""{name=31, anchor=center, inner sep=0}, "{(GF)^{(2)}_{\one,\one}}"'{pos=0.25}, curve={height=70pt}, from=7-1, to=8-5]
\arrow[""{name=32, anchor=center, inner sep=0}, "{G(F^{(2)}_{\one,\one})}"'{pos=0.5}, from=7-2, to=8-4]
\arrow["{G(F^{(0)})}"', from=7-3, to=6-3]
\arrow[""{name=33, anchor=center, inner sep=0}, "{G(F^{(2)}_{\one,\one})}", from=7-4, to=8-4]
\arrow["{GF(r_{\one})}", from=8-4, to=8-5]
\arrow["{\textcolor{blue}{(1)}}"{description}, shift right=-5, draw=none, from=1-1, to=1-3]
\arrow["{\textcolor{blue}{(4)}}"{pos=0.4}, shift right=-10, draw=none, from=1-2, to=2-2]
\arrow["{\textcolor{blue}{(5)}}"{pos=0.4},shift right =-10, draw=none, from=1-3, to=2-3]
\arrow["{\textcolor{blue}{(2)}}"{description}, shift right=-5, draw=none, from=1-4, to=1-6]
\arrow["{\textcolor{blue}{(6)}}"{pos=0.4},shift right =-10, draw=none, from=1-4, to=2-4]
\arrow["{\textcolor{blue}{(8)}}"{description,pos=.5},shift right=-10, draw=none, from=1-6, to=6-6]
\arrow["{\textcolor{blue}{(3)}}"{description},shift right =10, draw=none, from=2-1, to=2-2]
\arrow["{\textcolor{blue}{(18)}}"'{pos=0.5}, shift right=5, draw=none, from=2-1, to=7-1]
\arrow["{\textcolor{blue}{(9)}}"{pos=0.4},shift right =-10, draw=none, from=2-2, to=3-2]
\arrow["{\textcolor{blue}{(10)}}"{pos=0.4},shift right =-10, draw=none, from=2-3, to=3-3]
\arrow["{\textcolor{blue}{(11)}}"{pos=0.4},shift right =-10, draw=none, from=2-4, to=3-4]
\arrow["{\textcolor{blue}{(7)}}"{description}, draw=none, from=2-5, to=3-6]
\arrow["{\textcolor{blue}{(12)}}"{pos=0.4},shift right =-3, draw=none, from=3-2, to=4-2]
\arrow["{\textcolor{blue}{(13)}}"{pos=0.4},shift right =-3,draw=none, from=3-3, to=4-3]
\arrow["{\textcolor{blue}{(14)}}"{pos=0.4},shift right =-5, draw=none, from=3-4, to=4-4]
\arrow["{\textcolor{blue}{(15)}}"{pos=0.4},shift right =-10, draw=none, from=4-2, to=5-2]
\arrow["{\textcolor{blue}{(16)}}"{pos=0.4},shift right =-10, draw=none, from=4-3, to=5-3]
\arrow["{\textcolor{blue}{(17)}}"{pos=0.4},shift right =-10, draw=none, from=4-4, to=5-4]
\arrow["{\textcolor{blue}{(19)}}"{description}, shift right=8, draw=none, from=5-1, to=7-2]
\arrow["{\textcolor{blue}{(20)}}"{description}, shift left=4, draw=none, from=25, to=6-3]
\arrow["{\textcolor{blue}{(24)}}"{description, pos=0.5}, draw=none, from=7-3, to=7-4]
\arrow["{\textcolor{blue}{(21)}}"{pos=0.4},shift right =-10, draw=none, from=5-3, to=6-3]
\arrow["{\textcolor{blue}{(22)}}"{pos=0.4},shift right =-10, draw=none, from=5-4, to=6-4]
\arrow["{\textcolor{blue}{(23)}}"{description,pos=0.5}, draw=none, from=6-5, to=6-6]
\arrow["{\textcolor{blue}{(26)}}"{pos=0.6}, draw=none, from=32, to=31]
\arrow["{\textcolor{blue}{(25)}}"{description,pos=.8}, draw=none, from=6-5, to=8-5]
\end{tikzcd}}
\caption{$GF$ satisfies Definition~\ref{def:eFMfunc}(b).}
\label{GF satisfies b)}
\end{figure}
\end{landscape}

\begin{landscape}
\begin{figure}[h!]
	\adjustbox{scale=.8,center}{
\begin{tikzcd}[every label/.append style = {font = \tiny}, nodes={font=\scriptsize}, column sep=4ex, row sep=11ex]
	{GF(X\otimes Y\otimes \one)} & {GF(X\otimes Y)} & {G(F(X)\otimes'F(Y))} & {G(F(X)\otimes'F(Y))} & {GF(X)\otimes''GF(Y)} \\
{G(F(X\otimes Y)\otimes'F(\one))} & {G(F(X\otimes Y)\otimes'F(\one))} & {GF(X\otimes Y\otimes\one)} & {GF(X\otimes Y)} & {GF(X)\otimes''GF(Y)} \\
{GF(X\otimes Y)\otimes''GF(\one)} & {G(F(X\otimes Y)\otimes'F(\one)\otimes'\one')} & {G(F(X\otimes Y\otimes\one)\otimes'\one')} & {G(F(X\otimes Y)\otimes'\one')} & {G(F(X)\otimes' F(Y)\otimes'\one')} & {GF(X)\otimes''GF(Y)} \\
{GF(X\otimes Y)\otimes''GF(\one)} & {G(F(X\otimes Y)\otimes'F(\one))\otimes''G(\one')} & {GF(X\otimes Y\otimes\one)\otimes''G(\one')} & {GF(X\otimes Y)\otimes''G(\one')} & {G(F(X)\otimes'F(Y))\otimes''G(\one')} \\
& {G(F(X\otimes Y)\otimes'F(\one))\otimes''G(\one')} & {GF(X\otimes Y\otimes\one)\otimes''G(\one')} & {GF(X\otimes Y)\otimes''G(\one')} & {G(F(X)\otimes'F(Y))\otimes''G(\one')} & {G(F(X)\otimes'F(Y))} \\
{GF(X\otimes Y)\otimes''GF(\one)} & {G(F(X\otimes Y)\otimes'F(\one)\otimes'\one')} & {G(F(X\otimes Y\otimes\one)\otimes'\one')} & {G(F(X\otimes Y)\otimes'\one')} & {G(F(X)\otimes' F(Y)\otimes'\one')} \\
& {G(F(X\otimes Y)\otimes'F(\one))} & {GF(X\otimes Y\otimes\one)} &&& {GF(X\otimes Y)}
\arrow[""{name=0, anchor=center, inner sep=0}, "{{{{{{{{G(F_{(2)}^{X\otimes Y,\one})}}}}}}}}"{pos=0.5},shift right =1, from=1-1, to=2-1]
\arrow[""{name=1, anchor=center, inner sep=0}, "{{{{{{{{(GF)_{(2)}^{X\otimes Y,\one}}}}}}}}}"{description, pos=0.8}, shift right=5, curve={height=50pt}, from=1-1, to=3-1]
\arrow["{{{{{{{{GF(r_{X\otimes Y})}}}}}}}}"{pos=0.5}, from=1-1, to=1-2]
\arrow[""{name=2, anchor=center, inner sep=0}, "{{{{{{{{G(F_{(2)}^{X,Y})}}}}}}}}"'{pos=0.5}, from=1-2, to=1-3]
\arrow[""{name=3, anchor=center, inner sep=0}, "{{{{{{{{(GF)_{(2)}^{X,Y}}}}}}}}}"{description, pos=0.4}, curve={height=-50pt}, from=1-2, to=1-5]
\arrow["{{{{{{{{G(\widehat{F}_X\,\otimes'\,\id_{F(Y)})}}}}}}}}"',draw=none, shift right =1, from=1-3, to=1-4]
\arrow[from=1-3, to=1-4]
\arrow[""{name=4, anchor=center, inner sep=0}, "{{{{{{{{G_{(2)}^{F(X),F(Y)}}}}}}}}}"{description, pos=0.3}, curve={height=-20pt}, from=1-3, to=1-5]
\arrow["{{{{{{{{G(F^{(2)}_{X,Y})}}}}}}}}"', from=1-4, to=2-4]
\arrow[""{name=5, anchor=center, inner sep=0}, "{{{{{{{{G_{(2)}^{F(X),F(Y)}}}}}}}}}"{description}, from=1-4, to=2-5]
\arrow["{{{{{{{{G((r'_{F(X)\otimes' F(Y)})^{-1})}}}}}}}}"{description, pos=0.7}, from=1-4, to=3-5]
\arrow[""{name=6, anchor=center, inner sep=0}, "{{{{{{{{G(\widehat{F}_X)\,\otimes''\,\id_{GF(Y)}}}}}}}}}"{pos=0.6}, from=1-5, to=2-5]
\arrow[""{name=7, anchor=center, inner sep=0}, "{{{{{{{{\widehat{GF}_{X}\,\otimes''\,\id_{GF(Y)}}}}}}}}}"{description, pos=0.7},curve={height=-70pt}, from=1-5, to=3-6]
\arrow["{{{{{{{{G_{(2)}^{F(X\otimes Y),F(\one)}}}}}}}}}"{pos=0.5},shift right =1,from=2-1, to=3-1]
\arrow["{{{{{{{{G(\widehat{F}_{X\otimes Y}\,\otimes'\,\id_{F(\one)})}}}}}}}}"{pos=0.5},draw=none, shift right=-1, from=2-1, to=2-2]
\arrow[from=2-1, to=2-2]
\arrow[ "{{{{{{{{G(F^{(2)}_{X\otimes Y,\one})}}}}}}}}",draw=none, shift right =-1, from=2-2, to=2-3]
\arrow[""{name=8, anchor=center, inner sep=0}, from=2-2, to=2-3]
\arrow["{{{{{{{{G((r'_{F(X\otimes Y)\,\otimes'\,F(\one)})^{-1})}}}}}}}}"{pos=0.5}, from=2-2, to=3-2]
\arrow["{{{{\textcolor{blue}{(8)}}}}}"{description, pos=0.5}, shift right =10, draw=none, from=2-3, to=3-3]
\arrow[""{name=9, anchor=center, inner sep=0}, "{{G_{(2)}^{F(X\otimes Y),F(\one)}}}"{description, pos=0.3},from=2-2, to=4-1]
\arrow["{{{{{{{{GF(r_{X\otimes Y})}}}}}}}}",draw=none, shift right =-1, from=2-3, to=2-4]
\arrow[ from=2-3, to=2-4]
\arrow["{{{{{{{{G((r'_{F(X\otimes Y\otimes\one)})^{-1})}}}}}}}}"{pos=0.5}, from=2-3, to=3-3]
\arrow["{{{{\textcolor{blue}{(9)}}}}}"{description, pos=0.5}, shift left=-10, draw=none, from=2-4, to=3-4]
\arrow["{{{{{{{{G((r'_{F(X\otimes Y)})^{-1})}}}}}}}}"{pos=0.6}, from=2-4, to=3-4]
\arrow[""{name=10, anchor=center, inner sep=0}, "{{{{{{{{\widehat{G}_{F(X)}\,\otimes''\,\id_{GF(Y)}}}}}}}}}"{description, pos=0.5}, from=2-5, to=3-6]
\arrow["{{{{{{{{G(\widehat{F}_{X\otimes Y})\otimes''\id_{GF(\one)}}}}}}}}}"{description,pos=0.3}, shift right =2.1, from=3-1, to=4-1]
\arrow[""{name=12, anchor=center, inner sep=0}, "{{{{{{{{\widehat{GF}_{X\otimes Y}\otimes''\id_{GF(\one)}}}}}}}}}"{description, pos=0.8}, shift right=14, curve={height=50pt}, from=3-1, to=6-1]
\arrow[ "{{{{{{{{G(F^{(2)}_{X\otimes Y,\one}\,\otimes'\,\id_{\one'})}}}}}}}}", draw=none, shift left=1, from=3-2, to=3-3]
\arrow[""{name=13, anchor=center, inner sep=0}, from=3-2, to=3-3]
\arrow["{{{{{{{{G_{(2)}^{F(X\otimes Y)\,\otimes'\, F(\one),\one'}}}}}}}}}"{pos=0.5}, from=3-2, to=4-2]
\arrow[ "{{{{{{{{G(F(r_{X\otimes Y})\,\otimes'\,\id_{\one'})}}}}}}}}"{pos=0.4},draw=none, shift left=1, from=3-3, to=3-4]
\arrow[""{name=14, anchor=center, inner sep=0}, from=3-3, to=3-4]
\arrow["{{{{{{{{G_{(2)}^{F(X\otimes Y\otimes\one),\one'}}}}}}}}}"{pos=0.5}, from=3-3, to=4-3]
\arrow["{{{{{{{{G_{(2)}^{F(X\otimes Y),\one'}}}}}}}}}"{pos=0.5}, from=3-4, to=4-4]
\arrow["{{{{\textcolor{blue}{(11)}}}}}"{description}, shift left=-10, draw=none, from=3-5, to=4-5]
\arrow["{{{{{{{{G(F^{(2)}_{X,Y}\,\otimes'\,\id_{\one'})}}}}}}}}"'{pos=.5},draw=none, shift right =1, from=3-5, to=3-4]
\arrow[""{name=15, anchor=center, inner sep=0}, from=3-5, to=3-4]
\arrow["{{{{{{{{G_{(2)}^{F(X)\otimes'F(Y),\one'}}}}}}}}}"{pos=0.5}, from=3-5, to=4-5]
\arrow[""{name=16, anchor=center, inner sep=0}, "{{{{{{{{G^{(2)}_{F(X),F(Y)}}}}}}}}}"'{pos=0.5}, from=3-6, to=5-6]
\arrow[""{name=17, anchor=center, inner sep=0}, "{{{{{{{{(GF)^{(2)}_{X,Y}}}}}}}}}"{description, pos=0.3}, shift left=4, curve={height=-40pt}, from=3-6, to=7-6]
\arrow[""{name=18, anchor=center, inner sep=0}, "{{{{{{{{\widehat{G}_{F(X\otimes Y)}\,\otimes''\,\id_{GF(\one)}}}}}}}}}"{description,pos=0.4},shift right =1, from=4-1, to=6-1]
\arrow["{{{{{{{{G(F^{(2)}_{X\otimes Y,\one})\,\otimes''\,G(\id_{\one'})}}}}}}}}"{pos=0.5},draw=none, shift left =1, from=4-2, to=4-3]
\arrow[""{name=19, anchor=center, inner sep=0}, from=4-2, to=4-3]
\arrow["{{{{{{{{\widehat{G}_{F(X\otimes Y)\otimes'F(\one)}\,\otimes''\,\id_{G(\one')}}}}}}}}}"{pos=0.5}, from=4-2, to=5-2]
\arrow["{{{{{{{{GF(r_{X\otimes Y})\,\otimes''\,G(\id_{\one'})}}}}}}}}",draw=none, shift left=1, from=4-3, to=4-4]
\arrow[""{name=20, anchor=center, inner sep=0}, from=4-3, to=4-4]
\arrow["{{{{{{{{\widehat{G}_{F(X\otimes Y\otimes\one)}\,\otimes''\,\id_{G(\one')}}}}}}}}}"{pos=0.5}, from=4-3, to=5-3]
\arrow["{{{{{{{{\widehat{G}_{F(X\otimes Y)}\,\otimes''\,\id_{G(\one')}}}}}}}}}"{pos=0.5}, from=4-4, to=5-4]
\arrow["{{{{{{{{G(F^{(2)}_{X,Y})\,\otimes''\,G(\id_{\one'})}}}}}}}}"'{pos=0.5}, shift right=1, draw=none, from=4-5, to=4-4]
\arrow[""{name=21, anchor=center, inner sep=0}, from=4-5, to=4-4]
\arrow["{{{{{{{{\widehat{G}_{F(X)\otimes'F(Y)}\otimes''\id_{G(\one')}}}}}}}}}", from=4-5, to=5-5]
\arrow["{{{{{{{{G(F^{(2)}_{X\otimes Y,\one})\,\otimes''\,G(\id_{\one'})}}}}}}}}", shift left=1, draw=none, from=5-2, to=5-3]
\arrow[""{name=22, anchor=center, inner sep=0}, from=5-2, to=5-3]
\arrow[""{name=23, anchor=center, inner sep=0}, "{{{{{{{{G^{(2)}_{F(X\otimes Y)\otimes' F(\one),\one'}}}}}}}}}"{pos=0.5}, from=5-2, to=6-2]
\arrow["{{{{{{{{GF(r_{X\otimes Y})\,\otimes''\,G(\id_{\one'})}}}}}}}}"{pos=0.5},draw=none, shift left =1, from=5-3, to=5-4]
\arrow[""{name=24, anchor=center, inner sep=0},  from=5-3, to=5-4]
\arrow["{{{{{{{{G^{(2)}_{F(X\otimes Y\otimes\one),\one'}}}}}}}}}"{pos=0.5}, from=5-3, to=6-3]
\arrow["{{{{{{{{G^{(2)}_{F(X\otimes Y),\one'}}}}}}}}}"{pos=0.5}, from=5-4, to=6-4]
\arrow["{{{{{{{{G(F^{(2)}_{X,Y})\,\otimes''\,G(\id_{\one'})}}}}}}}}"'{pos=0.5}, draw=none,shift right=1, from=5-5, to=5-4]
\arrow[""{name=25, anchor=center, inner sep=0},from=5-5, to=5-4]
\arrow["{{{{{{{{G^{(2)}_{F(X)\otimes'F(Y),\one'}}}}}}}}}"{pos=0.5}, from=5-5, to=6-5]
\arrow[""{name=26, anchor=center, inner sep=0}, "{{{{{{{{G(F^{(2)}_{X,Y})}}}}}}}}"{description, pos=0.5}, from=5-6, to=7-6]
\arrow["{{{{{{{{G^{(2)}_{F(X\otimes Y),F(\one)}}}}}}}}}"{pos=0.4},shift right =1.5,draw=none, from=6-1, to=7-2]
\arrow[from=6-1, to=7-2]
\arrow[""{name=27, anchor=center, inner sep=0}, "{{{{{{{{(GF)^{(2)}_{X,Y}}}}}}}}}"{description, pos=0.8}, curve={height=90pt}, from=6-1, to=7-3]
\arrow["{{{{{{{{G(F^{(2)}_{X\otimes Y,\one}\,\otimes'\,\id_{\one'})}}}}}}}}"{pos=0.5},draw=none, shift left =1, from=6-2, to=6-3]
\arrow[""{name=28, anchor=center, inner sep=0},  from=6-2, to=6-3]
\arrow["{{{{{{{{G(r'_{F(X\otimes Y)\otimes'F(\one)})}}}}}}}}"{pos=0.5}, from=6-2, to=7-2]
\arrow[ "{{{{{{{{G(F(r_{X\otimes Y})\,\otimes'\,\id_{\one'})}}}}}}}}",draw=none, shift left =1, from=6-3, to=6-4]
\arrow[""{name=29, anchor=center, inner sep=0}, from=6-3, to=6-4]
\arrow["{{{{{{{{G(r'_{F(X\otimes Y\otimes\one)})}}}}}}}}", from=6-3, to=7-3]
\arrow["{{{{{{{{G(r'_{F(X\otimes Y)})}}}}}}}}",draw=none, shift right=1.5, from=6-4, to=7-6]
\arrow[from=6-4, to=7-6]
\arrow["{{{{{{{{G(r'_{F(X)\otimes'F(Y)})}}}}}}}}"'{description,pos=0.5}, curve={height=27pt}, from=6-5, to=5-6]
\arrow[ "{{{{{{{{G(F^{(2)}_{X,Y}\,\otimes'\,\id_{\one'})}}}}}}}}"'{pos=.5},draw=none, shift right =1, from=6-5, to=6-4]
\arrow[""{name=30, anchor=center, inner sep=0},  from=6-5, to=6-4]
\arrow["{{{{\textcolor{blue}{(24)}}}}}"{description,pos=.7},shift right=15, draw=none, from=5-6, to=7-6]
\arrow[ "{{{{{{{{G(F^{(2)}_{X\otimes Y,\one})}}}}}}}}", shift left=1,draw=none, from=7-2, to=7-3]
\arrow[""{name=31, anchor=center, inner sep=0}, from=7-2, to=7-3]
\arrow[""{name=32, anchor=center, inner sep=0}, "{{{{{{{{GF(r_{X\otimes Y})}}}}}}}}", from=7-3, to=7-6]
\arrow["{{{{\textcolor{blue}{(2)}}}}}"{description, pos=0.7}, shift right=10, draw=none, from=1-1, to=2-1]
\arrow["{{{{\textcolor{blue}{(1)}}}}}"{description, pos=0.5}, shift left=8, draw=none, from=1-2, to=1-4]
\arrow["{{{{\textcolor{blue}{(3)}}}}}"{description, pos=0.5}, shift left=14, draw=none, from=2-1, to=2-4]
\arrow["{{{{\textcolor{blue}{(4)}}}}}"{description, pos=0.5}, shift right=3, draw=none, from=1-4, to=1-5]
\arrow["{{{{\textcolor{blue}{(10)}}}}}"{description, pos=.2}, shift right=3, draw=none, from=2-4, to=2-5]
\arrow["{{{{\textcolor{blue}{(12)}}}}}"{description, pos=0.8}, shift left=5, curve={height=18pt}, draw=none, from=5, to=16]
\arrow["{{{{\textcolor{blue}{(5)}}}}}"{description, pos=0.5}, shift left=1, draw=none, from=1-5, to=3-6]
\arrow["{{{{\textcolor{blue}{(6)}}}}}"{description, pos=0.2}, shift right=10, curve={height=12pt}, draw=none, from=4-1, to=6-1]
\arrow["{{{{\textcolor{blue}{(7)}}}}}"{description,pos=.8}, shift left=15, draw=none, from=2-1, to=3-1]
\arrow["{{{{\textcolor{blue}{(14)}}}}}"{description, pos=0.5}, shift right=10, draw=none, from=3-3, to=4-3]
\arrow["{{{{\textcolor{blue}{(13)}}}}}"{description,pos=.5}, draw=none, from=4-1, to=5-2]
\arrow["{{{{\textcolor{blue}{(16)}}}}}"{description, pos=0.5}, shift left=-8, draw=none, from=4-3, to=5-3]
\arrow[""{name=33, anchor=center, inner sep=0}, "{{{{\textcolor{blue}{(17)}}}}}"{description}, shift left=-6,  draw=none, from=4-4, to=5-4]
\arrow["{{{{\textcolor{blue}{(18)}}}}}"{description}, shift left=-8, draw=none, from=4-5, to=5-5]
\arrow["{{{{\textcolor{blue}{(19)}}}}}"{description}, shift left=-10, draw=none, from=5-3, to=6-3]
\arrow["{{{{\textcolor{blue}{(20)}}}}}"{description}, shift left=-10, draw=none, from=5-4, to=6-4]
\arrow["{{{{\textcolor{blue}{(21)}}}}}"{description}, shift right=10, draw=none, from=5-5, to=6-5]
\arrow["{{{{\textcolor{blue}{(25)}}}}}"{description,pos=.2}, shift left=8, draw=none, from=5-6, to=7-6]
\arrow["{{{{\textcolor{blue}{(22)}}}}}"{description}, shift left=-10,  draw=none, from=6-3, to=7-3]
\arrow["{{{{\textcolor{blue}{(23)}}}}}"{description}, draw=none, from=29, to=32]
\arrow["{{{{\textcolor{blue}{(26)}}}}}"{description, pos=0.05}, shift left=5, curve={height=-30pt}, draw=none, from=7-2, to=27]
\arrow["{{{{\textcolor{blue}{(15)}}}}}"{description, pos=0.5}, shift right=10, draw=none, from=3-4, to=4-4]
\end{tikzcd}
}
\caption{$GF$ satisfies  Definition~\ref{def:eFMfunc}(c)(iii).} 
\label{GF satisfies c)iii)}
\end{figure}
\end{landscape}




\clearpage

\section*{Acknowledgements}

We thank the anonymous referee for their careful feedback, time, and consideration. We would like to thank Harshit Yadav for sharing some of their graphical arguments in  Appendix~\ref{sec:HF}.
Czenky was partially supported by Simons Collaboration Grant No. 999367. Walton was partially supported by the US NSF grant \#DMS-2348833, and by an AMS Claytor-Gilmer Research Fellowship. 

\bibliography{ExtFrobMonStruc}
\bibliographystyle{alpha}

\end{document}